\patchcmd{\subsection}{-.5em}{.5em}{}{}
\patchcmd{\subsubsection}{-.5em}{.5em}{}{}
\numberwithin{equation}{section}
\newcommand{\SL}{\operatorname{SL}}
\newcommand{\cC}{\mathcal{C}}
\newcommand{\cG}{\mathcal{G}}
\newcommand{\cL}{\mathcal{L}}
\newcommand{\cN}{\mathcal{N}}
\newcommand{\cS}{\mathcal{S}}
\newcommand{\cT}{\mathcal{T}}
\newcommand{\cX}{\mathcal{X}}
\newcommand{\cY}{\mathcal{Y}}
\newcommand{\cZ}{\mathcal{Z}}
\newcommand{\bN}{\mathbb{N}}
\newcommand{\bR}{\mathbb{R}}
\newcommand{\bZ}{\mathbb{Z}}
\newcommand{\ra}{\rightarrow}
\newcommand{\qand}{\quad \textrm{and} \quad}
\newcommand\subsetsim{\mathrel{%
\ooalign{\raise0.2ex\hbox{$\subset$}\cr\hidewidth\raise-0.8ex\hbox{\scalebox{0.9}{$\sim$}}\hidewidth\cr}}}
\newcommand{\eps}{\varepsilon}
\newcommand{\res}[3]{{}^{#1}\mathrm{res}^{#2}_{#3}}
\newcommand{\ind}[3]{{}^{#1}\mathrm{ind}^{#2}_{#3}}
\DeclareMathOperator{\pr}{pr}
\DeclareMathOperator{\supp}{supp}
\DeclareMathOperator{\Prob}{Prob}
\DeclareMathOperator{\gr}{graph}
\DeclareMathOperator{\Dens}{\underline{Dens}}
\DeclareMathOperator{\dom}{dom}
\DeclareMathOperator{\ran}{ran}
\definecolor{lichtgrijs}{gray}{0.95}
\theoremstyle{theorem}
\newtheorem{theorem}{Theorem}[section]
\newtheorem{corollary}[theorem]{Corollary}
\newtheorem{proposition}[theorem]{Proposition}
\newtheorem{lemma}[theorem]{Lemma}
\theoremstyle{definition}
\newtheorem{definition}[theorem]{Definition}
\newtheorem{remark}[theorem]{Remark}
\newtheorem{example}{Example}[section]
\tikzstyle{decision} = [diamond, draw, fill=blue!20, 
\tikzstyle{block} = [rectangle, draw, fill=blue!20, 
\tikzstyle{line} = [draw, -latex']
\tikzstyle{cloud} = [draw, ellipse,fill=red!20, node distance=3cm,
\renewcommand\labelenumi{(\roman{enumi})}
\renewcommand\theenumi\labelenumi
\begin{document}
\bibliographystyle{plain} 

\title{Intersection spaces and multiple transverse recurrence}

\author[Michael Bj\"orklund]{Michael Bj\"orklund}
\address{}

\author[Tobias Hartnick]{Tobias Hartnick}
\address{}

\author[Yakov Karasik]{Yakov Karasik}
\address{}

\begin{abstract} We study multiple recurrence properties along separated cross sections for pmp actions of unimodular lcsc group on Polish spaces. We establish a multiple transverse recurrence theorem under the assumption that sufficiently large powers of the return time set are Delone sets. Typical examples of such situations arise from the theory of uniform approximate lattices.
\end{abstract}

\keywords{Transversal ergodic theory,  approximate lattices}

\subjclass[2010]{Primary: 37A15; Secondary: 37B20, 52C23}

\maketitle

\fontsize{10pt}{13pt}\selectfont

\section{Introduction}
Given a Borel measurable action of a locally compact second countable (lcsc) group $G$ on a Polish space $X$, we say that a point $x \in X$ is \emph{recurrent} if there exists a sequence $(g_n)$ in $G$  with $g_n \to \infty$ such that $g_n.x \to x$. The Poincar\'e recurrence theorem ensures that in the case of a  probability measure preserving (pmp) action, almost every point is recurrent. In this article we are interested in conditions which ensure a stronger recurrence property for generic points which we call \emph{multiple transverse recurrence}. 

More precisely, we are going to consider a pmp action of a lcsc group $G$ on a Polish space $X$ with invariant probability measure $\mu$.  If $Y \subset X$ is a sufficiently well-behaved cross-section,  then the measure $\mu$ gives rise to a transverse measure $\nu$ on $Y$ (see  \cite{Avni, KPV, Slutsky1}).  By a \emph{multiple transverse recurrence theorem} we shall mean a theorem which ensures that for every $r \geq 1$
and for $\nu^{\otimes r}$-almost all $(y_1, \dots, y_r) \in Y^r$ there exists a sequence $(g_n) \in G$ with $g_n \to \infty$ such that for all $j \in \{1, \dots, r\}$ we have both
 $g_n.y_j \to y_j$ (i.e.\ \emph{multiple recurrence}) and $g_n.y_j \in Y$ (i.e. \emph{transverse recurrence}).  Multiple transverse recurrence is a rare 
phenomenon,  which,  as we will see later,  only occurs under strong arithmetic assumptions on the the return time set to the cross-section.
 
 In this article we are going to establish multiple transverse recurrence theorem for a specific class of transverse systems whose return time sets are uniform approximate lattices in the sense of \cite{BjorklundHartnick1}. Before we go into the details of our setting, we discuss a specific example of a multiple transverse recurrence theorem, which motivated us to introduce the general setting considered below.

\subsection{A motivating example}\label{SecMotivation}
Let $G_o$ be a non-compact unimodular lcsc group with Haar measure $m_{G_o}$. We recall that the closed subsets of a lcsc group $G_o$ form a compact metrizable space $\mathcal C(G_o)$ under the Chabauty--Fell topology (see e.g. \cite{BjorklundHartnick1}) on which $G_o$ acts jointly continuously by $g.A := Ag^{-1}$. If $P_o$ is a discrete subset of $G_o$, then we denote by $\Omega_{P_o}$ the \emph{hull} of $P_o$, i.e. the orbit closure of $P_o$ in $\mathcal C(G_o)$, and set $\Omega^\times_{P_o} := \Omega_{P_o} \setminus\{\emptyset\}$. The subset $\mathcal T_{P_o} := \{Q \in \Omega_{P_o} \mid e \in P_o\}$ is then a \emph{cross section} for the $G_o$-action on $\Omega_{P_o}^\times$, i.e.\ every $G$-orbit in  $\Omega_{P_o}^\times$ intersects $\mathcal T_{P_o}$.

Motivated by problems in the theory of aperiodic order (see e.g. \cite{BjorklundHartnick1, BHP1, BHP2}) we would like to find conditions which ensure that for many elements $Q_1, \dots, Q_r$ of $\mathcal T_{P_o}$ the intersection $Q_1 \cap \dots \cap Q_r$ is large in a suitable sense. This can be achieved under two additional assumptions on the initial set $P_o$.

Firstly, we need to assume enough discreteness of $P_o$. Recall that a subset $P_o \subset G_o$ is called \emph{uniformly discrete} if its difference set $\Lambda := P_oP_o^{-1}$ does not accumulate at the identity; it is called \emph{relatively dense} if there exists a compact subset $K \subset G_o$ such that $G_o = KP_o$. In the sequel we assume that $P_o \subset G_o$ is a subset such that the difference set $\Lambda$ is relatively dense and $\Lambda^3$ is uniformly discrete. These assumptions imply that $\Lambda$ is actually a \emph{uniform approximate lattices} in the sense of \cite{BjorklundHartnick1}, i.e.\ a relatively dense and discrete approximate subgroup, and hence $\Lambda^n$ is actually uniformly discrete for all $n\in \bN$. 

Secondly, we will assume that $G_o$ is unimodular and that the set $\Omega^\times_{P_o}$ admits a $G_o$-invariant probability measure $\mu$. We are mostly interested in the case in which $G_o$ itself is uncountable. In this case, the cross section $\mathcal T_{P_o}$ is a $\mu$-nullset, hence it does not make sense to speak about $\mu$-generic points in $\mathcal T_{P_o}$. However, 
the theory of transverse measures ensures that there is a finite measure $\nu$ on $\cT_{P_o}$, called the \emph{transverse measure} of $\mu$, such that for every bounded non-negative Borel function $f: G \times \cT_{P_o} \to \bR_{\geq 0}$,
\[
\int_{\Omega_{P_o}}  \sum_{p \in P} f(p^{-1},p.P)\, d\mu(P) = \int_{G_o} \int_{\cT_{P_o}} f(g, Q)\, d\nu(Q) \, dm_{G_o}(g),
\]
and this provides us with a notion of genericity in $\cT_{P_o}$. There are plenty of examples of sets $P_o$ satisfying both assumptions, including all uniform model sets in the sense of \cite{BHP1}.

\begin{theorem}[Multiple transverse recurrence]\label{MTR1} For $\nu^{\otimes r}$-almost all $(Q_1, \dots, Q_r) \in \mathcal T_{P_o}^r$ there exist $g_n\in Q_1 \cap \dots \cap Q_r$ such that
\[
 g_n.Q_1 \ra Q_1, \quad \dots, \quad g_n.Q_r \ra Q_r \qand g_n \ra \infty.\]
\end{theorem}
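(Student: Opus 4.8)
The plan is to recognize the statement as transverse recurrence for the \emph{diagonal} action of $G_o$ on $\Omega_{P_o}^r$ and to reduce it, via the transverse measure formula, to a largeness property of generic intersections. I would equip $\Omega_{P_o}^r$ with the diagonal action $g.(Q_1,\dots,Q_r)=(Q_1g^{-1},\dots,Q_rg^{-1})$ and the invariant probability measure $\mu^{\otimes r}$, and take the cross section $\cT_{P_o}^r$. For $\mathbf Q=(Q_1,\dots,Q_r)\in\cT_{P_o}^r$ one has $g.\mathbf Q\in\cT_{P_o}^r$ if and only if $e\in Q_jg^{-1}$ for every $j$, i.e.\ if and only if $g\in Q_1\cap\dots\cap Q_r$; thus the set of return times of $\mathbf Q$ to $\cT_{P_o}^r$ is exactly $Q_1\cap\dots\cap Q_r$, and the assertion that $g_n.Q_j\to Q_j$ for all $j$ with $g_n\to\infty$ is precisely transverse recurrence of $\mathbf Q$ along these return times. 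Since every $Q_j\in\cT_{P_o}$ satisfies $Q_j\subseteq Q_jQ_j^{-1}\subseteq\Lambda$ (elements of the hull pointed at $e$ are subsets of the difference set $\Lambda$), the return time set $Q_1\cap\dots\cap Q_r$ lies in the uniformly discrete set $\Lambda$; in particular, once it is known to be infinite it is automatically unbounded, so that any sequence of distinct return times escapes to infinity.

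The main obstacle is to show that this intersection is infinite for $\nu^{\otimes r}$-almost every $\mathbf Q$. Here the product structure decouples the first moment: writing $\kappa(g):=\nu(\{Q\in\cT_{P_o}:g\in Q\})$ for the autocorrelation of the transverse measure, and using that each $Q_j\subseteq\Lambda$ while $\Lambda$ meets any relatively compact $B\subseteq G_o$ in a finite set, one computes
\[
\int_{\cT_{P_o}^r}\#\big((Q_1\cap\dots\cap Q_r)\cap B\big)\,d\nu^{\otimes r}(\mathbf Q)
=\sum_{g\in\Lambda\cap B}\kappa(g)^r .
\]
So the mean number of common return times in $B$ is governed by the $r$-th power of $\kappa$. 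The first key input I would establish is that, because $\Lambda=P_oP_o^{-1}$ is a uniform approximate lattice, $\kappa$ is bounded below by some $\delta>0$ on a relatively dense subset of $\Lambda$; this is exactly the statement that sufficiently high powers of the return time set are Delone, and it is the point at which the approximate lattice hypothesis enters. It makes the right-hand side grow at least linearly in $\Vol(B)$, so the intersection point process has strictly positive intensity. The second input is to upgrade positive intensity to almost sure infinitude: the assignment $\mathbf Q\mapsto Q_1\cap\dots\cap Q_r$ is equivariant for the diagonal return dynamics, since a common return time $g$ gives $\bigcap_j(Q_jg^{-1})=(\bigcap_jQ_j)g^{-1}$, and $\nu^{\otimes r}$ is invariant under that dynamics, so the intersection is a stationary point process in the transverse (Palm) sense; for such a process strictly positive intensity forces the configuration to be almost surely infinite. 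This largeness statement is the heart of the argument, and I expect the relative density of the support of $\kappa$ to be the genuinely hard step.

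It then remains to produce the recurrence, which I expect to be soft. The transverse measure formula together with unimodularity of $G_o$ shows that $\nu$ is invariant under the cross section holonomy of the single system, and hence coordinatewise that $\nu^{\otimes r}$ is invariant under every partial map $\mathbf Q\mapsto g.\mathbf Q$ defined on $\{\mathbf Q\in\cT_{P_o}^r:g\in\bigcap_jQ_j\}$; choosing a measurable enumeration of the locally finite return time sets then yields a $\nu^{\otimes r}$-preserving first return transformation $T$ on $\cT_{P_o}^r$, whose iterates $T^n\mathbf Q=g_n.\mathbf Q$ are defined for all $n$ precisely because the intersection is infinite, and whose associated $g_n$ are distinct since the diagonal action is essentially free (the hull action of a uniform approximate lattice is aperiodic). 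Applying the Poincar\'e recurrence theorem to $(\cT_{P_o}^r,\nu^{\otimes r},T)$ and exhausting $\mathbf Q$ by a countable neighbourhood basis gives, for $\nu^{\otimes r}$-almost every $\mathbf Q$, a subsequence with $T^{n_k}\mathbf Q\to\mathbf Q$; the corresponding $g_{n_k}\in Q_1\cap\dots\cap Q_r$ are distinct elements of the uniformly discrete set $\Lambda$, hence $g_{n_k}\to\infty$, and $T^{n_k}\mathbf Q\to\mathbf Q$ unwinds to $g_{n_k}.Q_j\to Q_j$ for every $j$. The delicate points to be carried out in full are thus the relative density of the support of $\kappa$ and the passage from positive intensity to almost sure infinitude of the intersection; granting these, the conclusion follows from Poincar\'e recurrence for the transverse measure.
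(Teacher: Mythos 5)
The decisive step in your outline --- ``for such a process strictly positive intensity forces the configuration to be almost surely infinite'' --- is exactly where the difficulty of the theorem lives, and as stated it is not true. The set $A_{\mathrm{fin}}:=\{\mathbf{Q}: Q_1\cap\dots\cap Q_r \text{ finite}\}$ is invariant under your partial holonomy maps, so $\nu^{\otimes r}$ splits into two holonomy-invariant pieces, and your first-moment identity $\int \#\big((\bigcap_j Q_j)\cap B\big)\,d\nu^{\otimes r}=\sum_{g\in\Lambda\cap B}\kappa(g)^r$ is a global average that cannot see this decomposition: all the intensity may be carried by the infinite-intersection piece while $A_{\mathrm{fin}}$ retains positive mass. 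A finite Mecke/holonomy-invariant measure can perfectly well charge pointed configurations with finite, even singleton, point sets --- for the two cross-sections of the Kronecker flows with return sets $\bZ$ and $\alpha\bZ$ ($\alpha$ irrational) one has $\nu_1\otimes\nu_2=\delta_{(0,0)}$ and the common return set is $\{0\}$, with all your invariance properties intact. Ruling this out requires an intensity bound on each invariant piece (equivalently, a.e. positive density as in Theorem \ref{PD1}), which is a consequence of the theorem you are proving, not an available input. Relatedly, the approximate-lattice hypothesis never enters your argument where it must: the Delone property of the powers $\Lambda^n$ is a purely geometric uniform-discreteness statement, and it is neither the same as, nor by your methods known to imply, a lower bound $\kappa\geq\delta$ on a relatively dense (or even positive-density) subset of $\Lambda$; so even your positive-intensity step is unestablished.

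The paper's mechanism is different: it lifts rather than staying on the transversal. Writing $G=G_o^r=N_kL$ with $L$ the diagonal and $N_k=\ker(\pi_k)$, the uniform discreteness of $\Lambda^3\Lambda^3\subset\Lambda^6$ is used (Theorem \ref{ThmComm}, via Lemma \ref{Lemma_NcrossIFF}) to show that the intersection space $X^{[r]}=L.\cT_{P_o}^r$ is a \emph{separated} cross-section for each $N_k$-action; restriction in stages (Theorem \ref{Stages}) then yields a \emph{finite} $G_o$-invariant measure $\mu^{[r]}$ on $X^{[r]}$ whose transverse measure is exactly $\nu^{\otimes r}$ (Theorem \ref{Intro1}). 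Finiteness of this ambient invariant measure is precisely what converts positive transverse mass into recurrence, via the transverse Poincar\'e theorem (Theorem \ref{Thm_Poincare}) applied to the $L\cong G_o$-action on $(X^{[r]},\mu^{[r]})$. That theorem also repairs the last ``soft'' step of your sketch: for a general noncompact lcsc group $G_o$ there is no canonical next return time, so a measurable enumeration of the return sets does not produce a single measure-preserving first-return transformation $T$ (the enumeration is not equivariant); the paper instead uses Poincar\'e sets of the form $\Theta\Theta^{-1}\setminus\{e\}$ avoiding prescribed compacta, an argument that again needs the finite invariant measure upstairs. In short, to complete your approach you would need exactly the missing ingredient --- a finite $G_o$-invariant measure on the intersection space with transverse measure $\nu^{\otimes r}$ --- and that is the paper's main technical theorem, not a soft consequence of stationarity.
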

\vspace{0.2cm}

Note that for every $n \in \bN$ and $j \in \{1,\dots, r\}$ we have $g_n.Q_j \in \mathcal T_{P_o}$, hence the recurrence to $(Q_1, \dots, Q_r)$ is in the transverse direction.
The theorem implies in particular that generically the intersection $Q_1 \cap \dots \cap Q_r$ is infinite, but one can actually say much more. Given a sequence $(G_t)$ of subsets of $G_o$ of positive Haar measures we define the \emph{lower $(G_t)$-density} of a locally finite subset $A \subset G_o$
 by
 \[
 \Dens_{(G_t)}(A) := \varliminf_{t \ra \infty} \frac{|A \cap G_t|}{m_G(G_t)}.
 \]

\begin{theorem}[Positive density of intersections]\label{PD1} For $\nu^{\otimes r}$-almost all $(Q_1, \dots, Q_r) \in \mathcal T_{P_o}^r$ and every convenient sequence $(G_t)$ in $G_o$ we have
\[
\Dens_{(G_t)}\big(Q_1 \cap \dots \cap Q_r\big) > 0.
\]
\end{theorem}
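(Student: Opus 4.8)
The plan is to realize the intersection count as a return count to a cross section in a product (``intersection'') system, to reduce matters to a point process supported on the fixed Delone set $\Lambda$, and then to run a pointwise ergodic theorem along convenient sequences. First I would rewrite the set being counted. Since $g.A = Ag^{-1}$ we have $e \in g.A \iff g \in A$, so for $Q_j \in \cT_{P_o}$ and the diagonal $G_o$-action on $\Omega_{P_o}^r$,
\[
g \in Q_1 \cap \dots \cap Q_r \iff g.(Q_1, \dots, Q_r) \in \cT_{P_o}^r .
\]
Thus $|Q_1 \cap \dots \cap Q_r \cap G_t|$ counts the returns of the diagonal orbit of $(Q_1,\dots,Q_r)$ to $\cT^{(r)} := \cT_{P_o}^r$ inside $G_t$. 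The crucial structural observation is that $e \in Q_j$ forces $Q_j = Q_j e^{-1} \subseteq Q_jQ_j^{-1} \subseteq \Lambda$, because every element of the hull has difference set inside the closed, uniformly discrete set $\Lambda$. Hence $Q_1 \cap \dots \cap Q_r \subseteq \Lambda$: all the relevant point sets sit inside one fixed Delone set, so ``lying simultaneously in all the $Q_j$'' is a positive-probability, not a null, event. (Contrast two generic translates of a lattice, which meet only on a null set; it is the passage to the cross section that makes the intersection substantial.)

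Next I would set up the intersection space: the pmp system $(\Omega_{P_o}^r, \mu^{\otimes r})$ under the diagonal action, in which $\cT^{(r)}$ is a uniformly discrete cross section (well separated because $\Lambda$, and hence every power $\Lambda^n$, is uniformly discrete) whose return-time set at $(Q_1, \dots, Q_r)$ is exactly $Q_1 \cap \dots \cap Q_r$. Its transverse measure $\nu^{(r)}$ is defined by the co-area identity for $\mu^{\otimes r}$, and by independence of the factors one identifies $\nu^{(r)}$, up to normalization, with $\nu^{\otimes r}$; in particular a $\nu^{(r)}$-almost-everywhere statement is a $\nu^{\otimes r}$-almost-everywhere statement, as demanded by the theorem. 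Testing the co-area identity against $f(g,\mathbf Q) = \mathbf 1_B(g^{-1})$ and using unimodularity yields
\[
\int_{\Omega_{P_o}^r} \bigl| P_1 \cap \dots \cap P_r \cap B \bigr|\, d\mu^{\otimes r} = \nu^{(r)}(\cT^{(r)}) \cdot m_{G_o}(B),
\]
so the mean density of the intersection process equals the transverse mass $\nu^{(r)}(\cT^{(r)})$.

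I would then invoke a pointwise ergodic theorem for cross-section return counts along convenient sequences, these being precisely the averaging families for which such a theorem is available. Applied on $\cT^{(r)}$ it produces a canonical nonnegative, diagonal-invariant function $h$ with $\int h\, d\mu^{\otimes r} = \nu^{(r)}(\cT^{(r)})$ such that, for $\nu^{\otimes r}$-almost every $(Q_1,\dots,Q_r)$ and every convenient $(G_t)$, one has $\varliminf_t |Q_1\cap\dots\cap Q_r \cap G_t|/m_{G_o}(G_t) \geq h(Q_1,\dots,Q_r)$. Consequently $\Dens_{(G_t)}(Q_1\cap\dots\cap Q_r) \geq h$, and the whole problem collapses to showing that $h > 0$ almost everywhere.

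Proving $h > 0$ almost everywhere is the main obstacle. That the \emph{average} of $h$ is positive is not hard: writing $\eta(\lambda) := \widehat\nu[\lambda \in Q]$ for the autocorrelation of the normalized Palm measure $\widehat\nu = \nu / \nu(\cT_{P_o})$, independence gives $\Prob[\lambda \in Q_1 \cap \dots \cap Q_r] = \eta(\lambda)^r$ for $\lambda \in \Lambda$, whence $\nu^{(r)}(\cT^{(r)})$ is proportional to a weighted sum of the $\eta(\lambda)^r$ over $\lambda \in \Lambda$; relative density of $\Lambda$ together with the approximate-group structure forces $\eta$ to be bounded below on a relatively dense subset of $\Lambda$, so the sum grows proportionally to $m_{G_o}(G_t)$ and the average is positive. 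The genuinely delicate point is to pass from positivity of the \emph{average} of $h$ to positivity of $h$ \emph{itself} almost everywhere, since $\mu^{\otimes r}$ need not be ergodic and $h$ is only a conditional intensity. I expect to settle this by extracting a \emph{uniform}, deterministic lower bound for the intersection intensity valid on every ergodic component: because the intersection is trapped in the fixed Delone set $\Lambda$, always contains $e$, and (via Theorem \ref{MTR1}) returns to the cross section along a set that the uniform discreteness and relative density of the powers $\Lambda^n$ keep from thinning out, no ergodic component can concentrate on density-zero intersections. It is this component-uniform lower bound, valid for every convenient averaging family rather than relying on any ergodicity of the product action, that forces $\Dens_{(G_t)}(Q_1 \cap \dots \cap Q_r) \geq c > 0$ and hence the assertion.
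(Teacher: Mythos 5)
Your opening reduction is fine and agrees with the paper's setup: $g \in Q_1 \cap \dots \cap Q_r$ iff the diagonal orbit of $(Q_1,\dots,Q_r)$ returns to $\cT_{P_o}^r$ at time $g$, and the intersection is trapped in $\Lambda$. But the step on which your whole argument rests is wrong: $\cT_{P_o}^r$ is \emph{not} a cross section for the diagonal $G_o$-action on $(\Omega_{P_o}^\times)^r$ --- the paper states this explicitly in the introduction --- because a $\mu^{\otimes r}$-generic tuple satisfies $Q_1 \cap \dots \cap Q_r = \emptyset$. Consequently there is no transverse measure ``$\nu^{(r)}$'' of $\mu^{\otimes r}$ for the diagonal action, and your co-area identity
\[
\int_{\Omega_{P_o}^r} \bigl| P_1 \cap \dots \cap P_r \cap B \bigr|\, d\mu^{\otimes r} = \nu^{(r)}(\cT^{(r)}) \cdot m_{G_o}(B)
\]
is false. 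Already for $P_o = \bZ \subset G_o = \bR$, $r=2$: the hull is the circle, $\mu^{\otimes 2}$ is two-dimensional Lebesgue measure, and the left-hand side vanishes since two independent translates of $\bZ$ almost surely do not meet, while the right-hand side would be positive ($\nu^{\otimes 2}$ is a point mass at $(\bZ,\bZ)$). The intersection event lives on the set $X^{[r]}$, which is $\mu^{\otimes r}$-null whenever $G_o$ is uncountable, and the technical heart of the paper is precisely to manufacture a \emph{finite} diagonal-invariant measure $\mu^{[r]}$ on $X^{[r]}$ with $\res{G_o}{X^{[r]}}{Z}(\mu^{[r]}) = \nu^{\otimes r}$ (Theorems \ref{Intro1} and \ref{Joining}). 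This is done by restriction in stages (Theorem \ref{Stages}), setting $\mu^{[r]} = \res{N_k}{X}{X^{[r]}}(\mu^{\otimes r})$ for the normal subgroup $N_k = \ker \pi_k$ of $G = G_o^r$, and it is exactly there that the uniform approximate lattice hypothesis enters, to make $X^{[r]}$ a \emph{separated} $N_k$-cross section (Theorem \ref{ThmComm}, Lemma \ref{Lemma_Approx}). Your phrase ``by independence of the factors one identifies $\nu^{(r)}$, up to normalization, with $\nu^{\otimes r}$'' assumes the conclusion of this construction without performing it; nothing in your sketch replaces it.

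The second gap is your treatment of positivity. You correctly identify non-ergodicity as the delicate point, but your proposed resolution --- a ``uniform, deterministic lower bound valid on every ergodic component,'' partly supported by an appeal to Theorem \ref{MTR1} --- is not an argument: no uniform constant $c > 0$ is available or claimed (the densities of different ergodic components can be arbitrarily small), and Theorem \ref{MTR1} is itself a downstream consequence of the intersection measure, so invoking it here inverts the paper's logical order. Once $\mu^{[r]}$ exists, the paper's positivity argument is soft and requires no uniformity: decompose the finite $L$-invariant measure $\mu^{[r]}$ into ergodic components (Corollary \ref{Cor_ErgodicDecomposition}); on each ergodic component $\eta$, the transversal pointwise ergodic theorem along the convenient sequence (Theorem \ref{Thm_Ergodic}, applied with $f \equiv 1$) shows that the limiting density equals the total transverse mass $\eta_Z(1)$, which is strictly positive because the restriction map sends nonzero invariant measures to nonzero transverse measures (Lemma \ref{Lemma_injectivemap}); this is exactly Theorem \ref{Thm_Density}. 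Your autocorrelation computation would at best yield positivity of the \emph{mean} density, which, as you yourself note, does not suffice.
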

\vspace{0.2cm}
 
See Definition \ref{DefConvenient} below for the definition of a convenient sequence; for the purposes of this introduction it suffices to know that
such sequences exist in many amenable lcsc groups and all semisimple algebraic groups over local fields.

\subsection{A general setting}
We now describe a general abstract setting, in which Theorems \ref{MTR1} and \ref{PD1} hold true. For the rest of this introduction, $G_o$ denotes a unimodular lcsc group with a jointly continuous action on a Polish space $X$. A Borel subset $Z \subset X$ is called a \emph{cross section} for the $G_o$-action if $G_o.Z = X$; it is called \emph{separated} if the set $\Lambda(Z) := \{g \in G_o\mid g.Z \cap Z \neq \emptyset\}$ of \emph{return times} to $Z$ intersects some identity neighbourhood in $G_o$ only in $\{e\}$. For example, $\mathcal T_{P_o} \subset \Omega_{P_o}^\times$ is a cross section with return time set $\Lambda = P_oP_o^{-1}$, hence a separated cross section if $\Lambda$ is uniformly discrete. \\

From now on, $Z \subset X$ denotes a separated cross section and $\mu$ denotes a $G_o$-invariant probability measure on $X$. As in the motivating example there is then a finite measure $\nu$ on $Z$, called the \emph{transverse measure} of $\mu$, such that for every bounded non-negative Borel function $f: G \times Z \to \bR_{\geq 0}$,
\[
\int_X  \sum_{g \in Z_x} f(g^{-1},g.x)\, d\mu(x) = \int_{G_o} \int_Z f(g,z)\, d\nu(z) \, dm_{G_o}(g),
\]
where $Z_x :=\big\{ g \in G_o \mid  g.x \in Z \big\}$ denotes the set of \emph{return times} from $x$ to $Z$. 

\begin{definition}
We say that the action of $G_o$ on $(X, \mu)$ is 
\begin{itemize}
\item \emph{recurrent} if for $\mu$-almost every $x \in X$ there exists a sequence $(g_n)$ in $G_o$ such that $g_n \to \infty$ and $g_n.x \to x$.  \vspace{0.1cm}
\item \emph{$r$-fold recurrent} if for $\mu^{\otimes r}$-almost every $(x_1, \dots, x_r) \in X^r$  there exists a sequence $(g_n)$ in $G_o$ such that $g_n \to \infty$ and $g_n.x_j \to x_j$ for all $j \in \{1, \dots, r\}$.  \vspace{0.1cm}
\item \emph{transversally recurrent} if for $\nu$-almost every $z\in Z$ there exists a sequence $(g_n)$ in $G_o$ such that  $g_n \to \infty$, $g_n.z \in Z$ and $g_n.z \to z$ (i.e.\ the recurrence is along the transversal).  \vspace{0.1cm}
\item \emph{$r$-fold transversally recurrent} if for $\nu^{\otimes r}$-almost every $(z_1, \dots, z_r) \in Z^r$  there exists a sequence $(g_n)$ in $G_o$ such that $g_n \to \infty$, $g_n.z_j \in Z$ and $g_n.z_j \to z_j$ for all $j \in \{1, \dots, r\}$.
\end{itemize}
\end{definition}
\vspace{0.2cm}

While transversal reccurence follows from recurrence, multiple transverse recurrence does not follow from multiple recurrence. The reason is that $Z^r$ is not a transversal for the diagonal $G_o$-action on $X^r$, but only for the $G_o$-action on the (typically much smaller) \emph{intersection space}
\[
X^{[r]} = \{(x_1, \dots, x_r) \in X^r \mid \exists g \in G_o:\; g.x_j \in Z \text{ for all } j = 1, \dots, r\}.
\]
Note that in the case where $X = \Omega_{P_o}^\times$ and $Z = \cT_{P_o}$ this space is given by
\[
\{(Q_1, \dots, Q_r) \in \Omega_{P_o}^r\mid Q_1 \cap \dots \cap Q_r \neq \emptyset\},
\]
hence the name. In order to establish $r$-fold transverse recurrence we first construct a finite invariant measure on $X^{[r]}$ with transverse measure $\nu^{\otimes r}$ and then establish multiple recurrence for this measure. This can be carried out under suitable assumptions on the return time set $\Lambda$: 

\begin{theorem}[Finiteness of intersection measure]\label{Intro1} If $\Lambda$ is a uniform approximate lattice in $G_o$, then there exists a unique finite $G_o$-invariant measure $\mu^{[r]}$ on $X^{[r]}$ whose transverse measure on $Z^r$ is given by $\nu^{\otimes r}$.  Moreover,  if $\mu$ is $G_o$-ergodic,  then,  up to a multiplicative constant,  $\mu^{[r]}$ is an $r$-fold $G_o$-invariant self-joining of the measure $\mu$. 
\end{theorem}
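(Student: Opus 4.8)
The plan is to realize $\mu^{[r]}$ through the general correspondence between transverse measures and $G_o$-invariant Radon measures attached to a separated cross section (\cite{Avni, KPV, Slutsky1}), and then to control its total mass and its marginals by a single transverse computation. First I would check that $Z^r$ is a separated cross section for the diagonal $G_o$-action on $X^{[r]}$: one has $Z^r\subseteq X^{[r]}$ (take $g=e$) and $G_o.Z^r=X^{[r]}$ by the very definition of the intersection space, while $g.Z^r\cap Z^r\neq\emptyset$ forces $g.Z\cap Z\neq\emptyset$ in each coordinate, so the return-time set of $Z^r$ is contained in the separated set $\Lambda$. Since $\Lambda$ is a uniform approximate lattice it is in particular uniformly discrete, which supplies the local finiteness the correspondence requires. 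The finite product $\nu^{\otimes r}$ is then a transverse measure on $Z^r$, and the correspondence produces a $G_o$-invariant Radon measure $\mu^{[r]}$ on $X^{[r]}$, uniquely determined by having transverse measure $\nu^{\otimes r}$, i.e.\ by
\[
\int_{X^{[r]}}\sum_{g\in\bigcap_{j}Z_{x_j}}f(g^{-1},g.\mathbf{x})\,d\mu^{[r]}(\mathbf{x})\;=\;\int_{G_o}\int_{Z^r}f(g,\mathbf{z})\,d\nu^{\otimes r}(\mathbf{z})\,dm_{G_o}(g).
\]
Uniqueness is immediate from this identity, so the existence and uniqueness parts are formal.

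The real content is finiteness, which I would extract from a coordinate marginal. Set $\sigma:=(\pr_1)_*\mu^{[r]}$, a $G_o$-invariant Radon measure on $X$; since a pushforward preserves total mass, $\mu^{[r]}(X^{[r]})=\sigma(X)$, so it suffices to prove $\sigma$ finite. To do this I would compute the transverse measure $\lambda$ of $\sigma$ on the cross section $Z\subseteq X$. The obstacle is a mismatch of return-time structures: the integration formula for $\mu^{[r]}$ records \emph{simultaneous} returns $\bigcap_j Z_{x_j}$, whereas the transverse measure of $\sigma$ sees first-coordinate returns $Z_{x_1}$. To bridge it I would factor $\pr_1$ through the intermediate cross section $(Z\times X^{r-1})\cap X^{[r]}$, whose return-time set from $\mathbf{x}$ is exactly $Z_{x_1}$, and disintegrate $\mu^{[r]}$ over it. Comparing the two integration formulas then exhibits the Radon--Nikodym density $d\lambda/d\nu$ as the $\mu^{[r]}$-conditional number of compatible completions $(x_2,\dots,x_r)$ of a given first-coordinate return, and the whole point is that this density is $\nu$-integrable, so that $\lambda$ and hence $\sigma$ are finite.

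I expect the integrability of this density to be the main obstacle, and the step where the hypotheses genuinely enter. Equivalently, testing the integration formula against a nearest-return selector yields the exact identity $\mu^{[r]}(X^{[r]})=\int_{Z^r}\Vol\big(\mathrm{cell}_e(\bigcap_j Z_{z_j})\big)\,d\nu^{\otimes r}(\mathbf{z})$, where $\mathrm{cell}_e$ is the Voronoi cell of the identity; finiteness is then precisely the finiteness of the \emph{average} cell volume. For an arbitrary family of relatively dense sets this average could be infinite, so one must use that every $Z_{z_j}$ contains $e$ as a common return time and that sufficiently high powers $\Lambda^n$ are Delone: uniform discreteness of $\Lambda^n$ bounds the local multiplicities from above, while relative density of $\Lambda$ keeps the intersections $\bigcap_j Z_{z_j}$ relatively dense on $\nu^{\otimes r}$-average, and the two together force the cell volumes to be $\nu^{\otimes r}$-integrable. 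This is exactly the arithmetic input advertised in the introduction.

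Finally, the self-joining assertion comes out of the same computation once $\mu$ is assumed ergodic. Ergodicity of the $G_o$-action on $(X,\mu)$ passes to the restriction of the orbit equivalence relation to $(Z,\nu)$, for which the holonomy-invariant density $d\lambda/d\nu$ must be $\nu$-almost surely equal to a constant $c>0$; thus $\lambda=c\nu$, and the bijectivity of the correspondence for $Z\subseteq X$ forces $\sigma=c\mu$. The argument applies verbatim to every coordinate projection $\pr_j$, giving $(\pr_j)_*\mu^{[r]}=c\mu$ for each $j$ with the common constant $c=\mu^{[r]}(X^{[r]})$. Dividing by $c$, the probability measure $c^{-1}\mu^{[r]}$ on $X^r$ is invariant under the diagonal $G_o$-action and has every marginal equal to $\mu$; that is, it is an $r$-fold $G_o$-invariant self-joining of $\mu$, as claimed.
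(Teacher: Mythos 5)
Your high-level architecture (existence and uniqueness via the transverse-measure correspondence, finiteness as the real content, joining via ergodicity) matches the paper's, but your proof of the crucial finiteness step has a genuine gap, and it is exactly where the paper's main technical work lives. Your reduction of $\mu^{[r]}(X^{[r]})<\infty$ to the $\nu^{\otimes r}$-integrability of the volume of the Voronoi cell of $e$ in $\bigcap_j Z_{z_j}$ is a correct reformulation, but it is \emph{equivalent} to the statement being proved, and the argument you then offer --- uniform discreteness of $\Lambda^n$ bounds multiplicities, while ``relative density of $\Lambda$ keeps the intersections relatively dense on $\nu^{\otimes r}$-average'' --- is an assertion, not a proof. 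Relative density of $\Lambda$ gives no pointwise control on $\bigcap_j Z_{z_j}$: given $g\in Z_{z_1}$ there is no reason for the other $Z_{z_j}$ to have points near $g$, and the intersection can a priori be as small as $\{e\}$ for particular $\mathbf{z}\in Z^r$. The almost-everywhere positive density of the intersection is Theorem \ref{PD2}, a \emph{downstream consequence} of the finiteness you are trying to establish, so as written your argument is circular at its key point. The paper's mechanism is different: inside $G=G_o^r$ it takes the semidirect splittings $G=N_kL$ with $L$ the diagonal, observes that $X^{[r]}=L.Z^r$ is precisely the intermediate cross-section, and proves (Lemmas \ref{Lemma_NcrossIFF}, \ref{Lemma_YcrossINT}, \ref{Lemma_Approx}) that it is a \emph{separated} cross-section for each $N_k$ --- this is where relative density of $\Lambda$ genuinely enters, via writing $l=k\eta$ with $\eta\in\Lambda\cap L$ and controlling conjugates $\eta\,\Lambda\,\eta^{-1}\subset\Lambda^3$, with uniform discreteness of $\Lambda^6$ supplying the separation. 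Once that is known, $\mu^{[r]}:=\res{N_k}{X}{X^{[r]}}(\mu^{\otimes r})$ is the transverse measure of a \emph{finite} measure with respect to a separated cross-section and is therefore finite for free (essentially $m_{N_k}(V_N)\,\mu^{[r]}(X^{[r]})=\mu^{\otimes r}(V_N.X^{[r]})\leq 1$), and restriction in stages (Theorem \ref{StagesInfinite}) identifies its $L$-transverse measure with $\nu^{\otimes r}$. Your outline contains no substitute for the separation of $X^{[r]}$ as an $N_k$-cross-section, and I do not see how to obtain the cell-volume integrability without it or an equivalent.

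Two secondary issues. First, to ``produce'' $\mu^{[r]}$ by inducing $\nu^{\otimes r}$ through the correspondence (Theorems \ref{LiftingInvariantMeasure} and \ref{GMCT}) you must first verify that $\nu^{\otimes r}$ is invariant under the cross-section equivalence relation $R_{L,Z^r}$ of the \emph{diagonal} action on $X^{[r]}$; this is not automatic from $R_{G_o,Z}$-invariance of $\nu$. It can be rescued by noting that $R_{L,Z^r}$ is a subrelation of $R_{G,Z^r}$ for $G=G_o^r$ and that $\nu^{\otimes r}=\res{G}{X^r}{Z^r}(\mu^{\otimes r})$ is $R_{G,Z^r}$-invariant by Lemma \ref{Lemma_RYinvariance}, but you do not address it; in the paper it falls out of the stages theorem. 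Second, your marginal computation applies the transverse-measure machinery to $\sigma=(\pr_1)_*\mu^{[r]}$ before knowing that $\sigma$ lies in the admissible finiteness class required by Proposition \ref{Prop_Tmap} (the paper explicitly warns that push-forwards of $\sigma$-finite measures need not be $\sigma$-finite), so there is a further circularity risk there. The paper avoids both problems by proving finiteness first and then handling the marginals through Theorem \ref{Compatibility} (absolute continuity of $\pi_*\mu_Y$ with respect to $\pi_*\mu$, then $G_o$-ergodicity forcing proportionality) --- which is in substance your constancy-of-density argument, carried out at the level of $X$ rather than via holonomy invariance on $Z$; that part of your proposal is sound once finiteness is in hand.
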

\vspace{0.2cm}

Once the existence of a finite intersection measure is established, multiple transversal recurrence follows by standard ergodic theoretical methods (see Theorem \ref{Thm_Poincare} for the general statement):

\begin{theorem}[Multiple transverse recurrence]\label{MTR2} If $\Lambda$ is a uniform approximate lattice in $G_o$, then the action of $G_o$ on $X$ is $r$-fold transversally recurrent for every $r \in \bN$.
\end{theorem}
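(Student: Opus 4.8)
The plan is to deduce $r$-fold transverse recurrence for the $G_o$-action on $(X,\mu)$ relative to $Z$ from ordinary ($1$-fold) transverse recurrence for the \emph{diagonal} $G_o$-action on the intersection space $X^{[r]}$ relative to the cross section $Z^r$. Although $Z^r$ is not a cross section for the diagonal action on all of $X^r$, it is one for the diagonal action on the invariant Borel subset $X^{[r]}$: by the very definition of $X^{[r]}$, any $(x_1,\dots,x_r)\in X^{[r]}$ admits $g$ with $g.(x_1,\dots,x_r) = (g.x_1,\dots,g.x_r)\in Z^r$, so every diagonal orbit in $X^{[r]}$ meets $Z^r$, and clearly $Z^r\subset X^{[r]}$. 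This $Z^r$ is moreover separated, because $g.Z^r\cap Z^r\neq\emptyset$ forces $g.Z\cap Z\neq\emptyset$ coordinatewise, so that its return time set equals $\Lambda$; since $Z$ is separated, $\Lambda$ meets some identity neighbourhood only in $\{e\}$.

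First I would invoke Theorem~\ref{Intro1}. As $\Lambda$ is a uniform approximate lattice, it furnishes a finite $G_o$-invariant measure $\mu^{[r]}$ on $X^{[r]}$ whose transverse measure on $Z^r$ is $\nu^{\otimes r}$. This is the unique point at which the approximate-lattice hypothesis enters, and it supplies exactly the finiteness needed to run Poincar\'e-type recurrence. The theorem then reduces to the assertion that the finite-measure-preserving diagonal action on $(X^{[r]},\mu^{[r]})$ is transversally recurrent along $Z^r$; once this is known, writing $g_n.(z_1,\dots,z_r) = (g_n.z_1,\dots,g_n.z_r)$ converts the conclusions $g_n.\mathbf z\in Z^r$ and $g_n.\mathbf z\to\mathbf z$ into $g_n.z_j\in Z$ and $g_n.z_j\to z_j$ for all $j$, which is precisely $r$-fold transverse recurrence, with the correct $\nu^{\otimes r}$-almost-everywhere clause.

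It remains to establish transverse recurrence for a finite invariant measure and a separated cross section, which is the general statement of Theorem~\ref{Thm_Poincare}. I would argue via a flow box. Choose a relatively compact identity neighbourhood $U$ with $U^{-1}U\cap\Lambda=\{e\}$; then $(u,\mathbf z)\mapsto u.\mathbf z$ is a Borel isomorphism of $U\times Z^r$ onto the tube $U.Z^r$, under which $\mu^{[r]}$ restricts to $m_{G_o}|_U\otimes\nu^{\otimes r}$. Since $\mu^{[r]}$ is finite and the tube has positive measure, Poincar\'e recurrence applies to the tube: for $\mu^{[r]}$-a.e.\ $x=u.\mathbf z$ there are $h_n\to\infty$ with $h_n.x\to x$, and writing $h_n.x=u_n.\mathbf z_n$ with $u_n\in U$, $\mathbf z_n\in Z^r$, the elements $g_n:=u_n^{-1}h_n u$ satisfy $g_n.\mathbf z=\mathbf z_n\in Z^r$ and $g_n\to\infty$ (because $h_n\to\infty$ while $u,u_n$ stay in the relatively compact $U$). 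Disintegrating $m_{G_o}|_U\otimes\nu^{\otimes r}$ transports this recurrence from $\mu^{[r]}$-a.e.\ point of the tube to $\nu^{\otimes r}$-a.e.\ point of $Z^r$.

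The genuine difficulty lies upstream, in Theorem~\ref{Intro1}: proving that the intersection measure $\mu^{[r]}$ is \emph{finite} is where the uniform approximate lattice structure of $\Lambda$ is indispensable, and this is the crux of the whole argument. Granting that input, the one delicate point internal to the present proof is upgrading the purely measure-theoretic return $g_n.\mathbf z\in Z^r$ to genuine topological transverse recurrence $g_n.\mathbf z\to\mathbf z$; this is handled by the injectivity of the flow-box parametrisation together with the fact that Poincar\'e recurrence returns almost every point into arbitrarily small neighbourhoods of itself, forcing $u_n\to u$ and hence $\mathbf z_n=u_n^{-1}.(u_n.\mathbf z_n)\to u^{-1}.(u.\mathbf z)=\mathbf z$.
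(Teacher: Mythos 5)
Your overall route coincides with the paper's: Theorem \ref{Intro1} supplies the finite $G_o$-invariant measure $\mu^{[r]}$ on $X^{[r]}$ with transverse measure $\nu^{\otimes r}$ on the separated cross-section $Z^r$, and the paper then obtains Theorem \ref{MTR2} (via Theorem \ref{MTR3}) by applying the transverse Poincar\'e theorem, Theorem \ref{Thm_Poincare}, to the diagonal action on $(X^{[r]},\mu^{[r]})$ and unwrapping coordinates exactly as you do. So if you simply cite Theorem \ref{Thm_Poincare} at the point where you invoke it, your reduction is complete and is the paper's proof.

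The problem is your proffered flow-box proof of that step, and it fails exactly where you flag the ``delicate point.'' First, ordinary Poincar\'e recurrence $h_n.x \to x$ does not let you write $h_n.x = u_n.\mathbf{z}_n$ with $u_n \in U$, $\mathbf{z}_n \in Z^r$: since $Z$ is merely Borel (not closed), the tube $U.Z^r$ is a Borel set, not a neighbourhood of $x$, so points converging to $x$ need not lie in it; returns \emph{into} the tube require measure-theoretic recurrence into positive-measure sets, not topological recurrence. Second, and more seriously, injectivity of $(u,\mathbf{z}) \mapsto u.\mathbf{z}$ makes it a Borel isomorphism onto the tube, but not a homeomorphism, so $h_n.x \to x$ does not force $u_n \to u$. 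Passing to a subsequence $u_n \to u_* \in \overline{U}$ gives $\mathbf{z}_n \to u_*^{-1}.x$, and when $Z$ is not closed nothing identifies $u_*$ with $u$; the $\mathbf{z}_n$ can converge to a point of $\overline{Z^r}\setminus Z^r$ different from $\mathbf{z}$. (If $Z$ were closed, as for $\cT_{P_o}$, your argument could be patched, but Theorem \ref{MTR2} concerns general Borel cross-sections.) The paper's proof of Theorem \ref{Thm_Poincare} avoids the issue by a different shrinking: it recurs not into small neighbourhoods of $x$ in the ambient space but into small tubes $V.B_n$, where $B_n$ runs through a countable basis of neighbourhoods of the point \emph{inside the transversal} (intersected with $\supp(\mu_Y)$), so the transverse component of the return lies in $B_n$ by construction; the escape $g_n \to \infty$ is arranged with Poincar\'e sets $\Xi_n = \Theta\Theta^{-1}\setminus\{e\}$ disjoint from compacta $K_n$ (Corollary \ref{Cor_Poincare} and Lemma \ref{Lemma_AXi}), and the conull set is transported to the transversal by an analyticity argument (Lemma \ref{Lemma_prYX}), which your one-line ``disintegration'' step also glosses over.
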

\vspace{0.2cm}

Another consequence of the existence of a finite intersection measures concerns intersections of return time sets (see Theorem \ref{Thm_Density} for the general statement).  Recall that for $z \in Z$ we denote by $Z_z \subset G_o$ the set of return times from $x$ to $Z$.  

\begin{theorem}[Positive density of intersections]\label{PD2} If $\Lambda$ is a uniform approximate lattice in $G_o$, then for every convenient sequence $(G_t)$ of subsets of $G$ we have
\[
\Dens_{(G_t)}(Z_{z_1} \cap \dots \cap Z_{z_r}) > 0,
\]
for $\nu^{\otimes r}$-almost all $(z_1, \dots, z_r) \in Z^r$.
\end{theorem}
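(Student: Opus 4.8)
\emph{Proof plan.}

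The key observation is that the set whose density we wish to bound is itself a return-time set for a larger system. Writing $w = (z_1, \dots, z_r)$, the condition $g \in Z_{z_1} \cap \dots \cap Z_{z_r}$ says exactly that $g.z_j \in Z$ for every $j$, i.e.\ that $g.w \in Z^r$. Thus $Z_{z_1}\cap\dots\cap Z_{z_r}$ is the return-time set $(Z^r)_w := \{g \in G_o \mid g.w \in Z^r\}$ of the point $w$ to the product cross section $Z^r \subset X^{[r]}$. By Theorem~\ref{Intro1} the intersection space $X^{[r]}$ carries a finite $G_o$-invariant measure $\mu^{[r]}$ whose transverse measure along $Z^r$ is $\nu^{\otimes r}$, so the problem reduces to a single statement about one transverse system: for $(X^{[r]}, \mu^{[r]})$ with separated cross section $Z^r$, the return-time set of a transverse-generic point has positive lower $(G_t)$-density.

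First I would set up a flow box. Since $Z$ is separated, its return time set meets some identity neighbourhood only in $\{e\}$; the same then holds for $Z^r$ under the diagonal action, so I may fix a relatively compact symmetric identity neighbourhood $V$ such that $VV \cap \Lambda(Z^r) = \{e\}$. The map $(v,z)\mapsto v.z$ from $V \times Z^r$ to $X^{[r]}$ is then injective, and feeding $f(h,z) = \mathbf 1_V(h)\mathbf 1_A(z)$ into the transverse measure formula for $(X^{[r]}, \mu^{[r]}, Z^r)$ yields
\[
\mu^{[r]}(V.A) = m_{G_o}(V)\,\nu^{\otimes r}(A) \qquad (A \subseteq Z^r\ \text{Borel}).
\]
In particular the tube $T := V.Z^r$ satisfies $\mu^{[r]}(T) = m_{G_o}(V)\,\nu(Z)^r \in (0,\infty)$. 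The same disjointness of the slabs $\{Vh : h \in (Z^r)_w\}$, again from $VV\cap\Lambda(Z^r)=\{e\}$, together with the identity $\{g : g.w \in T\} = V\cdot(Z^r)_w$, lets me compare the counting function with an ergodic average: up to the return times lying in a $V$-neighbourhood of $\partial G_t$,
\[
\frac{1}{m_{G_o}(G_t)}\,\big|(Z^r)_w \cap G_t\big| = \frac{1}{m_{G_o}(V)\,m_{G_o}(G_t)}\int_{G_t}\mathbf 1_T(g.w)\,dm_{G_o}(g) + o(1),
\]
the error being negligible because convenient sequences have asymptotically negligible boundary (Definition~\ref{DefConvenient}).

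Next I would invoke the pointwise ergodic theorem available for convenient sequences. Applying it to $\mathbf 1_T \in L^1(\mu^{[r]})$ gives, for $\mu^{[r]}$-almost every $w$, the equality $D(w) := \Dens_{(G_t)}\big((Z^r)_w\big) = m_{G_o}(V)^{-1}\,\bE_{\mu^{[r]}}[\mathbf 1_T \mid \cI](w)$, where $\cI$ is the $\sigma$-algebra of $G_o$-invariant sets; note $D$ is $G_o$-invariant, since return-time sets along an orbit differ by right translations under which convenient densities are invariant. To transfer this from $\mu^{[r]}$ to $\nu^{\otimes r}$, set $S := \{w \mid D(w) = 0\}$, a $G_o$-invariant set on which the conditional expectation vanishes, so that $\mu^{[r]}(T \cap S) = \int_S \bE[\mathbf 1_T \mid \cI]\,d\mu^{[r]} = 0$. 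If $B := S \cap Z^r$ had $\nu^{\otimes r}(B) > 0$, then $V.B \subseteq V.Z^r = T$, while $V.B \subseteq S$ because $S$ is $G_o$-invariant and $B \subseteq S$, whence
\[
\mu^{[r]}(T \cap S) \geq \mu^{[r]}(V.B) = m_{G_o}(V)\,\nu^{\otimes r}(B) > 0,
\]
a contradiction. Therefore $\nu^{\otimes r}(B) = 0$, i.e.\ $D > 0$ for $\nu^{\otimes r}$-almost every $(z_1,\dots,z_r)$, which is the assertion.

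I expect the main obstacle to lie in the $o(1)$ term of the second display: uniformly controlling the return times in the boundary layer of $G_t$ and reconciling the counting function with the ergodic average. This is precisely where the defining properties of a convenient sequence must be used in full, namely both its F\o{}lner-type boundary behaviour and the validity of a pointwise ergodic theorem, and it is also the step at which the unimodularity of $G_o$ and the finiteness of $\mu^{[r]}$ from Theorem~\ref{Intro1} enter essentially.
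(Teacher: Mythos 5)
Your reduction is the same as the paper's: $Z_{z_1}\cap\dots\cap Z_{z_r}=(Z^r)_w$ is the return-time set of $w$ to $Z^r$ inside the intersection space, Theorem \ref{Intro1} supplies the finite invariant measure $\mu^{[r]}$ with transverse measure $\nu^{\otimes r}$, and your flow-box comparison of the counting function with smoothed orbital averages is the content of the paper's Theorem \ref{Thm_Ergodic}, which feeds Theorem \ref{Thm_Density}. Two smaller imprecisions first: for a \emph{fixed} neighbourhood $V$ the boundary error is of size $O(\eps_n)$, not $o(1)$ in $t$ --- convenient sequences are not F\o lner, so one must run the two-sided sandwich over a sequence $V_n,\delta_n,\eps_n\to 0$ as in \eqref{contain} and \eqref{ratios}; and the identification of the limit as $\bE_{\mu^{[r]}}[\mathbf 1_T\mid\cI]$ requires the ergodic decomposition (Corollary \ref{Cor_ErgodicDecomposition}), since Definition \ref{DefConvenient}(ii) identifies the limit only for ergodic measures. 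Both of these are fixable and you flagged the first yourself.

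The genuine gap is in your transfer from $\mu^{[r]}$-a.e.\ to $\nu^{\otimes r}$-a.e. Your argument needs $S=\{D=0\}$ to be $G_o$-invariant (both for $\mu^{[r]}(T\cap S)=0$ and for $V.B\subseteq S$), and you justify this by asserting that $\Dens_{(G_t)}$ is invariant under right translations, since $(Z^r)_{g.w}=(Z^r)_w\,g^{-1}$. Nothing in Definition \ref{DefConvenient} supports this: the containment axioms compare $v^{-1}G_t$ with $G_{t\pm\delta_n}$ only for $v$ in \emph{shrinking} identity neighbourhoods $V_n$, and there is no axiom relating $G_tg$ to any $G_{t'}$ for a fixed $g$. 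For ball averages on a semisimple group (the motivating non-amenable examples, cf.\ \cite{GorodnikNevo}) one has $m_G(G_tg\,\triangle\,G_t)/m_G(G_t)\not\to 0$, and the value of $D$ can genuinely change along an orbit; so $S$ has no reason to be invariant and the inclusion $V.B\subseteq S$ collapses. Note also that the $\mu^{[r]}$-a.e.\ identity $D=m_{G_o}(V)^{-1}\bE[\mathbf 1_T\mid\cI]$ says nothing by itself about points of $Z^r$, which is $\mu^{[r]}$-null when $G_o$ is non-discrete. This is exactly the subtlety that Definition \ref{DefConvenient}(ii) is engineered to absorb (see the remark following it): it \emph{postulates}, for each ergodic measure, a $G$-invariant conull set $E_\varphi(\mu)$ on which the averages converge to the right constant. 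The paper's proof uses precisely this: on each ergodic component $\eta$ of $\mu^{[r]}$, the sandwich gives $D=\eta_{Z^r}(Z^r)$ on the $G$-invariant conull set $\bigcap_n E_n(\eta)$, positivity follows from $\eta_{Z^r}\neq 0$ (Lemma \ref{Lemma_injectivemap}), the null-set correspondence for invariant sets (Lemma \ref{lemma_null}) converts this invariant conull set into an $\eta_{Z^r}$-conull subset of $Z^r$, and Corollary \ref{Cor_ErgodicDecomposition} integrates over the components. Replacing your invariance claim for $D$ by this mechanism repairs the proof; as written, it does not go through.
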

\vspace{0.2cm}

Note that Theorems \ref{MTR1} and \ref{PD1} are special cases of Theorems \ref{MTR2} and \ref{PD2} respectively.

\subsection{Establishing finiteness of the intersection measure}

Before we discuss a few further generalizations of the theorems above we would like to comment on the methods behind the proof of Theorem \ref{Intro1}. It turns out that this theorem can be derived from a restriction in stages theorem for transverse measures with respect to certain semidirect products groups. To state the theorem we assume that $G = NL$ is the semidirect product of a normal subgroup $N$ and a subgroup $L$, where $N$ and $L$ are unimodular lcsc groups and the $L$-action on $N$ is Haar measure-preserving. We may then normalize Haar measures on $G$, $N$ and $L$ such that $m_G(W_N W_L) = m_N(W_N) m_L(W_L)$ for all Borel subsets $W_N \subset N$ and $W_L \subset L$. \\

We now consider a Borel action of a group $G$ on a Polish space $X$ with separated cross section $Z$. If we define $Y:=L.Z$, then $X \supset Y \supset Z$ and
\begin{itemize}
\item $G$ acts on $X$ with separated cross section $Z$;
\item $N$ acts on $X$ with cross section $Y$ (which need not be separated);
\item $L$ acts on $Y$ with separated cross section $Z$.
\end{itemize}
If $\mu$ is a finite $G$-invariant measure on $X$, then we denote the corresponding transverse measure (with respect to the $G$-action) on $Z$ by $\res{G}{X}{Z}(\mu)$. It turns out that this measure is finite and uniquely determines the original measure $\mu$. We also use similar notations for the other two groups. The technical heart of this article is then the proof of the following theorem: 

\begin{theorem}[Restriction in stages]\label{Stages} If the $N$-cross section $Y$ is separated, then for every finite $G$-invariant measure $\mu$ on $X$ the measure $\res{N}{X}{Y}(\mu)$ is finite and $L$-invariant and
\[
\res{G}{X}{Z} = \res{L}{Y}{Z} \circ \res{N}{X}{Y}.
\]
\end{theorem}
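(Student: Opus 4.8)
The plan is to use the uniqueness of transverse measures twice. Write $\nu := \res{N}{X}{Y}(\mu)$ and, for an acting group $H\in\{G,N,L\}$, a point $w$ and a target transversal $T$, denote by $T^H_w := \{h\in H\mid h.w\in T\}$ the associated return-time set. The strategy is to produce the candidate measure $\lambda := \res{L}{Y}{Z}(\nu)$ and to check that it satisfies the defining identity of $\res{G}{X}{Z}(\mu)$; uniqueness then forces $\lambda=\res{G}{X}{Z}(\mu)$. The only geometric input beyond the hypotheses is that $Y=L.Z$ is $L$-invariant, so that $n.x\in Y\iff l.(n.x)\in Y$ for every $l\in L$. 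Before $\lambda$ is even defined I must show that $\nu$ is finite and $L$-invariant. For finiteness I would test the $N$-transverse identity against $f(n,y)=\mathbf 1_W(n)$ for a relatively compact $W\subset N$ of positive measure, obtaining $\nu(Y)=m_N(W)^{-1}\int_X \#(Y^N_x\cap W^{-1})\,d\mu(x)$; since $Y$ is a \emph{separated} $N$-cross section, two distinct returns $n_1,n_2\in Y^N_x$ satisfy $n_1n_2^{-1}\in\{n\mid n.Y\cap Y\neq\emptyset\}$, which avoids a fixed identity neighbourhood, so $Y^N_x$ is uniformly discrete and $\#(Y^N_x\cap W^{-1})$ is bounded by a constant depending only on $W$. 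Finiteness of $\mu$ then gives $\nu(Y)<\infty$.

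The delicate step, which I expect to be the main obstacle, is the $L$-invariance of $\nu$. Here too I would argue by uniqueness: fixing $l_0\in L$, I show that the pushforward $(l_0)_*\nu$ satisfies the same defining identity as $\nu$ and is therefore equal to it. Unfolding the right-hand side of that identity for $(l_0)_*\nu$ turns the summand into $f(n^{-1},l_0.(n.x))$, and I then rewrite $l_0.(n.x)=(l_0nl_0^{-1}).(l_0.x)$ using normality of $N$. Three of the standing assumptions enter exactly here: $L$-invariance of $Y$ shows that $n\in Y^N_x\iff l_0nl_0^{-1}\in Y^N_{l_0.x}$, so the index set transforms correctly; $L$-invariance of $\mu$ justifies the substitution $x\mapsto l_0.x$ in the outer integral; and the hypothesis that conjugation by $l_0$ preserves $m_N$ justifies the substitution $n\mapsto l_0nl_0^{-1}$ in the Haar integral. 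After these three changes of variable the defining identity of $\nu$ reappears verbatim, giving $(l_0)_*\nu=\nu$.

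With $\nu$ finite and $L$-invariant, $\lambda=\res{L}{Y}{Z}(\nu)$ is defined and the proof reduces to one unfolding computation. Its combinatorial core is the observation that, for each $x$, the $NL$-decomposition $g=ln$ ($l\in L$, $n\in N$) restricts to a bijection between $Z^G_x$ and the pairs $(n,l)$ with $n\in Y^N_x$ and $l\in Z^L_{n.x}$: if $g.x\in Z$ then $n.x=l^{-1}.(g.x)\in Y$ by $L$-invariance of $Y$ and $l.(n.x)=g.x\in Z$, while conversely any such pair gives $g=ln\in Z^G_x$. Substituting this bijection into the defining identity for $\res{G}{X}{Z}(\mu)$ turns its left-hand side into
\[
\int_X \sum_{n\in Y^N_x}\ \sum_{l\in Z^L_{n.x}} F\big(n^{-1}l^{-1},\,l.(n.x)\big)\,d\mu(x).
\]
I would then apply the $N$-transverse identity for $\nu$ to the function $(m,y)\mapsto \sum_{l\in Z^L_y}F(ml^{-1},l.y)$, then the $L$-transverse identity for $\lambda$ to $(l,z)\mapsto F(ml,z)$ for each fixed $m\in N$, arriving at
\[
\int_N\int_L\int_Z F(ml,z)\,d\lambda(z)\,dm_L(l)\,dm_N(m).
\]
Finally the Haar normalization $m_G(W_NW_L)=m_N(W_N)m_L(W_L)$, in its Fubini form $\int_G H\,dm_G=\int_N\int_L H(ml)\,dm_L\,dm_N$, reassembles this into $\int_G\int_Z F(g,z)\,d\lambda(z)\,dm_G(g)$, which is precisely the defining identity of $\res{G}{X}{Z}(\mu)$; uniqueness yields the theorem. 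All interchanges are legitimate by Tonelli since $F\geq 0$, and the measurability of the intermediate function together with finiteness of the inner sums uses that $Z^L_y$ is locally finite, which is the separatedness of $Z$ in $Y$.
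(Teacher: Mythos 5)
Your proposal is correct, but it runs along a genuinely different mechanical track than the paper's. The paper proves Theorem \ref{Stages} (via Theorem \ref{StagesInfinite}) \emph{locally}, by testing measures on small injective product sets: $L$-invariance of $\mu_Y$ comes from $\mu_Y(l.B)=\mu(V_l l.B)/m_N(V_l)$ for an identity neighbourhood $V_l \subset N$ with $l^{-1}V_l l\subset V_N$, using $G$-invariance of $\mu$ and conjugation-invariance of $m_N$; and the composition formula comes from evaluating $\mu(V_N V_L.B)$ in two ways, once through the chain $N$-then-$L$ (giving $m_N(V_N)m_L(V_L)(\mu_Y)_Z(B)$) and once through the $G$-correspondence (giving $m_G(V_NV_L)\mu_Z(B)$), so that the normalization $m_G(V_NV_L)=m_N(V_N)m_L(V_L)$ yields $(\mu_Y)_Z(B)=\mu_Z(B)$ \emph{directly} for every Borel $B\subset Z$, with no appeal to uniqueness. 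You instead argue \emph{globally}: you unfold the defining integral identity of Theorem \ref{GMCT}(ii)/Proposition \ref{Prop_Tmap} via the factorization bijection between $\{g\in G: g.x\in Z\}$ and pairs $(n,l)$ with $n.x\in Y$, $l.(n.x)\in Z$ (which is exactly where $L$-invariance of $Y=L.Z$ and uniqueness of the $NL$-factorization enter), apply the $N$- and then the $L$-transverse identity, and conclude by uniqueness of the transverse measure; your $L$-invariance argument is likewise a global change-of-variables version of the paper's local one, using the same three invariances in the same roles. Both routes rest on identical structural inputs, so neither is more general; what yours buys is a single self-contained Fubini-style computation, at the price of two technical checks that you correctly flag but would still need to carry out: joint Borel measurability in $(m,y)$ of the intermediate periodization $h(m,y)=\sum_{l} F(ml^{-1},l.y)$, summed over $l\in L$ with $l.y \in Z$ (an adaptation of Lemma \ref{Lemma_T}(i) with the extra parameter $m$), and the extension of the defining identity from \emph{bounded} non-negative integrands, as stated in Proposition \ref{Prop_Tmap}, to $[0,\infty]$-valued ones by monotone convergence, since $h$ need not be bounded (or even finite) for bounded $F$ of unbounded support. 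The paper's local route sidesteps both issues because all test sets are small, injective product sets; your finiteness argument for $\res{N}{X}{Y}(\mu)$, bounding the number of $N$-returns in a compact window by separatedness of $Y$, coincides with the paper's (Remark \ref{Remark_muYfinite} together with Lemma \ref{Lemma_BasicProp}(ii)).
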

\vspace{0.2cm}

Let us demonstrate how Theorem \ref{Stages} implies a more general version of Theorem \ref{Intro1}. For this let $X_1, \dots, X_r$ be Polish spaces on which $G_o$ acts jointly continuously with corresponding separated cross sections $Z_1, \dots, Z_r$ and let $\mu_1, \dots, \mu_r$ be $G_o$-invariant probability measures on $X_1, \dots, X_r$ respectively. We then denote by  $\nu_1, \dots, \nu_r$ and $\Lambda_1, \dots, \Lambda_r$ the corresponding transverse measure, respectively return time sets and abbreviate
\[
G := G_o^r, \quad X := X_1 \times \dots \times X_r \qand Z := Z_1 \times \dots \times Z_r.
\]
Then $Z$ is a separated cross section for the action of $G$ on $X$. In order to apply Theorem \ref{Stages} we observe that the group $G$ admits semidirect product splittings of the form $G = N_k  L$ for any $k \in \{1, \dots, r\}$, where $N_k$ denotes the kernel of the $k$th coordinate projection and $L\cong G_o$ denotes the diagonal subgroup. In this situation, the intermediate transversal $Y := L.Z$ is given by the \emph{generalized intersection space}
\[
X^{[r]} = \Big\{ (x_1,\ldots,x_r) \in X_1 \times \dots \times X_r  \mid  \exists\, g \in G_o:\; g.x_1 \in Z_1, \dots, g.x_r \in Z_r\}.
\]
We say that the cross sections $Z_1, \dots, Z_r$ are \emph{commensurable} if $Y=X^{[r]}$ is a separated cross section for the $N_k$-action on $X$ for all $k \in \{1, \dots, r\}$ so that Theorem \ref{Stages} applies. In this case
\[
\mu^{[r]}_k := \res{N_k}{X}{Y}(\mu_1 \otimes \dots \otimes \mu_r)
\]
is a finite measure on $Y=X^{[r]}$ which is invariant under the action of the diagonal group $L \cong G_o$ and
\[
\res{L}{Y}{Z}(\mu^{[r]}_k) = \res{G}{X}{Z}(\mu_1 \otimes \dots \otimes \mu_r) =\nu_1 \otimes \dots \otimes \nu_r.
\]
Furthermore, $\mu^{[r]}_k$ projects to the given measure $\mu_k$ on $X_k$. Moreover, we have
\[
\res{L}{Y}{Z}(\mu^{[r]}_1) = \dots = \res{L}{Y}{Z}(\mu^{[r]}_r), \quad \text{and hence}\quad \mu^{[r]}_1 = \dots = \mu^{[r]}_r,
\]
i.e.\ the measure $\mu^{[r]} := \mu^{[r]}_k$ is actually independent of $k$. This then establishes the following general version of Theorem \ref{Intro1}:

\begin{theorem}[Finiteness of intersection measure, general form]\label{Joining} Assume that  $Z_1, \dots, Z_r$ are commensurable. Then $G_o$ acts diagonally on $X^{[r]}$ with separated cross section $Z := Z_1 \times \dots \times Z_r$ and there is a unique finite $G_o$-invariant measure $\mu^{[r]}$ on $X^{[r]}$ such that 
\[
\res{G_o}{X^{[r]}}{Z}(\mu^{[r]}) = \nu_1 \otimes \dots \otimes \nu_r.
\]
Moreover,  if $\mu_1,\ldots,\mu_r$ are $G_o$-ergodic,  then,  up to a multiplicative constant,  $\mu^{[r]}$ is a $G_o$-invariant joining of the measures $\mu_1, \dots, \mu_r$.
\end{theorem}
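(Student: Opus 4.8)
The plan is to assemble the measure $\mu^{[r]}$ directly from the restriction-in-stages construction carried out just above the statement, and then to verify the three assertions (separatedness, uniqueness, joining) in turn. Throughout I write $\mu_X := \mu_1 \otimes \dots \otimes \mu_r$ for the product measure on $X = X_1 \times \dots \times X_r$, let $p_k : X^{[r]} \to X_k$ denote the $k$-th coordinate projection, and recall that $Z = Z_1 \times \dots \times Z_r$ is a separated cross section for the $G = G_o^r$-action on $X$, that $G = N_k L$ for each $k$ with $L \cong G_o$ the diagonal subgroup, and that $Y := L.Z = X^{[r]}$. The genuinely technical input is Theorem \ref{Stages}, which I take as given; the remaining work is organizational, apart from one marginal computation discussed at the end.

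\emph{Separatedness, existence, and independence of $k$.} The first assertion — that the diagonal $G_o$-action on $X^{[r]}$ admits $Z$ as a separated cross section — is precisely the third bullet of the restriction-in-stages setup applied to $N = N_k$ and the diagonal $L$: whenever $Z$ is a $G$-separated cross section of $X$, the subgroup $L$ acts on $Y = L.Z$ with separated cross section $Z$. For existence, commensurability guarantees that $Y$ is a separated $N_k$-cross section for every $k$, so Theorem \ref{Stages} applies to each splitting $G = N_k L$ and shows that $\mu^{[r]}_k := \res{N_k}{X}{Y}(\mu_X)$ is a finite $L$-invariant (equivalently, diagonally $G_o$-invariant) measure on $Y = X^{[r]}$ satisfying
\[
\res{L}{Y}{Z}(\mu^{[r]}_k) = \res{L}{Y}{Z}\circ \res{N_k}{X}{Y}(\mu_X) = \res{G}{X}{Z}(\mu_X) = \nu_1 \otimes \dots \otimes \nu_r.
\]
Since the right-hand side is independent of $k$, all the $\mu^{[r]}_k$ have the same transverse measure for the $L$-action on $Y$; as the transverse-measure correspondence is injective on finite invariant measures (the fact, recalled before Theorem \ref{Stages}, that $\res{L}{Y}{Z}$ determines its argument), I conclude $\mu^{[r]}_1 = \dots = \mu^{[r]}_r =: \mu^{[r]}$. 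This is the desired finite $G_o$-invariant measure with $\res{G_o}{X^{[r]}}{Z}(\mu^{[r]}) = \nu_1 \otimes \dots \otimes \nu_r$, and the same injectivity immediately yields the asserted uniqueness.

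\emph{Joining property.} Assume now that each $\mu_k$ is $G_o$-ergodic; I would show that a multiple of $\mu^{[r]}$ is a $G_o$-invariant joining of $\mu_1, \dots, \mu_r$. Because $p_k$ intertwines the diagonal $G_o$-action on $X^{[r]}$ with the $G_o$-action on $X_k$, the marginal $(p_k)_* \mu^{[r]}$ is a finite $G_o$-invariant measure on $X_k$. To identify it, I would evaluate the defining integral identity for $\mu^{[r]} = \res{N_k}{X}{Y}(\mu_X)$ on test functions of the form $f(n,y) = \phi(p_k(y))\,\psi(n)$ with $\int_{N_k}\psi\,dm_{N_k} = 1$; using that $N_k$ fixes the $k$-th coordinate and that $\mu_X = \mu_k \otimes \bigotimes_{i \neq k}\mu_i$ is a product, one finds that $(p_k)_*\mu^{[r]}$ is absolutely continuous with respect to $\mu_k$ with a density $h_k \in L^1(\mu_k)$. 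Since both $(p_k)_*\mu^{[r]}$ and $\mu_k$ are $G_o$-invariant, $h_k$ is $G_o$-invariant $\mu_k$-almost everywhere, so ergodicity forces $h_k$ to equal a constant $c_k$ $\mu_k$-a.e. Hence $(p_k)_*\mu^{[r]} = c_k\,\mu_k$, and comparing total masses gives $c_k = \mu^{[r]}(X^{[r]}) =: M$ for every $k$. Therefore $M^{-1}\mu^{[r]}$ is a $G_o$-invariant probability measure on $X_1 \times \dots \times X_r$ with marginals $\mu_1, \dots, \mu_r$, i.e.\ a joining, which is exactly the final claim (the constant $M$ being the promised multiplicative constant).

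\emph{Main obstacle.} Granting Theorem \ref{Stages}, the only delicate point is the marginal identification in the last paragraph: one must justify carefully that the $N_k$-transverse measure of the product $\mu_X$ has $k$-th marginal dominated by $\mu_k$. This rests on unwinding the defining identity of $\res{N_k}{X}{Y}$ against functions depending only on the $k$-th coordinate and exploiting that $N_k$ acts trivially on $X_k$; ergodicity of $\mu_k$ then upgrades ``dominated by $\mu_k$'' to ``a constant multiple of $\mu_k$'', which is precisely what the joining conclusion needs.
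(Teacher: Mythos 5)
Your proposal is correct and follows essentially the same route as the paper: restriction in stages (Theorem \ref{Stages}) along each splitting $G = N_kL$, independence of $k$ and uniqueness via injectivity of $\res{L}{Y}{Z}$ (Lemma \ref{Lemma_injectivemap}), and identification of the marginals by absolute continuity plus $G_o$-ergodicity. The only (cosmetic) difference is at the step you flag as the main obstacle: where you unwind the defining identity of $\res{N_k}{X}{Y}$ against product test functions $\phi(p_k(y))\,\psi(n)$, the paper (Theorem \ref{Compatibility}) gets $(p_k)_*\mu^{[r]} \ll \mu_k$ in one line by noting that $p_k^{-1}(B)$ is $N_k$-invariant, so $m_{N_k}(V_{N_k})\,\mu^{[r]}\bigl(p_k^{-1}(B) \cap X^{[r]}\bigr) \leq \mu\bigl(p_k^{-1}(B)\bigr)$ for a small injective neighbourhood $V_{N_k}$ --- the same exploitation of $N_k$ acting trivially on $X_k$, packaged as a null-set argument instead of a computation.
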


In order to apply the theorem, one needs to give criteria which ensure that the cross sections $Z_1, \dots, Z_r$ are commensurable. This is the point where we need discreteness and cocompactness assumptions on the return times: 

\begin{theorem}[Commensurability criterion]\label{ThmComm} The cross sections $Z_1, \dots, Z_r$ are commensurable provided the intersection $\Lambda := \Lambda_1 \cap \dots \cap \Lambda_r$ is relatively dense in $G_o$ and none of the product sets $\Lambda_i^3\Lambda_j^3$ accumulates at the identity.
\end{theorem}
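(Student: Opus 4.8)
The plan is to verify the two conditions in the definition of commensurability under the stated hypotheses. Recall that the cross sections $Z_1, \dots, Z_r$ are commensurable if $Y = X^{[r]}$ is a separated cross section for the $N_k$-action on $X = X_1 \times \dots \times X_r$ for every $k \in \{1, \dots, r\}$. Unpacking the meaning of $N_k$ (the kernel of the $k$th coordinate projection $G_o^r \to G_o$, so that $N_k$ acts trivially in the $k$th coordinate) and of $Y = L.Z$ (the diagonal sweep of $Z_1 \times \dots \times Z_r$), each of the following must be established: first, that $Y$ is genuinely a cross section for $N_k$, i.e.\ $N_k.Y = X$; and second, that the $N_k$-return time set $\Lambda_{N_k}(Y) := \{h \in N_k \mid h.Y \cap Y \neq \emptyset\}$ is separated, i.e.\ meets some identity neighbourhood of $N_k$ only in $\{e\}$.

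Let me explain how the hypotheses feed into these two points. For the cross-section property, I expect relative density of $\Lambda = \Lambda_1 \cap \dots \cap \Lambda_r$ to be exactly what is needed. Writing $\Lambda_i = \Lambda(Z_i)$ for the $G_o$-return time set of $Z_i$, an element $g$ lies in $\Lambda_i$ precisely when some point of $X_i$ can be moved into $Z_i$ with both $g.x_i \in Z_i$ and $x_i \in Z_i$; the simultaneous version $\bigcap_i \Lambda_i$ controls when a single diagonal element of $L$ places all coordinates in their respective $Z_i$ at once. I would translate ``$X^{[r]}$ is $N_k$-invariant up to sweeping by $N_k$'' into the statement $N_k . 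X^{[r]} = X$, and reduce this to showing that for any $(x_1, \dots, x_r) \in X$ there is an element of $N_k$ carrying it into the set where a common diagonal element exists; relative density of $\Lambda$ in $G_o$ provides, via a compactness/covering argument using the compact set $K$ with $G_o = K\Lambda$, the required return element.

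For the separation property, this is where the product-set hypothesis enters, and I expect it to be the main obstacle. The point is that an $N_k$-return time $h = (h_1, \dots, h_{k-1}, e, h_{k+1}, \dots, h_r)$ with $h.Y \cap Y \neq \emptyset$ means there are points $(x_j)_j, (x_j')_j \in Y$ with $x_j' = h_j.x_j$ and, since both lie in $Y = L.Z$, there exist diagonal elements $g, g' \in L \cong G_o$ with $g.x_j \in Z_j$ and $g'.x_j' \in Z_j$ for all $j$. Comparing the $j$th coordinates for $j \neq k$ forces $g' h_j g^{-1} \in \Lambda_j$, while the $k$th coordinate (where $h_k = e$) forces $g' g^{-1} \in \Lambda_k$, hence $g'g^{-1} \in \Lambda$. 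Chaining these relations and using the approximate-subgroup arithmetic, one expresses each $h_j$ in terms of products drawn from $\Lambda_j$ and $\Lambda_k$, landing in a controlled power such as $\Lambda_j^3 \Lambda_k^3$ (after accounting for the inverse and for $g'g^{-1} \in \Lambda$). The hypothesis that no $\Lambda_i^3 \Lambda_j^3$ accumulates at the identity then says precisely that the set of possible $h_j$ avoids a fixed identity neighbourhood in $G_o$ unless $h_j = e$, and as $N_k \subset G_o^r$ carries the product topology, this yields an identity neighbourhood in $N_k$ meeting $\Lambda_{N_k}(Y)$ only in $\{e\}$.

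The careful bookkeeping of which powers of the $\Lambda_i$ the elements $h_j$ land in is the delicate part: one must track the contributions of $g, g'$ (each contributing a factor from $\Lambda$, hence from each $\Lambda_i$) together with the defining inclusions, and verify that the cubes $\Lambda_i^3 \Lambda_j^3$ are wide enough to absorb all of them. Here I would lean on the approximate-subgroup property of each $\Lambda_i$ (symmetry $\Lambda_i = \Lambda_i^{-1}$ and the bound $\Lambda_i \Lambda_i \subset F_i \Lambda_i$ for a finite set $F_i$) to keep the exponents under control, and on the observation that uniform discreteness of a product set is equivalent to its not accumulating at the identity. Once both the cross-section and separation properties are in hand for every $k$, the cross sections are commensurable by definition, completing the proof.
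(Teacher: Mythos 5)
There is a genuine gap, and it sits exactly at the crucial point of the separation argument. Your reduction of an $N_k$-return time $h = (h_1, \dots, e, \dots, h_r)$ to the relations $g' h_j g^{-1} \in \Lambda_j$ for $j \neq k$ and $g' g^{-1} \in \Lambda_k$ is correct (though note the $k$th coordinate gives only $g'g^{-1} \in \Lambda_k$, not $g'g^{-1} \in \Lambda$ as you wrote). But solving these relations yields $h_j = (g')^{-1}\alpha_j \beta^{-1} g'$ with $\alpha_j \in \Lambda_j$, $\beta \in \Lambda_k$: each $h_j$ is a conjugate of an element of $\Lambda_j \Lambda_k$ by the \emph{unbounded} diagonal element $g' \in G_o$, and no bookkeeping of powers puts such a conjugate into a fixed set $\Lambda_j^3\Lambda_k^3$. (The approximate-subgroup property $\Lambda_i^2 \subset F_i\Lambda_i$ that you invoke is in any case not among the hypotheses of this theorem --- the $\Lambda_i$ here are merely symmetric return-time sets containing $e$ --- and even granted, it controls products, not conjugates.) Conjugates of a set avoiding an identity neighbourhood can perfectly well accumulate at $e$: the paper's example with $G = \SL_2(\bR) \times \SL_2(\bR)$ and $\Gamma = \SL_2(\bZ)^2$ shows precisely this, since there $\pr_N(\Lambda_a(Z)^3) = \SL_2(\bZ) \times \{e\}$ is uniformly discrete while its $L$-conjugates accumulate at the identity because $\SL_2(\bZ)$ contains unipotents. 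So the step ``landing in a controlled power such as $\Lambda_j^3\Lambda_k^3$'' fails as written.

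The missing ingredient is exactly the hypothesis you spent on the wrong half of the proof. The cross-section property costs nothing: $N_k.Y = N_k.(L.Z) = G.Z = X$, so relative density of $\Lambda$ plays no role there. Where it is indispensable is in taming the conjugator: since $\Lambda$ is symmetric and relatively dense, one can write $G_o = \Lambda K_o$ with $K_o$ compact and decompose $g' = \eta c$ with $\eta \in \Lambda$, $c \in K_o$; then
\[
h_j = c^{-1}\bigl(\eta^{-1}\alpha_j\bigr)\bigl(\beta^{-1}\eta\bigr)c \in c^{-1}\bigl(\Lambda_j^2\Lambda_k^2\bigr)c \subset c^{-1}\bigl(\Lambda_j^3\Lambda_k^3\bigr)c,
\]
using $\eta \in \Lambda \subset \Lambda_j \cap \Lambda_k$ and symmetry, with $c$ now confined to a fixed compact set. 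The hypothesis provides a neighbourhood $W$ with $\Lambda_j^3\Lambda_k^3 \cap W = \{e\}$, and shrinking to $U_j$ with $cU_jc^{-1} \subset W$ for all $c \in K_o$ gives separatedness coordinatewise, whence in $N_k$ via the product topology as you say. This is precisely the mechanism of the paper: Theorem \ref{ThmComm} is deduced as a special case of Lemma \ref{Lemma_YcrossINT}, which rests on Lemma \ref{Lemma_NcrossIFF}, where the identity $\Lambda_{a_{N_k}}(Y) = \pr_{N_k}(\Lambda_a(Z))^L$ is proved and cocompactness of $\Lambda_a(Z) \cap L = \Delta_r(\Lambda)$ in $L$ is used, via $\eta\,\Lambda_a(Z)\,\eta^{-1} \subset \Lambda_a(Z)^3$ for $\eta \in \Lambda_a(Z) \cap L$, to replace the unbounded $L$-conjugation by a compact conjugation of $\pr_{N_k}(\Lambda_a(Z)^3)$. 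With the relative-density decomposition inserted at this point your outline becomes a correct direct proof; without it, the separation argument does not close.
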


Note that if $X_1 = \dots = X_r$ and $Z_1 = \dots = Z_r$ this condition is satisfied if and only if $\Lambda = \Lambda_1 = \dots = \Lambda_r$ is a uniform approximate lattice.

\subsection{Generalizations}

Just as Theorem \ref{Intro1} implies  Theorems \ref{MTR2} and \ref{PD2} (and hence in particular Theorems \ref{MTR1} and \ref{PD1}) we can deduce from Theorem \ref{Joining} the following theorems. 

\begin{theorem}[Multiple transverse recurrence, general form]\label{MTR3} If the cross sections $Z_1, \dots, Z_r$ are commensurable, then for $\nu_1 \otimes \dots \otimes \nu_r$-almost all $(z_1, \dots, z_r) \in Z_1 \times \dots \times Z_r$ there exists a sequence $(g_n)$ in $(Z_1)_{z_1} \cap \dots \cap (Z_r)_{z_r}$ such that
\[
g_nz_1 \to z_1, \quad \dots,\quad g_nz_r \to z_r \qand g_n \to \infty.
\]
\end{theorem}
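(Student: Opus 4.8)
The plan is to deduce the statement from the finiteness of the intersection measure (Theorem~\ref{Joining}) together with an abstract transversal Poincar\'e recurrence theorem for finite invariant measures (Theorem~\ref{Thm_Poincare}). The whole point is that commensurability has already packaged all the analytic difficulty into the existence of a \emph{finite} invariant measure on the intersection space, so that the recurrence statement becomes a formal consequence.

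First I would invoke Theorem~\ref{Joining}: since $Z_1, \dots, Z_r$ are commensurable, the diagonal group $G_o$ acts on the intersection space $X^{[r]}$ with separated cross section $Z = Z_1 \times \dots \times Z_r$, and there is a finite $G_o$-invariant measure $\mu^{[r]}$ on $X^{[r]}$ with transverse measure $\res{G_o}{X^{[r]}}{Z}(\mu^{[r]}) = \nu_1 \otimes \dots \otimes \nu_r$. This reduces the claim to a single transverse system, namely $(X^{[r]}, \mu^{[r]})$ equipped with the separated cross section $Z$ and the finite transverse measure $\sigma := \nu_1 \otimes \dots \otimes \nu_r$, to which the abstract recurrence theorem applies directly.

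Next I would apply the transversal Poincar\'e recurrence theorem (Theorem~\ref{Thm_Poincare}) to this system, obtaining for $\sigma$-almost every $z = (z_1, \dots, z_r) \in Z$ a sequence $(g_n)$ in $G_o$ with $g_n \to \infty$, $g_n.z \in Z$ and $g_n.z \to z$ in $X^{[r]}$. It then remains only to unwind the diagonal action. Since $g_n.z = (g_n.z_1, \dots, g_n.z_r)$, the condition $g_n.z \in Z$ is equivalent to $g_n.z_j \in Z_j$ for all $j$, that is, to $g_n \in (Z_1)_{z_1} \cap \dots \cap (Z_r)_{z_r}$; and, as $X^{[r]}$ carries the subspace topology inherited from $X_1 \times \dots \times X_r$, the convergence $g_n.z \to z$ is equivalent to $g_n.z_j \to z_j$ for all $j$. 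This is precisely the asserted conclusion.

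The only genuinely ergodic-theoretic content lies in Theorem~\ref{Thm_Poincare}, which I would establish as follows. A separated cross section $Z$ carries the canonical countable Borel equivalence relation $\cR = \{(z, g.z) : z \in Z,\ g.z \in Z\}$, and $G_o$-invariance of $\mu^{[r]}$ together with the transverse measure formula shows that $\sigma$ is $\cR$-invariant. Because $\mu^{[r]}$ is finite, the diagonal $G_o$-action on $X^{[r]}$ is conservative, and this conservativity passes to $\cR$. A Poincar\'e recurrence argument for the $\sigma$-preserving conservative relation $\cR$ then shows that for every basic open set $V$ and $\sigma$-almost every $z \in Z \cap V$ infinitely many $\cR$-neighbours of $z$ lie in $Z \cap V$; since $Z$ is separated the return times $\{g \in G_o : g.z \in Z\}$ form a discrete set, so these infinitely many neighbours are realised by elements $g_n \to \infty$. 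Diagonalising over a countable neighbourhood basis of $z$ yields a single sequence with $g_n \to \infty$ and $g_n.z \to z$. The \textbf{main obstacle} is therefore not in the present statement but in securing the finiteness of $\mu^{[r]}$ that drives this Poincar\'e argument: that finiteness is exactly the substantial fact furnished by commensurability through Theorem~\ref{Joining} (and ultimately the restriction-in-stages Theorem~\ref{Stages}), after which the theorem follows formally.
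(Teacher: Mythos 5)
Your proposal is correct and takes essentially the same route as the paper: Theorem~\ref{Joining} furnishes the finite diagonally invariant measure $\mu^{[r]}$ on $X^{[r]}$ with transverse measure $\nu_1 \otimes \dots \otimes \nu_r$, and applying Theorem~\ref{Thm_Poincare} to this system and unwrapping via $Z_z = (Z_1)_{z_1} \cap \dots \cap (Z_r)_{z_r}$ is exactly the paper's proof of Theorem~\ref{MTR3}. (Your auxiliary sketch of Theorem~\ref{Thm_Poincare} via conservativity of the cross-section equivalence relation differs in detail from the paper's argument using Poincar\'e sets and the projection $\pr_Y$, but since that theorem is quoted as a black box, this does not affect the proof of the statement at hand.)
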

\vspace{0.2cm}

\begin{theorem}[Positive density of intersections, general form]\label{PD3} If the cross sections $Z_1, \dots, Z_r$ are commensurable,  then for every convenient sequence $(G_r)$ of subsets of $G$,  we have 
\[
\Dens_{(G_t)}((Z_1)_{z_1} \cap \dots \cap (Z_r)_{z_r}) > 0,
\]
for $\nu_1 \otimes \dots \otimes \nu_r$-almost all $(z_1, \dots, z_r) \in Z_1 \times \dots \times Z_r$.
\end{theorem}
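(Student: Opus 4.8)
The plan is to deduce the statement from Theorem~\ref{Joining} by recognising the intersection of return time sets as the return time set of a single point in the intersection space, and then to prove an abstract positive-density statement for that space. Since $Z_1,\dots,Z_r$ are commensurable, Theorem~\ref{Joining} furnishes a finite $G_o$-invariant measure $\mu^{[r]}$ on $X^{[r]}$ for which $Z := Z_1 \times \dots \times Z_r$ is a separated cross section with transverse measure $\res{G_o}{X^{[r]}}{Z}(\mu^{[r]}) = \nu_1 \otimes \dots \otimes \nu_r$. The decisive observation is that, for $z = (z_1,\dots,z_r) \in Z$, the return time set of $z$ to $Z$ under the diagonal action is exactly
\[
Z_z = \{g \in G_o \mid g.z_j \in Z_j \text{ for all } j\} = (Z_1)_{z_1} \cap \dots \cap (Z_r)_{z_r}.
\]
Thus it suffices to show that $\Dens_{(G_t)}(Z_z) > 0$ for $\nu_1 \otimes \dots \otimes \nu_r$-almost every $z \in Z$. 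Note that $Z_x Z_x^{-1} \subseteq \Lambda_1 \cap \dots \cap \Lambda_r = \Lambda$ for every $x \in X^{[r]}$, and that $\Lambda$ is uniformly discrete by the commensurability criterion (Theorem~\ref{ThmComm}), so all the return sets in play are uniformly discrete.

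First I would prove positive lower density for $\mu^{[r]}$-almost every $x \in X^{[r]}$, and only afterwards transfer this to $\nu$-almost every $z \in Z$. Fix a relatively compact identity neighbourhood $U$ small enough that $U \times Z \to X^{[r]}$, $(u,z)\mapsto u.z$, is injective (possible since $Z$ is separated) and $U^{-1}U \cap \Lambda = \{e\}$. Then for every $x$ the set $\{w \in G_o \mid w.x \in U.Z\}$ is the disjoint union $\bigsqcup_{g \in Z_x} Ug$, whence
\[
m_{G_o}\big(\{w \in G_t \mid w.x \in U.Z\}\big) = m_{G_o}(U)\,|Z_x \cap G_t| + O\big(m_{G_o}(UG_t \setminus G_t)\big).
\]
The boundary term is $o(m_{G_o}(G_t))$ by the asymptotic invariance built into Definition~\ref{DefConvenient}, so $\varliminf_t |Z_x \cap G_t|/m_{G_o}(G_t)$ equals $m_{G_o}(U)^{-1}$ times the lower average of $\mathbf{1}_{U.Z}$ along $(G_t)$. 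The pointwise ergodic theorem guaranteed by convenience of $(G_t)$ then identifies this average, for $\mu^{[r]}$-almost every $x$, with $\bE[\mathbf{1}_{U.Z} \mid \cI](x)$, where $\cI$ is the $\sigma$-algebra of $G_o$-invariant sets. The transverse measure formula applied to $f(a,b) = \mathbf{1}_U(a)$ gives $\mu^{[r]}(U.Z) = m_{G_o}(U)\,\nu(Z) > 0$; and since every $G_o$-orbit in $X^{[r]}$ meets $Z$ (hence $U.Z$) by the very definition of the intersection space, no invariant set of positive $\mu^{[r]}$-measure can be disjoint from $U.Z$. Therefore $\bE[\mathbf{1}_{U.Z}\mid\cI] > 0$ almost everywhere, which yields positive lower density for $\mu^{[r]}$-almost every $x$.

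To transfer this to $\nu$-almost every $z \in Z$, I would first check that the conclusion is a $G_o$-invariant property of $x$: from $Z_{g.x} = Z_x g^{-1}$, uniform discreteness of $Z_x$, and asymptotic invariance of $(G_t)$ one gets $|Z_x g^{-1} \cap G_t| - |Z_x \cap G_t| = o(m_{G_o}(G_t))$, so the set $P \subseteq X^{[r]}$ of points with $\Dens_{(G_t)}(Z_x)>0$ is $G_o$-invariant and $\mu^{[r]}$-conull. Applying the transverse measure formula to $f(a,b) = \mathbf{1}_U(a)\mathbf{1}_{Z \setminus P}(b)$ gives $m_{G_o}(U)\,\nu(Z \setminus P) = \mu^{[r]}\big(U.(Z \setminus P)\big) \le \mu^{[r]}(X^{[r]} \setminus P) = 0$, the inequality because $P$ is invariant and $U.(Z\setminus P)$ lies in its complement. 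Hence $Z \setminus P$ is $\nu$-null, so $\Dens_{(G_t)}(Z_z) > 0$ for $\nu$-almost every $z \in Z$, which is the assertion.

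The main obstacle I anticipate is the middle step: turning the count $|Z_x \cap G_t|$ into an honest ergodic average and controlling it with a pointwise ergodic theorem simultaneously. This is precisely where the two hypotheses enter — commensurability supplies, via Theorem~\ref{ThmComm}, the uniform discreteness that makes the tubes $Ug$ disjoint and the packing bounds available, while the defining features of a convenient sequence supply both the negligibility of the boundary terms and the pointwise ergodic theorem. For non-amenable $G_o$, where $(G_t)$ is not a F\o lner sequence, I expect the same scheme to go through, but the asymptotic-invariance manipulations must be replaced by the corresponding admissibility properties encoded in Definition~\ref{DefConvenient}; verifying that positivity of the lower density remains a genuinely $G_o$-invariant quantity in that setting is the delicate point.
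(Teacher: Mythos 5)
Your reduction is exactly the paper's: Theorem \ref{Joining} produces the finite $G_o$-invariant intersection measure $\mu^{[r]}$ on $X^{[r]}$ with $\res{G_o}{X^{[r]}}{Z}(\mu^{[r]}) = \nu_1 \otimes \dots \otimes \nu_r$, and the identity $Z_z = (Z_1)_{z_1} \cap \dots \cap (Z_r)_{z_r}$ converts the claim into positivity of $\Dens_{(G_t)}(Z_z)$ for the diagonal action. At that point the paper simply cites its Theorem \ref{Thm_Density}, which rests on the transversal pointwise ergodic theorem (Theorem \ref{Thm_Ergodic}) plus the ergodic decomposition of transverse measures (Corollary \ref{Cor_ErgodicDecomposition}) and injectivity of the restriction map to see $\eta_Y(1)>0$. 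You instead re-derive the density statement inline; your tube-counting is in substance the paper's sandwich $m_{G_o}(U)\,|Z_x \cap G_{t-\delta_n}| \leq m_{G_o}\big(\{w \in G_t : w.x \in U.Z\}\big) \leq m_{G_o}(U)\,|Z_x \cap G_{t+\delta_n}|$, valid once $U \subset V_n$, which the paper implements with bump functions $\rho_n$ supported in $V_n$. One caveat already here: Definition \ref{DefConvenient} does \emph{not} give $m_{G_o}(UG_t \setminus G_t) = o(m_{G_o}(G_t))$ for a fixed $U$; it only controls $(G_t)^{\pm}_{V_n}$ up to the factors $1 \pm \eps_n$. Since you may shrink $U$ and only need positivity of the liminf, this is repairable by the sandwich.

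The genuine gap is your transfer step. You argue that $P = \{x : \Dens_{(G_t)}(Z_x) > 0\}$ is $G_o$-invariant from $|Z_x g^{-1} \cap G_t| - |Z_x \cap G_t| = o(m_{G_o}(G_t))$, i.e.\ from $m_{G_o}(G_t g \,\triangle\, G_t) = o(m_{G_o}(G_t))$ for each fixed $g$. This is a F\o lner-type property that convenient sequences are emphatically not assumed to have: the definition controls perturbations only by the \emph{shrinking} neighbourhoods $V_n$, and the remark following Definition \ref{DefConvenient} stresses that for non-amenable $G$ no invariance of this kind is automatic --- which is precisely why condition (ii) \emph{postulates} that the convergence set $E_\varphi(\mu)$ is $G$-invariant. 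Invariance of the good set is an input of the definition, not something recoverable by counting. The repair is the paper's route: for $G_o$-ergodic $\mu$, run the sandwich on the invariant conull sets $E_{f_{\rho_n}}(\mu)$ supplied by condition (ii), intersect over $n$ to obtain a $G_o$-invariant conull set on which the lower density equals $\mu_Y(1) > 0$, and pass to the cross-section via Lemma \ref{lemma_null} (your computation with $f(a,b) = \mathbf{1}_U(a)\mathbf{1}_{Z \setminus P}(b)$ is exactly that lemma, but it needs $P$ invariant, which is what had to be fixed). Relatedly, your identification of the ergodic average with $\bE[\mathbf{1}_{U.Z} \mid \cI]$ is not literally given by Definition \ref{DefConvenient}, which identifies the limit only for ergodic measures; the non-ergodic case must be handled through Corollary \ref{Cor_ErgodicDecomposition}, as the paper does in the proof of Theorem \ref{Thm_Density}.
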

\vspace{0.2cm}

This allows us in particular to establish the following twisted versions of Theorems \ref{MTR1} and \ref{PD1}: 

\begin{corollary}[Twisted multiple transverse recurrence and twisted positive density]\label{IntroTwisted} Let $P_o$, $\mu$ and $\nu$ be as in Subsection \ref{SecMotivation} and let $\lambda_1, \dots, \lambda_r$ be contained in the subgroup of $G_o$ generated by $P_o$.
\vspace{0.1cm}
\begin{enumerate}[(i)]
\item For $\nu^{\otimes r}$-almost all $(Q_1, \dots, Q_r) \in \cT_{P_o}^r$ there exist $g_n \in \bigcap_{k=1}^r \lambda_k Q_k \lambda_k^{-1}$ such that
\[
 \lambda_1^{-1}g_n\lambda_1.Q_1 \ra Q_1, \quad \dots, \quad  \lambda_r^{-1}g_n\lambda_r.Q_r \ra Q_r \qand g_n \ra \infty.\]
 
\item For every convenient sequence $(G_t)$ in $G_o$,  we have
\[
\Dens_{(G_t)}\big(\lambda_1Q_1\lambda_1^{-1} \cap \dots \cap \lambda_r Q_r \lambda_r^{-1}\big) > 0,
\]
for $\nu^{\otimes r}$-almost all $(Q_1, \dots, Q_r) \in \cT_{P_o}^r$.
\end{enumerate}
\end{corollary}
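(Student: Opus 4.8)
\emph{The plan} is to realize each twist $\lambda_k$ by a conjugation automorphism and thereby reduce the statement to the general multiple transverse recurrence and positive density theorems. Write $\Gamma := \langle P_o\rangle$, and for $\lambda\in\Gamma$ let $c_\lambda\colon\mathcal C(G_o)\to\mathcal C(G_o)$, $c_\lambda(A) := \lambda A\lambda^{-1}$, denote conjugation. A direct computation with the action $g.A = Ag^{-1}$ gives the intertwining relation $c_\lambda(g.A) = (\lambda g\lambda^{-1}).c_\lambda(A)$, so $c_\lambda$ carries the $G_o$-action to itself through the inner automorphism $\sigma_\lambda\colon g\mapsto\lambda g\lambda^{-1}$. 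Since $c_\lambda(\Omega_{P_o}) = \Omega_{\lambda P_o\lambda^{-1}}$ and $c_\lambda(\cT_{P_o}) = \cT_{\lambda P_o\lambda^{-1}}$, I would set
\[
X_k := \Omega^\times_{\lambda_k P_o\lambda_k^{-1}},\qquad Z_k := \cT_{\lambda_k P_o\lambda_k^{-1}},\qquad \mu_k := (c_{\lambda_k})_*\mu,
\]
and take $\nu_k$ to be the transverse measure of $\mu_k$. As $G_o$ is unimodular, $\sigma_{\lambda_k}$ preserves Haar measure, so $\mu_k$ is a $G_o$-invariant probability measure on $X_k$ and, by naturality of the transverse-measure construction under the Haar-preserving isomorphism $c_{\lambda_k}$, one has $\nu_k = (c_{\lambda_k})_*\nu$.

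Next I would identify the relevant return data. Since the return time set from any $R\in\Omega^\times_S$ to $\cT_S$ (for $S := \lambda_k P_o\lambda_k^{-1}$) equals $R$ itself, the return time set of the system $(X_k,Z_k)$ is $\Lambda_k = (\lambda_k P_o\lambda_k^{-1})(\lambda_k P_o\lambda_k^{-1})^{-1} = \lambda_k\Lambda\lambda_k^{-1}$, and the return times from $R_k := c_{\lambda_k}(Q_k) = \lambda_k Q_k\lambda_k^{-1}\in Z_k$ are exactly $(Z_k)_{R_k} = \lambda_k Q_k\lambda_k^{-1}$. Granting commensurability of $Z_1,\dots,Z_r$, Theorem~\ref{MTR3} then produces, for $\nu_1\otimes\dots\otimes\nu_r$-almost every $(R_1,\dots,R_r)$, a single sequence $g_n\to\infty$ in $\bigcap_{k}\lambda_k Q_k\lambda_k^{-1}$ with $g_n.R_k\to R_k$ for all $k$. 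Applying $c_{\lambda_k}^{-1}$ and the intertwining relation gives $c_{\lambda_k}^{-1}(g_n.R_k) = (\lambda_k^{-1}g_n\lambda_k).Q_k\to Q_k$, which is precisely~(i); Theorem~\ref{PD3} applied to the same system yields $\Dens_{(G_t)}\big(\bigcap_k\lambda_k Q_k\lambda_k^{-1}\big) > 0$, which is~(ii). Because each $c_{\lambda_k}$ is a measure isomorphism $(\cT_{P_o},\nu)\to(Z_k,\nu_k)$, the exceptional $\nu_1\otimes\dots\otimes\nu_r$-null set pulls back under $\prod_k c_{\lambda_k}$ to a $\nu^{\otimes r}$-null set, so both conclusions hold for $\nu^{\otimes r}$-almost all $(Q_1,\dots,Q_r)\in\cT_{P_o}^r$.

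The hard part will be the commensurability of the cross sections $Z_1,\dots,Z_r$, i.e.\ verifying the hypotheses of Theorem~\ref{ThmComm} for the conjugates $\Lambda_k = \lambda_k\Lambda\lambda_k^{-1}$: that $\Lambda_1\cap\dots\cap\Lambda_r$ is relatively dense and that none of the products $\Lambda_i^3\Lambda_j^3$ accumulates at the identity. This is exactly where the hypothesis $\lambda_k\in\Gamma = \langle P_o\rangle$ must be used. I would deduce it from the fact that $\Gamma$ lies in the commensurator $\comm_{G_o}(\Lambda)$ of the uniform approximate lattice $\Lambda$, in the theory of approximate lattices \cite{BjorklundHartnick1}: since $\comm_{G_o}(\Lambda)$ is a subgroup, it suffices to treat a single generator $p\in P_o$, and this then propagates to all of $\Gamma$. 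Commensurability of $\Lambda$ with each $\Lambda_k$ makes the $\Lambda_k$ mutually commensurable, whence each product $\Lambda_i^3\Lambda_j^3$ lies in a fixed power of a uniform approximate lattice envelope of $\Lambda_i\cup\Lambda_j$ and is therefore uniformly discrete, because all powers of a uniform approximate lattice are uniformly discrete; relative density of $\Lambda_1\cap\dots\cap\Lambda_r$ then follows since the intersection of two commensurable uniform approximate lattices is again a relatively dense uniform approximate lattice, applied inductively. Establishing $\langle P_o\rangle\subseteq\comm_{G_o}(\Lambda)$ cleanly---rather than merely $\langle\Lambda\rangle\subseteq\comm_{G_o}(\Lambda)$, which can be strictly smaller---is the genuinely delicate step, and the one I would expect to require the full strength of the approximate-lattice machinery.
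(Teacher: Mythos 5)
Your transport argument is, up to packaging, the paper's own: the paper derives the corollary from Theorem \ref{Thm_Commensurable}, which stays inside the single hull $\Omega_{P_o}^{\times}$ and uses the \emph{translated} cross-sections $Z_k:=\lambda_k.\cT_{P_o}$, for which $(Z_k)_{\lambda_k.Q_k}=\lambda_k Q_k\lambda_k^{-1}$ and $\Lambda_k=\lambda_k\Lambda\lambda_k^{-1}$ --- exactly your return data --- and identifies the transverse measures by the change-of-cross-section Lemma \ref{Lemma_YgY} rather than by your conjugation isomorphisms $c_{\lambda_k}$. Your naturality claim for the transverse measure under the Haar-preserving automorphism $\sigma_{\lambda_k}$ is true, but it is an extra verification that the paper's route avoids; up to this cosmetic difference, the reduction to Theorems \ref{MTR3} and \ref{PD3} and the pullback of the null set are sound and match the paper.

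The genuine gap is in the commensurability step, and your proposed repair does not work as written. First, the assertion that the intersection of two commensurable uniform approximate lattices is again relatively dense is false: two model sets from the same cut-and-project scheme, with symmetric windows $W,W'$ meeting only in finitely many points, are mutually commensurable uniform approximate lattices whose intersection (the model set of $W\cap W'$) is far from relatively dense; in the approximate-subgroup theory such intersection statements only hold after passing to bounded powers (e.g.\ for $\Lambda^2\cap\Xi^2$). So your induction for the relative density of $\Lambda_1\cap\dots\cap\Lambda_r$ collapses, and the claim that $\Lambda_i^3\Lambda_j^3$ sits inside a power of an ``envelope'' of $\Lambda_i\cup\Lambda_j$ is precisely what would need proof. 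Second, the paper needs no commensurator machinery at all: Lemma \ref{Lemma_Approx} rests on the two-line computation that $\lambda_k\in\Lambda^{p_r}$ for some common $p_r$, whence $\Lambda_i^3\Lambda_j^3=\lambda_i\Lambda^3\lambda_i^{-1}\lambda_j\Lambda^3\lambda_j^{-1}\subseteq\Lambda^{2(3+2p_r)}$, which is uniformly discrete because every power of a uniform approximate lattice is. Third, your instinct about $\langle P_o\rangle$ versus $\Lambda^\infty=\langle\Lambda\rangle$ is a fair reading of the statement, but the paper's implicit answer is a normalization, not commensurators: replacing $P_o$ by $P_op_0^{-1}$ for some $p_0\in P_o$ changes neither $\Omega_{P_o}$, $\cT_{P_o}$, $\mu$, $\nu$ nor $\Lambda$ (the same reduction as in Proposition \ref{UDExamples}), and afterwards $e\in P_o$, hence $P_o\subseteq\Lambda$ and $\langle P_o\rangle=\Lambda^\infty$, so Lemma \ref{Lemma_Approx} applies verbatim. (Strictly speaking this normalization shrinks the admissible twists to $\Lambda^\infty$; twists in $\langle P_o\rangle\setminus\Lambda^\infty$ for the original $P_o$ are not covered by the paper's written argument either, and the two claims above would not close that case.)
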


We do not know whether separatedness of $Y$ can be replaced by a weaker condition in Theorem \ref{Stages}. If this was the case, then the assumptions in all of our theorems could be weakened accordingly.

\subsection{A general correspondence theorem for transverse measures}

It is obvious from the results listed above that transverse measures (of finite measures) play a central role in this article. In recent years there have been a number of excellent expositions of the theory of transverse measures, see in particular \cite{Avni, KPV, Slutsky1}, and in preparing this article we profited very much from these expositions. However, for the purposes of the present article we needed a version of transverse measure theory which applies to actions which are not necessarily essentially free and measures which are not necessarily finite (but only $\sigma$-finite). For the convenience of the reader we thus decided to include a self-contained treatment of the theory of transverse measures in this specific context. Let us briefly summarize this theory here: \\

We consider a Borel action $G \curvearrowright X$ of a unimodular lcsc group $G$ on a standard Borel space $X$ with separated cross section $Y$. We fix a Haar measure $m_G$ on $G$ throughout. The action of $G$ on $X$ defines an equivalence relation on $X$, the \emph{orbit relation} of $G \curvearrowright X$, and we denote by $R_{G,Y}$ (or $R_Y$ if $G$ is clear from context) the restriction of this orbit relation to $Y$; it is called the \emph{cross section equivalence relation}. We are going to construct a bijective correspondence between certain classes of $G$-invariant $\sigma$-finite measures on $X$ and certain classes of $R_{Y}$-invariant $\sigma$-finite measures on $Y$. (See Subsection \ref{SecRelationInvMeasure} below for the notion of invariance under an equivalence relation.) To obtain this perfect correspondence, we have to fix certain finiteness conditions on the measures in questions.  

\begin{definition} A \emph{Borel exhaustion} of $Y$ is an increasing sequence $\cY= (Y_n)$ in $\mathscr{B}_Y$ such that $Y = \bigcup Y_n$. A Borel measure $\nu$ on $Y$ is called \emph{$\cY$-finite} if $\nu(Y_n) < \infty$ for all $n$, and we denote by $M(Y, \cY)$ the space of all $\cY$-finite Borel measures on $Y$.
\end{definition}

We fix once and for all a Borel exhaustion $\cG = (K_n)$ of $G$ by compact sets, such that every compact set in $G$ is contained in some $K_n$. For every Borel exhaustion $\cY = (Y_n)$ of $Y$ we then obtain a Borel exhaustion $\cY_{\cG} := (K_n.Y_n)$ of $X$. We now denote by $M(X, \cY_{\cG})^G \subset M(X, \cY_{\cG})$ the subspace of $G$-invariant measures and by $M(Y, \cY)^{R_{G,Y}} \subset M(Y, \cY)$ the subspace of $R_{G,Y}$-invariant measures.

\begin{theorem}[Measure correspondence, general case]\label{GMCT} There exist mutually inverse bijections 
\[\res{G}{X}{Y}:M(X, \cY_{\cG})^G \to M(Y, \cY)^{R_{G,Y}}, \qand \ind GXY: M(Y, \cY)^{R_{G,Y}} \to M(X, \cY_{\cG})^G,\] 
such that the following hold for all paris $(\mu, \nu)$ with $\mu \in M(X, \cY_{\cG})^G$ and $\nu = \res{G}{X}{Y}(\mu)$:
\vspace{0.2cm}
\begin{enumerate}
\item For every bounded non-negative Borel function $f$ on $Y$ we have
\[
\nu(f) =   \int_X \sum_{g \in Y_x} f(g.x)\, d\mu(x).
\]
\item More generally, for every bounded non-negative Borel function $F$ on $G \times Y$ we have
\[
(m_G \otimes \nu)(F) = \int  \sum_{g \in Y_x} F(g^{-1},g.x) \, d\mu(x)
\]
\item If $C \subset G \times Y$ is a Borel subset such that $a: C \to X$, $(g,y) \mapsto g.y$ is injective, then
\[
\mu(a(C)) = (m_G \otimes \nu)(C).
\] 
\item If $B$ be a $G$-invariant Borel set in $X$, then
\[
\mu(B) = 0 \iff \nu(B \cap Y) = 0.
\]
\end{enumerate}
\end{theorem}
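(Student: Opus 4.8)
The plan is to construct the two maps by hand from the local product structure of a separated cross section, prove they are mutually inverse, and then read off the integration formulas (1)--(4) from the construction.

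\emph{Geometric preliminaries.} Since $Y$ is separated, $\Lambda(Y)$ meets some identity neighbourhood only in $\{e\}$, so I can fix a symmetric, relatively compact identity neighbourhood $V$ with $V^{-1}V\cap\Lambda(Y)=\{e\}$. A one-line computation then shows that the action map $a\colon G\times Y\to X$, $a(g,y)=g.y$, restricts to an \emph{injective} Borel map on $V\times Y$; by Lusin--Souslin its image $V.Y$ is Borel and $a$ is a Borel isomorphism $V\times Y\xrightarrow{\sim}V.Y$. The same computation shows the return sets $Y_x=\{g:g.x\in Y\}$ are left $V$-uniformly discrete, hence countable, and that all fibres of $a$ are countable. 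In particular every sum appearing in the statement is a countable sum of Borel functions and is Borel in $x$.

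\emph{Construction of $\res{G}{X}{Y}$.} Given $\mu\in M(X,\cY_\cG)^G$, transport $\mu|_{V.Y}$ through the chart to a Borel measure $\tilde\mu$ on $V\times Y$. The identity $a(hv,y)=h.a(v,y)$ shows that $\tilde\mu$ is invariant under every partial left translation $(v,y)\mapsto(hv,y)$ that stays inside $V\times Y$, and that this translation fixes the $Y$-coordinate. Disintegrating $\tilde\mu$ over the projection to $Y$, each fibre measure on $V$ is invariant under all left translations keeping it inside $V$, hence is a scalar multiple of $m_G|_V$ by local uniqueness of Haar measure; this produces a unique Borel measure $\nu$ on $Y$ with $\tilde\mu=(m_G|_V)\otimes\nu$, equivalently $\nu(A)=\mu(W.A)/m_G(W)$ for every Borel $W\subset V$ of positive finite measure. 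Independence of $W$ is exactly the product decomposition, and translating tubes by $G$-invariance of $\mu$ shows $\nu$ is $R_{G,Y}$-invariant. Indexing the exhaustions so that $V\subset K_n$ for large $n$ gives $Y_n\subset V.Y_n\subset K_n.Y_n$, whence $\cY_\cG$-finiteness of $\mu$ forces $\nu(Y_n)<\infty$; thus $\nu\in M(Y,\cY)^{R_{G,Y}}$, and I set $\res{G}{X}{Y}(\mu):=\nu$.

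\emph{Construction of $\ind{G}{X}{Y}$ and inverseness.} Because $a$ is countable-to-one Borel, Lusin--Novikov furnishes a countable Borel partition $G\times Y=\bigsqcup_k C_k$ on which $a$ is injective (concretely, intersect a countable cover of $G$ by left translates of $V$ with a Borel enumeration of the uniformly discrete fibres). For $\nu\in M(Y,\cY)^{R_{G,Y}}$ I define $\mu:=\sum_k a_*\big((m_G\otimes\nu)|_{C_k}\big)$. The substantive points are that $\mu$ does not depend on the partition (pass to a common refinement) and is $G$-invariant (substitute $g\mapsto hg$ and use left-invariance of $m_G$, reconciling the two bookkeepings of a point reached along distinct return times by $R_{G,Y}$-invariance of $\nu$). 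On any tube $C\subset V\times Y$, injectivity gives $\mu(a(C))=(m_G\otimes\nu)(C)$, and comparing this with the tube formula for $\res{}{}{}$ yields $\res{G}{X}{Y}\circ\ind{G}{X}{Y}=\id$ and $\ind{G}{X}{Y}\circ\res{G}{X}{Y}=\id$; I set $\ind{G}{X}{Y}(\nu):=\mu$.

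\emph{The formulas, and the main obstacle.} Property (3) is the injective-domain identity above, extended from tubes to arbitrary injectivity domains through the partition. Summing (3) over the pieces $C_k$ and applying Fubini together with left-invariance of $m_G$ gives the master formula (2); inserting a unit-mass test function in the $G$-variable into (2) recovers the integral formula (1); and applying (3) to a $G$-invariant $B$, for which $V.Y\cap B=V.(B\cap Y)$, gives the nullset equivalence (4). I expect the crux to be the rigidity step in the second paragraph --- that $G$-invariance forces the tube pullback to be the product $m_G|_V\otimes\nu$ --- together with the measurable gluing of the third: the former converts invariance into the transverse measure via a disintegration that must be controlled in the $\sigma$-finite setting, while the latter requires a genuinely Borel choice of injectivity sheets and a proof that the glued measure neither depends on that choice nor loses $G$-invariance.
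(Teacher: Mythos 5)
Your architecture is essentially the paper's: the tube formula obtained from uniqueness of Haar measure is Proposition \ref{Prop_Tmap} (done there via the periodization $T$ rather than a chart disintegration, but the content is the same), the countable decomposition into injective sheets is Lemma \ref{Lemma_BasicProp}~(v), and your Lusin--Novikov gluing for $\ind GXY$ is a repackaging of the Borel $Y$-section lift of Theorem \ref{LiftingInvariantMeasure}. However, there is a genuine gap: nowhere in your proposal does unimodularity of $G$ appear, and the theorem is \emph{false} without it, so the two steps you compress into single clauses cannot go through as described. Concretely, let $G$ be the $ax+b$ group acting on $X = G$ by left translation, let $Y = \{(a_0^n,0) : n \in \bZ\}$ be the discrete subgroup generated by a dilation (a separated cross-section, since $\Lambda_a(Y) = Y$ is discrete), and let $\mu = m_G$. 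Your own tube formula $\nu(A) = \mu(W.A)/m_G(W)$ gives $\nu(\{y\}) = \Delta_G(y)$ for $y \in Y$, where $\Delta_G$ is the modular function; since the action is transitive, $R_Y$ identifies all atoms of $Y$, so $R_Y$-invariance would force the weights $\Delta_G(y)$ to be constant, which they are not. Hence any correct proof must use right- (equivalently conjugation-) invariance of $m_G$ at specific points, and your sketch never does.

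The two places this is hidden are exactly your glossed steps. First, ``translating tubes by $G$-invariance of $\mu$ shows $\nu$ is $R_{G,Y}$-invariant'' is not a single group translation: a partial $R_Y$-map is only locally of the form $\varphi(y) = \rho_k(y)\lambda_k.y$ with a \emph{Borel, point-dependent} $\rho_k : A_k \to W_k$ (the paper's Lemma \ref{Lemma_partialstructure}, with the $V^4 \subset U$ bookkeeping ensuring $\lambda_k^{-1}W_k\lambda_k$ is still an admissible tube base); the twisted tube over $\varphi(A_k)$ then has $G$-fibres $W_k\rho_k(y)^{-1}$, right-translated copies of $W_k$, so evaluating it against $m_G \otimes \nu$ requires right-invariance of $m_G$, and the comparison $\mu(W_k\lambda_k.A_k) = \mu(\lambda_k^{-1}W_k\lambda_k.A_k) = m_G(\lambda_k^{-1}W_k\lambda_k)\,\nu(A_k)$ requires $m_G(\lambda^{-1}W\lambda) = m_G(W)$ --- left-invariance alone produces the spurious factor $\Delta_G(\lambda_k)$, which is precisely what the counterexample above detects (this is Lemma \ref{Lemma_RYinvariance} in the paper). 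Second, in your ``reconciling the two bookkeepings \ldots by $R_{G,Y}$-invariance of $\nu$'': when two injective sheets have the same image in $X$, the paper's Lemma \ref{Lemma_injectivesets} matches them by an element of $[[R_Y]]$ \emph{and} a right translation of the $G$-fibres by the return-time map $\gamma_k(y) \in Y_y \cap V_k^{-1}$; $R_{G,Y}$-invariance of $\nu$ handles only the base motion, and the fibre translation again needs $m_G$ right-invariant (Corollary \ref{cor_measC1C2}). Once unimodularity is invoked at these two points, and the measurable local structure of partial $R_Y$-maps (which your one-line tube translation presupposes) is supplied, your construction coincides with the paper's and your derivation of properties (1)--(4) is correct, up to two small repairs: formula (2) follows from (3) by countable additivity alone (no left-invariance is needed there), and (4) requires covering $X$ by countably many translates $\xi V.Y$ to upgrade $\mu(B \cap V.Y) = 0$ to $\mu(B) = 0$, as in Lemma \ref{lemma_null}.
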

\vspace{0.2cm}

If $\mu \in M(X, \cZ_{\cG})^G$, then $\nu := \res{G}{X}{Y}(\mu)$ is called the \emph{transverse measure} of $\nu$ and $\mu$ is called the \emph{lifted measure} of $\nu$. An important special case of Theorem \ref{GMCT} concerns the case of Radon measures:

\begin{corollary}[Measure correspondence for Radon measures]\label{CorRadon} Let $G \curvearrowright X$ be a continuous action of a lcsc group $G$ on a lcsc space $X$ and let $Y \subset X$ be a closed separated cross section. Then $\res GXY$ restricts to a bijection between $G$-invariant Radon measures on $X$ and $R_{G,Y}$-invariant Radon measures on $Y$.
\end{corollary}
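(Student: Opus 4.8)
The plan is to deduce the corollary from Theorem \ref{GMCT} by choosing the Borel exhaustion adapted to the topology, so that the condition of being $\cY$-finite becomes \emph{equivalent} to being a Radon measure. Throughout I use the standard fact that on an lcsc space every locally finite (i.e.\ finite on compacta) Borel measure is automatically inner and outer regular, so that ``Radon'' and ``locally finite'' coincide; hence it suffices to match the finiteness conditions of Theorem \ref{GMCT} with local finiteness on the two sides.

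First I would fix the exhaustions. Since $Y$ is closed in the lcsc space $X$, it is itself lcsc, so it admits an increasing sequence $\cY=(Y_n)$ of \emph{relatively compact open} subsets with $\overline{Y_n}\subseteq Y_{n+1}$ and $Y=\bigcup_n Y_n$. For such an exhaustion every compact subset of $Y$ lies in some $Y_n$, so a Borel measure $\nu$ on $Y$ satisfies $\nu\in M(Y,\cY)$ if and only if it is locally finite, i.e.\ Radon; thus $M(Y,\cY)^{R_{G,Y}}$ is \emph{exactly} the set of $R_{G,Y}$-invariant Radon measures on $Y$. On the $X$-side, each $K_n.Y_n$ is relatively compact, being contained in the compact set $K_n.\overline{Y_n}$ (the continuous image of $K_n\times\overline{Y_n}$), and $\cY_\cG=(K_n.Y_n)$ exhausts $X=G.Y$; consequently every $G$-invariant Radon measure on $X$ is $\cY_\cG$-finite, giving the inclusion $\{\text{$G$-invariant Radon on }X\}\subseteq M(X,\cY_\cG)^G$.

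It then remains to prove the reverse inclusion, namely that every $\mu\in M(X,\cY_\cG)^G$ is Radon; the corollary follows by combining this with the bijection of Theorem \ref{GMCT} and the identification of $M(Y,\cY)^{R_{G,Y}}$ above. The key local tool is the \emph{tube (flow-box) structure} coming from separatedness: there is a relatively compact symmetric open identity neighbourhood $W\subseteq G$ with $W^{-1}W\cap\Lambda(Y)=\{e\}$, so that the action map $a(g,z)=g.z$ is injective on $W\times Y$, and for each $y\in Y$ a relatively compact open neighbourhood $V_y\subseteq Y$ with $a(W\times V_y)$ open in $X$. Granting this, given a compact $K\subseteq X$ I would write each $x\in K$ as $x=g.y$ with $y\in Y$, so that $x\in a(gW\times V_y)$, extract a finite subcover $K\subseteq\bigcup_{i=1}^m a(g_iW\times V_{y_i})$, and bound each piece using $G$-invariance and Theorem \ref{GMCT}(3): since $a$ is injective on $W\times V_{y_i}$,
\[
\mu\big(a(g_iW\times V_{y_i})\big)=\mu\big(a(W\times V_{y_i})\big)=(m_G\otimes\nu)(W\times V_{y_i})=m_G(W)\,\nu(V_{y_i}),
\]
where $\nu=\res GXY(\mu)\in M(Y,\cY)^{R_{G,Y}}$ is Radon by the previous paragraph. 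As $m_G(W)<\infty$ and $\nu(V_{y_i})\le\nu(\overline{V_{y_i}})<\infty$, summing over the finite cover yields $\mu(K)<\infty$, so $\mu$ is Radon. The same computation with $C=W\times L$ for compact $L\subseteq Y$ shows conversely that $\res GXY$ sends Radon measures on $X$ to Radon measures on $Y$, closing the loop.

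I expect the main obstacle to be the \emph{openness of the tubes} $a(W\times V_y)$, i.e.\ the existence of a local product (tubular) neighbourhood of the closed separated cross section $Y$; this is the one genuinely geometric input, and it is precisely what allows a compact set to be covered by \emph{finitely many} relatively compact injective tubes, so that finiteness of $\nu$ on compacta transfers to $\mu$. Everything else is the bookkeeping of matching $\cY$-finiteness with local finiteness together with the automatic regularity of locally finite Borel measures on lcsc spaces. This openness should be available from the separatedness setup underlying the proof of Theorem \ref{GMCT}; if one instead had only injective (but not open) tubes, one would replace the finite-subcover argument by a global Borel unfolding $C\subseteq G\times Y$ with $a|_C$ injective and $a(C)=X$, and the difficulty would migrate to establishing the local finiteness in $G$ needed to keep $(m_G\otimes\nu)\big(C\cap a^{-1}(K)\big)$ finite.
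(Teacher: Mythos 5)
Your overall strategy --- choosing the exhaustions so that $\cY$-finiteness on $Y$ and $\cY_\cG$-finiteness on $X$ become equivalent to local finiteness, and then invoking Theorem \ref{GMCT} --- is exactly the paper's, and much of your bookkeeping is correct: the identification of $M(Y,\cY)^{R_{G,Y}}$ with the $R_{G,Y}$-invariant Radon measures, the easy inclusion that a $G$-invariant Radon measure on $X$ is $\cY_\cG$-finite (via compactness of $K_n.\overline{Y_n}$), and the tube computation via invariance and Theorem \ref{GMCT}(3) are all fine. The genuine gap is precisely at the point you flag yourself: the openness of the tubes $a(W\times V_y)$ is \emph{false} under the hypotheses of the corollary, and your closing remark about a ``global Borel unfolding'' does not repair the argument --- you concede the difficulty merely migrates. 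A closed separated cross-section need not be topologically transverse. Concretely, let $G=\bZ$ act on $X=\bT$ by an irrational rotation and take $Y=[0,1/2]$: this is a closed cross-section (every orbit is dense), and it is $U$-separated for the open identity neighbourhood $U=\{0\}\subset\bZ$; but any $W$ with $W^{-1}W\subset U$ is a singleton, so a tube $a(W\times V_y)$ is just a relatively open subset of the arc, e.g.\ $[0,\,0.1)$, which is not open in $\bT$. (A connected-group example: the horizontal flow on $\bR^2$ with $Y=(\{0\}\times\bR)\cup\{(1,0)\}$, where the tube through the redundant point $(1,0)$ is a horizontal segment, not open.) In such cases a compact set cannot be covered by finitely many tubes via a finite-subcover argument, so your proof of the crucial implication $\mu\in M(X,\cY_\cG)^G\Rightarrow\mu$ Radon collapses; note that the corollary itself still holds in these examples, so it is the method, not the statement, that fails.

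The paper closes exactly this step by a different, non-geometric device. Taking $\cY=\{Y_n\}$ an exhaustion of $Y$ by \emph{compact} sets, the sets $X_n:=K_n.Y_n$ are compact and exhaust $X$; since $X$ is a Baire space, almost every $X_n$ has non-empty interior, and combining an interior point with $G$-invariance of $\mu$ --- the translates $g.V$ of a finite-measure open set $V\subset X_{n_0}$ are open sets of the same finite measure --- one covers any compact set by finitely many open sets of finite $\mu$-measure. This uses only $\mu(X_n)<\infty$, invariance, and Baire category, and requires no transversality or tubular-neighbourhood structure whatsoever. If you replace your tube step by this Baire-category argument, the rest of your write-up can stand essentially as is.
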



\subsection{Structure of the paper}

This paper is composed of three parts. \textbf{Part 1} (Section \ref{SecMeasureTheory} - Section \ref{SecTransverseMeasure}) develops transverse measure theory in our specific setting and establishes Theorem \ref{GMCT} and Corollary \ref{CorRadon}. \textbf{Part 2} (Sections \ref{SecTransversalRecurrence} and \ref{SecErgodicTransversal}) summarizes the basic ergodic theory of transverse measures. In particular, we establish Theorem \ref{MTR3} and Theorem \ref{PD3} in the case $r = 1$. Our main results are derived in \textbf{Part 3} (Section \ref{Sec:semidirect} - Section \ref{SecAUL}): In Section \ref{Sec:semidirect} we study transverse measure for semi-direct product groups and establish Theorem \ref{Stages}. We apply this in Section \ref{SecIntersectionSpaces} to the study of intersection spaces and establish Theorems \ref{Joining}  - \ref{PD3} (and hence also Theorems \ref{Intro1} - \ref{PD2} as special cases). In Section \ref{SecAUL} we specialize further to the case of return times which are uniform approximate lattices and establish Theorem \ref{MTR1}, Theorem \ref{PD1} and Corollary \ref{IntroTwisted}.

\setcounter{tocdepth}{1}
\tableofcontents


\section{Measure theory}\label{SecMeasureTheory}

This section summarizes our notation concerning measurable actions of lcsc groups on standard Borel spaces and the corresponding invariant measures. The first three subsections (Subsection \ref{SSGenNot}, Subsection \ref{SSStandardBorel} and Subsection \ref{SSBorelGSpaces}) contain standard material concerning general measurable spaces, standard Borel spaces and Borel-$G$-spaces respectively. In Subsection \ref{SecRelationInvMeasure} we discuss the notion of invariance of a set or a measure under a countable Borel equivalence relation, which will play a crucial role in the sequel.

\subsection{Measurable spaces}\label{SSGenNot}

Let $(X,\mathscr{B}_X)$ be a measurable space.  We refer to the elements of $\mathscr{B}_X$ as \emph{Borel sets}.  A \emph{Borel measure on $X$} is a 
$\sigma$-additive measure on $\mathscr{B}_X$ with values in $[0,\infty]$.  We denote by $M(X)$ the set of all Borel measures on $X$ (we suppress the dependence on the $\sigma$-algebra $\mathscr{B}_X$).  A \emph{Borel exhaustion} is an increasing sequence $\cX := \{ X_n : n \geq 1\}$ in $\mathscr{B}_X$ such that $X = \bigcup_n X_n$,  and 
a Borel measure $\mu$ on $X$ is \emph{$\cX$-finite} if $\mu(X_n) < \infty$ for all $n$.
The set of all $\cX$-finite Borel measures on $X$ is denoted by $M(X;\cX)$,  and a Borel measure is \emph{$\sigma$-finite} if it is $\cX$-finite for some Borel exhaustion $\cX$.  If $\mu(X) < \infty$,  then $\mu$ is of course $\cX$-finite for any Borel exhaustion 
$\cX$.  In this case,  we say that $\mu$ is a \emph{finite Borel measure},  and  if $\mu(X) = 1$,  we say that 
$\mu$ is a \emph{Borel probability measure}.  We denote by $M_\sigma(X)$,  $M_{\textrm{fin}}(X)$ and $\Prob(X)$ the spaces of $\sigma$-finite Borel measures,  finite Borel measures and Borel probability measures on $X$ respectively.  Clearly,
\[
\Prob(X) \subset M_{\textrm{fin}}(X) \subset M(X;\cX) \subset M_\sigma(X),
\]
for every Borel exhaustion $\cX$.  If $\mu$ is a $\sigma$-finite Borel measure on $X$,  
we say that a Borel set $B$ is \emph{$\mu$-null} if $\mu(B) = 0$ and \emph{$\mu$-conull} if $\mu(B^c) = 0$.  If $\mathscr{A} \subset \mathscr{B}_X$ is a sub-$\sigma$-algebra and $T \subset X$ is a Borel set,  we define the $\sigma$-algebra $\mathscr{A}|_T$ by $\mathscr{A}|_T = \big\{ B \cap T \,  : \,  B \in \mathscr{A} \big\}$.  We refer to $\mathscr{A}|_T$ as the \emph{restriction of $\mathscr{A}$ to $T$}.  We also 
define $\mathscr{B}_T := \mathscr{B}_X|_T$,  and refer to the elements in $\mathscr{B}_T$ as \emph{Borel sets in $T$}.

If $(X,\mathscr{B}_X)$ and $(Y,\mathscr{B}_Y)$ are measurable spaces,  then a map 
$\varphi : X \ra Y$ is \emph{Borel} if $\varphi^{-1}(\mathscr{B}_Y) \subset \mathscr{B}_X$.  If $\varphi$ is a 
bijection and both $\varphi$ and $\varphi^{-1}$ are Borel,  we say that $\varphi$ is a
\emph{Borel isomorphism}.  More generally,  if $A$ is a Borel set in $X$,  then a map $\varphi : A \ra Y$ is \emph{Borel} if $\varphi^{-1}(\mathscr{B}_Y) \subset \mathscr{B}_A$.  A map $f : A \ra [0,\infty]$ is Borel if $A_\infty := f^{-1}(\{\infty\}) \in \mathscr{B}_A$ and $f|_{A \setminus A_\infty}$ is Borel. 

\subsection{Standard Borel spaces}\label{SSStandardBorel}

We say that a measurable space $(X,\mathscr{B}_X)$ is a \emph{standard Borel space} if there exists a 
Polish (completely metrizable and separable) topology $\cT$ on $X$ such that $\mathscr{B}_X$ is the Borel $\sigma$-algebra generated by $\cT$.  In particular,  there
is a countable subset $\cS \subset \mathscr{B}_X$ which generates $\mathscr{B}_X$ such that 
\[
\bigcap_{x \in B \in \cS} B = \{x\},  \quad \textrm{for all $x \in X$}.
\]
We refer to $\cS$ as a (countable) \emph{separating family} for $X$.  \\

Let us collect some basic facts about standard Borel spaces. 

\begin{lemma}
\label{Lemma_Borel}
Let $(X,\mathscr{B}_X)$ and $(Y,\mathscr{B}_Y)$ be standard Borel spaces and let $\varphi : X \ra Y$ be a Borel map.  
\begin{itemize}
\item[$(i)$] If $Z \subset X$ is a Borel set,  then $(Z,\mathscr{B}_X|_Z)$ is a standard Borel space \cite[Corollary 13.4]{Kechris}.  \vspace{0.1cm}
\item[$(ii)$] If $\varphi $
is injective,  then $\varphi(X)$ is a Borel set in $Y$ and $\varphi : X \ra \varphi(X)$ is a Borel isomorphism \cite[Corollary 15.2]{Kechris} \vspace{0.1cm}
\item[$(iii)$] If $\varphi$ has countable fibers,  i.e.  $\varphi^{-1}(\{y\})$ is at
most countable for every $y \in Y$,  then $\varphi(X)$ is a Borel set in $Y$ and $\varphi$
admits a Borel right-inverse.  \cite[Corollary 18.10]{Kechris} 
\end{itemize}
\end{lemma}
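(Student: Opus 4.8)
The plan is to recognize all three assertions as standard facts of descriptive set theory and to reduce each to a classical theorem about Polish spaces and analytic sets, citing Kechris as indicated rather than reproducing the deep machinery. For $(i)$ I would use the change-of-topology method: starting from a Polish topology $\cT$ on $X$ generating $\mathscr{B}_X$, the class of Borel sets admitting a finer Polish topology (with the same Borel $\sigma$-algebra) in which they become clopen contains the open sets and is closed under complements and countable unions, hence is all of $\mathscr{B}_X$. Applied to $Z$, this makes $Z$ clopen, hence closed, in a Polish space; a closed subspace of a Polish space is Polish, and its Borel structure is exactly $\mathscr{B}_X|_Z$.

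For $(ii)$ I would invoke the Lusin--Souslin theorem. By $(i)$ we may assume $X$ and $Y$ Polish, and then $\varphi(X)$ is analytic as a Borel image of a Borel set. Injectivity is precisely what upgrades analyticity to Borelness: partitioning $X$ into countably many pieces and applying the Lusin separation theorem (two disjoint analytic sets are separated by a Borel set) shows that $\varphi(X)$ is also co-analytic, whence Borel by Souslin's theorem that $\mathbf{\Delta}^1_1$ equals the Borel class. The same argument applied to Borel subsets of $X$ shows that $\varphi$ sends Borel sets to Borel sets, so $\varphi^{-1}$ is Borel and $\varphi$ is a Borel isomorphism onto its image.

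For $(iii)$ I would appeal to the Lusin--Novikov uniformization theorem applied to the graph of $\varphi$, a Borel subset of $X \times Y$ all of whose $Y$-fibers are countable. That theorem decomposes the graph into countably many graphs of partial Borel functions; this immediately gives that the projection $\varphi(X)$ is Borel and that a Borel right inverse can be selected, for instance by choosing at each $y \in \varphi(X)$ the preimage of least index in a fixed enumeration of those partial functions.

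The only genuine content lies in $(ii)$ and $(iii)$, both of which rest on the separation and uniformization theory for analytic sets; these are the hard theorems, and since the paper cites them directly I would do the same rather than reprove them. Part $(i)$, by contrast, is elementary once the change-of-topology lemma is in hand.
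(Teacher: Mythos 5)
Your proposal is correct and matches the paper exactly: the paper offers no proof of this lemma, instead citing precisely the results you invoke --- \cite[Corollary 13.4]{Kechris} (change of topology) for $(i)$, \cite[Corollary 15.2]{Kechris} (Lusin--Souslin) for $(ii)$, and \cite[Corollary 18.10]{Kechris} (Lusin--Novikov uniformization) for $(iii)$. Your sketches of the underlying arguments are accurate, and deferring the hard separation and uniformization theorems to the cited reference is exactly the paper's approach.
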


We denote by $\mathscr{B}_{\Prob(X)}$ the smallest $\sigma$-algebra on $\Prob(X)$
with respect to which the maps $\mu \mapsto \mu(B)$,  where $B$ is a Borel set in $X$,
are measurable.  By \cite[Section 17.E]{Kechris},  if $(X,\mathscr{B}_X)$ is standard,  then so $(\Prob(X),\mathscr{B}_{\Prob(X)})$.

\subsection{Borel $G$-spaces}\label{SSBorelGSpaces}

Let $G$ be a locally compact and second countable (lcsc) group and let $(X,\mathscr{B}_X)$ be a standard Borel space.  We denote by 
$\mathscr{B}_G$ the Borel $\sigma$-algebra on $G$,  and note that both
$(G,\mathscr{B}_G)$ and $(G \times X, \mathscr{B}_G \otimes \mathscr{B}_X)$ are standard Borel spaces.  Suppose $a : G \times X \ra X,  \enskip (g,x) \mapsto g.x$ is an action of $G$ on $X$.  If $a$ is Borel,  we say that $(X,a)$ is a \emph{Borel $G$-space}.  If $H$ is a closed subgroup of $G$,  we denote by $a|_{H}$ the restriction of $a$ to $H \times X$,
and refer to the Borel $H$-space $(X,a|_{H})$ as the \emph{$H$-restriction of $(X,a)$} \\

Suppose $(X,a)$ is a Borel $G$-space.  A Borel set $B$ in $X$ is \emph{$G$-invariant} if $g.B = B$ for all $g \in G$.  A Borel measure $\mu$ on $X$ is \emph{$G$-invariant} if $\mu(g.B) = \mu(B)$ for every $g \in G$ and $B \in \mathscr{B}_X$,  and we say that $\mu$  is \emph{$G$-ergodic} if every $G$-invariant Borel set in $X$ is either $\mu$-null or $\mu$-conull.  We denote by $\Prob(X)^G,  M_{\textrm{fin}}(X)^G,  M(X;\cX)^{G}$ and $M_\sigma(X)^G$ the spaces of $G$-invariant Borel probability measures,  $G$-invariant finite measures,  $G$-invariant $\cX$-finite measures and $G$-invariant $\sigma$-finite measures on $X$ respectively.   \\

Suppose $(X,a)$ and $(T,b)$ are Borel $G$-spaces.  A Borel map $\pi : X \ra T$ such that $\pi(g.x) = g.\pi(x)$ for all $g \in G$ and $x \in X$ is called a \emph{$G$-map}.  Note that if $\pi$ is  a $G$-map and $\mu \in M_{\textrm{fin}}(X)^G$,  then 
$\pi_*\mu \in M_{\textrm{fin}}(T)^G$,  where $\pi_*\mu(C) = \mu(\pi^{-1}(C))$ for $C \in \mathscr{B}_T$.  We refer to $\pi_*\mu$ as the \emph{push-forward} of $\mu$ under $\pi$ (we only consider push-forwards of finite measures,  as push-forwards of $\sigma$-finite measures are not $\sigma$-finite in general).  We refer to $(T,\pi_*\mu)$ as the
\emph{$G$-factor} of $(X,\mu)$ induced by $\pi$.  If $H$ is a lcsc group and $p : G \ra H$ is a continuous homomorphism,  then every Borel $H$-space $(T,b_o)$ can be lifted to a Borel $G$-space $(T,b)$,  by setting $b(g,x) = b_o(p(g),x)$.  We then say that 
\emph{$G$ acts on $T$ via $p$}.  \\

If $(X_1,a_1),\ldots,(X_r,a_r)$ are Borel $G$-spaces,  let $a_\Delta$ denote the diagonal 
$G$-action on $X_1 \times \cdots \times X_r$,  i.e.  $a_{\Delta}(g,x) = (a_1(g,x_1),\ldots,a_r(g,x_r))$,  for $g \in G$ and $x = (x_1,\ldots,x_r)$.  Let $\pi_k$ denote the projection from $X_1 \times \cdots \times X_r$ onto $X_k$.  Note that $\pi_k$ is a $G$-map.
Suppose $\mu_k \in \Prob(X_k)^G$ for $k = 1,\ldots, r$.  A $G$-invariant Borel probability measure $\mu$ on $X_1 \times \cdots \times X_r$ is a \emph{joining} of $(X_1,\mu_1),\ldots, (X_r,\mu_r)$ if $(\pi_k)_*\mu = \mu_k$ for every $k$.

\subsection{Countable Borel equivalence relations}\label{SecRelationInvMeasure}

Let $(X,\mathscr{B}_X)$ be a standard Borel space.  A Borel subset 
$E \subset X \times X$ is a \emph{countable Borel equivalence relation} (cber) if $E$ is an equivalence relation on $X$ and for every $x \in X$,  the set $ \{ y \in X \,  : \,  (x,y) \in E \big\}$ is countable.  \\

Suppose $E$ is a countable Borel equivalence relation.  Let $A, B \subset X$ be Borel
sets and $\varphi : A \ra B$ a Borel surjection. Then $\varphi$ is a \emph{partial $E$-map} if $\gr(\varphi) \subset E \cap (A \times B)$.  Note that, according to our terminology, every partial $E$-map $\varphi: A \ra B$ is assumed to be \emph{onto} $B$. We thus refer to
$A$ as the \emph{domain} of $\varphi$ and to $B$ as the \emph{range} of $B$,  and we 
sometimes write $A = \dom(\varphi)$ and $B = \ran(\varphi)$.  We denote by $[[E]]$ the
set of all injective (hence bijective) partial Borel $E$-maps.  We then have the following notions of invariance under $E$:

\begin{definition} A Borel set $B \subset X$ is \emph{$E$-invariant}
if 
\[
\varphi^{-1}(B \cap \ran(\varphi)) = B \cap \dom(\varphi),  \quad \textrm{for all $\varphi \in [[E]]$}.
\]
A Borel measure $\mu$ on $X$ is \emph{$E$-invariant} if $\mu(\dom(\varphi)) = \mu(\ran(\varphi))$,  for all $\varphi \in [[E]]$,  and an $E$-invariant Borel measure $\mu$ on $X$ is \emph{$E$-ergodic} if every $E$-invariant Borel set in $X$ is either $\mu$-null or $\mu$-conull.
\end{definition}

We denote by $\Prob(X)^E,  M_{\textrm{fin}}(X)^E,  M(X;\cX)^{E}$ and $M_\sigma(X)^E$ the spaces of $E$-invariant Borel probability measures,  $E$-invariant finite measures,  $E$-invariant $\cX$-finite measures and $E$-invariant $\sigma$-finite measures on $X$ respectively. In terms of Borel functions, $E$-invariance can be characterized as follows:
\begin{lemma}
\label{Lemma_Integral}
Suppose $\mu$ is an $E$-invariant Borel measure and $f : X \ra [0,\infty]$ is Borel.  Then,  
\[
\int_{\dom(\varphi)} f \circ \varphi \,  d\mu = \int_{\ran(\varphi)} f \,  d\mu,   \quad \textrm{for every $\varphi \in [[E]]$}.
\]
\end{lemma}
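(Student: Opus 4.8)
The plan is to reduce the claimed identity to the case of indicator functions, where it is a direct reformulation of the $E$-invariance of $\mu$, and then to bootstrap to arbitrary non-negative Borel $f$ by linearity and monotone convergence. Throughout write $A := \dom(\varphi)$ and $B := \ran(\varphi)$, so that $\varphi : A \to B$ is a Borel bijection with $\gr(\varphi) \subset E$.

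The one structural fact I need is a closure property of $[[E]]$: if $\varphi \in [[E]]$ and $C \subset X$ is any Borel set, then $\varphi|_{\varphi^{-1}(C \cap B)}$ again lies in $[[E]]$. Indeed $\varphi^{-1}(C \cap B) \in \mathscr{B}_A$ since $\varphi$ is Borel and $A \in \mathscr{B}_X$; the restriction is injective and Borel because $\varphi$ is; it is onto $C \cap B$ because $\varphi$ is onto $B$; and its graph is contained in $\gr(\varphi) \subset E$. Granting this, the indicator case is immediate: for a Borel set $C \subset X$ put $f = \mathbf{1}_C$. Since $\varphi$ takes values in $B$, one has $f \circ \varphi = \mathbf{1}_{\varphi^{-1}(C \cap B)}$ on $A$, so that
\[
\int_{\dom(\varphi)} f \circ \varphi \, d\mu = \mu\big(\varphi^{-1}(C \cap B)\big) \qand \int_{\ran(\varphi)} f \, d\mu = \mu(C \cap B).
\]
By the closure property, $\varphi|_{\varphi^{-1}(C \cap B)} \in [[E]]$ has domain $\varphi^{-1}(C \cap B)$ and range $C \cap B$, so the $E$-invariance of $\mu$ gives exactly $\mu(\varphi^{-1}(C \cap B)) = \mu(C \cap B)$, which is the asserted identity for $f = \mathbf{1}_C$.

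Finally I would pass from indicators to general $f$. By linearity of the integral the identity extends to all non-negative Borel simple functions. For an arbitrary Borel $f : X \to [0,\infty]$ choose simple functions $0 \le f_n \uparrow f$ pointwise (possible in the present $[0,\infty]$-valued sense); then $f_n \circ \varphi \uparrow f \circ \varphi$ pointwise on $A$, and two applications of the monotone convergence theorem yield
\[
\int_{\dom(\varphi)} f \circ \varphi \, d\mu = \lim_{n} \int_{\dom(\varphi)} f_n \circ \varphi \, d\mu = \lim_{n} \int_{\ran(\varphi)} f_n \, d\mu = \int_{\ran(\varphi)} f \, d\mu.
\]
The only point requiring any care is the verification that restrictions of $\varphi$ remain in $[[E]]$, i.e.\ the closure property above; everything else is routine measure theory. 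Equivalently, one may phrase the whole argument as the single statement that $\varphi_*(\mu|_A) = \mu|_B$, after which the conclusion is just the change-of-variables formula for push-forwards.
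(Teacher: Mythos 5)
Your proof is correct and follows essentially the same route as the paper: the key step in both is the observation that restricting $\varphi \in [[E]]$ to $\varphi^{-1}(C \cap \ran(\varphi))$ yields another element of $[[E]]$ (the paper's $\varphi_B$), after which $E$-invariance gives the indicator case, and one extends via simple functions and monotone convergence. The only cosmetic difference is that the paper first disposes of the set $f^{-1}(\{\infty\}) \cap \ran(\varphi)$ by a separate case distinction, whereas your choice of $[0,\infty]$-valued simple approximations $f_n \uparrow f$ handles infinite values automatically within the monotone convergence step.
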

\vspace{0.1cm}
\begin{proof}
If $B_\infty := f^{-1}(\{\infty\}) \cap \ran(\varphi)$ has positive $\mu$-measure,  then so has $\varphi^{-1}(B_\infty) \cap \dom(\varphi)$.  Hence,  
\[
\int_{\dom(\varphi)} f \circ \varphi \,  d\mu = \infty \qand \int_{\ran(\varphi)} f \,  d\mu = \infty,
\]
and thus the identity in the lemma trivially holds.  Let us from now on assume that $\mu(B_\infty) = 0$. 
Suppose $f = \chi_B$ for some Borel set $B \subset X$.  If $\varphi \in [[E]]$,  we
define $\varphi_B := \varphi_{\varphi^{-1}(B) \cap \dom(\varphi)}$,  and note 
that $\varphi_B \in [[E]]$ with $\dom(\varphi_B) = \varphi^{-1}(B) \cap \dom(\varphi)$
and $\ran(\varphi_B) = B \cap \ran(\varphi)$. Since $\mu$ is $E$-invariant,  we have $\mu(\dom(\varphi_B)) = \mu(\ran(\varphi_B))$,  and thus 
\[
\int_{\dom(\varphi)} f \circ \varphi \,  d\mu = \mu(\dom(\varphi_B)) = \mu(\ran(\varphi_B)) = \int_{\ran(\varphi)} f \,  d\mu.
\]
We conclude that the lemma holds for every simple function.  A standard approximation argument finishes the proof. 
\end{proof}

\section{Separated cross-sections and lifted measures}

In this section we discuss cross-sections of Borel actions. We then explain how, under certain separability assumptions on the cross sections, measures on the cross section can be extended to measures on the whole space. Our main result is Theorem \ref{LiftingInvariantMeasure} which states that $\sigma$-finite measures on the transversal which are invariant under a certain equivalence relation can be canonically extended to $G$-invariant measures on the whole space, which are subject to a certain finiteness property.

\subsection{Separated cross sections}
Let $G$ be a lcsc group and $(X,a)$ a Borel $G$-space.  If $Y \subset X$ is a Borel set
and $x \in X$,  we define the \emph{set of hitting times} $Y_x$ by
\[
Y_x := \big\{ g \in G \,  : \,  g.x \in Y \big\} \subset G,
\]
and the \emph{set of return times} $\Lambda_a(Y)$ by
\[
\Lambda_a(Y) := \big\{ g \in G \,  : \,  Y \cap g^{-1}.Y \neq \emptyset \big\}= \bigcup_{y \in Y} Y_y.  
\]
Let $U$ be an open identity neighbourhood in $G$.  A Borel set $Y \subset X$ is 
\emph{$U$-separated} if $Y_y \cap U = \{e\}$ for all $y \in Y$ (or equivalently,  if $\Lambda_a(Y) \cap U = \{e\}$),  and a \emph{cross-section} if $G.Y = X$ (or equivalently,  if the restriction of the action map $a$ to
$G \times Y$ is surjective). If $Y \subset X$ is a $U$-separated cross section for some open identity neighbourhood $U$ in $G$, then we call $Y$ a \emph{separated cross-section} for short.
In this case we say that a Borel set $C \subset G \times Y$ is \emph{injective} if the restriction of $a$ to $C$ is injective. The following lemma ensures that for every separated cross section $Y \subset X$  every Borel set $B \subset X$ can be covered by the images of countably many injective sets in $G \times Y$.
\begin{lemma}
\label{Lemma_BasicProp}
Suppose $Y \subset X$ is a $U$-separated cross section.  
\vspace{0.1cm}
\begin{itemize}
\item[$(i)$] If $V \subset G$ is a Borel set such that $V^{-1}V \subset U$,  then 
$V \times Y$ is an injective Borel set.  In particular,  $|Y_x \cap V^{-1}| \leq 1$ for all $x \in X$.\vspace{0.1cm}
\item[$(ii)$] For every compact set $K \subset G$,  
\begin{equation}
\label{DefMK}
M_K := \sup_{x \in X} |Y_x \cap K| < \infty.
\end{equation}
\item[$(iii)$] For every $x \in X$,  the set $Y_x$ is countable and $Y_x Y_x^{-1} \subset \Lambda_a(Y)$.  In particular,  $Y_x$ is a $U$-uniformly discrete (and hence closed) subset of $G$. \vspace{0.1cm}. 
\item[$(iv)$] The map $a : G \times Y \ra X$ has countable fibers.  In particular,  
if $C \subset G\times Y$ is a Borel set,  then $a(C)$ is a Borel set in $X$ (by Lemma \ref{Lemma_Borel} (iii)).  \vspace{0.1cm}
\item[$(v)$] For every Borel set $B \subset X$,  there are injective Borel sets 
$\widetilde{B}_1,  \widetilde{B}_2,\ldots$ in $G \times Y$ such that 
\[
B = \bigsqcup_k \, a(\widetilde{B}_k).  
\]
\end{itemize}
\end{lemma}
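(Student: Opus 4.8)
The plan is to prove the five assertions in the stated order, feeding each into the next; the separatedness hypothesis enters only in $(i)$, and everything else follows from $(i)$ together with the facts about standard Borel spaces recorded in Lemma \ref{Lemma_Borel}. For $(i)$ I would argue directly from the definition: if $(v_1,y_1),(v_2,y_2)\in V\times Y$ satisfy $v_1.y_1=v_2.y_2$, then $g:=v_2^{-1}v_1\in V^{-1}V\subset U$ satisfies $g.y_1=y_2\in Y$, so $g\in Y_{y_1}\cap U=\{e\}$; hence $v_1=v_2$ and then $y_1=y_2$, giving injectivity of $V\times Y$. The consequence $|Y_x\cap V^{-1}|\le 1$ is the same statement read backwards: if $g_1,g_2\in Y_x\cap V^{-1}$, then $(g_1^{-1},g_1.x)$ and $(g_2^{-1},g_2.x)$ both lie in $V\times Y$ and map to $x$, so $g_1=g_2$.

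Parts $(ii)$ and $(iii)$ are short consequences. For $(ii)$ I would fix a symmetric open identity neighbourhood $W$ with $WW\subset U$; then $\{Wh:h\in K\}$ covers the compact set $K$, and a finite subcover $Wg_1,\dots,Wg_n$ suffices because each $Wg_i$ equals $V^{-1}$ with $V=g_i^{-1}W$ satisfying $V^{-1}V=WW\subset U$, so $|Y_x\cap Wg_i|\le 1$ by $(i)$ and hence $|Y_x\cap K|\le n$ uniformly in $x$. For $(iii)$, $\sigma$-compactness of $G$ writes $Y_x$ as a countable union of finite sets $Y_x\cap K_n$, hence countable; and for $g_1,g_2\in Y_x$, setting $y:=g_2.x\in Y$ gives $(g_1g_2^{-1}).y=g_1.x\in Y$, so $Y_xY_x^{-1}\subset\Lambda_a(Y)$, whence $Y_xY_x^{-1}\cap U=\{e\}$ by separatedness, i.e.\ $Y_x$ is $U$-uniformly discrete and therefore closed.

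For $(iv)$, note first that $Y$ is standard Borel by Lemma \ref{Lemma_Borel}$(i)$, so $G\times Y$ and every Borel $C\subset G\times Y$ are standard Borel and $a|_{G\times Y}$ is Borel. Any $(g,y)\in a^{-1}(x)$ satisfies $g^{-1}\in Y_x$ with $y=g^{-1}.x$, so $(g,y)\mapsto g^{-1}$ injects the fibre into the countable set $Y_x$; thus $a$, and each restriction $a|_C$, has countable fibres, and Lemma \ref{Lemma_Borel}$(iii)$ gives that $a(C)$ is Borel.

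The last part $(v)$ carries the only real bookkeeping, and is the step I expect to be the main obstacle. With $W$ as above, $\{gW:g\in G\}$ is an open cover of the second-countable group $G$, so it has a countable subcover $\{g_kW\}_k$, and each $T_k:=(g_kW)\times Y$ is injective since $(g_kW)^{-1}(g_kW)=WW\subset U$. Disjointifying in the source yields $G\times Y=\bigsqcup_k T_k'$ with $T_k':=T_k\setminus\bigcup_{j<k}T_j\subset T_k$ still injective, and by $(iv)$ together with Lemma \ref{Lemma_Borel}$(ii)$ each $a|_{T_k'}$ is a Borel isomorphism onto its Borel image. Disjointifying in the target, $S_k:=a(T_k')\setminus\bigcup_{j<k}a(T_j')$ is a Borel partition of $X=a(G\times Y)$ (using that $Y$ is a cross section, so $a$ is onto). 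For a given Borel $B\subset X$, the sets $\widetilde B_k:=(a|_{T_k'})^{-1}(S_k\cap B)\subset T_k'$ are then Borel and injective, with $a(\widetilde B_k)=S_k\cap B$ pairwise disjoint and of union $B$, so $B=\bigsqcup_k a(\widetilde B_k)$ as required. The points requiring care are the direction of translation needed to meet the condition $V^{-1}V\subset U$ of $(i)$---left translates $gW$ for the injective tiles in $(v)$, right translates $Wh$ for the covering in $(ii)$---and the two-sided disjointification in $(v)$ that keeps the pieces injective while their images partition $B$.
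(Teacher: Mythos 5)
Your proof is correct and follows essentially the same route as the paper's: (i) directly from $U$-separatedness, (ii) via a finite cover of $K$ by translates of a small symmetric neighbourhood, (iii) and (iv) by the same computations, and (v) by a countable cover of $G$ by injective tiles plus disjointification. The only cosmetic difference is in (v), where you disjointify both in the source $G\times Y$ and in the target $X$, while the paper disjointifies only the sets $B\cap V_k.Y$ in $X$ and then pulls each piece back inside the single tile $V_k\times Y$ via $\widetilde{B}_k = a^{-1}(B_k)\cap(V_k\times Y)$ --- both versions yield the required partition.
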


\begin{proof}
(i) Suppose $(v_1,y_1),  (v_2,y_2) \in V \times Y$ such that $v_1.y_1 = v_2.y_2$.  
Since $Y$ is $U$-separated,  
\[
v_1^{-1}v_2 \in \Lambda_a(Y) \cap V^{-1}V \subset \Lambda_a(Y) \cap U = \{e\},
\]
and thus $v_1 = v_2$,  whence $y_1 = y_2$.  (ii) Let $K \subset G$ be a compact set,  and pick an open cover 
$V_1,\ldots,V_p$ of $K^{-1}$ such that $V_k^{-1}V_k \subset U$ for all $k$.  By (i), 
$|Y_x \cap V_k^{-1}| \leq 1$ for all $x$ and $k$,  and thus
\[
|Y_x \cap K| \leq \sum_{k=1}^p |Y_x \cap V_k^{-1}| \leq p,  \quad \textrm{for all $x$}.
\]
(iii) Countability of $Y_x$ is immediate from (ii).  To prove the inclusion,  pick $g_1,  g_2 \in Y_x$, 
and note that
\[
y := g_2.x \in Y \qand g_1.x = (g_1 g_2^{-1}).y \in Y,
\]
whence $g_1g_2^{-1} \in \Lambda_a(Y)$.  Since $g_1$ and $g_2$ are arbitrary,  $Y_x Y_x^{-1} \subset \Lambda_a(Y)$.  (iv) Fix $x \in X$ and an element $(g,y) \in a^{-1}(\{x\})$.  Then,  for any $(g',y') \in a^{-1}(\{x\})$,  we have $g'.y' = g.y = x$,  and thus $(g')^{-1} \in Y_x$ and $y' \in Y_x.x$. By (iii),  $Y_x$ is a countable set,  so there are only countably many choices for $g'$ and $y'$.  (v) Since $G$ is separable,  we can find a countable open cover 
\[
G = \bigcup_k V_k, \enskip \textrm{such that $V_k^{-1}V_k \subset U$ for all $k$}.
\]
Let $B'_k := B \cap V_k.Y$,  and note that  $B = \bigcup_k B_k'$.  Define 
\[
B_1 := B_1' \qand B_{k+1} = B'_{k+1} \setminus \Big( \bigcup_{j=1}^k B_j\Big),
\]
for all $k$.  Then,  $B_1,B_2,\ldots$ are disjoint Borel sets,  $B = \bigsqcup_k B_k$ and $B_k \subset B'_k \subset V_k.Y$ for all $k$.  Set $\widetilde{B}_k = a^{-1}(B_k) \cap (V_k \times Y)$.  By (i),  $V_k \times Y$ is an
injective set,  and thus $\widetilde{B}_k$ is an injective set as well,  and $a(\widetilde{B}_k) = a(a^{-1}(B_k) \cap (V_k \times Y)) = B_k$,  for all $k$.
\end{proof}

\subsection{Transverse triples and their Chabauty--Fell maps}

Throughout this subsection, $G$ denotes a lcsc group. We are interested in triples $(X,a,Y)$ where $(X,a)$ is a Borel $G$-space and $Y \subset X$ is a separated cross section. We refer to such a triple $(X,a,Y)$ as a \emph{transverse triple} over $G$ (or more specifically a \emph{$U$-transverse triple} if $Y$ is $U$-separated). In this subsection we discuss an important class of examples of transverse triples which arise from certain discrete subsets of $G$. In fact, we are going to see that this class of examples is ``universal'' in a suitable sense.

We denote by $\cC(G)$ the space of all closed subsets of $G$.  The \emph{Chabauty-Fell topology} is the topology on $\cC(G)$ generated by
open sets of the form
\[
W_K := \big\{ P \in \cC(G) \, : \,  P \cap K = \emptyset \big\}
\qand
W^V := \big\{ P \in \cC(G) \,  : \,  P \cap V \neq \emptyset \big\},
\]
where $K$ and $V$ range over all compact subsets and over all open subsets of $G$ respectively (cf.\ \cite{BjorklundHartnick1, BHP1}). It is not hard to see that with respect to this topology,  $\cC(G)$ is a compact and second countable Hausdorff space,  and a sequence $(P_n)$ converges to a point $P$ in $\cC(G)$ if and only if 
\vspace{0.1cm}
\begin{itemize}
\item[(i)] for every $p \in P$,  there exists a sequence $(p_n)$ in $G$ such that $p_n \in P_n$ for all $n$ and $p_n \ra p$ as $n \ra \infty$.  \vspace{0.1cm}
\item[(ii)] if $(n_k)$ is a sub-sequence and $p_k \in P_{n_k}$ such that $p_k \ra p$
as $k \ra \infty$,  then $p \in P$.
\end{itemize}
Furthermore,  the action 
\[
a_r : G \times \cC(G) \ra \cC(G), \enskip (g,P) \mapsto Pg^{-1}
\]
is jointly continuous.  If $U$ is an identity neighbourhood in $G$,  then a subset $P_o \in \cC(G)$ is \emph{$U$-uniformly discrete} if $P_o P_o^{-1} \cap U = \{e\}$.  We denote by $\cC^U(G) \subset \cC(G)$ the subset of $U$-uniformly discrete subsets of $G$. It is easy to see that $\cC^U(G)$ is a closed (hence compact) $G$-invariant subset of $\cC(G)$. The subset $\cT := \big\{ P \in \cC(G) \,  : \,  e \in P \big\} \subset \cC(G)$ is closed,  and thus compact. By definition, $\cT$ is a cross section for the $G$-action on $\cC(G) \setminus\{\emptyset\}$.

\begin{example}\label{CFExamples}
Assume that $P_o \subset G$ is a non-empty closed subset. We then define the \emph{hull} of $P_o$ as the orbit closure $\Omega_{P_o} := \overline{G.P_o}$ and set $\Omega_{P_o}^\times := \Omega_{P_o} \setminus\{\emptyset\}$. Then $\cT_{P_o} := \cT \cap \Omega_{P_o}$ is a cross section for the $a_r$-action of $G$ on $\Omega_{P_o}^\times$, though it will in general not be separated.
\end{example}

\begin{proposition}\label{UDExamples} The cross section $\cT_{P_o}$ is $U$-separated if and only if $P_o$ is $U$-uniformly discrete, hence $(X,a,Y) := (\Omega_{P_o}^\times, a_r, \cT_{P_o})$ is a transverse triple if and only if $P_o$ is uniformly discrete.
\end{proposition}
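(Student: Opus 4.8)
The plan is to first reduce $U$-separatedness of $\cT_{P_o}$ to a condition on the individual elements of $\cT_{P_o}$ by computing the return times, and then to match that condition with $U$-uniform discreteness of $P_o$ by passing to the hull. First I would compute the hitting-time sets: for $P \in \cT_{P_o}$, an element $g$ lies in $Y_P$ exactly when $g.P = Pg^{-1} \in \cT_{P_o}$; since $\Omega_{P_o}$ is $G$-invariant this membership only requires $e \in Pg^{-1}$, i.e.\ $g \in P$. Hence $Y_P = P$, so that
\[
\Lambda_{a_r}(\cT_{P_o}) = \bigcup_{P \in \cT_{P_o}} Y_P = \bigcup_{P \in \cT_{P_o}} P.
\]
Consequently $\cT_{P_o}$ is $U$-separated if and only if $P \cap U = \{e\}$ for every $P \in \cT_{P_o}$.

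For the ``if'' direction, I would assume $P_o$ is $U$-uniformly discrete, i.e.\ $P_o \in \cC^U(G)$. Since $\cC^U(G)$ is closed and $G$-invariant, the whole hull satisfies $\Omega_{P_o} = \overline{G.P_o} \subseteq \cC^U(G)$; in particular every $P \in \cT_{P_o} \subseteq \Omega_{P_o}$ is $U$-uniformly discrete. As $e \in P$ we have $P \subseteq PP^{-1}$, whence $P \cap U \subseteq PP^{-1} \cap U = \{e\}$, and the criterion above yields $U$-separatedness. I expect this to be the main point of the argument: pointwise, the criterion $P \cap U = \{e\}$ is strictly weaker than $U$-uniform discreteness of $P$, and it is only the passage to the entire orbit closure, via the closedness of $\cC^U(G)$, that upgrades the former to the latter for every $P \in \cT_{P_o}$.

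For the ``only if'' direction, I would assume $\cT_{P_o}$ is $U$-separated and take $p_1, p_2 \in P_o$ with $p_1 p_2^{-1} \in U$. Translating by $p_2$, the set $P := p_2.P_o = P_o p_2^{-1}$ lies in $G.P_o \subseteq \Omega_{P_o}$ and contains $e = p_2 p_2^{-1}$, hence $P \in \cT_{P_o}$. Then $p_1 p_2^{-1} \in P \cap U = \{e\}$ forces $p_1 = p_2$, so $P_o P_o^{-1} \cap U = \{e\}$, i.e.\ $P_o$ is $U$-uniformly discrete. Finally, the ``hence'' clause follows by quantifying over $U$: by Example \ref{CFExamples} the set $\cT_{P_o}$ is always a cross section for the Borel $G$-space $(\Omega_{P_o}^\times, a_r)$, and by the equivalence just established it is separated (i.e.\ $U$-separated for some open identity neighbourhood $U$) precisely when $P_o$ is $U$-uniformly discrete for some such $U$, that is, precisely when $P_o$ is uniformly discrete.
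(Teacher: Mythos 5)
Your proof is correct, and its skeleton matches the paper's: both compute $Y_P = P$ for $P \in \cT_{P_o}$, hence $\Lambda_{a_r}(\cT_{P_o}) = \bigcup_{P \in \cT_{P_o}} P$, and then transfer discreteness between $P_o$ and the hull. Where you genuinely differ is in the auxiliary facts invoked, and your choices make the argument more self-contained. For the ``if'' direction the paper quotes \cite[Lemma 3.15]{BHP2}, namely $PP^{-1} \subset \overline{P_oP_o^{-1}}$ for all $P \in \Omega_{P_o}$, whereas you use only the weaker in-paper observation that $\cC^U(G)$ is closed and $G$-invariant, so that $\Omega_{P_o} = \overline{G.P_o} \subset \cC^U(G)$; combined with $e \in P$, i.e.\ $P \subset PP^{-1}$, this yields $P \cap U = \{e\}$ exactly as you say. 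Both facts rest on the same Chabauty--Fell approximation of difference sets, but yours is the minimal version needed here and avoids the external citation (the convention issue for $\emptyset \in \cC^U(G)$ is harmless, since every $P \in \cT_{P_o}$ contains $e$). For the ``only if'' direction the paper first normalizes $e \in P_o$, so that $P_o \in \cT_{P_o}$, and then applies Lemma \ref{Lemma_BasicProp} (iii) to get $P_oP_o^{-1} = Y_{P_o}Y_{P_o}^{-1} \subset \Lambda_{a_r}(\cT_{P_o})$; your translate argument with $P := P_o p_2^{-1} \in \cT_{P_o}$ reaches the same conclusion directly, dispensing with both the normalization and the lemma. Your closing remark is also on point: for a single $P$ the condition $P \cap U = \{e\}$ is strictly weaker than $U$-uniform discreteness of $P$, and the equivalence indeed hinges on quantifying over the entire closed hull; the ``hence'' clause then follows by quantifying over $U$, as you do.
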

\begin{proof} We may assume without loss of generality that $e \in P_o$. For $Q \in Y = \mathcal T_{P_0}$ we have $Y_Q = Q \subset QQ^{-1}$ and by \cite[Lemma 3.15]{BHP2} we have $PP^{-1} \subset \overline{P_oP_o^{-1}}$ for all $P \in \Omega_{P_o}$. Thus
\[
\Lambda_{a_r}(Y) = \bigcup_{Q  \in \cT_{P_o}} Q \subset \bigcup_{Q  \in \cT_{P_o}} QQ^{-1} \subset  \overline{P_oP_o^{-1}},
\]
hence if $P_o$ is $U$-uniformly discrete, then $Y$ is $U$-separated. Conversely, if $Y$ is $U$-separated, then $P_o$ is $U$-uniformly discrete by Lemma \ref{Lemma_BasicProp} (iii).
\end{proof}
Proposition \ref{UDExamples} provides plenty of transverse triples $(X, a, Y)$, where $X$ is a subset of $\cC^U(G)$ and $Y = X \cap \cT$. Our next goal is to show that every transverse triple over $G$ admits a Borel-$G$-factor, which is of this form. For the proof we are going to use the fact that since every open set in $G$ is $\sigma$-compact,  the Borel $\sigma$-algebra $\mathscr B_{\cC(G)}$ is generated by the sets $W_K$ with $K$ relatively compact.

\begin{lemma}\label{CFEx} For every $U$-transverse triple $(X,a, Y)$ the map 
\[
\pi : X \ra \cC^U(G),  \enskip x \mapsto Y_x.
\]
is a well-defined Borel $G$-map with $\pi(Y) \subset \cT$ and $\pi^{-1}(\cT) = Y$.
\end{lemma}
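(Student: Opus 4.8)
The plan is to verify each assertion in Lemma \ref{CFEx} separately, working through the definition of $\pi(x) = Y_x$ and exploiting the properties already established in Lemma \ref{Lemma_BasicProp}. First I would check that $\pi$ is well-defined, i.e.\ that $Y_x \in \cC^U(G)$ for every $x \in X$: by Lemma \ref{Lemma_BasicProp}(iii) the set $Y_x$ is $U$-uniformly discrete and closed, so it is a legitimate point of $\cC^U(G)$.

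Next I would verify the equivariance $\pi(g.x) = g.\pi(x)$, where the action on $\cC(G)$ is $a_r(g,P) = Pg^{-1}$. This is a direct computation: by definition $Y_{g.x} = \{h \in G \mid h.(g.x) \in Y\} = \{h \mid (hg).x \in Y\}$, and substituting $h' = hg$ shows this equals $Y_x g^{-1}$, which is exactly $g.\pi(x)$. The two identities $\pi(Y) \subset \cT$ and $\pi^{-1}(\cT) = Y$ follow from unwinding the definition of $\cT = \{P : e \in P\}$: for $x \in X$ we have $e \in Y_x \iff e.x = x \in Y$, so $\pi(x) \in \cT \iff x \in Y$, which gives both containments at once.

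The main obstacle is Borel measurability of $\pi$. The plan is to use the remark preceding the lemma, namely that $\mathscr{B}_{\cC(G)}$ is generated by the sets $W_K$ with $K$ relatively compact, so it suffices to show that $\pi^{-1}(W_K)$ is Borel in $X$ for each such $K$. Unwinding the definitions,
\[
\pi^{-1}(W_K) = \{x \in X \mid Y_x \cap K = \emptyset\} = \{x \in X \mid g.x \notin Y \text{ for all } g \in K\},
\]
so I must show that $E_K := \{x \mid Y_x \cap K \neq \emptyset\} = a((K \times Y))$ (more precisely the set of $x$ admitting some $g\in K$ with $g.x \in Y$, equivalently $g^{-1} \in Y_x$) is Borel. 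The clean way is to cover $K$ by finitely many Borel sets $V_1,\dots,V_p$ with $V_i^{-1}V_i \subset U$, so that by Lemma \ref{Lemma_BasicProp}(i) each $V_i \times Y$ is an injective Borel set in $G \times Y$; then $a(V_i \times Y)$ is Borel in $X$ by Lemma \ref{Lemma_BasicProp}(iv), and $E_{K}$ is a finite union (over the $V_i$, after passing to $K^{-1}$ appropriately) of such images, hence Borel. Taking complements shows $\pi^{-1}(W_K)$ is Borel, completing the measurability argument.

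I expect the only subtlety to be bookkeeping with inverses: the Chabauty--Fell action twists by $g^{-1}$ and the hitting-time set is defined via $g.x \in Y$, so I would be careful to track whether $K$ or $K^{-1}$ appears, using that $Y_x \cap K \neq \emptyset$ is equivalent to the existence of $g \in K$ with $g^{-1} \in (Y_x)^{-1}$, and that $a$ restricted to an injective set has Borel image. Once the generating sets $W_K$ are handled, measurability against all of $\mathscr{B}_{\cC(G)}$ follows formally since preimages commute with the Boolean operations generating the $\sigma$-algebra.
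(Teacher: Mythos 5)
Your proposal is correct and follows essentially the same route as the paper's proof: well-definedness via Lemma \ref{Lemma_BasicProp} (iii), the direct computation $Y_{g.x} = Y_x g^{-1}$, the equivalence $e \in Y_x \iff x \in Y$ (which gives both $\pi(Y) \subset \cT$ and $\pi^{-1}(\cT) = Y$), and measurability by checking preimages of the generating sets $W_K$, which reduces to Borelness of $K^{-1}.Y$. The only difference is cosmetic: your finite covering of $K$ by injective pieces $V_i \times Y$ is unnecessary, since Lemma \ref{Lemma_BasicProp} (iv) already gives Borelness of $a(C)$ for an \emph{arbitrary} Borel set $C \subset G \times Y$ (in particular $C = K^{-1} \times Y$), and the inverse bookkeeping you flag resolves exactly as you anticipate, namely $\pi^{-1}(W_K) = \{x : Y_x \cap K = \emptyset\} = (K^{-1}.Y)^c$.
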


\begin{proof} It follows from Lemma \ref{Lemma_BasicProp} (iii) that $Y_x \in \cC^U(G)$ for every $x \in X$, hence $\pi$ is well-defined. Note that
\[
\pi(g.x) = Y_{g.x} = \big\{ t \in G \, : \,  tg.x \in Y \big\} = Y_x g^{-1} = g.\pi(x),
\]
for all $g \in G$ and $x \in X$,  so $\pi$ is $G$-equivariant.  Furthermore,  
\[
\pi^{-1}(\cT) = \{ x \in X \,  : \,  e \in Y_x \} = Y.
\]
To prove that $\pi$ is Borel,  it suffices to show that $\pi^{-1}(W_K)$ belongs to $\mathscr{B}_X$.  Note that
\[
\pi^{-1}(W_K) = \big\{ x \in X \, : \,  Y_x \cap K = \emptyset \big\} = (K^{-1}.Y)^c.
\]
By Lemma \ref{Lemma_BasicProp} (iv),  $K^{-1}.Y$ is a Borel set in $X$,  so we are done.
\end{proof}
\begin{definition}
The map $\pi: X \to \cC^U(G)$ is referred to as the \emph{canonical Chabauty-Fell $G$-map}.
\end{definition}

We will be particularly interested in  transverse triples $(X,a,Y)$ for which $X$ admits a $G$-invariant ergodic probability measure $\mu$.  In this case,  if $\pi$ denotes the canonical Chabauty-Fell $G$-map of $(X,a, Y)$,  then $\eta := \pi_*\mu$ is a $G$-ergodic Borel probability measure on $\cC^U(G)$ with $\eta(\{\emptyset\}) = 0$.  Note that $\supp(\eta)$ is a closed,  hence compact,  $G$-invariant subset of $\cC^U(G)$. 
Since the latter space is second countable and $\eta$ is ergodic,  there exists an element $P_o \in \supp(\eta)$ whose $G$-orbit is dense in the support.  In particular,  
$\Omega_{P_o} \subset \supp(\eta)$.  Since $\eta(\{\emptyset\}) = 0$,  we conclude that $X' := \pi^{-1}(\Omega_{P_o}^{\times})$ is a $G$-invariant $\mu$-conull Borel subset of $X$,  and we thus have a Borel $G$-map
\[
\pi: (X', \mu|_{X'}) \to (\Omega^\times_{P_o}, \eta) \quad \text{such that} \quad Y \cap X' = \pi^{-1}(\mathcal T_{P_o}).
\]
Note that $\Lambda_a(Y) \subset  \Lambda_{a_r}(\mathcal T_{P_o})$.  Here is a typical class of examples:





\begin{example}[Cut-and-project sets]
Let $H$ be a lcsc group and suppose $\Gamma < G \times H$ is a lattice whose projection to $H$ is dense.  Then $X := (G \times H)/\Gamma$,  equipped with quotient
Borel structure,  is a Borel $G$-space,  with a unique $G$-invariant (and $G$-ergodic) Borel probability measure $\mu$ (see \cite[Lemma 5.7]{BHP1}).  

Fix a pre-compact subset $W \subset H$ with non-empty interior,  and set $Y := (\{e\} \times W)\Gamma$.  It is not hard to check that $Y$ is a separated cross-section.  We thus obtain an associated Chabauty-Fell map $\pi:  (G \times H)/\Gamma \to \mathcal C^U(G)$.  The elements of $\pi(X)$ are called \emph{cut-and-project sets} in the literature.  Our arguments above show that $\mu$-almost every $x \in X$,  the $G$-hull $\Omega_{\pi(x)}^{\times}$ supports a $G$-invariant Borel probability measure $\eta_x$(which might depend on the point $x$).  Under some additional mild assumptions on $W$ \cite[Theorem 1.1]{BHP1} tells us that there is in fact a unique $G$-invariant (and thus $G$-ergodic) Borel probability measure $\eta$ on $\pi(X)$,  not supported on the empty set.  In particular,  in this case,  $\eta_x = \eta$ for all $x$.  

\end{example}

\subsection{Borel $Y$-sections and lifted measures}
Let $G$ be a lcsc group with left-Haar measure $m_G$ and let $(X,a, Y)$ be a $U$-transverse triple over $G$ for some open identity neighbourhood $U$ in $G$. By Lemma \ref{Lemma_Borel} (iii) and Lemma \ref{Lemma_BasicProp} (iv). the map \[a|_{G \times Y}: G \times Y \to X\] is surjective with countable fibers, hence admits a Borel section $b: X \to G \times Y$. Any such section is necessarily of the form
\begin{equation}\label{def_bbeta}
b(x) = b_\beta(x) :=  (\beta(x)^{-1},\beta(x).x),
\end{equation}
for some Borel map $\beta: X \to G$ such that $\beta(x) \in Y_x$ for all $x \in X$. In the sequel we are going to refer to such a map $\beta$ as a \emph{Borel $Y$-section}; such sections then always exist by the aforementioned lemmas. 

\begin{definition}\label{def_nubeta} If  $\beta: X \to G$ is a Borel $Y$-section and $\nu$ is a Borel measure on $Y$, then the Borel measure $\nu_\beta$ on $X$ given by
\[
\nu_\beta(B) = m_G \otimes \nu(b_\beta(B)),  \quad \textrm{for $B \in \mathscr{B}_X$}
\]
is called the \emph{$\beta$-lifted measure} of $\nu$.
\end{definition}

Our next goal is characterize the finiteness properties of lifted measures of $\cY$-finite Borel measures on $Y$, where $\cY = \{ Y_n : n \geq 1 \big\}$ is a given Borel exhaustion of $Y$.
By \cite[Theorem 2.A.10]{CornulierHarpe}, there is a fundamental exhaustion $\cG = \{ K_n : n \geq 1\}$ of $G$ by compact sets,  i.e.  every $K_n$ is a compact set,  and every compact set in $G$ is eventually contained in some $K_n$. We fix such an exhaustion once and for all and define $\cY_{\cG} := \{ K_n.Y_n : n \geq 1 \}$.  We refer to $\cY_{\cG}$ as the \emph{$\cG$-suspension of $\cY$}.

\begin{lemma}
\label{Lemma_YGsuspension}
$\cY_{\cG}$ is a Borel exhaustion of $X$,  and a Borel measure $\mu$ on $X$ is 
$\cY_{\cG}$-finite if and only if $\mu(K.Y_n) < \infty$ for every $n$ and for every compact set $K$ in $G$. In particular, this notion does not depend on the choice of fundamental exhaustion $\cG$.
\end{lemma}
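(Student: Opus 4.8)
The plan is to verify the three assertions in order, the crux being a domination argument that leverages the defining property of the fundamental exhaustion $\cG$.

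First I would check that $\cY_{\cG} = \{K_n.Y_n\}$ is a Borel exhaustion of $X$. Each set $K_n.Y_n$ equals $a(K_n \times Y_n)$, and since $K_n$ is compact (hence Borel) and $Y_n$ is Borel, the product $K_n \times Y_n$ is a Borel subset of $G \times Y$; by Lemma \ref{Lemma_BasicProp}(iv) its image $K_n.Y_n$ is therefore Borel in $X$. Monotonicity is immediate, since both $(K_n)$ and $(Y_n)$ are increasing, so that $K_n.Y_n \subset K_{n+1}.Y_{n+1}$. For the exhaustion property I would use that $Y$ is a cross section, so $G.Y = X$: given $x \in X$, write $x = g.y$ with $g \in G$ and $y \in Y$; since $\bigcup_n Y_n = Y$ we have $y \in Y_m$ for some $m$, and since every compact set (in particular the singleton $\{g\}$) is eventually contained in some $K_p$, we have $g \in K_p$; taking $n = \max(m,p)$ and using monotonicity gives $x \in K_n.Y_n$.

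Next I would establish the equivalence. The direction from the $(K,n)$-condition to $\cY_{\cG}$-finiteness is trivial: taking $K = K_n$ (which is compact) gives $\mu(K_n.Y_n) < \infty$ for all $n$, which is precisely $\cY_{\cG}$-finiteness. For the converse, fix $n$ and an arbitrary compact $K \subset G$. By the defining property of $\cG$ there is an index $m$, which we may take $\geq n$, with $K \subset K_m$; then, using $Y_n \subset Y_m$,
\[
K.Y_n \subset K_m.Y_n \subset K_m.Y_m,
\]
so $\mu(K.Y_n) \leq \mu(K_m.Y_m) < \infty$. Here $K.Y_n = a(K \times Y_n)$ is Borel by the same application of Lemma \ref{Lemma_BasicProp}(iv), so the inequality is meaningful. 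This domination step, which converts an arbitrary compact set into one of the prescribed $K_m$, is the only place where any real work occurs, and it is exactly where the \emph{fundamental} (rather than merely exhausting) nature of $\cG$ is used.

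Finally, the independence claim is immediate: the condition ``$\mu(K.Y_n) < \infty$ for every $n$ and every compact $K$'' makes no reference to $\cG$, yet the equivalence just proved shows that it characterizes $\cY_{\cG}$-finiteness for \emph{every} choice of fundamental exhaustion $\cG$; hence $\cY_{\cG}$-finiteness is independent of that choice. I expect no genuine obstacle in this lemma: the argument is elementary once measurability of the images $K.Y_n$ is secured via Lemma \ref{Lemma_BasicProp}(iv) and the domination is carried out using the exhaustion property of $\cG$.
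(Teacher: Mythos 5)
Your proof is correct and takes essentially the same approach as the paper: the one substantive point, Borel measurability of $K.Y_n$, is handled exactly as in the paper's proof (via Lemma \ref{Lemma_BasicProp}(iv) and Lemma \ref{Lemma_Borel}(iii)). The remaining steps you spell out --- monotonicity, the exhaustion property, the domination $K \subset K_m$ using the fundamental exhaustion, and the resulting $\cG$-independence --- are precisely the routine verifications the paper declares trivial, and your execution of them is sound.
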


\begin{proof}
The only non-trivial part of the lemma is to show that $K.Y_n$ is a Borel set for every
compact set $K$ in $G$.  However,  $K \times Y_n$ is a Borel set in $G \times Y$ and 
by Lemma \ref{Lemma_BasicProp} (iv),  $a|_{G \times Y}$ is Borel with countable fibers,  so
$a(K \times Y_n) = K.Y_n$ is a Borel set by Lemma \ref{Lemma_Borel} (iii).
\end{proof}
\begin{lemma}
\label{Lemma_adaptedBorelYsection}
Let $\{ K_n\}$ be a fundamental exhaustion of 
$G$ by compact sets,  and $\{ Y_n \}$ a Borel exhaustion of $Y$.  Then there is a Borel section $\beta$ such that $b_\beta(K_n.Y_n) \subset K_n \times Y_n$ for all $n$.
\end{lemma}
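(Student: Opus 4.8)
The plan is to choose, for each $x$, a return time $\beta(x)\in Y_x$ whose associated section value $b_\beta(x)=(\beta(x)^{-1},\beta(x).x)$ already lands inside $K_n\times Y_n$ for the \emph{smallest} index $n$ with $x\in K_n.Y_n$; monotonicity of the two exhaustions will then handle all larger $n$ automatically. First I would record that $(K_n.Y_n)$ is an increasing Borel exhaustion of $X$ by Lemma \ref{Lemma_YGsuspension}, so that
\[
n\colon X\to\bN,\qquad n(x):=\min\{n\ge 1: x\in K_n.Y_n\}
\]
is well defined; it is Borel because $\{x: n(x)=n\}=(K_n.Y_n)\setminus (K_{n-1}.Y_{n-1})$ is Borel (with $K_0.Y_0:=\emptyset$). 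The key reduction is that the desired property $b_\beta(K_n.Y_n)\subset K_n\times Y_n$ for all $n$ is equivalent to the single pointwise condition
\[
\beta(x)^{-1}\in K_{n(x)}\qquad\textrm{and}\qquad \beta(x).x\in Y_{n(x)}\qquad(x\in X).
\]
Indeed, if this holds and $x\in K_n.Y_n$, then $n(x)\le n$, so $\beta(x)^{-1}\in K_{n(x)}\subset K_n$ and $\beta(x).x\in Y_{n(x)}\subset Y_n$ by monotonicity; conversely the instance $n=n(x)$ is exactly the pointwise condition.

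Next I would realize such a $\beta$ as a Borel uniformization. I consider the relation
\[
D:=\bigl\{(g,x)\in G\times X : g^{-1}\in K_{n(x)},\ g.x\in Y_{n(x)}\bigr\}.
\]
Since $n$ is Borel, slicing on its value exhibits $D=\bigcup_m \bigl(K_m^{-1}\times\{x: n(x)=m\}\bigr)\cap a^{-1}(Y_m)$ as a countable union of Borel sets (here $K_m^{-1}$ is compact, $a$ is Borel, and $\{x: n(x)=m\}$ is Borel), so $D$ is Borel. For fixed $x$, writing $x=k.y$ with $k\in K_{n(x)}$ and $y\in Y_{n(x)}$ shows that $g=k^{-1}$ lies in the fibre $D_x$, so every fibre is nonempty; moreover $D_x\subset Y_x\cap K_{n(x)}^{-1}$ is finite by Lemma \ref{Lemma_BasicProp} (ii). Thus $\pr_X|_D\colon D\to X$ is a Borel surjection with countable fibres, and by Lemma \ref{Lemma_Borel} (iii) it admits a Borel right inverse $x\mapsto(\beta(x),x)$. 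The resulting map $\beta\colon X\to G$ is Borel and satisfies $\beta(x).x\in Y_{n(x)}\subset Y$, so $\beta(x)\in Y_x$ and $\beta$ is a Borel $Y$-section in the sense of \eqref{def_bbeta}; by construction it meets the pointwise condition above, which by the reduction yields $b_\beta(K_n.Y_n)\subset K_n\times Y_n$ for every $n$.

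The only genuinely technical point is the Borelness of $D$, which I handle through the slicing on the Borel index function $n(x)$; once $D$ is known to be Borel with nonempty countable fibres, the existence of $\beta$ is immediate from the countable-to-one selection result Lemma \ref{Lemma_Borel} (iii). Everything else — the nonemptiness of the fibres via the factorization $x=k.y$, the finiteness via the estimate in Lemma \ref{Lemma_BasicProp} (ii), and the reduction to the minimal index $n(x)$ — is elementary and uses only that the exhaustions $(K_n)$ and $(Y_n)$ are increasing.
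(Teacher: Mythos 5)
Your proof is correct and follows essentially the same route as the paper: both rest on the minimal-index partition of $X$ induced by the increasing exhaustion $(K_n.Y_n)$ together with the countable-to-one Borel selection of Lemma \ref{Lemma_Borel} (iii), using Lemma \ref{Lemma_BasicProp} to control the fibres. The only difference is organizational --- the paper picks a Borel right inverse of $a|_{K_n \times Y_n}$ for each $n$ and glues these along the partition, whereas you encode the index function $n(x)$ into a single Borel relation $D$ and uniformize once, which is a valid and slightly tidier packaging of the same argument.
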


\begin{proof}
By Lemma \ref{Lemma_Borel} (iii) and Lemma \ref{Lemma_BasicProp} (iv),  the 
map $a_n := a|_{K_n \times Y_n}$ has a Borel right-inverse 
\[
b_n : K_n.Y_n \ra K_n \times Y_n,  \enskip x \mapsto (b_{n,1}(x),b_{n,2}(x)). 
\]
Note that both $b_{n,1}$ and $b_{n,2}$ are Borel maps,  and that they satisfy $b_{n,1}(x).b_{n,2}(x) = x$ for all $x \in K_n.Y_n$.  We set $\beta_n(x) := b_{n,1}(x)^{-1}$,  so that
\[
b_n(x) = (\beta_n(x)^{-1},\beta_n(x).x),  \quad \textrm{for all $x \in K_n.Y_n$}.
\]
In particular,  $\beta_n(x) \in Y_x \cap K_n^{-1}$ for all $x \in K_n.Y_n$.  Let $X_1 := K_1.Y_1$,  and
define inductively (the possibly empty Borel sets) $X_2,  X_3,\ldots$ by 
$X_{n+1} := K_{n+1}.Y_{n+1} \setminus \bigcup_{m=1}^n X_m$.  Since $\{K_n.Y_n\}$
is a Borel exhaustion of $X$,  we see that $X_1,X_2,\ldots$ is a Borel partition of $X$.
We now define $\beta(x) = \beta_n(x)$ for $x \in X_n$,  which is clearly a Borel $Y$-section with the property that 
\[
\beta(K_n.Y_n) = \bigcup_{m=1}^n \beta(X_m) \subset \bigcup_{m=1}^n K_m^{-1} = K_n^{-1},
\]
since $\{K_n\}$ is increasing.
\end{proof}
In the sequel, when constructing lifted measures, we will usually use $Y$-sections as in Lemma \ref{Lemma_adaptedBorelYsection}. In this case, lifts of $\sigma$-finite measures will again be $\sigma$-finite. More precisely:
\begin{corollary}\label{cor_YGFinite} Let $\beta: X \to G$ be a Borel $Y$-section such that $b_\beta(K_n.Y_n) \subset K_n \times Y_n$ for all $n$. Then for all
$\nu \in M(Y, \cY)$ we have $\nu_\beta \in M(X, \cY_{\cG})$.
\end{corollary}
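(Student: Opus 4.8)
The plan is to show that $\nu_\beta(K_n.Y_n) < \infty$ for each $n$, since $\cY_{\cG} = \{K_n.Y_n\}$ is precisely the exhaustion with respect to which $\cY_{\cG}$-finiteness is measured. By the very definition of the $\beta$-lifted measure, we have
\[
\nu_\beta(K_n.Y_n) = (m_G \otimes \nu)\big(b_\beta(K_n.Y_n)\big),
\]
so the whole matter reduces to bounding the product measure of the set $b_\beta(K_n.Y_n)$ inside $G \times Y$. This is where the adaptedness hypothesis on $\beta$ enters: by assumption $b_\beta(K_n.Y_n) \subset K_n \times Y_n$, so by monotonicity of measures I immediately get
\[
\nu_\beta(K_n.Y_n) \leq (m_G \otimes \nu)(K_n \times Y_n) = m_G(K_n)\,\nu(Y_n).
\]

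Now I would observe that both factors on the right are finite. Indeed $K_n$ is compact and $m_G$ is a Haar measure, so $m_G(K_n) < \infty$; and $\nu \in M(Y,\cY)$ means precisely that $\nu(Y_n) < \infty$ for all $n$. Hence $\nu_\beta(K_n.Y_n) < \infty$ for every $n$, which by definition says $\nu_\beta \in M(X,\cY_{\cG})$. Strictly speaking I should also note that $\nu_\beta$ is a genuine Borel measure on $X$ (so that the statement $\nu_\beta \in M(X,\cY_{\cG})$ even makes sense): this follows because $b_\beta$ is Borel and $a|_{G\times Y}$ has countable fibers, so pushing forward along $b_\beta$ produces a well-defined countably additive set function, and images $a(\widetilde B)$ of injective Borel sets are Borel by Lemma \ref{Lemma_BasicProp}.

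I do not anticipate any serious obstacle here — the corollary is essentially an unwinding of Definition \ref{def_nubeta} combined with the containment furnished by Lemma \ref{Lemma_adaptedBorelYsection}. The only point requiring the slightest care is making sure $\cY_{\cG}$-finiteness is being tested against the correct exhaustion: here I would invoke Lemma \ref{Lemma_YGsuspension}, which guarantees that $\cY_{\cG} = \{K_n.Y_n\}$ is a bona fide Borel exhaustion of $X$ and that $\cY_{\cG}$-finiteness is equivalent to finiteness on all sets $K.Y_n$ with $K$ compact. In fact the bound above, once established for the fixed fundamental exhaustion, extends to arbitrary compact $K$ by the same inequality $\nu_\beta(K.Y_n) \leq m_G(K)\nu(Y_n)$, reconfirming independence of the choice of $\cG$.
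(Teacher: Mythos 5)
Your proof is correct and takes essentially the same route as the paper, which disposes of the corollary in one line via exactly the inequality $\nu_\beta(K_n.Y_n) \leq (m_G \otimes \nu)(K_n \times Y_n)$ that you derive from Definition \ref{def_nubeta} and the adaptedness hypothesis $b_\beta(K_n.Y_n) \subset K_n \times Y_n$. Your supplementary remarks (Borel-ness of $b_\beta(B)$ via Lemma \ref{Lemma_Borel}, and independence of the exhaustion $\cG$ via Lemma \ref{Lemma_YGsuspension}) are accurate but beyond what the paper records.
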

\begin{proof} This is immediate from the fact that $\nu_\beta(K_n.Y_n) \leq (m_G \otimes \nu)(K_n \times Y_n)$ for all $n$.
\end{proof}

\subsection{Cross-section equivalence relations}

Let $G$ be a lcsc group and let $(X,a, Y)$ be a $U$-transverse triple over $G$ for some open identity neighbourhood $U \subset G$. We
define the \emph{cross-section equivalence relation} $R_Y \subset Y \times Y$ by
\[
R_Y := \big\{ (y,y') \in Y \times Y \,  : \,  y \in G.y' \big\}.
\]
By definition, $R_Y$ is just the restriction of the orbit relation of the $G$-action on $X$ to the cross-section $Y$, hence in particular a (set-theoretic) equivalence relation on $Y$. Moreover, a straightforward application of Lemma \ref{Lemma_BasicProp} yields:

\begin{lemma}
$R_Y$ is a countable Borel equivalence relation on $Y$. 
\end{lemma}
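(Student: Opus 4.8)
The plan is to verify the two defining properties of a countable Borel equivalence relation separately, namely that each $R_Y$-class is countable and that $R_Y$ is a Borel subset of $Y \times Y$; that $R_Y$ is a set-theoretic equivalence relation has already been observed, being the restriction of the orbit relation. For countability of classes, I would note that the $R_Y$-class of a point $y \in Y$ is exactly $G.y \cap Y = \{g.y : g \in Y_y\}$, the image of the return-time set $Y_y$ under the orbit map $g \mapsto g.y$. By Lemma \ref{Lemma_BasicProp} (iii) the set $Y_y$ is countable, hence so is its image, and every $R_Y$-class is countable.

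The measurability of $R_Y$ is the step that genuinely uses the separatedness hypothesis, and is where I expect the only real work. Since $G$ is second countable I would first fix a countable open cover $G = \bigcup_k V_k$ with $V_k^{-1}V_k \subset U$, exactly as in the proof of Lemma \ref{Lemma_BasicProp} (v). For each $k$ set
\[
A_k := (V_k \times Y) \cap a^{-1}(Y) = \{(g,y') \in V_k \times Y : g.y' \in Y\},
\]
which is a Borel subset of the standard Borel space $G \times Y$, hence itself standard by Lemma \ref{Lemma_Borel} (i). I would then consider the map $\psi_k : A_k \to Y \times Y$, $(g,y') \mapsto (g.y', y')$, and argue that it is injective: by Lemma \ref{Lemma_BasicProp} (i) the set $V_k \times Y$ is injective for $a$, so if $\psi_k(g_1,y_1') = \psi_k(g_2,y_2')$ then $y_1' = y_2'$ and $g_1.y_1' = g_2.y_1'$ with $g_1, g_2 \in V_k$, forcing $g_1 = g_2$.

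Since $\psi_k$ is an injective Borel map between standard Borel spaces, its image $\psi_k(A_k)$ is a Borel subset of $Y \times Y$ by Lemma \ref{Lemma_Borel} (ii). Finally I would check the identity
\[
R_Y = \bigcup_k \psi_k(A_k)
\]
by unwinding definitions: $(y,y') \in R_Y$ means $y = g.y'$ for some $g \in G$, and choosing $k$ with $g \in V_k$ places $(g,y')$ in $A_k$ with $\psi_k(g,y') = (y,y')$, while the reverse inclusion is immediate from $\psi_k(A_k) \subset R_Y$. Thus $R_Y$ is a countable union of Borel sets, hence Borel, completing the proof. The main obstacle is purely the Borel structure: without the $U$-separatedness of $Y$ the action map would fail to be fiberwise injective on the pieces $V_k \times Y$, and one could not invoke the Lusin--Souslin theorem (Lemma \ref{Lemma_Borel} (ii)) to conclude that the images $\psi_k(A_k)$ are Borel; everything else is routine bookkeeping of the orbit relation restricted to $Y$.
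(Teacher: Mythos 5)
Your proof is correct, but the Borel-measurability step takes a genuinely different route from the paper's. The countability of classes is handled identically in both arguments, via Lemma \ref{Lemma_BasicProp} (iii). For measurability, the paper considers the single map $\widetilde{a} : G \times Y \ra Y \times X$, $(g,y) \mapsto (y,g.y)$, notes that $C := \widetilde{a}^{-1}(Y \times Y)$ is Borel with $R_Y = \widetilde{a}(C)$, observes via Lemma \ref{Lemma_BasicProp} (iv) that $\widetilde{a}$ has countable fibers, and then invokes the Lusin--Novikov-type result recorded as Lemma \ref{Lemma_Borel} (iii) in one stroke. You instead perform the countable decomposition by hand: using separatedness you cover $G$ by countably many pieces $V_k$ with $V_k^{-1}V_k \subset U$, check that the map $\psi_k$ is injective on each piece $A_k = (V_k \times Y) \cap a^{-1}(Y)$ by Lemma \ref{Lemma_BasicProp} (i), and then only need the injective-image theorem, Lemma \ref{Lemma_Borel} (ii), before taking the countable union. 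In effect you re-prove, in this concrete situation, the special case of Lusin--Novikov that the paper cites wholesale; your version is more self-contained (it avoids the countable-fiber uniformization result entirely) and makes the use of $U$-separatedness explicit and constructive, mirroring the partition technique the paper itself uses in the proofs of Lemma \ref{Lemma_BasicProp} (v) and Lemma \ref{Lemma_partialstructure}, while the paper's argument is shorter and delegates the decomposition to the general descriptive-set-theoretic machinery. One small caveat: your closing remark that without separatedness ``one could not invoke Lusin--Souslin'' is accurate for your argument, but note that separatedness is equally essential for the paper's route, since without it the fibers of $\widetilde{a}$ (equivalently the return-time sets $Y_x$) need not be countable, so neither Lemma \ref{Lemma_Borel} (ii) nor (iii) would apply, and indeed the classes themselves could be uncountable.
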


\begin{proof}
First note that if $y' \in Y$,  then $y \in G.y$ if and only if $y \in Y_{y'}.y'$.  By Lemma \ref{Lemma_BasicProp} (iii),  $Y_{y'}$ is countable,  whence the set $\{ y \in Y \,  : \,  (y,y') \in R_Y\}$ is countable as well.  To see why $R_Y$ is a Borel set in $Y \times Y$,  
consider the Borel map $\widetilde{a} : G \times Y \ra Y \times X,  \enskip (g,y) \mapsto (y,g.y)$.  Then
$C = \widetilde{a}^{-1}(Y \times Y)$ is a Borel set in $G \times Y$ and $R_Y = \widetilde{a}(C)$.  It follows from Lemma \ref{Lemma_BasicProp} (iv) that $\widetilde{a}$ has countable fibers,  so $R_Y$ is a Borel set in $Y \times Y$ by Lemma \ref{Lemma_Borel} (iii).
\end{proof}
Our next goal is to study sets and measures which are invariant under the cross-section equivalence relation. For this we need to describe the local structure of partial $R_Y$-maps. The following lemma is based on the proof of \cite[Lemma 2.5]{Avni}. 

\begin{lemma}
\label{Lemma_partialstructure}
Let $\varphi$ be a partial $R_Y$-map and let $V$ be a symmetric identity neighbourhood in $G$ such that $V^4 \subset U$.  
Then there exist
\vspace{0.1cm}
\begin{itemize}
\item a Borel partition $\dom(\varphi) = \bigsqcup_k A_k$ such that $\varphi|_{A_k}$ is injective for every $k$.  \vspace{0.1cm} 
\item identity neighbourhoods $W_k \subset V$ and $\lambda_k \in \Lambda_a(Y)$ such that $\, \lambda_k^{-1} W_k \lambda_k \subset V$ for all $k$. \vspace{0.1cm}
\item Borel maps $\rho_k : A_k \ra W_k$ such that
\[
\varphi(y) = \rho_k(y)\lambda_k.y,  \enskip \textrm{for all $y \in A_k$}.
\]
\end{itemize}
\vspace{0.1cm}
In particular,  $a(C_k) = W_k\lambda_k.A_k$, where 
\vspace{0.1cm}
\[
C_k = \big\{ (w\rho_k(y)^{-1},\varphi(y)) \, : \, w \in W_k, \enskip y \in A_k \big\} \subset W_k^2 \times Y.
\]
\end{lemma}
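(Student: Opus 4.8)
The plan is to reduce the statement to a measurable selection in $G$ followed by a covering argument, and to read off injectivity for free from $U$-separatedness. First I would linearise $\varphi$ into a Borel $G$-valued map. Consider the set
\[
C = \big\{ (g,y) \in G \times \dom(\varphi) \, : \, g.y = \varphi(y) \big\},
\]
which is Borel because $a$ and $\varphi$ are Borel. Since $\varphi$ is a partial $R_Y$-map, the projection $C \to \dom(\varphi)$ is onto, and its fibre over $y$ is contained in $Y_y$ (as $g.y = \varphi(y) \in Y$ forces $g \in Y_y$), hence countable by Lemma \ref{Lemma_BasicProp} (iii). By Lemma \ref{Lemma_Borel} (iii) this projection admits a Borel right inverse, whose $G$-component is a Borel map $\psi : \dom(\varphi) \to G$ with $\psi(y).y = \varphi(y)$ and $\psi(y) \in Y_y$ for all $y$. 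As $y \in Y$, we have $\psi(y) \in \Lambda_a(Y)$ for every $y$.

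Next I would chop up $G$ around the return times. For each $\lambda \in \Lambda_a(Y)$, continuity of the conjugation $g \mapsto \lambda^{-1}g\lambda$ yields a symmetric open identity neighbourhood $W(\lambda) \subset V$ with $\lambda^{-1}W(\lambda)\lambda \subset V$. The open sets $W(\lambda)\lambda$ each contain $\lambda$, so they cover $\Lambda_a(Y)$; since $G$ is second countable, hence Lindel\"of, I extract a countable subfamily, giving $\lambda_k \in \Lambda_a(Y)$ and $W_k := W(\lambda_k) \subset V$ with $\lambda_k^{-1}W_k\lambda_k \subset V$ and $\Lambda_a(Y) \subset \bigcup_k W_k\lambda_k$. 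Pulling this cover back through $\psi$, the Borel sets $\psi^{-1}(W_k\lambda_k)$ cover $\dom(\varphi)$, and disjointifying them produces a Borel partition $\dom(\varphi) = \bigsqcup_k A_k$ with $\psi(A_k) \subset W_k\lambda_k$. Setting $\rho_k(y) := \psi(y)\lambda_k^{-1}$ then defines a Borel map $A_k \to W_k$, and since $\rho_k(y)\lambda_k.y = \psi(y).y = \varphi(y)$ the required factorisation holds identically on $A_k$, with no stabiliser issues to resolve.

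It remains to check that each $\varphi|_{A_k}$ is injective, and this is where $V^4 \subset U$ (indeed $V^2 \subset U$ would suffice) enters. If $y,y' \in A_k$ with $\varphi(y)=\varphi(y')$, then $\psi(y).y = \psi(y').y'$, so $y' = (\psi(y')^{-1}\psi(y)).y$; writing $\psi(y) = w\lambda_k$ and $\psi(y')=w'\lambda_k$ with $w,w' \in W_k$ symmetric, one has $\psi(y')^{-1}\psi(y) = (\lambda_k^{-1}w'\lambda_k)^{-1}(\lambda_k^{-1}w\lambda_k) \in V^{-1}V = V^2 \subset U$. Since this element sends $y \in Y$ to $y' \in Y$, it lies in $Y_y \cap U = \{e\}$ by $U$-separatedness, forcing $y' = y$. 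Finally the ``in particular'' clause is a direct computation: $a(w\rho_k(y)^{-1},\varphi(y)) = w\rho_k(y)^{-1}\rho_k(y)\lambda_k.y = w\lambda_k.y$, whence $a(C_k) = W_k\lambda_k.A_k$, while $w\rho_k(y)^{-1} \in W_kW_k = W_k^2$ because $W_k$ is symmetric.

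I expect the main obstacle to be the organisation of the covering step, namely arranging that the centres $\lambda_k$ simultaneously lie in $\Lambda_a(Y)$ and satisfy the conjugation bound $\lambda_k^{-1}W_k\lambda_k \subset V$ while keeping every construction Borel; once $\psi$ is produced, the remaining ingredients are a single measurable selection and a short separatedness computation.
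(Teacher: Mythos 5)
Your proof is correct, and it differs from the paper's in one genuine structural respect: the order of selection and partition. You perform a single global measurable selection up front, choosing a Borel map $\psi : \dom(\varphi) \ra G$ with $\psi(y).y = \varphi(y)$ via the countable-fiber selection theorem (Lemma \ref{Lemma_Borel}~(iii), justified by $\{g : g.y = \varphi(y)\} \subset Y_y$ and Lemma \ref{Lemma_BasicProp}~(iii)), and only then partition $\dom(\varphi)$ by pulling back a countable cover $\Lambda_a(Y) \subset \bigcup_k W_k\lambda_k$ through $\psi$, so that $\rho_k(y) := \psi(y)\lambda_k^{-1}$ is Borel for free. The paper proceeds in the opposite order: it first partitions using the selection-free condition $\varphi(y) \in W_k\lambda_k.y$ (covering only the chosen witnesses $\lambda_y$ rather than all of $\Lambda_a(Y)$, an immaterial difference), and then builds each $\rho_k$ separately from the graph $\widetilde{A}_k = \{(w,y) \in W_k \times A_k : \varphi(y) = w\lambda_k.y\}$ via Lemma \ref{Lemma_Borel}~(ii); this requires an extra separatedness argument showing the $w$ with $\varphi(y) = w\lambda_k.y$ is \emph{unique}, which your construction bypasses entirely since $\rho_k$ is determined by $\psi$. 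The Lindel\"of covering with conjugation control and the injectivity computation for $\varphi|_{A_k}$ are essentially identical in both proofs. Your parenthetical remark that $V^2 \subset U$ suffices is also accurate for the lemma as stated (the paper's own proof only ever conjugates products in $W_k^2$); the full strength $V^4 \subset U$ is consumed downstream, in the proof of Lemma \ref{Lemma_RYinvariance}, where one needs $a|_{C_k}$ to be injective: there two first coordinates from $W_k^2$ produce an element of $W_k^4 \cap \Lambda_a(Y)$, and killing it requires $V^4 \subset U$.
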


\begin{proof}
Set $A := \dom(\varphi)$.  For every $y \in A$,  pick $\lambda_y \in \Lambda_a(Y)$ such that $\varphi(y) = \lambda_y.y$,  and a symmetric open identity neighbourhood $W_y \subset V$ such that $\lambda_y^{-1}W_y \lambda_y \subset V$.  Since $G$ is second countable,  and thus Lindel\"of,  we can find $y_1,y_2,\ldots \in A$ such that
\[
W := \bigcup_{y \in Y} W_y \lambda_y = \bigcup_{k} W_{y_k} \lambda_{y_k}.
\]
Set $W_k := W_{y_k}$ and $\lambda_k := \lambda_{y_k}$.  Let $A_k' := \big\{ y \in A \, : \, \varphi(y) \in W_k\lambda_k.y \big\}$,  and note that 
\[
\bigcup_k A_k' = \big\{ y \in A \, : \, \varphi(y) \in W.y \big\} = A.
\]
Define $A_1 := A_1'$ and $A_{k+1} = A_{k+1}' \setminus \Big( \bigcup_{j=1}^{k} A_j\Big)$,
for all $k$.  Then $A_1,A_2,\ldots$ are disjoint Borel sets with union $A$ (we can discard empty sets from this collection).  Let us now 
construct the maps $\rho_k$.  We define the Borel set
\[
\widetilde{A}_k = \big\{ (w,y) \in W_k \times A_k \, : \, \varphi(y) = w\lambda_k.y \big\}, 
\]
and note that the projection $(w,y) \mapsto y$ is injective.  Indeed,  if $(w_1,y), (w_2,y) \in \widetilde{A}_k$,  then 
\[
\varphi(y) = w_1 \lambda_k.y = w_2\lambda_k.y,
\]
whence $\lambda_k^{-1}w_2^{-1}w_1\lambda_k.y = y$.  We conclude that
\[
\lambda_k^{-1}w_2^{-1}w_1\lambda_k \in \lambda_k^{-1}W_k^2 \lambda_k \cap \Lambda_a(Y) \subset U \cap \Lambda_a(Y) = \{e\}.
\]
Hence $w_1 = w_2$,  and the projection is injective.  By Lemma \ref{Lemma_Borel} (ii),  we can now find a Borel map $\rho_k : A_k \ra W_k$ such that 
$(\rho_k(y),y) \in \widetilde{A}_k$ for all $y \in A_k$.  In other words,  $\varphi(y) = \rho_k(y)\lambda_k.y$.  It remains to show that $\varphi|_{A_k}$ is injective.  We argue by contradiction and suppose $y_1,  y_2 \in A_k$ are such that $\varphi(y_1) = \varphi(y_2)$,  or equivalently,
$\rho_k(y_1)\lambda_k.y_1 = \rho_k(y_2)\lambda_k.y_2$.  Then,
\[
\lambda_k^{-1}\rho_k(y_2)^{-1}\rho_k(y_1)\lambda_k \in \Lambda_a(Y) \cap \lambda_k^{-1}W_k^2\lambda_k = \{e\},
\]
since $\lambda_k^{-1}W^2_k \lambda_k \subset V \subset U$ and $Y$ is $U$-separated.
Hence,  $\rho_k(y_1) = \rho_k(y_2)$,  which implies that the identity $\rho_k(y_1)\lambda_k.y_1 = \rho_k(y_2)\lambda_k.y_2$ just reduces to $y_1 = y_2$.  
\end{proof}

\begin{corollary}
\label{cor_invariance}
Suppose $B \subset Y$ is a $R_Y$-invariant Borel set.  Then,  
\[
\varphi^{-1}(B \cap \ran(\varphi)) = B \cap \dom(\varphi),
\]
for every (not necessarily injective) partial $R_Y$-map $\varphi$.
\end{corollary}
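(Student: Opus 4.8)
The plan is to reduce the general (non-injective) case to the injective case, which is exactly the content of the definition of $R_Y$-invariance, and to bridge the two using the decomposition furnished by Lemma \ref{Lemma_partialstructure}. First I would apply that lemma to the given partial $R_Y$-map $\varphi$, obtaining a Borel partition $\dom(\varphi) = \bigsqcup_k A_k$ on each piece of which $\varphi$ restricts to an \emph{injective} map. Setting $\varphi_k := \varphi|_{A_k}$, one checks immediately that each $\varphi_k$ is an injective partial Borel $R_Y$-map, i.e.\ $\varphi_k \in [[R_Y]]$, with $\dom(\varphi_k) = A_k$ and $\ran(\varphi_k) = \varphi(A_k)$; indeed its graph is contained in that of $\varphi$, hence in $R_Y$, and it is Borel and injective.

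The next step is simply to invoke the hypothesis that $B$ is $R_Y$-invariant. By definition this identity is available precisely for the injective maps $\varphi_k$, and it gives
\[
\varphi_k^{-1}(B \cap \ran(\varphi_k)) = B \cap \dom(\varphi_k) = B \cap A_k \qquad \text{for every } k.
\]
The final step is to glue these over the partition. Since every value of $\varphi_k$ automatically lies in $\ran(\varphi_k)$, the left-hand side above is just $\{y \in A_k : \varphi(y) \in B\}$. Summing over $k$ and using that the $A_k$ partition $\dom(\varphi)$, I would conclude
\[
\varphi^{-1}(B \cap \ran(\varphi)) = \{ y \in \dom(\varphi) : \varphi(y) \in B \} = \bigsqcup_k \{ y \in A_k : \varphi(y) \in B \} = \bigsqcup_k (B \cap A_k) = B \cap \dom(\varphi),
\]
which is the asserted equality.

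There is no genuine obstacle here: once Lemma \ref{Lemma_partialstructure} is in hand, the statement is essentially a bookkeeping exercise. The one point that requires a little care — and the reason I would organize the argument on the \emph{domain} side rather than the range — is that, because $\varphi$ need not be injective, the sets $\ran(\varphi_k)$ need not be disjoint and so do not partition $\ran(\varphi)$; attempting to decompose the range would force one to track multiplicities. Working fibrewise over the Borel partition of $\dom(\varphi)$ sidesteps this entirely, since each $y \in \dom(\varphi)$ lies in a unique $A_k$ and the condition $\varphi(y)\in B$ is then governed by the single injective map $\varphi_k$.
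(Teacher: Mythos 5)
Your proof is correct and follows essentially the same route as the paper's: both partition $\dom(\varphi)$ via Lemma \ref{Lemma_partialstructure} into pieces on which $\varphi$ is injective, apply the definition of $R_Y$-invariance to each injective restriction, and glue over the partition on the domain side. Your closing remark about why one works over the domain rather than the range is a sound observation, implicit in the paper's argument as well.
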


\begin{proof}
Let $\varphi$ be a (not necessarily injective) partial Borel $R_Y$-map.  Then,  by Lemma \ref{Lemma_partialstructure},  we can find a Borel partition $\dom(\varphi) = \bigsqcup_k A_k$ such that $\psi_k := \varphi|_{A_k}$ is an injective partial $R_Y$-map for every $k$.  Hence,
\[
\varphi^{-1}(B \cap \ran(\varphi)) = \bigsqcup_k \varphi^{-1}(B \cap \ran(\varphi)) \cap A_k
= \bigsqcup_k \psi_k^{-1}(B \cap \ran(\psi_k)).
\]
Since $B$ is $R_Y$-invariant and $\psi_k$ is injective,  $\psi_k^{-1}(B \cap \ran(\psi_k)) = B \cap \dom(\psi_k)$ for all $k$, 
whence 
\[
\varphi^{-1}(B \cap \ran(\varphi)) = \bigsqcup_k \big(B \cap \dom(\psi_k)) = B \cap \dom(\varphi).\qedhere
\]
\end{proof}

\subsection{$R_Y$-invariant sets}
Let $G$ be a lcsc group and let $(X,a, Y)$ be a $U$-transverse triple over $G$ for some open identity neighbourhood $U \subset G$. We are going to show that a subset $A \subset Y$ is invariant under the cross-section equivalence relation $R_Y$ if and only if there exists a $G$-invariant subset $B \subset X$ such that $A = B \cap Y$. One direction is in fact immediate from  Lemma \ref{Lemma_partialstructure}:

\begin{corollary}
\label{cor_RYinvariance}
If $B \subset X$ is a $G$-invariant Borel set,  then $B \cap Y \subset Y$
is a $R_Y$-invariant Borel set. 
\end{corollary}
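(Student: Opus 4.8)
The plan is to show that if $B \subset X$ is $G$-invariant and $A := B \cap Y$, then $A$ is $R_Y$-invariant, i.e. that $\varphi^{-1}(A \cap \ran(\varphi)) = A \cap \dom(\varphi)$ for every $\varphi \in [[E]]$ where $E = R_Y$. Since the claim is about the two slices agreeing as subsets of $Y$, and since partial $R_Y$-maps in $[[E]]$ are injective, the natural route is to unwind the definition of $R_Y$ and use the $G$-invariance of $B$ directly, without needing the full local-structure machinery of Lemma \ref{Lemma_partialstructure}.

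First I would fix an injective partial $R_Y$-map $\varphi: \dom(\varphi) \to \ran(\varphi)$. For each $y \in \dom(\varphi)$ we have $(y, \varphi(y)) \in R_Y$, which by definition of $R_Y$ means $\varphi(y) \in G.y$; concretely there is some $g \in G$ (in fact $g \in \Lambda_a(Y)$) with $\varphi(y) = g.y$. Since $B$ is $G$-invariant, $g.y \in B \iff y \in B$, so $\varphi(y) \in B \iff y \in B$. Because $\varphi(y) \in Y$ and $y \in Y$, this reads $\varphi(y) \in A \iff y \in A$. That is exactly the pointwise statement that $y \in \varphi^{-1}(A \cap \ran(\varphi))$ if and only if $y \in A \cap \dom(\varphi)$, so the two sets coincide and $A$ is $R_Y$-invariant. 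It remains only to note that $A = B \cap Y$ is a Borel set, which is immediate since $B \in \mathscr{B}_X$ and hence $B \cap Y \in \mathscr{B}_Y$.

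The only subtlety — and the step I would flag as requiring a line of care rather than a genuine obstacle — is the logical equivalence ``$\varphi(y) \in A \iff y \in A$'' being valid \emph{simultaneously} for all $y$ with the same map $\varphi$; but since the equivalence holds pointwise for each $y$ using the element $g$ witnessing $(y,\varphi(y)) \in R_Y$, no uniform choice of $g$ is needed, and the set identity follows by collecting the pointwise equivalences over $\dom(\varphi)$. I expect this to be the shortest proof in the excerpt, a direct consequence of the definitions together with $G$-invariance of $B$; the heavier partial-structure lemma is not required for this direction (it is needed for the converse, namely that $R_Y$-invariant sets arise as traces of $G$-invariant sets).
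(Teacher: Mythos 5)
Your proof is correct, and it is genuinely more direct than the paper's. The paper proves Corollary \ref{cor_RYinvariance} by invoking the local structure result (Lemma \ref{Lemma_partialstructure}): it partitions $\dom(\varphi)$ into Borel pieces $A_k$ on which $\varphi(y) = \rho_k(y)\lambda_k.y$, and then applies $G$-invariance of $B$ piecewise. Your observation is that no such decomposition is needed here: since $R_Y$-invariance of a \emph{set} is a pointwise set identity (unlike $R_Y$-invariance of a measure, which compares $\mu(\dom\varphi)$ with $\mu(\ran\varphi)$ and genuinely needs injectivity and measurable structure), one may use, for each individual $y \in \dom(\varphi)$, any witness $g \in G$ with $\varphi(y) = g.y$ coming from the definition of $R_Y$, and conclude $\varphi(y) \in B \iff y \in B$ from $G$-invariance; no measurable or uniform choice of $g$ is required, and Borelness of $B \cap Y$ is automatic. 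In effect the paper's proof is the same idea with the witness written explicitly as $g = \rho_k(y)\lambda_k$, so the heavy lemma is dispensable in this direction — and note that your pointwise argument even yields the identity for arbitrary, not necessarily injective, partial $R_Y$-maps, which is what the paper's proof also delivers. Where Lemma \ref{Lemma_partialstructure} is truly indispensable is in Corollary \ref{cor_invariance} (upgrading invariance, \emph{hypothesized only for injective} partial maps, to all partial maps) and in the measure-theoretic statements such as Lemma \ref{Lemma_RYinvariance}; your closing remark correctly identifies this division of labor.
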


\begin{proof}
Let $\varphi$ be a partial Borel $R_Y$-map.  By Lemma \ref{Lemma_partialstructure},
there exist a Borel partition $\dom(\varphi) = \bigsqcup_k A_k$,  Borel maps $\rho_k : A_k \ra G$ and $\lambda_k \in G$ such that
\[
\varphi(y) = \rho_k(y)\lambda_k.y,  \quad \textrm{for all $y \in A_k$}.
\]
Hence,   
\[
\varphi^{-1}(B \cap \ran(\varphi)) = \bigsqcup_k  
\varphi^{-1}(B \cap \ran(\varphi)\big) \cap A_k.
\]
Since $B$ is $G$-invariant and $\ran(\varphi) \subset Y$,  we see that
\[
\varphi(y) = \rho_k(y)\lambda_k.y \in B \cap Y
\iff 
y \in B \cap Y,
\]
for every $k$ and $y \in A_k$.  In other words,  $\varphi^{-1}(B \cap \ran(\varphi)) \cap A_k = B \cap A_k$,
for all $k$,  and thus
\[
\varphi^{-1}(B \cap \ran(\varphi))  = \bigsqcup_k B \cap A_k = B \cap \dom(\varphi).
\]
Since $\varphi$ is arbitrary,  $B \cap Y$ is $R_Y$-invariant.
\end{proof}
For the converse direction we fix a Borel $Y$-section $\beta: X \to G$. We then define a Borel map
\begin{equation}
\label{def_varphibeta}
\varphi_\beta: Y \to Y, \quad y \mapsto \beta(y).y.
\end{equation}
By construction, $\varphi_\beta$ is a (not necessarily injective) partial $R_Y$-map with $\dom(\varphi_\beta) = Y$. If $A \subset Y$ is a Borel set in $Y$, we now define $A_\beta \subset X$ by
\begin{equation}
\label{def_Abeta}
A_\beta := \big\{ x \in X \,  : \,  \beta(x).x \in A \big\}.
\end{equation}
Note that $A_\beta = b_\beta^{-1}(G \times A)$,  and thus $A_\beta$ is a Borel set in $X$. We can now establish the converse of Corollary \ref{cor_RYinvariance}:
\vspace{0.1cm}
\begin{lemma}
\label{Lemma_liftedinvariance}
Let $\beta$ be a Borel $Y$-section.  If $A \subset Y$ is a $R_Y$-invariant Borel set,  then $A_\beta \subset X$ is a $G$-invariant Borel set such that $A_\beta \cap Y = A$. 
\end{lemma}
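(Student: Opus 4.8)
The plan is to dispatch both assertions from a single observation about $R_Y$-invariant sets, after which everything reduces to unwinding the definitions of $\beta$, $\varphi_\beta$ and $A_\beta$. Borelness of $A_\beta$ is already recorded just before the statement, via $A_\beta = b_\beta^{-1}(G \times A)$, so only $G$-invariance and the identity $A_\beta \cap Y = A$ remain. The key observation I would establish first is that an $R_Y$-invariant Borel set $A$ is automatically a union of $R_Y$-equivalence classes: whenever $y, y' \in Y$ satisfy $(y,y') \in R_Y$, one has $y \in A \iff y' \in A$. Indeed, for such a pair the singleton map $y \mapsto y'$ is an injective partial Borel $R_Y$-map --- its graph $\{(y,y')\}$ lies in $R_Y$, and singletons are Borel since $X$, hence $Y$, is standard --- so it belongs to $[[R_Y]]$, and feeding it into the defining identity $\varphi^{-1}(A \cap \ran(\varphi)) = A \cap \dom(\varphi)$ immediately yields the equivalence. (Alternatively this is contained in Corollary \ref{cor_invariance}.)

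For the identity $A_\beta \cap Y = A$ I would simply unwind the definitions: for $y \in Y$ we have $y \in A_\beta \iff \beta(y).y \in A \iff y \in \varphi_\beta^{-1}(A)$, so that $A_\beta \cap Y = \varphi_\beta^{-1}(A)$. Since $\varphi_\beta(y) = \beta(y).y$ lies in $Y$ and in the same $G$-orbit as $y$, the pair $(y, \varphi_\beta(y))$ lies in $R_Y$; the observation above then gives $y \in A \iff \varphi_\beta(y) \in A$, i.e.\ $\varphi_\beta^{-1}(A) = A$. One may equally apply Corollary \ref{cor_invariance} directly to the partial $R_Y$-map $\varphi_\beta$ (whose domain is all of $Y$) to obtain $\varphi_\beta^{-1}(A \cap \ran(\varphi_\beta)) = A \cap Y = A$.

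For $G$-invariance I would fix $g \in G$ and $x \in X$ and compare the two points $\beta(x).x$ and $\beta(g.x).(g.x)$. Both lie in $Y$ by the defining property of a Borel $Y$-section, and both lie in the single orbit $G.x = G.(g.x)$, so $(\beta(x).x,\, \beta(g.x).(g.x)) \in R_Y$. The observation therefore gives $\beta(x).x \in A \iff \beta(g.x).(g.x) \in A$, which is exactly $x \in A_\beta \iff g.x \in A_\beta$. As $x$ and $g$ are arbitrary, this says $g.A_\beta = A_\beta$ for every $g$, i.e.\ $A_\beta$ is $G$-invariant. It may be cleaner to package the same computation as the identity $A_\beta = \{x \in X : (G.x) \cap Y \subset A\}$, whose right-hand side depends only on the orbit of $x$ and is hence visibly $G$-invariant.

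The only genuinely delicate point is the reduction in the first paragraph: passing from the relation-theoretic definition of $R_Y$-invariance, which quantifies over all of $[[R_Y]]$, to the concrete statement that $A$ is saturated for the equivalence relation. Here this costs almost nothing because singletons already supply enough members of $[[R_Y]]$, but it is the step that must be handled correctly, and it is exactly where the standing standard-Borel hypothesis is used. Everything after that is a routine substitution into the definitions.
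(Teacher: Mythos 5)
Your proof is correct, and it takes a genuinely more elementary route than the paper's. The paper deduces both assertions from Corollary \ref{cor_invariance} --- and hence ultimately from the structural Lemma \ref{Lemma_partialstructure} on decomposing partial $R_Y$-maps into injective pieces --- applying it first to $\varphi_\beta$ to get $A_\beta \cap Y = A$, and then, for $G$-invariance, to the translated sections $\beta_{gh}(y) = \beta(gh.y)gh$ via the computation $g^{-1}.A_\beta \cap h.Y = h.A$ for all $g,h \in G$ (which suffices since $X = G.Y$). You instead observe that for \emph{sets} (as opposed to measures) the quantification over all of $[[R_Y]]$ collapses: singleton maps $\{y\} \ra \{y'\}$ with $(y,y') \in R_Y$ already belong to $[[R_Y]]$ --- their graphs are Borel singletons lying in $R_Y$ --- and feeding them into the defining identity shows that an $R_Y$-invariant Borel set is precisely a union of $R_Y$-classes. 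After that, both claims are pointwise trivialities, since $(y, \varphi_\beta(y)) \in R_Y$ and $\big(\beta(x).x,\; \beta(g.x).(g.x)\big) \in R_Y$, and your closed-form identity $A_\beta = \{x \in X : (G.x) \cap Y \subset A\}$ makes the $G$-invariance manifest. Your route buys a proof that never invokes the separatedness of $Y$ or the machinery of Lemma \ref{Lemma_partialstructure}, and it avoids the paper's slightly awkward forward reference to the translated sections $\beta_g$, which are only introduced later in the proof of Theorem \ref{LiftingInvariantMeasure}; it also isolates cleanly why set-invariance is so much cheaper than measure-invariance, where singletons carry no information and the full structure lemma is genuinely needed. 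What the paper's route buys is uniformity: it handles the set statement with exactly the same tool (Corollary \ref{cor_invariance}) that the subsequent measure-theoretic arguments require anyway. One cosmetic remark: your parenthetical that the saturation observation is ``contained in'' Corollary \ref{cor_invariance} is slightly off --- it follows directly from the definition of $R_Y$-invariance, since singleton maps are already injective and need no decomposition; the corollary is only needed for non-injective partial maps such as $\varphi_\beta$.
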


\begin{proof}
We first note that $A_\beta \cap Y = \varphi_\beta^{-1}(A \cap \ran(\varphi_\beta))$.  
Since $A$ is $R_Y$-invariant and $\dom(\varphi_\beta) = Y$,   
Corollary \ref{cor_invariance} tells us that $\varphi_\beta^{-1}(A \cap \ran(\varphi_\beta)) = A$,  and thus $A_\beta \cap Y = A$.  To prove that $A_\beta$ is $G$-invariant,  it is enough to show (since $X = G.Y$) that 
\[
g^{-1}.A_\beta \cap h.Y = h.A,  \quad \textrm{for all $g, h \in G$}.
\]
First note that
\[
g^{-1}.A_\beta = \{ x \in X \,  : \,  \beta(g.x)g.x \in A \big\},  \quad \textrm{for all $g \in G$}
\]
and thus
\[
g^{-1}.A_\beta \cap h.Y = h.\{ y \in Y \,  : \,  \beta(gh.y)gh.y \in A \} 
= h .  \varphi_{\beta_{gh}}^{-1}(A \cap \ran \varphi_{\beta_{gh}}),   
\]
for all $g,h \in G$.  Since $A$ is $R_Y$-invariant and $\varphi_{\beta_{gh}}$ is a partial
$R_Y$-map with $\dom(\varphi_{\beta_{gh}}) = Y$,  we see that from Corollary \ref{cor_invariance} that $\varphi_{\beta_{gh}}^{-1}(A \cap \ran \varphi_{\beta_{gh}}) = A$,  and thus $g^{-1}.A_\beta \cap h.Y = h.A$ for all $g,h \in G$.
\end{proof}
In other words, the $Y$-section $\beta$ picks for every $R_Y$-invariant Borel subset $A$ a canonical $G$-invariant extension to $X$.

\subsection{Lifts of $R_Y$-invariant measures}
Let $G$ be a lcsc group and let $(X,a, Y)$ be a $U$-transverse triple over $G$ for some open identity neighbourhood $U \subset G$. The goal of this subsection is to establish the following theorem:
\vspace{0.1cm}
\begin{theorem}\label{LiftingInvariantMeasure} Let $\nu \in M(Y, \cY)^{R_Y}$ be $R_Y$-invariant and let $\beta: X \to G$ be a Borel $Y$-section.
\vspace{0.1cm}
\begin{itemize}
\item[$(i)$] The lifted measure $\nu_\beta$ is independent of the choice of $\beta$. \vspace{0.1cm}
\item[$(ii)$]  $\nu_\beta$ is $\cY_G$-finite. \vspace{0.1cm}
\item[$(iii)$] If $G$ is unimodular, then $\nu_\beta$ is $G$-invariant.
\end{itemize}
\vspace{0.1cm}
If $G$ is unimodular we thus obtain a canonical map $\ind GXY: M(Y, \cY)^{R_Y} \to M(X, \cY_G)^G$, $\nu \mapsto \nu_\beta$.
\end{theorem}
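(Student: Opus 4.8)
The plan is to reduce all three claims to one ``unfolding'' identity for $\nu_\beta$ and then to play the $R_Y$-invariance of $\nu$ against the local structure of partial $R_Y$-maps. First I would record the master formula: since $a\circ b_\beta=\id_X$ and $b_\beta$ is injective, one has $b_\beta(B)=b_\beta(X)\cap a^{-1}(B)$ for every Borel $B\subset X$, while unwinding the definitions gives $b_\beta(X)=\{(g,y)\in G\times Y:\beta(g.y)=g^{-1}\}$. Substituting this into Definition \ref{def_nubeta} and applying Fubini to $m_G\otimes\nu$ yields, for every Borel $f\ge 0$ on $X$,
\[
\int_X f\,d\nu_\beta=\int_Y\int_G f(g.y)\,\mathbf 1[\beta(g.y)=g^{-1}]\,dm_G(g)\,d\nu(y),
\]
equivalently $\nu_\beta(B)=\int_Y m_G(\{g:g.y\in B,\ \beta(g.y)=g^{-1}\})\,d\nu(y)$. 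This identity is the computational backbone for all three parts.

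The heart of the theorem is part (i). Given two Borel $Y$-sections $\beta,\beta'$, the transition map $\Xi:=b_{\beta'}\circ a$ carries $b_\beta(X)$ bijectively onto $b_{\beta'}(X)$; a direct computation shows $\Xi(g,y)=(g\lambda^{-1},\lambda.y)$ with $\lambda=\beta'(g.y)\beta(g.y)^{-1}\in\Lambda_a(Y)$. Because $\Lambda_a(Y)$ meets the identity neighbourhood $U$ only in $e$ it is countable, so I would partition $X$ along the (countably many) values $\lambda_k$ of $\lambda$; on the $k$-th piece $\Xi$ factors as the right translation $g\mapsto g\lambda_k^{-1}$ on $G$ times the partial $R_Y$-map $y\mapsto\lambda_k.y$ on $Y$. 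The second factor preserves $\nu$ by $R_Y$-invariance (Lemma \ref{Lemma_Integral}), and the first preserves the left Haar measure $m_G$ precisely because unimodularity forces the modular factor $\Delta(\lambda_k)$ to be $1$; hence $\Xi$ is $(m_G\otimes\nu)$-preserving piecewise and $\nu_\beta=\nu_{\beta'}$. I expect this to be the main obstacle: turning the heuristic map $\Xi$ into honest Borel, injective pieces each implemented by a single return element (this is exactly what Lemma \ref{Lemma_partialstructure} provides, after reducing to injective sets via Lemma \ref{Lemma_BasicProp}(v)), and isolating unimodularity as the hypothesis that deletes the factor $\Delta(\lambda_k)$ coming from the $G$-coordinate.

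The remaining two parts are then short. For (ii) I would first take the adapted section of Lemma \ref{Lemma_adaptedBorelYsection}, for which $b_\beta(K_n.Y_n)\subset K_n\times Y_n$; then Corollary \ref{cor_YGFinite} gives $\nu_\beta\in M(X,\cY_\cG)$ at once, and part (i) propagates $\cY_\cG$-finiteness to an arbitrary section. For (iii) I would use the pushed section $\beta_h(x):=\beta(h.x)h$, which is again a Borel $Y$-section since $\beta_h(x).x=\beta(h.x).(h.x)\in Y$. The change of variables $g\mapsto hg$ in the master formula, legitimate by left-invariance of $m_G$, shows $\nu_\beta(h.B)=\nu_{\beta_h}(B)$, whereupon part (i) gives $\nu_{\beta_h}=\nu_\beta$ and therefore $\nu_\beta(h.B)=\nu_\beta(B)$ for all $h\in G$. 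Collecting (i)--(iii), the assignment $\nu\mapsto\nu_\beta$ is the claimed well-defined map $\ind GXY:M(Y,\cY)^{R_Y}\to M(X,\cY_\cG)^G$.
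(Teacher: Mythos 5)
Your overall route coincides with the paper's: the same reduction of (i) to comparing two injective lifts of the same Borel set, the same adapted-section argument for (ii) via Lemma \ref{Lemma_adaptedBorelYsection} and Corollary \ref{cor_YGFinite}, and literally the paper's proof of (iii) (the translated section $\beta_h(x)=\beta(h.x)h$, left-invariance of $m_G$ for the change of variables, then independence of the section). However, the decomposition you propose for (i) is wrong as written. First, $U$-separatedness only gives $\Lambda_a(Y)\cap U=\{e\}$, which does \emph{not} make $\Lambda_a(Y)$ countable: take $G=\bR$ and $X$ the Borel family of circles $\bR/t\bZ$, $t\in[1,2]$, with $Y$ the set of base points; then $\Lambda_a(Y)=\bigcup_{t}t\bZ$ is uncountable although $\Lambda_a(Y)\cap(-1,1)=\{0\}$. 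Consequently there is no countable Borel partition ``along the values $\lambda_k$ of $\lambda$'', and one also cannot arrange the transition element to be \emph{constant} on each piece. What one can arrange --- and this is exactly the paper's Lemma \ref{Lemma_injectivesets}, which is the tool you should have cited rather than Lemma \ref{Lemma_partialstructure} (the latter decomposes partial $R_Y$-maps, not transitions between injective sets) --- is a countable partition indexed by a Lindel\"of cover $G=\bigsqcup_k V_k$ with $V_k^{-1}V_k\cup V_kV_k^{-1}\subset U$ (Remark \ref{rmk_partition}); on the $k$-th piece the transition element is a uniquely determined \emph{Borel function} $\gamma_k(y)$ of the base point, because $|Y_y\cap V_k^{-1}|\le 1$ by Lemma \ref{Lemma_BasicProp}(i). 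The fiberwise relation is then $(C_{2,k})_{y}=(C_{1,k})_{\varphi_k(y)}\,\gamma_k(y)$ with $\varphi_k(y)=\gamma_k(y).y$ in $[[R_Y]]$, and Fubini, right-invariance of $m_G$ applied pointwise in $y$ (here the varying $\gamma_k(y)$ is harmless), and Lemma \ref{Lemma_Integral} yield Corollary \ref{cor_measC1C2}, from which (i) follows. So your plan is repairable, but the countability you invoke comes from second countability of $G$, not of $\Lambda_a(Y)$, and the key lemma has a different (variable-element) structure than you assert.

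One point in your favour: you locate unimodularity already in part (i), whereas the statement postpones it to (iii). Your instinct agrees with what the paper's own argument actually uses --- the proof of Corollary \ref{cor_measC1C2} explicitly invokes right-invariance of $m_G$ --- and indeed (i) can fail for non-unimodular $G$: with $X=G$ under left translation, $Y=\{e,\lambda\}$, $\nu=\delta_e+\delta_\lambda$ (which is $R_Y$-invariant), the constant-target sections $\beta_1(x)=x^{-1}$ and $\beta_2(x)=\lambda x^{-1}$ give lifts $m_G$ and $\Delta(\lambda)^{-1}m_G$ respectively. So flagging the modular factor $\Delta(\lambda)$ as the obstruction that unimodularity removes is correct, and is a genuine clarification rather than a defect of your proposal.
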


We refer to the map $\ind GXY$ from Theorem \ref{LiftingInvariantMeasure} as the \emph{induction map} of the transverse triple $(X,a,Y)$. Note that Part (ii) of the theorem is immediate from Part (i) in view of Lemma \ref{Lemma_adaptedBorelYsection} and Corollary \ref{cor_YGFinite}. The proofs of Parts (i) and (iii) are based on the following main lemma concerning piecewise equivalences of injective sets. Here, given a Borel set $C \subset G \times Y$, we set 
\[
C_y = \big\{ h \in G \,  : \,  (h,y) \in C \big\},  \quad \textrm{for $y \in Y$}.
\]
By Fubini's Theorem,  $C_y$ is a Borel set in $G$ for every $y \in Y$. 

\begin{lemma}
\label{Lemma_injectivesets}
Let $C_1, C_2 \subset G \times Y$ be injective Borel sets such that $a(C_1)= a(C_2)$.
Then there exist 
\vspace{0.1cm}
\begin{itemize}
\item a covering $\{Y^{(k)}\}$ of $Y$ by Borel sets,  Borel maps $\gamma_k : Y^{(k)} \ra G$ and $\varphi_k \in [[R_Y]]$ with $\dom(\varphi_k) = Y^{(k)}$. \vspace{0.1cm}
\item  Borel partitions $C_1 = \bigsqcup_k C_{1,k}$ and $C_2 = \bigsqcup_k C_{2,k}$ such that 
\[
C_{1,k} \subset G \times \ran(\varphi_k) \qand C_{2,k} \subset G \times \dom(\varphi_k),
\quad \textrm{for all $k$},
\]
\end{itemize} 
\vspace{0.1cm}
such that 
\[
(C_{2,k})_{y_2} = (C_{1,k})_{\varphi_k(y_2)} \gamma_k(y_2),  \quad \textrm{for all $y_2 \in Y^{(k)}$}.
\]
\end{lemma}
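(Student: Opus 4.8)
The plan is to use the hypothesis $a(C_1)=a(C_2)$ to transport $C_2$ onto $C_1$ by a single Borel bijection, and then to chop this bijection into countably many pieces on each of which it is governed by exactly one partial $R_Y$-map together with one right translation.

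First I would set $B:=a(C_1)=a(C_2)$. Since $C_1,C_2$ are injective Borel sets, Lemma \ref{Lemma_BasicProp} (iv) together with Lemma \ref{Lemma_Borel} (i)--(ii) shows that $a|_{C_i}\colon C_i\to B$ is a Borel isomorphism for $i=1,2$, so
\[
\theta:=(a|_{C_1})^{-1}\circ(a|_{C_2})\colon C_2\to C_1
\]
is a Borel bijection. Writing $\theta(h_2,y_2)=(\theta_G(h_2,y_2),\theta_Y(h_2,y_2))$ and $\delta(h_2,y_2):=\theta_G(h_2,y_2)^{-1}h_2$, the defining identity $\theta_G(c)\,.\,\theta_Y(c)=h_2.y_2$ gives $\theta_Y(h_2,y_2)=\delta(h_2,y_2).y_2$, and in particular $\delta(h_2,y_2)\in Y_{y_2}$. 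All three maps $\theta_G,\theta_Y,\delta$ are Borel.

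Next I would fix a symmetric identity neighbourhood $W$ with $W^2\subset U$, disjointify a countable cover $G=\bigcup_n Wg_n$ into a Borel partition $\{W_n\}$ of $G$, so that $W_nW_n^{-1}\subset W^2\subset U$ for every $n$. For $i,j\in\bN$ put
\[
C_2^{(i,j)}:=\{c\in C_2 : \theta_G(c)\in W_i,\ \delta(c)\in W_j\},
\]
a Borel partition of $C_2$. The crucial claim is that the projection $\pi_Y(h_2,y_2)=y_2$ is \emph{injective} on each $C_2^{(i,j)}$. Given two points with the same $y_2$, their $\delta$-values lie in $W_j\cap Y_{y_2}$, and since $Y_{y_2}Y_{y_2}^{-1}\subset\Lambda_a(Y)$ by Lemma \ref{Lemma_BasicProp} (iii) while $W_jW_j^{-1}\subset U$ and $\Lambda_a(Y)\cap U=\{e\}$, these $\delta$-values coincide; hence the targets $y_1=\delta.y_2$ coincide as well. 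Their $\theta_G$-values then lie in $W_i$ over the common point $y_1$, and the same separatedness argument, now applied to a return $h_1'h_1^{-1}$ between the points $h_1.y_1,h_1'.y_1\in Y$, forces $\theta_G$ to agree; injectivity of $\theta$ then forces the two points to be equal. This is the one place where separatedness of $Y$ enters, and the main thing to get right is to arrange both uniform-discreteness steps in the right-hand form $PP^{-1}$ so they match the inclusion $W_nW_n^{-1}\subset U$.

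Finally I would relabel the pairs $(i,j)$ by $k$ and descend to $Y$. By the injectivity just proved and Lemma \ref{Lemma_Borel} (ii), $Y^{(k)}:=\pi_Y(C_2^{(k)})$ is Borel and $\pi_Y\colon C_2^{(k)}\to Y^{(k)}$ admits a Borel inverse $s_k$; I set $\gamma_k:=\delta\circ s_k$ and $\varphi_k:=\theta_Y\circ s_k$, both Borel, with $\varphi_k(y_2)=\gamma_k(y_2).y_2\in G.y_2$, so that $\gr(\varphi_k)\subset R_Y$, and the same argument as above shows $\varphi_k$ is injective, i.e.\ $\varphi_k\in[[R_Y]]$ with $\dom(\varphi_k)=Y^{(k)}$. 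Putting $C_{2,k}:=C_2^{(k)}$ and $C_{1,k}:=\theta(C_2^{(k)})$ yields Borel partitions of $C_2$ and $C_1$ with $C_{2,k}\subset G\times\dom(\varphi_k)$ and $C_{1,k}\subset G\times\ran(\varphi_k)$. For $y_2\in Y^{(k)}$ both fibres $(C_{2,k})_{y_2}$ and $(C_{1,k})_{\varphi_k(y_2)}$ are singletons $\{h_2\}$ and $\{h_1\}$ with $h_2=h_1\,\delta=h_1\,\gamma_k(y_2)$, which is precisely the asserted identity $(C_{2,k})_{y_2}=(C_{1,k})_{\varphi_k(y_2)}\gamma_k(y_2)$. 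To make $\{Y^{(k)}\}$ cover all of $Y$, I would adjoin one extra index with $Y^{(k)}:=Y\setminus\bigcup_k Y^{(k)}$, $\varphi_k=\id$, $\gamma_k\equiv e$ and $C_{1,k}=C_{2,k}=\emptyset$, for which the fibre identity holds vacuously.
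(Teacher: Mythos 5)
Your reduction of the problem to a single Borel bijection $\theta\colon C_2\ra C_1$, the extraction of the connecting element $\delta$, and the first separatedness computation (two points of a piece with the same $y_2$ have $\delta(\delta')^{-1}\in Y_{y_2}Y_{y_2}^{-1}\cap W_jW_j^{-1}\subset\Lambda_a(Y)\cap U=\{e\}$, hence the same $\delta$) all match the paper. But your ``crucial claim'' --- that $\pi_Y$ is injective on each $C_2^{(i,j)}$ --- is false, and the sentence meant to prove it contains the error: you apply separatedness ``to a return $h_1'h_1^{-1}$ between the points $h_1.y_1,h_1'.y_1\in Y$'', yet the inclusion $C_1\subset G\times Y$ only places the \emph{second coordinate} $y_1$ in $Y$; the points $h_1.y_1,\,h_1'.y_1$ lie merely in $a(C_1)\subset X$, so $h_1'h_1^{-1}$ is not a return time of $Y$ and $\Lambda_a(Y)\cap U=\{e\}$ gives nothing. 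A concrete counterexample: take $C_1=C_2=V\times Y$ with $V^{-1}V\subset U$ and $V$ uncountable, which is injective by Lemma \ref{Lemma_BasicProp} (i). Then $\theta=\mathrm{id}$ and $\delta\equiv e$, so $C_2^{(i,j)}=(V\cap W_i)\times Y$ for the unique $j$ with $e\in W_j$; since there are only countably many $W_i$, some $V\cap W_i$ contains distinct elements $h\neq h'$, and then $(h,y),(h',y)$ lie in the same piece with the same $y$-coordinate. Nothing contradicts injectivity of $C_2$ or separatedness, because $h.y\neq h'.y$ and neither point lies in $Y$. The failure propagates: your $Y^{(k)}$, $s_k$, $\gamma_k$ and $\varphi_k$ are all manufactured from the Borel inverse of $\pi_Y|_{C_2^{(k)}}$, and the example shows the fibres of that projection can even be uncountable, so no countable-fibre selection theorem can be substituted either.

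The repair is to abandon singleton fibres, which is exactly what the paper does. Drop the index $i$ and partition only by the connecting element: $C_{2,j}:=\delta^{-1}(W_j)$ and $C_{1,j}:=\theta(C_{2,j})$. Your first computation in fact shows that $Y_y\cap W_j$ is at most a singleton for every $y\in Y$, so on $C_{2,j}$ the value $\delta(h_2,y_2)$ depends only on $y_2$. Set $Y^{(j)}:=\{y\in Y : Y_y\cap W_j\neq\emptyset\}=W_j^{-1}.Y\cap Y$; since $(W_j^{-1})^{-1}W_j^{-1}=W_jW_j^{-1}\subset U$, the map $a|_{W_j^{-1}\times Y}$ is injective, and Lemma \ref{Lemma_Borel} (ii) gives Borelness of $Y^{(j)}$ together with a Borel selection $\gamma_j(y)\in Y_y\cap W_j$; put $\varphi_j(y):=\gamma_j(y).y$, which is injective by the same separatedness computation, so $\varphi_j\in[[R_Y]]$ with $\dom(\varphi_j)=Y^{(j)}$. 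The displayed identity is then proved as an equality of whole (possibly infinite) fibres: $(h_2,y_2)\in C_{2,j}$ if and only if $\theta(h_2,y_2)=(h_2\gamma_j(y_2)^{-1},\varphi_j(y_2))\in C_{1,j}$, which is precisely $(C_{2,j})_{y_2}=(C_{1,j})_{\varphi_j(y_2)}\,\gamma_j(y_2)$. Note the lemma never asserts the fibres are singletons --- only that they are right translates of one another --- and this set-level identity is all that Corollary \ref{cor_measC1C2} later needs, via right-invariance of $m_G$.
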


\begin{remark}
\label{rmk_partition}
In the proof of this lemma,  we will use a Borel partition $G = \bigsqcup V_k$ 
with the property that $V_k^{-1}V_k \cup V_k V_k^{-1} \subset U$ for all $k$.  
Such partitions can be constructed in many different ways.  Let us here briefly outline
one possible construction.  For every $g \in G$,  let $W_g$ be an open symmetric
identity neighbourhood $W_g$ such that $W_g^2$ and $gW_g^2 g^{-1}$ are 
both contained in $U$.  Then $\bigcup_{g \in G} gW_g$ is an open cover of $G$.  
Since $G$ is Lindel\"of,  there exist a countable set $\{g_k\}$ such that 
\[
G = \bigcup_k \widetilde{V}_k,  \quad \textrm{where $\widetilde{V_k} = g_k W_{g_k}$}.
\]
Note that $\widetilde{V}_k^{-1} \widetilde{V}_k \cup \widetilde{V}_k \widetilde{V}_k^{-1} \subset U$ for all $k$.  To make this collection of sets disjoint,  we set 
$V_1 := \widetilde{V}_1$ and define inductively $V_{k+1} := \widetilde{V}_{k+1} \setminus \cup_{j=1}^k  V_j$,  for $k \geq 1$ (discarding empty sets). 
\end{remark}

\begin{proof}[Proof of Lemma \ref{Lemma_injectivesets}]
We set $a_i := a|_{C_i}$ for $i = 1,2$.  By Lemma \ref{Lemma_Borel},  $a_i$ is a Borel
isomorphism between the Borel sets $C_i$ and $a(C_i)$.  Since $a(C_1) = a(C_2)$,  the composition
$\delta := a_1^{-1} \circ a_2$ is a well-defined injective Borel map and $C_1 = \delta(C_2)$.  It is not hard to
see that there is a Borel map $\theta : C_2 \ra G$ such that
\[
\delta(h_2,y_2) = (h_2 \theta(h_2,y_2)^{-1}, \theta(h_2,y_2).y_2),  \quad \textrm{for $(h_2,y_2) \in C_2$}.
\]
Let $G = \bigsqcup_k V_k$ be a Borel partition (see Remark \ref{rmk_partition}) such that 
\[
V_k^{-1} V_k \cup V_k V_k^{-1} \subset U,  \quad \textrm{for all $k$}
\]
and set
\[
C_{2,k} = \theta^{-1}(V_k^{-1}) \qand C_{1,k} = \delta(C_{2,k}).
\]
Clearly,  $C_1 = \bigsqcup_k C_{1,k}$ and $C_2 = \bigsqcup_k C_{2,k}$ are Borel partitions.  Define 
\[
Y^{(k)} = \{ y \in Y \,  : \,  Y_y \cap V_k^{-1} \neq \emptyset \} = V_k.Y \cap Y.
\]
By Lemma \ref{Lemma_BasicProp} (i),  combined with Lemma \ref{Lemma_Borel} (ii),  we see that $Y^{(k)} \subset Y$ is a Borel set,  and there is a Borel map $\gamma_k : Y^{(k)} \ra V_k^{-1}$ such that $\gamma_k(y) \in Y_y \cap V_k^{-1}$ for all $y \in Y^{(k)}$.  Pick $(h_2,y_2) \in C_{2,k}$.  Then $\theta(h_2,y_2) \in Y_{y_2} \cap V_k^{-1}$,  and thus $y_2 \in Y^{(k)}$.  However,  by Lemma \ref{Lemma_BasicProp} (i),  $|Y_{y_2} \cap V_k^{-1}| \leq 1$,  and thus 
\[
\theta(h_2,y_2) = \gamma_k(y_2),  \quad \textrm{for all $(h_2,y_2) \in C_{2,k}$}.
\]
We also define $\varphi_k(y_2) = \gamma_k(y_2).y_2$ for $y_2 \in Y^{(k)}$.  Note that 
$\varphi_k$ is injective.  Indeed,  if $\varphi_k(y_2) = \varphi_k(y_2')$,  for $y_2,  y_2' \in Y^{(k)}$,  then 
\[
\gamma_k(y_2')^{-1}\gamma_k(y_2) \in Y_{y_2} \cap V_k V_k^{-1} \subset Y_{y_2} \cap U = \{e\},
\]
since $Y$ is $U$-separated.  Hence,  $\gamma_k(y_2) = \gamma_k(y_2')$.  
Since $\varphi_k(y_2) = \varphi_k(y_2')$,  we see that $y_2 = y_2'$.  We conclude that  $\varphi_k \in [[R_Y]]$ with $\dom(\varphi_k) = Y^{(k)}$.  Furthermore,  since $C_{1,k} = \delta(C_{2,k})$,  we see that $C_{1,k} \subset G \times \ran(\varphi_k)$.  Let us now fix an index $k$ and an element $y_2 \in Y^{(k)}$ Then,  since $C_{2,k} \subset G \times Y^{(k)}$,  we see that
\begin{align*}
(C_{2,k})_{y_2} 
&=
\{ h_2 \in G \,  : \,  (h_2,y_2) \in C_{2,k} \} 
= 
\{ h_2 \in G \,  : \,  (h_2 \theta(h_2,y_2)^{-1}, \theta(h_2,y_2).y_2) \in C_{1,k} \} \\[0.1cm]
&=
\{ h_2 \in G \,  : \,  (h_2 \gamma_k(y_2)^{-1}, \varphi_k(y_2)) \in C_{1,k} \} = (C_{1,k})_{\varphi_{k}(y_2)} \,  \gamma_k(y_2).\qedhere
\end{align*}
\end{proof}
\begin{corollary}
\label{cor_measC1C2}
Let $\nu \in M_\sigma(Y)^{R_Y}$,  and suppose that $C_1,  C_2 \subset G \times Y$
are injective Borel sets such that $a(C_1) = a(C_2)$.  Then $m_G \otimes \nu(C_1) = m_G \otimes \nu(C_2)$.
\end{corollary}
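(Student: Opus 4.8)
The plan is to reduce the statement to the structural decomposition furnished by Lemma \ref{Lemma_injectivesets} and then to convert the resulting fibrewise identity into an equality of measures by combining Tonelli's theorem with the $R_Y$-invariance of $\nu$ recorded in Lemma \ref{Lemma_Integral}. The point is that Lemma \ref{Lemma_injectivesets} already encodes the matching between $C_1$ and $C_2$ at the level of fibres; all that remains is to integrate this matching correctly.

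First I would apply Lemma \ref{Lemma_injectivesets} to the injective Borel sets $C_1, C_2$ with $a(C_1)=a(C_2)$. This yields a countable covering $\{Y^{(k)}\}$ of $Y$, partial $R_Y$-maps $\varphi_k \in [[R_Y]]$ with $\dom(\varphi_k)=Y^{(k)}$, Borel maps $\gamma_k$, and disjoint Borel partitions $C_1=\bigsqcup_k C_{1,k}$ and $C_2=\bigsqcup_k C_{2,k}$ with $C_{1,k}\subset G\times\ran(\varphi_k)$ and $C_{2,k}\subset G\times\dom(\varphi_k)$, satisfying the fibre relation $(C_{2,k})_y=(C_{1,k})_{\varphi_k(y)}\gamma_k(y)$ for all $y\in Y^{(k)}$. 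Since $m_G\otimes\nu$ is $\sigma$-finite (as $m_G$ and $\nu$ are) and the two partitions are countable and disjoint, countable additivity reduces the claim to proving $m_G\otimes\nu(C_{1,k})=m_G\otimes\nu(C_{2,k})$ for each fixed $k$.

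For a fixed $k$ I would evaluate both sides by Tonelli's theorem, writing $m_G\otimes\nu(C_{i,k})=\int_Y m_G\big((C_{i,k})_y\big)\,d\nu(y)$; the integrands $y\mapsto m_G\big((C_{i,k})_y\big)$ are Borel by the measurability clause of Tonelli for the $\sigma$-finite measures $m_G$ and $\nu$. Because $C_{2,k}\subset G\times Y^{(k)}$, its fibres vanish off $Y^{(k)}=\dom(\varphi_k)$, so I restrict the integral to $Y^{(k)}$ and substitute the fibre relation. Invariance of $m_G$ under the right translation by $\gamma_k(y)$ gives $m_G\big((C_{2,k})_y\big)=m_G\big((C_{1,k})_{\varphi_k(y)}\big)$, whence $m_G\otimes\nu(C_{2,k})=\int_{\dom(\varphi_k)}(f\circ\varphi_k)\,d\nu$, where $f(z):=m_G\big((C_{1,k})_z\big)$ is a non-negative Borel function on $Y$ supported in $\ran(\varphi_k)$ because $C_{1,k}\subset G\times\ran(\varphi_k)$.

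Finally I would invoke Lemma \ref{Lemma_Integral} with the $R_Y$-invariant measure $\nu$ and the map $\varphi_k\in[[R_Y]]$ to get $\int_{\dom(\varphi_k)}f\circ\varphi_k\,d\nu=\int_{\ran(\varphi_k)}f\,d\nu$; the right-hand side equals $\int_Y m_G\big((C_{1,k})_z\big)\,d\nu(z)=m_G\otimes\nu(C_{1,k})$. This gives $m_G\otimes\nu(C_{2,k})=m_G\otimes\nu(C_{1,k})$, and summing over $k$ finishes the proof. I expect the main obstacle to be the fibre-measure step: besides checking the Borel measurability demanded by Tonelli, one must justify that right translation by $\gamma_k(y)$ preserves $m_G$, which is the sole place where the right-invariance of the Haar measure is used and is the hinge that lets the $R_Y$-invariance of $\nu$ take over. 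The remaining bookkeeping (disjointness of the partitions and the inclusions cutting the integrals down to $\dom(\varphi_k)$ and $\ran(\varphi_k)$) is routine.
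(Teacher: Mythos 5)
Your proof is correct and takes essentially the same route as the paper's: the same decomposition furnished by Lemma \ref{Lemma_injectivesets}, right-invariance of $m_G$ to absorb the translation by $\gamma_k(y)$ in the fibre relation, and Lemma \ref{Lemma_Integral} applied to $f_k(z)=m_G\big((C_{1,k})_z\big)$. The only cosmetic difference is that you reduce to each pair $(C_{1,k},C_{2,k})$ by countable additivity before integrating, whereas the paper sums the fibre identities pointwise over $k$ and integrates once; the substance is identical.
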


\begin{proof}
Let $Y^{(k)},  C_{1,k},  C_{2,k},  \gamma_k,  \varphi_k$ be as in Lemma \ref{Lemma_injectivesets}.  Then,  for all $y_2 \in Y$,
\begin{align*}
m_G((C_2)_{y_2}) 
&= \sum_{k} m_G((C_{2,k})_{y_2}) \chi_{\dom(\varphi_k)}(y_2)
= \sum_k m_G((C_{1,k})_{\varphi_k(y_2)} \gamma_k(y_2)) \, \chi_{\dom(\varphi_k)}(y_2) \\[0.1cm]
&= 
\sum_k m_G((C_{1,k})_{\varphi_k(y_2)}) \, \chi_{\dom(\varphi_k)}(y_2),
\end{align*}
where we in the last identity have used that $m_G$ is right-invariant.  Define 
$f_k : Y \ra [0,\infty]$ by 
\[
f_k(y) = m_G((C_{1,k})_{y}),  \quad \textrm{for $y \in Y$}.
\]
By Fubini's Theorem,  $f_k$ is Borel.   Since $\varphi_k \in [[R_Y]]$ and $\nu$ is a $\sigma$-finite and $R_Y$-invariant Borel measure on $Y$,  Lemma \ref{Lemma_Integral} tells us that
\[
\int_{\dom(\varphi_k)} f_k \circ \varphi_k \,  d\nu = \int_{\ran(\varphi_k)} f_k \,  d\nu,
\]
and thus
\begin{align*}
m_G \otimes \nu(C_2) 
&=
\int_{Y} m_G((C_2)_{y_2}) \,  d\nu(y_2) = \sum_k \int_{\dom(\varphi_k)}
m_G((C_{1,k})_{\varphi_k(y_2)}) \,  d\nu(y_2) \\[0.1cm]
&=
\sum_k \int_{\ran(\varphi_k)}
m_G((C_{1,k})_{y_2}) \,  d\nu(y_2)
=
\sum_k \int_{Y}
m_G((C_{1,k})_{y_2}) \,  d\nu(y_2)
\\[0.1cm]
&= m_G \otimes \nu(C_1),
\end{align*}
since $C_1 = \bigsqcup_k C_{1,k}$ and $C_2 = \bigsqcup_k C_{2,k}$ are Borel partitions.
\end{proof}

\begin{proof}[Proof of Theorem \ref{LiftingInvariantMeasure}] 
(i) If $\beta$ and $\beta'$ are two $Y$-sections, then $b_\beta$ and $b_\beta'$ are two sections of $a|_{G \times Y}$, and hence for every Borel set $B \subset X$ the sets $C := b_\beta(B)$ and $C' := b_{\beta'}(B)$ are injective Borel sets. We now deduce with Corollary \ref{cor_measC1C2} that
\[
\nu_\beta(B) = \nu(b_\beta(B)) = \nu(C) = \nu(C') = \nu(b_{\beta'}(B)) = \nu_{\beta'}(B).
\]
(ii) Let $\cY = (Y_n)$ and let $(K_n)$ be a fundamental exhaustion of $G$. By (i) and Lemma \ref{Lemma_adaptedBorelYsection} we may assume that $b_\beta(K_n.Y_n) \subset K_n \times Y_n$ for all $n$, but then $\nu_\beta \in M(X, \cY_{\cG})$ holds by Corollary \ref{cor_YGFinite}.
(iii) If $\beta: X \to G$ is a Borel $Y$-section and $g \in G$, then we define $\beta_g(x) := \beta(g.x)g$.  Note that $\beta_g(x).x = \beta(g.x)g.x \in Y$,  whence $\beta_g$ is again a Borel $Y$-section.
For every fixed $g \in G$ we then have
\[
b_\beta(g.x) = (g \beta_g(x)^{-1},\beta_g(x).x),  \quad \textrm{for all $x \in X$}.
\]
Since $m_G$ is left-invariant,  we see that for every Borel set $B \subset X$,
\[
\nu_\beta(g.B) = m_G \otimes \nu(b_{\beta_g}(B)).  
\]
Fix a Borel set $B$ in $X$,  and let $C := b_\beta(B)$ and $C_g := b_{\beta_g}(B)$. 
Then $C$ and $C_g$ are injective Borel sets in $G \times Y$ such that $a(C) = a(C_g)$. Since $m_G$ is also right-invariant,  Corollary \ref{cor_measC1C2} tells us that  
$m_G \otimes \nu(C_g) = m_G \otimes \nu(C)$,  and thus
\[
\nu_\beta(g.B) = m_G \otimes \nu(b_{\beta_g}(B)) = m_G \otimes \nu(b_{\beta}(B)) = \nu_\beta(B).
\]
Since $g$ is arbitrary,  $\nu_\beta \in M(X;\cY_{\cG})^G$.
\end{proof}

\section{Transverse measure theory}\label{SecTransverseMeasure}
The main goal of this section is to establish Theorem \ref{GMCT}  from the introduction. Moreover we discuss ergodic decompositions of transverse measures and the behavious of transverse measure under a change of transversal. Throghout this section, $G$ denotes a lcsc group with Haar measure $m_G$ and $(X,a, Y)$ is a $U$-transverse triple over $G$ for some open identity neighbourhood $U \subset G$. We fix a fundamental exhaustion $\cG = \{ K_n\}$ of $G$ by compact sets,  and a Borel exhaustion $\cY = \{ Y_n \}$ be of $Y$ and denote by $\cY_{\cG}$ the $\cG$-suspension of $\cY$. 

\subsection{Periodizations}
Given a non-negative bounded Borel function $f$ on $G \times Y$,  we define $Tf : X \ra [0,\infty]$ by
\begin{equation}
\label{def_Tmap}
Tf(x) = \sum_{g \in Y_x} f(g^{-1},g.x),  \quad \textrm{for $x \in X$}.
\end{equation}
We refer to $Tf$ as the \emph{$Y$-periodization of $f$}.  \\
\begin{example} Let $P_o \subset G$  be a uniformly discrete subset and $\Omega_{P_o}^\times$ and $\cT_{P_o}$ as in Example \ref{CFExamples} so that by Proposition \ref{UDExamples} the triple $(X,a,Y)$ is a transverse triple. Then the $\cT_{P_o}$-periodization of a  non-negative bounded Borel function $f$ on $G \times \cT_{P_o}$ is given by
\[
Tf(P) = \sum_{p \in P} f(p^{-1}, Pp^{-1})
\]
\end{example}
\vspace{0.1cm}
In general,  periodizations have the following basic properties:
\vspace{0.1cm}
\begin{lemma}
\label{Lemma_T}
Let $f$ be a bounded non-negative Borel function on $G \times Y$. 
\vspace{0.1cm}
\begin{itemize}
\item[$(i)$] $Tf$ is Borel.  \vspace{0.1cm}
\item[$(ii)$] $T\chi_C = \chi_{a(C)}$ for every injective Borel set $C \subset G \times Y$,  \vspace{0.1cm}
\item[$(iii)$] $Tf(g.x) = Tf_g(x)$,  for all $g \in G$ and $x \in X$,  where $f_g(h,y) = f(gh,x)$. \vspace{0.1cm}
\item[$(iv)$] Suppose $f = \sum_k f_k$ pointwise,  where $f_1, f_2,\ldots$ are bounded  non-negative Borel functions on $G \times Y$.  Then $Tf = \sum_k Tf_k$ pointwise. \vspace{0.1cm}
\item[$(v)$] If $\{f > 0\} \subset K_n \times Y_n$ for some $n$,  then
\[
Tf(x) \leq M_{K_n^{-1}} \,  \|f\|_\infty \,  \chi_{K_n.Y_n}(x)\,   \quad \textrm{for all $x \in X$},
\]
where $M_{K_n^{-1}}$ is given by \eqref{DefMK}.
\end{itemize}
\end{lemma}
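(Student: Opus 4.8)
The plan rests on a single change of variables. For fixed $x \in X$ the map $g \mapsto (g^{-1}, g.x)$ is a bijection from $Y_x$ onto the fibre $a^{-1}(x) = \{(h,y) \in G \times Y : h.y = x\}$, with inverse $(h,y) \mapsto h^{-1}$; indeed $h.y = x$ forces $h^{-1}.x = y \in Y$, so $h^{-1} \in Y_x$. Under this identification the periodization becomes a genuine fibre sum,
\[
Tf(x) = \sum_{g \in Y_x} f(g^{-1}, g.x) = \sum_{(h,y) \in a^{-1}(x)} f(h,y),
\]
and every assertion of the lemma can be read off from this formula.

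For $(i)$, which is the only part requiring real work, the plan is to fix a countable Borel partition $G = \bigsqcup_k V_k$ with $V_k^{-1} V_k \subset U$ (as in Remark \ref{rmk_partition}) and to set $C_k := V_k \times Y$. By Lemma \ref{Lemma_BasicProp} $(i)$ each $C_k$ is an injective Borel set, and $\{C_k\}$ partitions $G \times Y$, so the fibre sum splits as $Tf = \sum_k T_k f$ with $T_k f(x) := \sum_{(h,y) \in a^{-1}(x) \cap C_k} f(h,y)$. Since $a|_{C_k}$ is injective, each fibre $a^{-1}(x) \cap C_k$ contains at most one point, whence $T_k f = \big(f \circ (a|_{C_k})^{-1}\big) \cdot \chi_{a(C_k)}$. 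Here $a(C_k)$ is Borel and $(a|_{C_k})^{-1} : a(C_k) \to C_k$ is a Borel isomorphism by Lemma \ref{Lemma_Borel} $(ii)$, so each $T_k f$ is Borel; being a countable sum of non-negative Borel functions, $Tf$ is Borel.

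Parts $(ii)$--$(iv)$ are then immediate from the fibre-sum formula. For $(ii)$, with $C$ injective the fibre sum simply counts $|a^{-1}(x) \cap C| \in \{0,1\}$, which equals $1$ exactly when $x \in a(C)$, giving $T\chi_C = \chi_{a(C)}$. For $(iii)$, the reindexing $Y_{g.x} = Y_x g^{-1}$ (from $t.(g.x) \in Y \Leftrightarrow (tg).x \in Y$) together with the substitution $s = tg$ turns $Tf(g.x) = \sum_{t \in Y_{g.x}} f(t^{-1}, t.(g.x))$ into $\sum_{s \in Y_x} f(gs^{-1}, s.x) = Tf_g(x)$. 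For $(iv)$, expanding $f = \sum_k f_k$ inside the fibre sum and interchanging the two non-negative summations (Tonelli) yields $Tf = \sum_k Tf_k$.

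For $(v)$, the hypothesis $\{f > 0\} \subset K_n \times Y_n$ means that a summand $f(g^{-1}, g.x)$ can be non-zero only when $g^{-1} \in K_n$ and $g.x \in Y_n$, i.e.\ when $g \in Y_x \cap K_n^{-1}$; there are at most $M_{K_n^{-1}}$ such $g$ by \eqref{DefMK}, each contributing at most $\|f\|_\infty$, so $Tf(x) \leq M_{K_n^{-1}} \|f\|_\infty$. Moreover the existence of such a $g$ forces $x = g^{-1}.(g.x) \in K_n.Y_n$, so $Tf$ vanishes off $K_n.Y_n$, which supplies the factor $\chi_{K_n.Y_n}$. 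The sole genuine obstacle is the Borel measurability in $(i)$: everything hinges on slicing $G \times Y$ into countably many injective pieces so that, piece by piece, the fibre sum becomes a composition of Borel maps rather than an uncountable object; once this is in place the remaining parts are formal manipulations of the fibre sum.
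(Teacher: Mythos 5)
Your proposal is correct and takes essentially the same route as the paper: for part $(i)$ you use the identical device of a countable Borel partition $G = \bigsqcup_k V_k$ with $V_k^{-1}V_k \subset U$, injectivity of $a|_{V_k \times Y}$ from Lemma \ref{Lemma_BasicProp} $(i)$, and Lemma \ref{Lemma_Borel} $(ii)$ to make each piece of the sum Borel, while your arguments for $(ii)$--$(v)$ (fibre counting, the reindexing $Y_{g.x} = Y_x g^{-1}$, interchange of non-negative sums, and the $|Y_x \cap K_n^{-1}| \leq M_{K_n^{-1}}$ bound) coincide with the paper's. Your fibre-sum formulation $Tf(x) = \sum_{(h,y) \in a^{-1}(x)} f(h,y)$ is a notational repackaging of the paper's Borel sections $\rho_k$ rather than a different method, and if anything it is slightly cleaner, since restricting to the genuinely injective sets $V_k \times Y$ sidesteps any bookkeeping with disjointified sets.
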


\vspace{0.1cm}

\begin{proof}
(i) Let $G = \bigsqcup_k V_k$ be a Borel partition such that $V_k^{-1}V_k \subset U$
for all $k$.  Then,  by Lemma \ref{Lemma_BasicProp} (i),  $a|_{V_k \times Y}$ is injective, so by Lemma \ref{Lemma_Borel} (ii),  there is a Borel right-inverse $b_k : V_k.Y \ra V_k \times Y$,  which then must be of the form $b_k(x) = (\rho_k(x)^{-1},\rho_k(x).x)$ for 
some Borel map $\rho_k : V_k.Y \ra V_k^{-1}$.  Note that $g \in Y_x \cap V_k^{-1}$ if
and only if $g = \rho_k(x)$.  Set $X_1 = V_1.Y$,  and define inductively $X_{k+1} = V_{k+1}.Y \setminus \bigcup_{j=1}^k X_j$.  Then $X_1, X_2,\ldots$ are disjoint Borel sets,  and we conclude that
\[
Tf(x) = \sum_k  \sum_{g \in Y_x \cap V_k^{-1}} f(g^{-1},g.x)\, \chi_{X_k}(x)
= \sum_k F_k(x),
\]
where $F_k(x) = f(\rho_k(x)^{-1},\rho_k(x).x)\chi_{X_k}(x)$.  Since each $F_k$ is Borel, so is $Tf$.  (ii) Suppose $C \subset G \times Y$ is an injective Borel set.  Note that 
$x \in a(C)$ if and only if there is a unique $g \in G$ such that $(g^{-1},g.x) \in C$ (in particular,  $g \in Y_x$).  Hence, 
\[
T\chi_C(x) = \sum_{g \in Y_x} \chi_C(g^{-1},g.x) = \chi_{a(C)}(x),  \quad \textrm{for all $x \in X$}.
\]
(iii) Fix $g \in G$,  and note that 
\[
Tf(g.x) = \sum_{h \in Y_{g.x}} f(h^{-1},hg.x) = \sum_{h \in Y_x} f(gh^{-1},h.x), \quad \textrm{for all $x \in X$},
\]
since $Y_{g.x} = Y_x g^{-1}$.  (iv) This readily follows from monotone convergence.  
(v) Suppose $Tf(x) > 0$.  Since $\{f > 0\} \subset K_n \times Y_n$,  there must be an
element $g \in K_n^{-1}$ such that $g.x \in Y_n$,  whence $x \in K_n.Y_n$.  In particular, 
\[
Tf(x) \leq |Y_x \cap K_n^{-1}| \|f\|_\infty \,  \chi_{K_n.Y_n}(x),  \quad \textrm{for all $x \in X$}.
\]
By Lemma \ref{Lemma_BasicProp} (ii),  there is a finite constant $M_{K_n^{-1}}$ such that $|Y_x \cap K_n^{-1}| \leq M_{K_n^{-1}}$ for all $x \in X$.
\end{proof}

\begin{remark}[On the existence of Haar measures]
Let us briefly sketch how $Y$-periodizations can be used to \emph{construct}
a Haar measure on $G$,  following the ideas of Izzo \cite{Izzo1,Izzo2}.  Suppose
$(X,a)$ is a Borel $G$-space,  $\mu$ is a $G$-invariant Borel probability measure
on $X$,  and $Y$ is a $U$-separated cross-section for some identity neighbourhood
$U$ in $G$ (if the $G$-action $a$ is free,  then one can always produce such a cross-section in $X$ by \cite[Proposition 2.10]{Forrest}; furthermore,  this construction does not use the existence of a Haar measure on $G$).  We now define 
\[
m(\varphi) = \mu(T(\varphi \otimes 1)),  \quad \textrm{for $\varphi \in C_c(G)$}.
\]
The proof of Lemma \ref{Lemma_T} above now shows that $m$ is a left $G$-invariant locally finite Borel measure on $G$.  In particular,  the existence of a single free probability-measure preserving $G$-action implies the existence of a (left-invariant) Haar measure on $G$.
\end{remark}

\subsection{Existence of transverse measures}

\begin{proposition}
\label{Prop_Tmap}
For every $\mu \in M(X;\cY_{\cG})^G$,  there is a unique $\nu \in M(Y;\cY)$ such that
\begin{equation}\label{muTf}
\mu(Tf) = (m_G \otimes \nu)(f),  
\end{equation}
for every bounded non-negative Borel function $f$ on $G \times Y$.  In particular,  $\mu(a(C)) = m_G \otimes \nu(C)$ for every injective Borel set $C \subset G \times Y$.
\end{proposition}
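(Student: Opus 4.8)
The plan is to build $\nu$ out of $\mu$ by exploiting the uniqueness of Haar measure, in exactly the spirit of the construction of $m_G$ sketched in the remark following Lemma \ref{Lemma_T}. For a fixed Borel set $A\subseteq Y$ consider the set function
\[
\sigma_A(V) := \mu\big(T(\chi_V\otimes\chi_A)\big),\qquad V\subseteq G \text{ Borel}.
\]
First I would check that $\sigma_A$ is a Borel measure on $G$: countable additivity in $V$ follows from Lemma \ref{Lemma_T}(iv) together with monotone convergence. When $A\subseteq Y_n$ it is moreover finite on relatively compact sets, since for $V\subseteq K_n$ Lemma \ref{Lemma_T}(v) bounds $T(\chi_V\otimes\chi_A)$ by $M_{K_n^{-1}}\chi_{K_n.Y_n}$, and $\mu(K_n.Y_n)<\infty$ by $\cY_{\cG}$-finiteness; thus $\sigma_A$ is Radon for such $A$.

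The decisive step is to show that $\sigma_A$ is \emph{left-invariant}. Since $\mu$ is $G$-invariant we have $\mu(Tf)=\int_X Tf(g.x)\,d\mu(x)$ for every $g\in G$, and by Lemma \ref{Lemma_T}(iii) this equals $\int_X Tf_g\,d\mu$ with $f_g(h,y)=f(gh,y)$. Taking $f=\chi_V\otimes\chi_A$ we get $f_g=\chi_{g^{-1}V}\otimes\chi_A$, and hence $\sigma_A(V)=\mu(Tf)=\int_X Tf_g\,d\mu=\sigma_A(g^{-1}V)$ for all $g$. So for $A\subseteq Y_n$ the measure $\sigma_A$ is a left-invariant Radon measure on $G$, and uniqueness of Haar measure yields a unique constant $\nu(A)\in[0,\infty)$ with $\sigma_A=\nu(A)\,m_G$; concretely $\nu(A)=\sigma_A(V_0)/m_G(V_0)$ for any fixed small $V_0$ of positive finite Haar measure. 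I would then extend $\nu$ to all Borel $A\subseteq Y$ by $\nu(A):=\lim_n\nu(A\cap Y_n)$; countable additivity in $A$ comes again from Lemma \ref{Lemma_T}(iv), and $\nu(Y_n)<\infty$ from the local finiteness above, so $\nu\in M(Y;\cY)$.

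It remains to prove $\mu(Tf)=(m_G\otimes\nu)(f)$. For a box $f=\chi_{V\times A}$ with $V$ small, i.e.\ $V^{-1}V\subseteq U$, and $A\subseteq Y_n$, Lemma \ref{Lemma_T}(ii) gives $Tf=\chi_{a(V\times A)}$, so $\mu(Tf)=\sigma_A(V)=\nu(A)m_G(V)=(m_G\otimes\nu)(f)$; this also settles the injective-set formula for boxes, and a monotone-convergence passage in $n$ removes the restriction $A\subseteq Y_n$. To reach a general bounded non-negative $f$ I would fix a Borel partition $G=\bigsqcup_k V_k$ with each $V_k^{-1}V_k\subseteq U$ as in Remark \ref{rmk_partition}; by Lemma \ref{Lemma_T}(iv) both sides are countably additive along the decomposition $f=\sum_k f\,\chi_{V_k\times Y}$, reducing matters to $f$ supported in a single small box $V\times Y$. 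On such a box every Borel subset is injective, so $C\mapsto\mu(a(C))$ and $C\mapsto(m_G\otimes\nu)(C)$ are two $\sigma$-finite Borel measures on $V\times Y$ (with common exhaustion $V\times Y_n$, finite on each by $\cY_{\cG}$-finiteness) that agree on the $\pi$-system of sub-boxes; a Dynkin argument forces their equality, and a simple-function approximation with monotone convergence upgrades this to the full identity for every $f$ supported on $V\times Y$, hence for all $f$. Uniqueness of $\nu$ is immediate by testing the identity on boxes $\chi_{V\times A}$.

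I expect the main obstacle to be the left-invariance of $\sigma_A$ in the second paragraph: this is the sole place where the $G$-invariance of $\mu$ genuinely enters, and it must be combined carefully with the covariance rule of Lemma \ref{Lemma_T}(iii) and with the bookkeeping relating the inverse-variable $g^{-1}$ built into the periodization $T$ to the translation variable of $\sigma_A$. Once this identity is established, the remaining steps are routine measure theory.
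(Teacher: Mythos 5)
Your proposal is correct and is essentially the paper's own proof: both fix the transverse variable, show via Lemma \ref{Lemma_T}(iii) and the $G$-invariance of $\mu$ that $V \mapsto \mu(T(\chi_V \otimes \chi_A))$ is a left-invariant, locally finite measure on $G$ (your $\sigma_A$ is the paper's $\eta_B$), invoke uniqueness of Haar measure to extract $\nu(A)$, and glue along the exhaustion $(Y_n)$. The only divergence is the final extension from boxes to general $f$, where the paper cites the standard uniqueness theorem for $\sigma$-finite product measures applied to $\eta(D) := \mu(T\chi_D)$ while you run a hands-on partition-of-$G$ plus Dynkin argument -- routine measure theory either way (just make sure the $V_k$ are chosen relatively compact so that the exhaustion sets have finite measure on both sides).
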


\begin{remark}
\label{Remark_muYfinite}
We refer to $\nu$ as the \emph{$Y$-transverse measure} of $\mu$.  If we want to emphasize its dependence on $\mu$,  we use the notation $\nu = \mu_Y$.  We stress that we are \emph{not} assuming in this lemma that $G$ is unimodular. We can spell out \eqref{muTf} in the case where $f$ is a function which depends only on one of the two variables: If $f_1$ and $f_2$ are bounded non-negative Borel functions on $G$ and $Y$ respectively, then
\[
 \int_G f_1(g) \, dm_G(g) = \int_X \sum_{g \in Y_x} f_1(g^{-1}) \, d\mu(x) \qand \int_Y f_2(y)\, d\nu(y) =  \int_X \sum_{g \in Y_x} f_2(g.x)\, d\mu(x).
\]
Note that $\mu$ is a finite Borel measure,  then $\nu$ is a finite Borel measure on $Y$.  The converse is not true.  
\end{remark}
\begin{proof}[Proof of Proposition \ref{Prop_Tmap}]
Fix $\mu \in M(X;\cY_{\cG})^G$,  and define
\[
\eta(D) := \mu(T\chi_D),  \quad \textrm{for Borel sets $D \subset G \times Y$}.
\]
By Lemma \ref{Lemma_T} (i) and (iv),  $\eta$ is a Borel measure on $G \times Y$.  
Furthermore,  Lemma \ref{Lemma_T} (v) implies that $\eta(K_n \times Y_n) < \infty$ for all $n$,  so in particular,  $\eta$ is $\sigma$-finite.  By a standard approximation argument (see e.g.  \cite[Theorem 3.3.1]{Bog}),  it suffices to show that there is a unique 
$\nu \in M(Y;\cY)$ such that
\[
\eta(A \times B) = m_G(A) \,  \nu(B),  \quad \textrm{for all $A \in \mathscr{B}_G$ and $B \in \mathscr{B}_Y$}.
\]
To do this,  fix $n$ and a Borel set $B \subset Y_n$.  We define the Borel measure $\eta_B$ on $G$ by
\[
\eta_B(A) = \eta(A \times B),  \quad \textrm{for $A \in \mathscr{B}_G$}.
\]
Since $\mu$ is $\cY_{\cG}$-finite,  $\eta_B(K_n) \leq M_{K_n^{-1}} \mu(K_n.Y_n) < \infty$ for every $n$ by Lemma \ref{Lemma_T} (v),  and thus $\eta_B$ is locally finite.  By Lemma \ref{Lemma_T} (iii),  we see that
\[
\eta_B(g^{-1}A) = \mu(T\chi_{A \times B}(g.\cdot)) = \eta_B(A),  \quad \textrm{for all $g \in G$},
\]
since $\mu$ is $G$-invariant.  Hence $\eta_B$ is a locally finite and left-invariant Radon measure on $G$,  and thus a (non-negative) multiple of $m_G$.  We conclude that there is a non-negative (finite) number $\nu_n(B)$ such that 
\[
\eta(A \times B) = m_G(A) \,  \nu_n(B),  \quad \textrm{for all $A \in \mathscr{B}_G$}.
\]
Since $\eta$ is a Borel measure,  it follows that $\nu_n$ is a (finite) Borel measure on $Y_n$.  Furthermore,  if $m \leq n$,  then $\nu_m(B) = \nu_n(B)$ for every Borel set $B \subset Y_m$.  We conclude that the limit
\[
\nu(B) := \lim_{n \ra \infty} \nu_n(B \cap Y_n), 
\]
exists for every Borel set $B \subset Y$.  It readily follows from monotone convergence that $\nu$ is a Borel measure on $Y$ such that $\nu(B) = \nu_n(B)$
for every Borel set $B \subset Y_n$.  In particular,  $\nu$ is $\cY$-finite.  Since 
$\eta$ is a Borel measure,  we now have
\[
\eta(A \times B) = \lim_n \eta(A \times (B \cap Y_n)) = \lim_n m_G(A) \nu_n(B \cap Y_n) = m_G(A) \nu(B),
\]
for all Borel sets $A \subset G$ and $B \subset Y$.  
\end{proof}

\subsection{Proofs of Theorem \ref{GMCT} and Corollary \ref{CorRadon}}

We now turn to the proof of Theorem \ref{GMCT}. We split the proof into a series of lemmas. For the first two of these, unimodularity of $G$ is not needed.

\begin{lemma}
\label{Lemma_injectivemap}
The map $M(X;\cY_G)^G \ra M(Y;\cY)$, $\mu \mapsto \mu_Y$ is injective.  In particular,  if $\mu$ is non-zero,  then so is $\mu_Y$.
\end{lemma}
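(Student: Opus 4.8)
The plan is to show that any $\mu \in M(X;\cY_{\cG})^G$ is completely determined by its transverse measure $\mu_Y$. The mechanism is to reduce the evaluation of $\mu$ on an arbitrary Borel set to the values of $m_G \otimes \mu_Y$ on injective sets, where the last assertion of Proposition \ref{Prop_Tmap} applies verbatim.

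First I would invoke Lemma \ref{Lemma_BasicProp} (v): for any Borel set $B \subset X$ there are injective Borel sets $\widetilde{B}_1, \widetilde{B}_2, \ldots$ in $G \times Y$ with $B = \bigsqcup_k a(\widetilde{B}_k)$, each piece $a(\widetilde{B}_k)$ being Borel by part (iv) of the same lemma. Since this union is disjoint, countable additivity of $\mu$ gives $\mu(B) = \sum_k \mu(a(\widetilde{B}_k))$, and the final statement of Proposition \ref{Prop_Tmap} identifies each summand as $\mu(a(\widetilde{B}_k)) = (m_G \otimes \mu_Y)(\widetilde{B}_k)$. Hence
\[
\mu(B) = \sum_k (m_G \otimes \mu_Y)(\widetilde{B}_k).
\]
Thus, once a decomposition of $B$ into injective pieces is fixed, the value $\mu(B)$ depends only on $\mu_Y$. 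Consequently, if $\mu, \mu' \in M(X;\cY_{\cG})^G$ satisfy $\mu_Y = \mu'_Y$, then the displayed formula yields $\mu(B) = \mu'(B)$ for every Borel set $B \subset X$, so $\mu = \mu'$; this is the claimed injectivity.

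The \emph{in particular} statement then follows immediately: the zero measure lies in $M(X;\cY_{\cG})^G$ and has transverse measure zero, so $\mu_Y = 0$ forces $\mu = 0$ by injectivity; equivalently, if $\mu(B) > 0$ for some Borel set $B$, the formula above forces $(m_G \otimes \mu_Y)(\widetilde{B}_k) > 0$ for some $k$, and projecting onto the $Y$-factor shows $\mu_Y \neq 0$. I do not expect a genuine obstacle here, since the essential content is already packaged in Lemma \ref{Lemma_BasicProp} (v) and Proposition \ref{Prop_Tmap}. The only points requiring care are that the covering in Lemma \ref{Lemma_BasicProp} (v) is genuinely \emph{disjoint}, so that countable additivity applies without overcounting, and that each image $a(\widetilde{B}_k)$ is Borel, which is exactly the countable-fiber consequence recorded in Lemma \ref{Lemma_BasicProp} (iv). Notably, this argument uses neither unimodularity of $G$ nor the lifting construction of Theorem \ref{LiftingInvariantMeasure}, consistent with the remark preceding the lemma.
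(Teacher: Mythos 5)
Your proposal is correct and is essentially the paper's own proof: both decompose an arbitrary Borel set $B \subset X$ via Lemma \ref{Lemma_BasicProp} (v) into disjoint images of injective sets and then apply the identity $\mu(a(C)) = (m_G \otimes \mu_Y)(C)$ from Proposition \ref{Prop_Tmap} together with countable additivity. Your added observations --- the explicit handling of the \emph{in particular} clause and the remark that unimodularity is not needed --- are accurate but do not change the argument.
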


\begin{proof}
Suppose $\mu_1,  \mu_2 \in M_{\cY}(X)^G$ with $(\mu_1)_Y = (\mu_2)_Y$.  Then, 
\[
\mu_1(a(C)) = m_G \otimes (\mu_1)_Y(C) = m_G \otimes (\mu_2)_Y(C) = \mu_2(a(C))
\]
for every injective Borel set $C \subset G \times Y$.  Fix a Borel set $B \subset X$.  By Lemma \ref{Lemma_BasicProp} (v),  there are injective
Borel sets $\widetilde{B}_1,\widetilde{B}_2,\ldots$ in $G \times Y$ such that $B = \bigsqcup_k a(\widetilde{B}_k)$.   Since
$\mu_1(a(\widetilde{B}_k)) = \mu_2(a(\widetilde{B}_k))$
for all $k$,  we see that
\[
\mu_1(B) = \sum_k \mu_1(a(\widetilde{B}_k)) = \sum_k \mu_2(a(\widetilde{B}_k))
= \mu_2(B). 
\]
Since $B$ is an arbitrary Borel set in $X$,  we conclude that $\mu_1 = \mu_2$. 
\end{proof}

\begin{lemma}
\label{lemma_null}
Let $\mu \in M(X;\cX)^{G}$ and let $B$ be a $G$-invariant Borel set in $X$.  Then, 
\[
\mu(B) = 0 \iff \mu_Y(B \cap Y) = 0.
\]
\end{lemma}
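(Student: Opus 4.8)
The plan is to work entirely through the injective-set identity $\mu(a(C)) = (m_G \otimes \nu)(C)$ supplied by Proposition \ref{Prop_Tmap} (where $\nu = \mu_Y$), rather than through any return-time counting formula, since the latter is useless here: the counting function $x \mapsto |Y_x|$ is typically infinite-valued and would only yield the indeterminate product $\infty \cdot 0$. Both implications will follow by choosing suitable injective sets over $B \cap Y$ and exploiting the $G$-invariance of $B$.

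For the forward implication, suppose $\mu(B) = 0$. First I would fix an open identity neighbourhood $V \subset G$ with $V^{-1}V \subset U$, so that $V \times Y$ --- and hence its Borel subset $V \times (B \cap Y)$ --- is injective by Lemma \ref{Lemma_BasicProp}$(i)$. Since $B$ is $G$-invariant and $B \cap Y \subset B$, we have $a(V \times (B \cap Y)) = V.(B \cap Y) \subset B$, so Proposition \ref{Prop_Tmap} gives
\[
m_G(V)\,\nu(B \cap Y) = (m_G \otimes \nu)(V \times (B \cap Y)) = \mu\big(V.(B \cap Y)\big) \leq \mu(B) = 0.
\]
As $V$ is open and non-empty, $m_G(V) > 0$, forcing $\nu(B \cap Y) = 0$.

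For the converse, suppose $\nu(B \cap Y) = 0$. The key structural observation is that, because $Y$ is a cross section and $B$ is $G$-invariant, every point of $B$ lies in a $G$-translate of a point of $B \cap Y$; concretely $B = G.(B \cap Y)$. I would then invoke Lemma \ref{Lemma_BasicProp}$(v)$ to write $B = \bigsqcup_k a(\widetilde{B}_k)$ with each $\widetilde{B}_k \subset G \times Y$ injective Borel, so that $\mu(B) = \sum_k (m_G \otimes \nu)(\widetilde{B}_k)$ by Proposition \ref{Prop_Tmap}. Since $a(\widetilde{B}_k) \subset B$ and $B$ is $G$-invariant, each $(g,y) \in \widetilde{B}_k$ satisfies $y = g^{-1}.(g.y) \in B \cap Y$, i.e.\ $\widetilde{B}_k \subset G \times (B \cap Y)$. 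Writing each term as $\int_Y m_G((\widetilde{B}_k)_y)\,d\nu(y)$ via Fubini, the integrand is supported on the $\nu$-null set $B \cap Y$, so every term vanishes and $\mu(B) = 0$.

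I expect the only real obstacle to be resisting the temptation to use a counting formula and instead routing both directions through injective sets; once that is done, the $G$-invariance of $B$ does all the work (via $V.(B \cap Y) \subset B$ in one direction and $\widetilde{B}_k \subset G \times (B \cap Y)$ in the other), and integrating over a $\nu$-null base handles the possibly infinite fibres without difficulty.
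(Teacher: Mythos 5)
Your proof is correct, and while your forward implication coincides with the paper's, your converse takes a genuinely different route. For the forward direction both you and the paper inject $V \times (B \cap Y)$ with $V^{-1}V \subset U$ and divide by $m_G(V) > 0$ (the paper records the slightly sharper identity $V.(B\cap Y) = B \cap V.Y$, but your inclusion $V.(B\cap Y) \subset B$ suffices). For the converse, the paper translates the null set: from $\mu_Y(B\cap Y)=0$ it gets $\mu(B \cap V.Y)=0$, then chooses a countable $\Xi \subset G$ with $\Xi V = G$ and uses $G$-invariance of \emph{both} $\mu$ and $B$ to conclude $\mu(B \cap \xi V.Y)=0$ for every $\xi \in \Xi$, whence $\mu(B) = \mu(B \cap G.Y) = 0$ by countable subadditivity. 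You instead decompose $B$ itself via Lemma \ref{Lemma_BasicProp}$(v)$ as $B = \bigsqcup_k a(\widetilde{B}_k)$ with each $\widetilde{B}_k$ injective, observe from $G$-invariance of $B$ that $\widetilde{B}_k \subset G \times (B \cap Y)$, and kill each term $(m_G \otimes \nu)(\widetilde{B}_k)$ by Tonelli over the $\nu$-null base --- a step that is legitimate because $\nu$ is $\cY$-finite, hence $\sigma$-finite, so the product measure is well behaved and $\int_Y m_G((\widetilde{B}_k)_y)\,d\nu(y) = 0$ despite $m_G(G) = \infty$; this correctly disposes of the $\infty \cdot 0$ issue you flagged. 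Your route mirrors the proof of Lemma \ref{Lemma_injectivemap} (decompose into images of injective sets, compute with the product measure) and has the small structural advantage that $G$-invariance of $\mu$ is never invoked a second time, being already encoded in Proposition \ref{Prop_Tmap}; the paper's argument is marginally more elementary, needing only translation of null sets and countable subadditivity rather than the decomposition lemma plus Fubini. Both are sound proofs of the same equivalence.
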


\begin{proof}
Let $V \subset G$ be an identity neighbourhood such that $V^{-1}V \subset U$. 
Then $V \times Y$ is an injective Borel set by Lemma \ref{Lemma_BasicProp} (i).  
Since $B$ is $G$-invariant,  $V.(B \cap Y) = B \cap V.Y$,  and thus
\[
m_G(V) \mu_Y(B \cap Y) = \mu(V.(B \cap Y)) = \mu(B \cap V.Y).
\]
Since $0 < m_G(V) < \infty$,  we see that $\mu_Y(B \cap Y) = 0 \iff \mu(B \cap V.Y) = 0$.  In particular,  if $\mu(B) = 0$,  then $\mu_Y(B \cap Y) = 0$.  Conversely,  if $\mu_Y(B \cap Y) = 0$,  then
$\mu(B \cap V.Y) = 0$,  so if we let $\Xi \subset G$ be a countable set such that $\Xi V = G$,  then,  since $\mu$ and $B$ are $G$-invariant,  $\mu(B \cap \xi V.Y) = 0$ for all $\xi \in \Xi$.  Hence,  by $\sigma$-additivity of $\mu$,  we see that $0 = \mu(B \cap \Xi V.Y) = \mu(B \cap G.Y) = \mu(B)$.
\end{proof}

\begin{lemma}
\label{Lemma_RYinvariance}
If $G$ is unimodular and $\mu \in M(X;\cY_{\cG})^G$,  then $\mu_Y \in M(Y;\cY)^{R_Y}$ is $R_Y$-invariant.
\end{lemma}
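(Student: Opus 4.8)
The plan is to fix an arbitrary $\varphi \in [[R_Y]]$ and show directly that $\nu(\dom(\varphi)) = \nu(\ran(\varphi))$, where I abbreviate $\nu := \mu_Y$. The crucial non-circular input is Proposition \ref{Prop_Tmap}, which gives $\mu(a(C)) = (m_G \otimes \nu)(C)$ for every injective Borel set $C \subset G \times Y$; consequently, whenever $C_1, C_2$ are injective Borel sets with $a(C_1) = a(C_2)$, one automatically obtains $(m_G \otimes \nu)(C_1) = (m_G \otimes \nu)(C_2)$. I must argue this way rather than invoking Corollary \ref{cor_measC1C2}, since the latter presupposes the very $R_Y$-invariance we are trying to establish.

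First I would apply Lemma \ref{Lemma_partialstructure} to $\varphi$, choosing the auxiliary neighbourhood $V$ to be symmetric and precompact with $V^4 \subset U$. This yields a Borel partition $\dom(\varphi) = \bigsqcup_k A_k$, precompact identity neighbourhoods $W_k \subset V$, elements $\lambda_k \in \Lambda_a(Y)$ with $\lambda_k^{-1} W_k \lambda_k \subset V$, and Borel maps $\rho_k : A_k \to W_k$ with $\varphi(y) = \rho_k(y)\lambda_k.y$ on $A_k$, together with the sets $C_k = \{(w\rho_k(y)^{-1}, \varphi(y)) : w \in W_k,\ y \in A_k\}$ satisfying $a(C_k) = W_k \lambda_k.A_k$. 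Since $\varphi$ is injective, the images $\varphi(A_k)$ form a Borel partition of $\ran(\varphi)$, so by $\sigma$-additivity it suffices to prove $\nu(A_k) = \nu(\varphi(A_k))$ for each fixed $k$; I drop the index $k$ and write $A, W, \lambda, \rho, C$.

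Next I would introduce the comparison set $C' := W\lambda \times A$. Because $(W\lambda)^{-1}(W\lambda) = \lambda^{-1}W^2\lambda \subset V^2 \subset U$, Lemma \ref{Lemma_BasicProp}(i) shows that $C'$ is injective, and clearly $a(C') = W\lambda.A = a(C)$; the same inclusion combined with $U$-separatedness of $Y$ shows $C$ is injective as well. Then I would compute both $(m_G \otimes \nu)$-masses by slicing. For $z \in \varphi(A)$ the fibre $C_z$ equals $W\rho(\varphi^{-1}(z))^{-1}$, which has $m_G$-measure $m_G(W)$ by right-invariance of $m_G$ (here unimodularity enters), so $(m_G \otimes \nu)(C) = m_G(W)\,\nu(\varphi(A))$; similarly $(m_G \otimes \nu)(C') = m_G(W\lambda)\,\nu(A) = m_G(W)\,\nu(A)$, again by right-invariance. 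Equating via $a(C) = a(C')$ and cancelling the factor $m_G(W) \in (0,\infty)$ (finite and positive since $W$ is a precompact identity neighbourhood) gives $\nu(A) = \nu(\varphi(A))$, and summing over $k$ completes the proof.

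I expect the main obstacle to be the bookkeeping that makes unimodularity indispensable: both mass computations rely on replacing the right-translates $W\rho(y)^{-1}$ and $W\lambda$ by $W$ without picking up a modular factor, which is exactly where the hypothesis that $G$ is unimodular is used. A secondary point requiring care is verifying that the two sets $C$ and $C'$ are genuinely injective and share the common image $W\lambda.A$, both of which follow from the containment $\lambda^{-1}W^2\lambda \subset U$ together with $U$-separatedness of $Y$.
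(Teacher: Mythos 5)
Your proof is correct and follows the paper's own argument almost step for step: the same reduction via Lemma \ref{Lemma_partialstructure} to the pieces $A_k$, the same slicing computation $(m_G \otimes \nu)(C_k) = m_G(W_k)\,\nu(\varphi(A_k))$ using right-invariance of $m_G$, and the same (correctly identified) non-circular use of Proposition \ref{Prop_Tmap} in place of Corollary \ref{cor_measC1C2}. The only cosmetic difference is in the final step: where you compare $C_k$ against the second injective set $W_k\lambda_k \times A_k$ with the same image and invoke right-invariance of $m_G$ once more, the paper instead uses $G$-invariance of $\mu$ to pass from $\mu(W_k\lambda_k.A_k)$ to $\mu(\lambda_k^{-1}W_k\lambda_k.A_k)$ and then conjugation-invariance of $m_G$ --- the two bookkeeping devices are equivalent and both use unimodularity exactly where you flagged it.
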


\begin{proof}
Let $\varphi \in [[E]]$,  and let $V,A_k$,  $W_k,\rho_k,  \lambda_k$ and $C_k$ be as in Lemma \ref{Lemma_partialstructure}.  Since $\varphi$ is injective and the sets $A_k$ are disjoint,  we have
\[
\mu_Y(\varphi(A)) = \sum_{k} \mu_Y(\varphi(A_k)),
\]
so it suffices to show that $\mu_Y(\varphi(A_k)) = \mu_Y(A_k)$ for every $k$.  We recall from Lemma \ref{Lemma_partialstructure} that
\[
C_k := \big\{ (w\rho_k(y)^{-1},\varphi(y)) \,  : \,  w \in W_k,  \enskip y \in A_k \big\} \subset G \times Y
\]
is an injective set such that $a(C_k) = W_k\lambda_k.A_k$.  Let $\widetilde{\rho}_k := \rho \circ \varphi^{-1}|_{\varphi(A_k)}$,  and note that
\[
\chi_{C_k}(g,z) = \chi_{W_k}(g\widetilde{\rho}_k(z)) \chi_{\varphi(A_k)}(z),  \quad \textrm{for all $(g,z) \in G \times Y$}.
\]
Since $m_G$ is \emph{right}-invariant,  Fubini's Theorem tells us that
$m_G \otimes \mu_Y(C_k) = m_G(W_k) \mu_Y(\varphi(A_k))$.  
Since $C_k$ is injective,  we also have $(m_G \otimes \mu_Y)(C_k) = \mu(a(C_k))$.  
Moreover,  
\begin{align*}
\mu(a(C_k)) &= \mu(W_k\lambda_k.A_k) = \mu(\lambda_k^{-1}W_k\lambda_k.A_k) \\[0.1cm]
&= m_G(\lambda_k^{-1}W_k \lambda_k) \mu_Y(A_k) = m_G(W_k) \mu(A_k),
\end{align*}
where in the second identity we have used that $\mu$ is $G$-invariant,  in the third inequality we have used that $\lambda_k^{-1} W_k^{2} \lambda_k \subset U$,  and in the last identity we have used that $m_G$ is conjugation-invariant.  Hence
\[
m_G(W_k) \mu_Y(\varphi(A_k)) = m_G \otimes \mu_Y(C_k) = \mu(a(C_k))  = m_G(W_k) \mu(A_k),
\]
and thus $\mu_Y(\varphi(A_k)) = \mu_Y(A_k)$. 
\end{proof}
Thus, if we assume that $G$ is unimodular, then we have an injective restriction map
\[
\res G X Y: M(X;\cY_{\cG})^G \ra M(Y;\cY)^{R_Y}, \quad \mu \mapsto \mu_Y.
\]
It turns out that this map is actually a bijection:

\begin{lemma}\label{Lemma_bijection} If $G$ is unimodular, then the map $\res G X Y: M(X;\cY_{\cG})^G \ra M(Y;\cY)^{R_Y}$, $\mu \mapsto \mu_Y$ is bijective and inverse to the map $\ind G X Y: M(Y;\cY)^{R_Y} \ra  M(X;\cY_{\cG})^G$ from Theorem \ref{LiftingInvariantMeasure}.
\end{lemma}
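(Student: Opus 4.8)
The plan is to show that the two composites $\ind GXY \circ \res GXY$ and $\res GXY \circ \ind GXY$ are the respective identity maps; since both maps are already known to take values in the correct spaces (the restriction map lands in $M(Y;\cY)^{R_Y}$ by Lemma \ref{Lemma_RYinvariance}, and the induction map lands in $M(X;\cY_{\cG})^G$ by Theorem \ref{LiftingInvariantMeasure}), establishing both identities is exactly the assertion that they are mutually inverse bijections. The only substantive inputs are the defining relation of the transverse measure from Proposition \ref{Prop_Tmap}, namely $\mu(a(C)) = (m_G \otimes \mu_Y)(C)$ for every injective Borel set $C \subset G \times Y$ together with its uniqueness clause, and Corollary \ref{cor_measC1C2}.

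First I would verify $\ind GXY \circ \res GXY = \id$. Fix $\mu \in M(X;\cY_{\cG})^G$, set $\nu := \mu_Y = \res GXY(\mu)$, and choose any Borel $Y$-section $\beta$. For a Borel set $B \subset X$ the set $b_\beta(B)$ is injective: since $a \circ b_\beta = \id_X$, the map $b_\beta$ is injective and $a$ is injective on its image, so $b_\beta(B)$ is an injective Borel set (Lemma \ref{Lemma_Borel} (ii)) satisfying $a(b_\beta(B)) = B$. Hence, directly from Definition \ref{def_nubeta} and Proposition \ref{Prop_Tmap},
\[
\nu_\beta(B) = (m_G \otimes \nu)(b_\beta(B)) = \mu\big(a(b_\beta(B))\big) = \mu(B).
\]
As $B$ is arbitrary, $\ind GXY(\res GXY(\mu)) = \nu_\beta = \mu$.

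Next I would verify $\res GXY \circ \ind GXY = \id$. Fix $\nu \in M(Y;\cY)^{R_Y}$ and set $\mu := \nu_\beta = \ind GXY(\nu)$; note that $\nu$ is $\sigma$-finite and $R_Y$-invariant, so Corollary \ref{cor_measC1C2} is available. By the uniqueness clause of Proposition \ref{Prop_Tmap} it suffices to show $\mu(a(C)) = (m_G \otimes \nu)(C)$ for every injective Borel set $C \subset G \times Y$, for then necessarily $\mu_Y = \nu$. Set $C' := b_\beta(a(C))$, which is injective with $a(C') = a(C)$; Corollary \ref{cor_measC1C2} then gives $(m_G \otimes \nu)(C') = (m_G \otimes \nu)(C)$, and combining this with Definition \ref{def_nubeta} yields
\[
\mu(a(C)) = \nu_\beta(a(C)) = (m_G \otimes \nu)\big(b_\beta(a(C))\big) = (m_G \otimes \nu)(C).
\]
Thus $\res GXY(\ind GXY(\nu)) = \mu_Y = \nu$.

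I do not expect a genuine obstacle here: the lemma is essentially formal once Proposition \ref{Prop_Tmap} and Corollary \ref{cor_measC1C2} are in hand, and it also reproves the injectivity already recorded in Lemma \ref{Lemma_injectivemap}. The one point deserving attention is the second composite, where one must invoke the $R_Y$-invariance of $\nu$ (through Corollary \ref{cor_measC1C2}) in order to replace the arbitrary injective set $C$ by $b_\beta(a(C))$ without altering its $(m_G \otimes \nu)$-measure; this is precisely where the hypothesis $\nu \in M(Y;\cY)^{R_Y}$, rather than merely $\nu \in M(Y;\cY)$, enters, and it is the reason the image of $\res GXY$ is exactly the $R_Y$-invariant measures.
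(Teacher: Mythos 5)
Your proposal is correct, and its computational core is the same as the paper's --- Corollary \ref{cor_measC1C2} applied to a pair of injective Borel sets with the same image under $a$, one of them of the form $b_\beta(\,\cdot\,)$ --- but the organization is genuinely different. The paper proves only the composite $\res{G}{X}{Y} \circ \ind{G}{X}{Y} = \id$, testing on the special injective sets $C_1 = b_\beta(V.B)$ and $C_2 = V \times B$ with $V^{-1}V \subset U$, and then deduces bijectivity by invoking the separately established injectivity of $\res{G}{X}{Y}$ (Lemma \ref{Lemma_injectivemap}): a right inverse of an injective surjection is the inverse. You instead verify both composites directly: your first computation, $\nu_\beta(B) = (m_G \otimes \nu)(b_\beta(B)) = \mu\big(a(b_\beta(B))\big) = \mu(B)$, is a clean one-line proof of $\ind{G}{X}{Y} \circ \res{G}{X}{Y} = \id$ that bypasses Lemma \ref{Lemma_injectivemap} entirely (and in fact reproves it), while your second composite generalizes the paper's test sets from $V \times B$ to arbitrary injective $C$ via $C' := b_\beta(a(C))$, correctly using the $R_Y$-invariance of $\nu$ through Corollary \ref{cor_measC1C2} --- exactly the point where the hypothesis $\nu \in M(Y;\cY)^{R_Y}$ enters, as you note. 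The only spot where you are slightly loose is the appeal to ``the uniqueness clause of Proposition \ref{Prop_Tmap}'': that clause is stated for the full periodization identity $\mu(Tf) = (m_G \otimes \nu)(f)$, not for the family of injective sets. The gap is cosmetic: from $\mu(a(C)) = (m_G \otimes \nu)(C)$ for all injective $C$, test on $C = V \times B$ and compare with Proposition \ref{Prop_Tmap} applied to $\mu$ to get $m_G(V)\,\nu(B) = \mu(V.B) = m_G(V)\,\mu_Y(B)$, whence $\nu = \mu_Y$ since $0 < m_G(V) < \infty$ --- which is precisely the paper's closing step. You also correctly rely on Lemma \ref{Lemma_RYinvariance} (unimodularity) so that $\nu = \mu_Y$ is $R_Y$-invariant and hence, by Theorem \ref{LiftingInvariantMeasure}, $\nu_\beta$ is independent of the choice of section $\beta$ in your first composite.
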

\begin{proof} We fix a Borel $Y$-section $\beta$ as in Lemma \ref{Lemma_adaptedBorelYsection}; then $\ind G X Y(\nu) = \nu_\beta  \in M(X;\cY_{\cG})^G$ for all $\nu \in M(Y;\cY)^{R_Y}$ by Theorem \ref{LiftingInvariantMeasure}. Since $\res GXY$ is injective, it now suffices to show that 
\begin{equation}\label{LemmabijectionToShow}
(\nu_\beta)_Y = \nu \quad \text{for all }\nu \in M(Y;\cY)^{R_Y};
\end{equation}
this will prove that $\res GXY$ is surjective (hence bijective) and hence that its left-inverse $\ind G X Y$ is actually its inverse.

To prove \eqref{LemmabijectionToShow} we fix a Borel set $B \subset Y$.  Let $V$ be an identity neighbourhood in $G$ such that $V^{-1}V \subset U$.  Then
\[
C_1 = b_\beta(V.B) \qand C_2 = V \times B,
\]
where $b_\beta$ is defined in \eqref{def_bbeta},  are both injective Borel sets in $G \times Y$ and we have
\[
a(C_1) = V.B = a(C_2)
\]
Since $\nu$ is $R_Y$-invariant,  it follows from Corollary \ref{cor_measC1C2} that $m_G \otimes \nu(C_1) = m_G \otimes \nu(C_2)$,  and thus 
\[
\nu_\beta(V.B) = m_G \otimes \nu(C_1)  = m_G(V) \nu(B). 
\]
In particular,  $(\nu_\beta)_Y(B) = \nu(B)$.  Since $B$ is an arbitrary Borel set in $Y$,  
we see that $(\nu_\beta)_Y = \nu$.
\end{proof}

At this points we have established the measure correspondence promised in the introduction:
\begin{proof}[Proof of Theorem \ref{GMCT}] The maps $\res G X Y$ from Lemma \ref{Lemma_bijection} and $\ind G X Y$ from Theorem \ref{LiftingInvariantMeasure} are mutually inverse by Lemma \ref{Lemma_bijection}. It remains to show that if $\mu$ and $\nu$ are as in the theorem, then (i)-(iv) hold. Property (ii) is actually our definition of $\nu$ and (i) follows from (ii) (see Proposition \ref{Prop_Tmap} and Remark \ref{Remark_muYfinite}). Moreover, (iv) was established in Lemma \ref{lemma_null}. If $C$ is injective then by Lemma~\ref{Lemma_T}~(ii) we have $T\chi_C = \chi_{a(C)}$ and hence
\[
\mu(a(C)) = \mu(\chi_{a(C)}) = \mu(T\chi_C) = (m_G \otimes \nu)(\chi_C) = (m_G \otimes \nu)(C).
\]
This establishes (iii) and finishes the proof.
\end{proof}
\begin{proof}[Proof of Corollary \ref{CorRadon}] Since $Y$ is closed in $X$, it is locally compact, hence we can find an exhaustion $\mathcal Y = \{Y_n\}$ of $Y$ by compact sets with non-empty interior. Then the Radon measures on $Y$ are precisely the measures in $M(Y, \cY)$. If $\cG = \{K_n\}$, then the sets $X_n := K_nY_n$ are compact and exhaust $X$. Since $X$ is Baire, it follows that almost all $X_n$ contain an interior point, but this in turn implies that the measures in $M(X, \cY_\cG)$ are precisely the Radon measure on $X$. The corollary then follows from Theorem \ref{GMCT}.
\end{proof}

\subsection{Ergodic decomposition of $Y$-transverse measures}
In this subsection we assume that $G$ is unimodular so that by Lemma \ref{Lemma_RYinvariance} the transverse measure $\mu_Y$ is $R_Y$-invariant for every $G$-invariant measure $\mu \in M(X;\cY_{\cG})^G$. We first observe that the restriction map preserves ergodicity in the following sense:
\vspace{0.1cm}

\begin{lemma}
\label{Lemma_Ergodic}
Suppose $\mu \in M(X;\cY_{\cG})^G$.  Then
\[
\textrm{$\mu$ is $G$-ergodic} \iff \textrm{$\mu_Y$ is $R_Y$-ergodic}.
\]
\end{lemma}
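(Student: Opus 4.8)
The plan is to reduce the equivalence entirely to the dictionary between $G$-invariant Borel sets in $X$ and $R_Y$-invariant Borel sets in $Y$ that has already been set up, combined with the null-set characterization of Lemma \ref{lemma_null}. Recall that $G$-ergodicity of $\mu$ means every $G$-invariant Borel set is $\mu$-null or $\mu$-conull, while $R_Y$-ergodicity of $\mu_Y$ means every $R_Y$-invariant Borel set is $\mu_Y$-null or $\mu_Y$-conull. The bridge between the two classes of sets is furnished by Corollary \ref{cor_RYinvariance}, which sends a $G$-invariant $B \subset X$ to the $R_Y$-invariant set $B \cap Y$, and by Lemma \ref{Lemma_liftedinvariance}, which, after fixing a Borel $Y$-section $\beta$, sends an $R_Y$-invariant $A \subset Y$ to the $G$-invariant set $A_\beta$ with $A_\beta \cap Y = A$. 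The crucial quantitative input is that, for a $G$-invariant $B$, nullity transfers in both directions: $\mu(B) = 0 \iff \mu_Y(B \cap Y) = 0$.

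For the forward implication I would suppose that $\mu$ is $G$-ergodic and let $A \subset Y$ be $R_Y$-invariant. Fixing a Borel $Y$-section $\beta$, Lemma \ref{Lemma_liftedinvariance} produces a $G$-invariant set $A_\beta$ with $A_\beta \cap Y = A$. By ergodicity either $\mu(A_\beta) = 0$ or $\mu(A_\beta^c) = 0$; since $A_\beta^c$ is again $G$-invariant with $A_\beta^c \cap Y = Y \setminus A$, applying Lemma \ref{lemma_null} to $A_\beta$ in the first case and to $A_\beta^c$ in the second shows that $\mu_Y(A) = 0$ or $\mu_Y(Y \setminus A) = 0$, i.e.\ $A$ is $\mu_Y$-null or $\mu_Y$-conull. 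For the converse I would suppose that $\mu_Y$ is $R_Y$-ergodic and let $B \subset X$ be $G$-invariant. By Corollary \ref{cor_RYinvariance} the set $B \cap Y$ is $R_Y$-invariant, so either $\mu_Y(B \cap Y) = 0$ or $\mu_Y(B^c \cap Y) = 0$, where $B^c \cap Y = Y \setminus (B \cap Y)$ and $B^c$ is again $G$-invariant. Applying Lemma \ref{lemma_null} to $B$, respectively to $B^c$, then yields $\mu(B) = 0$ or $\mu(B^c) = 0$, so $B$ is $\mu$-null or $\mu$-conull and $\mu$ is $G$-ergodic.

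I do not expect a substantial obstacle here: the whole argument is a formal manipulation once Lemma \ref{lemma_null} and the set-level correspondence are in hand. The only point requiring care is the consistent handling of complements --- one must use that the complement of a $G$-invariant set is again $G$-invariant and intersect it correctly with $Y$, so that the ``conull'' halves of the two ergodicity conditions match up under the correspondence just as the ``null'' halves do. Unimodularity of $G$ enters only implicitly, in that it guarantees (via Lemma \ref{Lemma_RYinvariance}) that $\mu_Y$ is genuinely $R_Y$-invariant, so that $R_Y$-ergodicity of $\mu_Y$ is a meaningful hypothesis in the first place.
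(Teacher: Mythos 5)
Your proof is correct and follows essentially the same route as the paper's: both directions rest on the same three ingredients, namely Lemma \ref{Lemma_liftedinvariance} to lift an $R_Y$-invariant set $A$ to the $G$-invariant set $A_\beta$ with $A_\beta \cap Y = A$, Corollary \ref{cor_RYinvariance} for the restriction direction, and the null-set transfer of Lemma \ref{lemma_null} applied to the invariant set and its (again invariant) complement. The only cosmetic difference is that the paper passes through the identity $(A_\beta)^c = (A^c)_\beta$ where you simply observe $A_\beta^c \cap Y = Y \setminus A$, which is an equally valid (if anything, slightly more direct) way to handle the conull half.
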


\begin{proof}
Suppose $\mu$ is $G$-ergodic,  and let $A \subset Y$ be a $R_Y$-invariant subset
such that $\mu_Y(A) > 0$.  We want to show that $\mu_Y(A^c) = 0$.  Let us fix a Borel 
$Y$-section $\beta : X \ra G$ and define 
\[A_\beta = \{ x \in X \,  : \,  \beta(x).x \in A \}.
\] 
By Lemma \ref{Lemma_liftedinvariance},  $A_\beta \subset X$ is a $G$-invariant Borel set such that $A_\beta \cap Y = A$.  Since $\mu_Y(A) > 0$,  it follows from Lemma \ref{lemma_null} that $\mu(A_\beta) > 0$.   Since $\mu$
is ergodic,  we have $\mu(A_\beta^c) = 0$.  Using Lemma \ref{lemma_null} again,  and
the identity $(A_\beta)^c = (A^c)_\beta$,  we see that
\[
\mu_Y(A_\beta^c \cap Y) = \mu_Y(A^c) = 0.
\]
To prove the converse,  assume that $\mu_Y$ is $R_Y$-ergodic,  and let $B \subset X$ be a $G$-invariant Borel set such that $\mu(B) > 0$.  We want to show that $\mu(B^c) = 0$.  By Corollary \ref{cor_RYinvariance},  $B \cap Y$ is a $R_Y$-invariant Borel set in $Y$,  and by \ref{lemma_null},  we have $\mu_Y(B \cap Y) > 0$.  Since 
$\mu_Y$ is $R_Y$-ergodic,  $\mu_Y(B^c \cap Y) = 0$,  and thus 
$\mu(B^c) = 0$ by Lemma \ref{lemma_null}.  
\end{proof}
This allows us to carry over the ergodic decomposition of invariant measures in $M(X;\cY_{\cG})$ to transverse measures. We will use the following version of the ergodic decomposition theorem which is  contained in \cite[Theorem 1.1]{GS}.

\begin{theorem}
\label{Thm_ErgodicDecomposition}
For every $\mu \in \Prob(X)^G$,  there is a unique probability measure $\sigma$ on
$\Prob(X)$,  which is supported on the set of $G$-ergodic Borel probability measures on $X$,  such that
\[
\mu(F) = \int_{\Prob(X)^G} \eta(F) \, d\sigma(\eta), 
\] 
for every bounded real-valued Borel function $F$ on $X$.\qed
\end{theorem}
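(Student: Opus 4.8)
The plan is to realize $\sigma$ as the push-forward of $\mu$ under the map sending a point to its ergodic component, where the components are obtained by disintegrating $\mu$ over the $\sigma$-algebra of $G$-invariant Borel sets. Since $G$ is only lcsc (and in particular not assumed amenable), pointwise ergodic averaging is unavailable, so I would proceed purely through conditioning. Let $\mathscr{I} \subset \mathscr{B}_X$ be the sub-$\sigma$-algebra of $G$-invariant Borel sets. As $X$ is standard Borel and $\mu$ a probability measure, $L^1(X,\mu)$ is separable; I would use this to choose a countably generated $\mathscr{I}_0 \subseteq \mathscr{I}$ agreeing with $\mathscr{I}$ modulo $\mu$-null sets, and fix a countable algebra $\mathscr{A}_0$ generating $\mathscr{I}_0$. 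Applying Rokhlin's disintegration theorem to $\mu$ relative to $\mathscr{I}_0$ produces a $\mu$-a.e.\ defined Borel map $x \mapsto \mu_x \in \Prob(X)$ with $\mu(B) = \int_X \mu_x(B)\, d\mu(x)$ for all Borel $B$, with $\mu_x(A) = \chi_A(x)$ for $A \in \mathscr{I}_0$, and with $\mu_x$ concentrated on the ($G$-invariant) atom of $\mathscr{I}_0$ through $x$. The candidate is then $\sigma := (x \mapsto \mu_x)_*\mu$, which by construction satisfies $\mu(F) = \int \eta(F)\, d\sigma(\eta)$.

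The two substantive points are that $\mu_x$ is $G$-invariant and $G$-ergodic for $\mu$-almost every $x$. For invariance I would first check, for each fixed $g \in G$, that $x \mapsto g_*\mu_x$ satisfies the same disintegration identity over $\mathscr{I}_0$ (using that elements of $\mathscr{I}_0$ are $G$-invariant and $\mu$ is $G$-invariant), so that a.e.-uniqueness of the disintegration gives $g_*\mu_x = \mu_x$ off a $g$-dependent null set. To remove the dependence on $g$ I would pass to the product: the set $\{(g,x) : g_*\mu_x \neq \mu_x\}$ is Borel in $G \times X$ and all its $g$-slices are $\mu$-null, so by Fubini (using $\sigma$-finiteness of $m_G$) its $x$-slices are $m_G$-null for $\mu$-a.e.\ $x$. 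Thus for a.e.\ $x$ the stabilizer $\{g : g_*\mu_x = \mu_x\}$ is a conull Borel subgroup of $G$, hence by the Steinhaus--Weil argument open (and closed) of full Haar measure, hence all of $G$; so $\mu_x \in \Prob(X)^G$ almost surely.

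The crux of the argument is ergodicity of the components. For each invariant $B \in \mathscr{I}$, choosing $A \in \mathscr{I}_0$ with $\mu(B \triangle A) = 0$ gives $\int_X \mu_x(B \triangle A)\, d\mu(x) = \mu(B \triangle A) = 0$, while $\mu_x(A) \in \{0,1\}$ because $A$ is a union of atoms of $\mathscr{I}_0$ and $\mu_x$ is concentrated on a single atom; hence $\mu_x(B) \in \{0,1\}$ for a.e.\ $x$ --- but a priori with a null set depending on $B$, and there are uncountably many invariant $B$. The obstacle is precisely to upgrade this to a single conull set of $x$ on which $\mathscr{I}$ is $\mu_x$-trivial. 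I would resolve it using the standard fact that the ergodic members of $\Prob(X)^G$ form a Borel set, together with the maximality of conditioning on the full invariant $\sigma$-algebra: if a positive-$\sigma$-measure set of components were non-ergodic, a measurable selection of an invariant splitting $\eta \mapsto (\eta',\eta'')$ would yield a disintegration of $\mu$ strictly refining the one over $\mathscr{I}$, which is impossible since $\mathscr{I}$ already contains every invariant set. This forces $\sigma(\{\eta : \eta \text{ not ergodic}\}) = 0$.

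Finally, uniqueness follows because distinct ergodic measures are mutually singular: if $\sigma'$ is another measure supported on ergodic measures with barycentre $\mu$, then for Borel $E \subseteq \Prob(X)$ the value $\sigma'(E)$ is forced to equal $\mu(\{x : \mu_x \in E\})$ by disintegrating $\mu$ against the a.e.-unique conditional measures, whence $\sigma' = \sigma$. (In the setting of the rest of the paper, where $X$ carries a separated cross section $Y$, one could alternatively transport the classical ergodic decomposition of the countable Borel equivalence relation $R_Y$ through the bijection of Theorem \ref{GMCT} and the ergodicity equivalence of Lemma \ref{Lemma_Ergodic}, though this only covers that special case.) The main difficulty, as indicated, is the ergodicity of the fibres; the invariance of the fibres and the assembly of $\sigma$ are routine once the Fubini/Steinhaus upgrade and the disintegration are in place.
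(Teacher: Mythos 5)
The paper does not actually prove this statement: it is quoted verbatim from Greschonig--Schmidt \cite{GS} (hence the \qed with no proof following it), so your argument has to stand entirely on its own. The overall architecture you chose is the classical Varadarajan route, and roughly half of it is sound: choosing a countably generated $\mathscr{I}_0 \subset \mathscr{I}$ agreeing with $\mathscr{I}$ modulo $\mu$-null sets, disintegrating over $\mathscr{I}_0$, setting $\sigma := (x \mapsto \mu_x)_*\mu$, and upgrading per-$g$ invariance of the fibres to $G$-invariance via Fubini over $m_G \otimes \mu$ plus the Steinhaus--Weil argument for the conull Borel stabilizer subgroup --- all of this is correct and standard.

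The genuine gap is exactly where you located it, and your proposed repair does not close it. First, the ``standard fact'' that the $G$-ergodic measures form a Borel subset of $\Prob(X)^G$ is, for a general lcsc group, usually obtained \emph{as a consequence} of the ergodic decomposition theorem (or by a nontrivial extreme-point/Choquet analysis), so invoking it here is circular as stated. Second, and decisively, the ``measurable selection of an invariant splitting $\eta \mapsto (\eta',\eta'')$'' has no justification: there is no standard Borel space of $G$-invariant Borel sets from which to select, and a witness $B$ to non-ergodicity of a single fibre $\mu_x$ may be globally $\mu$-null --- one can have $0 < \mu_x(B) < 1$ for the fixed $x$ while $\mu(B) = 0$, with $x$ lying in the $B$-dependent exceptional null set. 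Such a $B$ has no counterpart in $\mathscr{I}_0$, and the identification of $\mathscr{I}$ with $\mathscr{I}_0$ modulo $\mu$-null sets is silent about it; your per-$B$ observation that $\mu_x(B) \in \{0,1\}$ almost everywhere only controls $\mu$-non-negligible invariant sets, which is precisely why the uncountability problem does not reduce away. Moreover, for the ``refinement contradicts maximality of $\mathscr{I}$'' step one would need the selection $x \mapsto B_x$ to be $\mathscr{I}$-measurable (constant on components), so that $\{x : x \in B_x\}$ is genuinely $G$-invariant and lies in $\mathscr{I}$; nothing in the sketch produces this. (Your uniqueness paragraph quietly relies on the same unproved consistency statement, namely that for $\sigma'$-a.e.\ $\eta$ one has $\mu_x = \eta$ for $\eta$-a.e.\ $x$.) Two honest ways to finish: reduce to a countable dense subgroup $G_0 < G$ (continuity of $g \mapsto \mu(g.A \,\triangle\, A)$ shows the $G$- and $G_0$-invariant $\sigma$-algebras agree modulo $\mu$-null sets) and then invoke the Farrell--Varadarajan ergodic decomposition for countable Borel equivalence relations, where fibre-ergodicity is settled via the consistency property of conditional measures ($\mu_y = \mu_x$ for $\mu_x$-a.e.\ $y$, for $\mu$-a.e.\ $x$); or simply cite \cite{GS}, as the paper itself does.
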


If we now consider $F$ of the form $T(\rho \otimes f)$,  where $T$ is defined as in \eqref{def_Tmap},  $\rho$ is a compactly supported continuous function with $m_G(\rho) = 1$ and $f$ is a bounded Borel function on $Y$,  then Proposition \ref{Prop_Tmap} and Lemma \ref{Lemma_Ergodic} give the following corollary. 

\begin{corollary}
\label{Cor_ErgodicDecomposition}
For every $\mu \in \Prob(X)^G$,  there is a unique probability measure $\sigma$
on $\Prob(X)$,  which is supported on the set of $G$-ergodic Borel probability measures on $Y$,  such that
\[
\mu_Y(f) = \int_{\Prob(X)^{G}} \eta_Y(f) \,  d\sigma(\eta),
\]
for every bounded real-valued Borel function $f$ on $Y$.\qed
\end{corollary}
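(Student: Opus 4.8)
The plan is to derive the transverse ergodic decomposition by feeding a carefully chosen family of periodizations into the ergodic decomposition on $X$ furnished by Theorem \ref{Thm_ErgodicDecomposition}. Fix once and for all a non-negative $\rho \in C_c(G)$ with $m_G(\rho) = 1$, and for a bounded non-negative Borel function $f$ on $Y$ set
\[
F_f := T(\rho \otimes f), \qquad (\rho \otimes f)(g,y) = \rho(g)f(y).
\]
First I would verify that $F_f$ is an admissible test function for Theorem \ref{Thm_ErgodicDecomposition}, i.e.\ a bounded Borel function on $X$. Borel measurability is Lemma \ref{Lemma_T}~(i). For boundedness, observe that $F_f(x) = \sum_{g \in Y_x} \rho(g^{-1}) f(g.x)$ receives contributions only from $g \in Y_x \cap K$ with $K := (\supp \rho)^{-1}$ compact, so by Lemma \ref{Lemma_BasicProp}~(ii) at most $M_K$ summands are non-zero and hence $\|F_f\|_\infty \leq M_K \|\rho\|_\infty \|f\|_\infty < \infty$.

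Next I would evaluate both sides of the decomposition on $F_f$. Since $\mu$ and every $\eta \in \Prob(X)^G$ lie in $M(X;\cY_{\cG})^G$ (being $G$-invariant probability measures, hence finite, hence $\cY_{\cG}$-finite), Proposition \ref{Prop_Tmap} applies and gives, using $m_G(\rho)=1$,
\[
\mu(F_f) = (m_G \otimes \mu_Y)(\rho \otimes f) = m_G(\rho)\,\mu_Y(f) = \mu_Y(f),
\]
and likewise $\eta(F_f) = \eta_Y(f)$ for every $\eta \in \Prob(X)^G$. Substituting $F = F_f$ into the identity of Theorem \ref{Thm_ErgodicDecomposition}, and noting that $\eta \mapsto \eta_Y(f) = \eta(F_f)$ is Borel on $\Prob(X)$ by the very definition of $\mathscr{B}_{\Prob(X)}$, I obtain
\[
\mu_Y(f) = \int_{\Prob(X)^{G}} \eta_Y(f)\, d\sigma(\eta)
\]
for every bounded non-negative Borel $f$ on $Y$. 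The identity for arbitrary bounded real-valued $f$ then follows by writing $f = f^+ - f^-$ and using additivity of $T$ on non-negative functions (Lemma \ref{Lemma_T}~(iv)).

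It remains to record the support and uniqueness of $\sigma$, both of which I would simply inherit from Theorem \ref{Thm_ErgodicDecomposition}: I take $\sigma$ to be exactly the measure produced there, so it is the unique probability measure on $\Prob(X)$ concentrated on the $G$-ergodic measures with the $X$-averaging property, and by Lemma \ref{Lemma_Ergodic} the transverse measure $\eta_Y$ of each such $G$-ergodic $\eta$ is $R_Y$-ergodic, so the displayed identity is genuinely an ergodic decomposition of $\mu_Y$ into $R_Y$-ergodic transverse measures. The only delicate point in the whole argument is the boundedness of $F_f$: it is essential that the compact support of $\rho$, combined with the local finiteness estimate of Lemma \ref{Lemma_BasicProp}~(ii), controls the number of summands in the periodization, since $f$ itself need not be supported in any $Y_n$; were it not for this, the periodization of $\rho \otimes f$ could fail to be bounded and Theorem \ref{Thm_ErgodicDecomposition} would not apply to it.
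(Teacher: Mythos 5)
Your proposal is correct and is essentially the paper's own argument: the paper likewise obtains the corollary by applying Theorem \ref{Thm_ErgodicDecomposition} to test functions of the form $F = T(\rho \otimes f)$ with $\rho \in C_c(G)$, $m_G(\rho)=1$, evaluating both sides via Proposition \ref{Prop_Tmap} and invoking Lemma \ref{Lemma_Ergodic} for the ergodicity of the transverse measures. Your explicit boundedness check for $T(\rho\otimes f)$ (via Lemma \ref{Lemma_BasicProp}~(ii), in the spirit of Lemma \ref{Lemma_T}~(v)) is a welcome detail the paper leaves implicit.
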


\subsection{Change of cross-section}
We continue to assume that $G$ is unimodular. We observe that since $Y \subset X$ is a $U$-separated cross section, the translate $g.Y$ is a $U^g$-separated cross-section. The corresponding transverse measures with respect to $Y$ and $g.Y$ are related as follows:
\begin{lemma}
\label{Lemma_YgY}
Let $\mu \in M_{\textrm{fin}}(X)^G$.  Then,  for every $g \in G$,  we have $g_*\mu_Y = \mu_{g.Y}$.
\end{lemma}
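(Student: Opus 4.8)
The plan is to reduce everything to the uniqueness part of Proposition \ref{Prop_Tmap}. Both $g_*\mu_Y$ and $\mu_{g.Y}$ are finite Borel measures on the separated cross-section $g.Y$ (here I use that $\mu$, hence $\mu_Y$, is finite, so its push-forward is finite too). By Proposition \ref{Prop_Tmap} applied to the transverse triple $(X,a,g.Y)$, the transverse measure $\mu_{g.Y}$ is the \emph{unique} finite measure $\nu'$ on $g.Y$ satisfying
\[
\mu(T^{g.Y}F) = (m_G \otimes \nu')(F)
\]
for every bounded non-negative Borel function $F$ on $G \times (g.Y)$, where $T^{g.Y}F(x) = \sum_{k \in (g.Y)_x} F(k^{-1},k.x)$. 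It therefore suffices to check that $\nu' := g_*\mu_Y$ satisfies this defining identity.

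The first step is to rewrite $T^{g.Y}F$ in terms of the $Y$-periodization. A direct computation gives $(g.Y)_x = gY_x$, since $k.x \in g.Y \iff g^{-1}k \in Y_x$; writing $k = gh$ with $h \in Y_x$ then yields
\[
T^{g.Y}F(x) = \sum_{h \in Y_x} F(h^{-1}g^{-1}, gh.x).
\]
I would now introduce the bounded non-negative Borel function $f$ on $G \times Y$ defined by $f(s,y) := F(sg^{-1}, g.y)$, so that $f(h^{-1},h.x) = F(h^{-1}g^{-1}, gh.x)$ and hence $T^Y f = T^{g.Y}F$ pointwise on $X$, where $T^Y$ is the $Y$-periodization of Proposition \ref{Prop_Tmap}. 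Applying the defining relation of $\mu_Y$ to $f$ gives $\mu(T^{g.Y}F) = \mu(T^Y f) = (m_G \otimes \mu_Y)(f)$.

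It remains to identify $(m_G \otimes \mu_Y)(f)$ with $(m_G \otimes g_*\mu_Y)(F)$, and this is the only place the hypothesis enters. By Fubini (everything is non-negative) and the definition of $f$,
\[
(m_G \otimes \mu_Y)(f) = \int_G \int_Y F(sg^{-1}, g.y)\, d\mu_Y(y)\, dm_G(s).
\]
Since $G$ is unimodular, $m_G$ is right-invariant, so the inner substitution $t = sg^{-1}$ leaves the outer integral unchanged and produces $\int_G \int_Y F(t,g.y)\, d\mu_Y(y)\, dm_G(t)$; using $\int_Y F(t,g.y)\, d\mu_Y(y) = \int_{g.Y} F(t,z)\, d(g_*\mu_Y)(z)$ this equals $(m_G \otimes g_*\mu_Y)(F)$. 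Combining the displays yields $\mu(T^{g.Y}F) = (m_G \otimes g_*\mu_Y)(F)$, and uniqueness in Proposition \ref{Prop_Tmap} forces $g_*\mu_Y = \mu_{g.Y}$. The proof is short; the only genuinely delicate point is the bookkeeping in the first variable, namely tracking the twist $F(h^{-1}g^{-1},\,\cdot\,)$ coming from $(g.Y)_x = gY_x$ and choosing $f(s,y) = F(sg^{-1}, g.y)$ to match it, after which the leftover right-translation by $g^{-1}$ is precisely what unimodularity absorbs (a nonunimodular $G$ would contribute a modular factor $\Delta(g)$, which is why the standing unimodularity assumption of this subsection is essential).
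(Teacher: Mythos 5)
Your proof is correct, and it reaches the conclusion by a genuinely different mechanism than the paper's. You characterize $\mu_{g.Y}$ through the full periodization identity of Proposition \ref{Prop_Tmap} applied to the triple $(X,a,g.Y)$, check that $g_*\mu_Y$ satisfies the same identity via the computation $(g.Y)_x = gY_x$, the substitution $f(s,y) = F(sg^{-1},g.y)$, and right-invariance of $m_G$, and then invoke the uniqueness clause of that proposition; your bookkeeping of the twist in the first variable is accurate, and your closing remark about the modular factor correctly locates where unimodularity enters. The paper instead argues locally on tubes: for a Borel set $B \subset g.Y$ and a small identity neighbourhood $V$ with $V^{-1}V \subset U$, it evaluates $\mu$ on $Vg^{-1}.B$ in two ways, using $G$-invariance of $\mu$ to rewrite $\mu(Vg^{-1}.B) = \mu(gVg^{-1}.B)$ and the $U^g$-separatedness of $g.Y$ (via the injective-set formula, the ``in particular'' clause of Proposition \ref{Prop_Tmap}) to get $\mu(gVg^{-1}.B) = m_G(gVg^{-1})\,\mu_{g.Y}(B)$; unimodularity then enters as conjugation-invariance $m_G(gVg^{-1}) = m_G(V)$ rather than as right-invariance --- an equivalent use, and in the non-unimodular case the paper's computation would leave exactly the ratio $m_G(gVg^{-1})/m_G(V)$, matching your $\Delta(g)$ observation. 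What your route buys is that all measure-theoretic content is quarantined inside Proposition \ref{Prop_Tmap}, making the verification purely formal; what the paper's route buys is brevity (two displayed lines) and the fact that it needs only the injective-set consequence, not the uniqueness statement for general $F$. Both arguments extend with trivial changes to $\cY_{\cG}$-finite measures, as the paper notes in the remark following the lemma. One point you use silently --- that $g.Y$ is a separated ($U^g$-separated) cross-section, without which Proposition \ref{Prop_Tmap} cannot be applied to $(X,a,g.Y)$ at all --- is established in the paper's preamble to this subsection, so no gap results, but it deserves a word in a self-contained write-up.
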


\begin{remark}
The lemma,  with virtually the same proof,  also holds for $\cY_{\cG}$-finite Borel measures,  for a Borel exhaustion $\cY$ of $Y$ and a fundamental exhaustion $\cG$ of $G$ by compact sets. 
\end{remark}

\begin{proof}
Fix $g \in G$ and let $V$ be an identity neighbourhood in $G$ such that $V^{-1}V \subset U$.  Suppose $B \subset g.Y$ is a Borel set.  Then,  $g^{-1}.B$ is a Borel set in $Y$,  and 
\[
g_*\mu_Y(B) = \mu_Y(g^{-1}.B) = \frac{\mu(Vg^{-1}.B)}{m_G(V)}.
\]
Since $\mu$ is $G$-invariant and $V^g (V^g)^{-1} \subset U^g$ and $g.Y$ is $U^g$-separated,   we see that
\[
\frac{\mu(Vg^{-1}.B)}{m_G(V)} = \frac{\mu(gVg^{-1}.B)}{m_G(V)} = \frac{m_G(gVg^{-1})}{m_G(V)} \,  \mu_{g.Y}(B).
\]
Since $G$ is unimodular and $B \subset g.Y$ is arbitrary,  we conclude that $g_*\mu_Y = \mu_{g.Y}$.
\end{proof}

\section{Transversal recurrence}\label{SecTransversalRecurrence}

In this section we establish a transverse recurrence theorem which corresponds to the case $r=1$ of Theorem \ref{MTR3} from the introduction. The versions for $r>1$ will later be established by applying this theorem to a suitable intersection space. 

\subsection{A transverse version of Poincar\'e's Recurrence Theorem}

Let $G$ be a  \emph{non-compact} lcsc group with left Haar measure $m_G$.  Let
$X$ be a Polish space,  and denote by $\mathscr{B}_X$ the $\sigma$-algebra on $X$ generated by the open
sets.  Let $a : G \times X \ra X$ be a Borel measurable action,  and suppose $Y \subset X$ is a $U$-separated cross-section for some identity neighbourhood $U$ in $G$.  The aim of this section is to prove the following transversal version of Poincar\'e's classical recurrence theorem.  
\begin{theorem}\label{Thm_Poincare}
For every $\mu \in M_{\textrm{fin}}(X)^G$,  there is a $\mu_Y$-conull Borel set $Y' \subset Y$ such that for every $y \in Y'$,  there is a sequence $(g_n) \in Y_y$ such
that $g_n.y \ra y$ and $g_n \ra \infty$ as $n \ra \infty$.
\end{theorem}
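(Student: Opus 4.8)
The plan is to show that the set of \emph{non-recurrent} points in $Y$ is a $\mu_Y$-null Borel set, by exhibiting it as a countable union of null sets indexed by a basis of $X$ and the exhaustion $\cG = \{K_n\}$. First I would fix a compatible metric $d$ on $X$ and a countable basis $\{O_m\}$ for its topology, and for a basic open set $O$ and $n \in \bN$ define
\[
B_{O,n} := \big\{ y \in Y \cap O \, : \, \{ g \in Y_y \, : \, g.y \in O\} \subset K_n \big\}.
\]
A point $y$ fails to be recurrent exactly when, for some basic $O \ni y$ and some $n$, every return time $g \in Y_y$ with $g.y \in O$ already lies in $K_n$; so the non-recurrent set is $B := \bigcup_{O,n} B_{O,n}$. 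Conversely, if $y \notin B$, then choosing basic neighbourhoods $O^{(j)} \ni y$ with $O^{(j)} \subset \{ x : d(x,y) < 1/j\}$ and applying $y \notin B_{O^{(j)},j}$ produces $g_j \in Y_y$ with $g_j.y \in O^{(j)}$ and $g_j \notin K_j$; then $g_j.y \to y$ and $g_j \to \infty$, so $Y' := Y \setminus B$ will be the desired conull set once $\mu_Y(B) = 0$ is established.

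The main step will be to prove $\mu_Y(B_{O,n}) = 0$, so suppose $A := B_{O,n}$ had $\mu_Y(A) > 0$. I would fix a precompact symmetric identity neighbourhood $V$ with $V^{-1}V \subset U$, so that $V \times A$ is an injective Borel set by Lemma \ref{Lemma_BasicProp} (i), and set $W := a(V \times A) = V.A$, which is Borel by Lemma \ref{Lemma_BasicProp} (iv). The measure correspondence of Theorem \ref{GMCT} (iii) (equivalently Proposition \ref{Prop_Tmap}) gives
\[
\mu(W) = (m_G \otimes \mu_Y)(V \times A) = m_G(V)\, \mu_Y(A) > 0.
\]
The crucial observation is that $W$ is \emph{compactly wandering}. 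Indeed, if $w \in W$ and $h.w \in W$, writing $w = v.y$ and $h.w = v'.y'$ with $(v,y),(v',y') \in V \times A$ (unique by injectivity) yields $(v')^{-1}hv.y = y' \in A \subset O$ with $(v')^{-1}hv \in Y_y$; the defining property of $A$ then forces $(v')^{-1}hv \in K_n$, whence $h \in V K_n V^{-1}$. Therefore $\{ h \in G : h.W \cap W \neq \emptyset\} \subset VK_nV^{-1} =: K'$, a relatively compact set.

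It remains to contradict $\mu(W) > 0$ using non-compactness of $G$ and finiteness of the $G$-invariant measure $\mu$. Enlarging $K'$ I may assume it is compact, symmetric and contains $e$. Since $G$ is non-compact and $K'$ compact, a greedy choice produces an infinite sequence $(g_k)$ with $g_i^{-1}g_j \notin K'$ for $i \neq j$: given $g_1,\dots,g_k$, the set $\bigcup_{i \leq k} g_i K'$ is compact, so some $g_{k+1}$ lies outside it. For $i \neq j$ the wandering property gives $g_i.W \cap g_j.W = \emptyset$, since otherwise $g_i^{-1}g_j \in K'$. By $G$-invariance $\mu(g_k.W) = \mu(W)$ for all $k$, so $\sum_k \mu(g_k.W) \leq \mu(X) < \infty$ forces $\mu(W) = 0$, a contradiction; hence $\mu_Y(B_{O,n}) = 0$ and $\mu_Y(B) = 0$. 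Two routine points remain: each $B_{O,n}$ is Borel because $S := \{(g,y) \in G \times Y : g.y \in O,\ g \notin K_n\}$ is Borel with countable $y$-fibres (as $Y_y$ is countable by Lemma \ref{Lemma_BasicProp} (iii)), so its projection to $Y$ is Borel by Lemma \ref{Lemma_Borel} (iii) and $B_{O,n} = (Y \cap O)\setminus \pi_Y(S)$; and $\mu_Y$ is finite by Remark \ref{Remark_muYfinite}. \emph{The hard part} is the middle step: the whole transverse content lies in converting ``all returns of $y$ to $O$ are bounded by $K_n$'' into compact wandering of the lifted set $W = V.A$, after which the argument is the classical Poincar\'e escape phenomenon for finite invariant measures on a non-compact group.
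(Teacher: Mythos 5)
Your proposal is correct, but it takes a genuinely different route from the paper's. The paper first proves a classical Poincar\'e lemma (sets of the form $\Theta\Theta^{-1}\setminus\{e\}$ are Poincar\'e, hence for every compact $K_n$ there is a countable Poincar\'e set $\Xi_n$ disjoint from $K_n$), applies it in $X$ to the thickenings $V.B$ of basic sets $B$ meeting $\supp(\mu_Y)$ to obtain a $\mu$-conull set $X'$ of points returning to each $V.B_n$ along $\Xi_n$, and then transfers genericity from $X$ down to $Y$ through the projection $\pr_Y$ of \eqref{def_prY} --- a step (Lemma \ref{Lemma_prYX}) that requires analytic sets and measurability with respect to completions. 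You instead parametrize the failure of transverse recurrence directly inside $Y$ by the Borel sets $B_{O,n}$, lift a putative positive-measure piece $A = B_{O,n}$ to the injective thickening $W = V.A$ via the measure correspondence (Theorem \ref{GMCT}(iii), resp.\ Proposition \ref{Prop_Tmap}), and observe that the defining property of $B_{O,n}$ forces $\{h \in G : h.W \cap W \neq \emptyset\} \subset VK_nV^{-1}$; the greedy disjoint-translates argument --- the same finite-measure pigeonhole that underlies the paper's Poincar\'e lemma --- then kills $\mu(W)$. Your verification of the wandering property is sound: from $w = v.y$ and $h.w = v'.y'$ with $y,y' \in A \subset Y \cap O$ one gets $(v')^{-1}hv \in Y_y$ with $(v')^{-1}hv.y = y' \in O$, so $(v')^{-1}hv \in K_n$, and this is exactly where the transverse content enters. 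What your route buys: it bypasses Lemma \ref{Lemma_prYX} entirely (no analytic sets, no completion arguments, no recourse to $\supp(\mu_Y)$) and produces an explicitly defined Borel conull set $Y'$; the only descriptive-set-theoretic input is the countable-fibre projection Lemma \ref{Lemma_Borel}(iii), which the paper uses liberally anyway. What the paper's route buys: the Poincar\'e-set formalism isolates a reusable recurrence principle (one sequence $(\Xi_n)$ serves all sets simultaneously), and Lemma \ref{Lemma_prYX} is of independent interest --- the paper notes it persists for $\sigma$-finite $\mu$.

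One small slip, below the level of a genuine gap: as written, $S = \{(g,y) \in G \times Y : g.y \in O,\ g \notin K_n\}$ has uncountable $y$-fibres (the condition $g.y \in O$ is typically open in $g$). You must also require $g.y \in Y$, i.e.\ $g \in Y_y$, as both your parenthetical appeal to Lemma \ref{Lemma_BasicProp}(iii) and the identity $B_{O,n} = (Y \cap O) \setminus \pi_Y(S)$ make clear you intended. With $S := \{(g,y) \in G \times Y : g.y \in Y \cap O,\ g \notin K_n\}$ the fibres lie in the countable sets $Y_y$ and Lemma \ref{Lemma_Borel}(iii) applies as you claim. Note also that your final extraction of $g_j \to \infty$ from $g_j \notin K_j$ silently uses that $\cG$ is a \emph{fundamental} exhaustion (every compact set is contained in some $K_n$), which is how the paper fixed $\cG$, so this is fine.
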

In other words, for a generic point in the transversal (with respect to the transverse measure) we can find an infinite number of return times to an arbitrary small neighbourhood in $Y$ (rather than in $X$). The assumption that $X$ is Polish is used in the proof to construct an exhaustion of $X$ by compact sets and to guarantee that $X$ is second countable.

\subsection{Proof of Theorem \ref{Thm_Poincare}}

We will prove Theorem \ref{Thm_Poincare} by reducing it to the non-transversal version of Poincar\'e's recurrence theorem. Throughout we fix $\mu \in M_{\textrm{fin}}(X)^G$. A subset $\Xi \subset G \setminus \{e\}$ is \emph{Poincar\'e} (with respect to the triple $(X,a,\mu)$) if for every Borel set $A \subset X$ with positive $\mu$-measure,  there exists $\xi \in \Xi$ such that $\mu(A \cap \xi^{-1}A) > 0$.  We can then state Poincar\'e's recurrence theorem as follows:
\begin{lemma}[Poincar\'e's Recurrence Theorem]
Let $\Theta \subset G$ be an infinite set.  Then $\Xi := \Theta \,  \Theta^{-1} \setminus \{e\}$ is a Poincar\'e set.  
\end{lemma}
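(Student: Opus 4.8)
The plan is to reduce the statement to the classical pigeonhole argument for finite invariant measures. Fix a Borel set $A \subset X$ with $\mu(A) > 0$ and choose a countable sequence of \emph{distinct} elements $\theta_1, \theta_2, \ldots \in \Theta$ (possible since $\Theta$ is infinite). Consider the translates $A_n := \theta_n^{-1}.A$. Since $\mu$ is $G$-invariant and finite, each satisfies $\mu(A_n) = \mu(A) > 0$.

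First I would argue that these sets cannot be pairwise essentially disjoint. Indeed, if $\mu(A_m \cap A_n) = 0$ for all $m \neq n$, then for every $N$ we would have $\mu\big(\bigcup_{n=1}^N A_n\big) = \sum_{n=1}^N \mu(A_n) = N\,\mu(A)$, which tends to $\infty$ as $N \to \infty$; this contradicts $\mu(X) < \infty$. Hence there exist indices $m \neq n$ with $\mu(A_m \cap A_n) > 0$.

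Next I would translate this intersection back into the required form. Applying the measure-preserving map $x \mapsto \theta_m.x$ and using $G$-invariance of $\mu$ gives
\[
\mu\big(A \cap \theta_m\theta_n^{-1}.A\big) = \mu\big(\theta_m.(A_m \cap A_n)\big) = \mu(A_m \cap A_n) > 0,
\]
where the first equality uses $\theta_m.A_m = A$ and $\theta_m.A_n = (\theta_m\theta_n^{-1}).A$. Setting $\xi := \theta_n\theta_m^{-1}$, so that $\xi^{-1} = \theta_m\theta_n^{-1}$, we obtain $\mu(A \cap \xi^{-1}.A) > 0$. By construction $\xi \in \Theta\Theta^{-1}$, and since $\theta_m \neq \theta_n$ we have $\xi \neq e$, so $\xi \in \Xi$. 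As $A$ was an arbitrary Borel set of positive measure, $\Xi$ is a Poincar\'e set.

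The argument is entirely elementary; the only points requiring (minor) care are ensuring that the chosen elements of $\Theta$ are distinct, so that the resulting $\xi$ is genuinely non-trivial, and phrasing the pigeonhole step via essential rather than literal disjointness, which is what finiteness of $\mu$ actually controls. I therefore expect no substantial obstacle in this lemma: the genuinely new content of Theorem \ref{Thm_Poincare} lies not here but in transferring recurrence from $X$ to the transversal $Y$ through the transverse measure $\mu_Y$.
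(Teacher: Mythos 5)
Your proof is correct and is essentially the paper's argument: both rest on the observation that the translates $\theta_n^{-1}.A$ cannot be pairwise essentially disjoint under a finite invariant measure, the paper phrasing this as a contradiction while you argue directly and then translate the overlapping pair back by $\theta_m$ to land in $\Xi = \Theta\Theta^{-1}\setminus\{e\}$. The points you flag (distinctness of the $\theta_n$, essential rather than literal disjointness) are exactly the minor care the paper's proof also takes implicitly.
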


\begin{proof}
Assume,  for the 
sake of contradiction,  that there exists a Borel set $A \subset X$ with positive $\mu$-measure such that 
\[
\mu(A \cap \theta_i \theta_j^{-1} .A) = \mu(\theta_i^{-1}.A \cap \theta_j^{-1}.A) = 0, \quad \textrm{for all $i \neq j$},
\]
where $\theta_1, \theta_2,\ldots$ is an infinite sequence of distinct elements in 
$\Theta$.  Then the sets $\theta_1^{-1}.A,  \theta_2^{-1}.A,\ldots$ are disjoint modulo 
$\mu$-null sets,  and thus
\[
\infty = \sum_i \mu(A) = \sum_i \mu(\theta_i^{-1}.A) = \mu\Big(\bigcup_i \theta_i^{-1}.A\Big) \leq 1.\qedhere
\]
\end{proof}

\begin{corollary}
\label{Cor_Poincare}
For every compact set $K$ in $G$,  there is a countable Poincar\'e set $\Xi$ such that $\Xi \cap K = \emptyset$.
\end{corollary}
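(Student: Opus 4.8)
The plan is to produce the required Poincar\'e set in the form $\Xi = \Theta\Theta^{-1}\setminus\{e\}$ for a suitable countably infinite set $\Theta \subset G$, so that the preceding lemma (Poincar\'e's Recurrence Theorem) immediately guarantees that $\Xi$ is Poincar\'e. The whole task thus reduces to the purely group-theoretic problem of constructing an infinite $\Theta = \{\theta_n\}$ whose non-trivial difference set $\Theta\Theta^{-1}\setminus\{e\}$ avoids $K$; the measure-theoretic content is carried entirely by that lemma.

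First I would replace $K$ by the symmetric compact set $K' := K \cup K^{-1} \cup \{e\}$, which is compact since inversion is a homeomorphism of $G$; it clearly suffices to arrange $\Xi \cap K' = \emptyset$. I then build $\theta_1, \theta_2, \dots$ inductively. Having chosen $\theta_1, \dots, \theta_{n-1}$, the set $C_n := \bigcup_{i<n} K'\theta_i$ is a finite union of right-translates of the compact set $K'$, hence compact. Since $G$ is non-compact, $G \setminus C_n \neq \emptyset$, and I pick any $\theta_n \in G \setminus C_n$.

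It then remains to verify the two desired properties. Because $e \in K'$ we have $\theta_i \in K'\theta_i$, so $\theta_n \notin C_n$ forces $\theta_n \neq \theta_i$ for all $i < n$; hence $\Theta := \{\theta_n : n \geq 1\}$ is genuinely infinite and $\Xi := \Theta\Theta^{-1}\setminus\{e\}$ is countable. For disjointness, any element of $\Xi$ has the form $\theta_i\theta_j^{-1}$ with $i \neq j$: if $i > j$ then $\theta_i \notin K'\theta_j$ gives $\theta_i\theta_j^{-1} \notin K'$ directly, while if $i < j$ then $\theta_j\theta_i^{-1}\notin K'$ together with the symmetry $K' = (K')^{-1}$ yields $\theta_i\theta_j^{-1} = (\theta_j\theta_i^{-1})^{-1} \notin K'$. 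Thus $\Xi \cap K' = \emptyset$, and a fortiori $\Xi \cap K = \emptyset$.

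I do not expect a serious obstacle here: the only substantive input beyond bookkeeping is the non-compactness of $G$, which is precisely what makes the compact obstruction set $C_n$ fail to exhaust $G$ at each stage. The one mild point to keep straight is the interplay between the two inequalities $\theta_i\theta_j^{-1}\notin K$ and $\theta_j\theta_i^{-1}\notin K$, which is exactly why I symmetrize $K$ at the outset so that a single avoidance condition at each step handles both directions.
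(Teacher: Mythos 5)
Your proof is correct and follows essentially the same route as the paper: both produce the Poincar\'e set in the form $\Xi = \Theta\Theta^{-1}\setminus\{e\}$ via the preceding lemma, using non-compactness of $G$ to extract an infinite sequence $\Theta$ whose non-trivial difference set misses $K$ (the paper phrases this as choosing $(\theta_i)$ with $\theta_i^{-1}K \cap \theta_j^{-1}K = \emptyset$ for $i \neq j$, which amounts to $\theta_i\theta_j^{-1} \notin KK^{-1}$). Your explicit inductive construction with the symmetrized set $K' = K \cup K^{-1} \cup \{e\}$ is merely a more detailed rendering of the same idea, and in fact tidies up the small implicit point that the paper's disjointness condition yields $\Xi \cap K = \emptyset$ only after enlarging $K$ to contain the identity.
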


\begin{proof}
Fix a compact set $K$ in $G$.  Since $G$ is non-compact,  there is an infinite sequence $\Theta = (\theta_i)$ in $G$ such that $\theta_i^{-1}K \cap \theta_j^{-1}K = \emptyset$ for all distinct $\theta_i, \theta_j \in \Theta$.  By
the previous lemma,  $\Xi = \Theta \Theta^{-1} \setminus \{e\}$ is a (countable) Poincar\'e set,  and clearly $\Xi \cap K = \emptyset$.
\end{proof}

The proof of Theorem \ref{Thm_Poincare} will be based on two lemmas. If $\Xi$ is a countable Poincar\'e set in $G$, and $A \subset X$ is a Borel set,  we denote by
\[
A_{\Xi} := \big\{ x \in A \,  : \,  \xi.x \in A,  \enskip \textrm{for some $\xi \in \Xi$}\big\}
\]
the set of all points in $A$ whose return time sets to $A$ contains $\Xi$. Since $A_{\Xi} = \bigcup_{\xi \in \Xi} A \cap \xi^{-1}.A$,  and $\Xi$ is countable,  we
see that $A_{\Xi}$ is a Borel set in $X$. 

\begin{lemma}
\label{Lemma_AXi}
If $A \subset X$ is a Borel set with $\mu(A)>0$,  then $A_{\Xi}$ is a $\mu$-conull Borel subset of $A$.
\end{lemma}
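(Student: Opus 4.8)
The plan is to exploit the Poincar\'e property of $\Xi$ by showing that the complementary set is \emph{wandering}. Set $W := A \setminus A_{\Xi}$; this is a Borel subset of $A$, since $A_{\Xi} = \bigcup_{\xi \in \Xi} A \cap \xi^{-1}.A$ is a countable union of Borel sets and hence Borel. The goal is to prove $\mu(W) = 0$.

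First I would unwind the definitions to observe that $W$ consists exactly of those $x \in A$ for which $\xi.x \notin A$ for every $\xi \in \Xi$. The key step is then to check that for every $\xi \in \Xi$ one has $W \cap \xi^{-1}.W = \emptyset$. Indeed, if $x$ belonged to this intersection, then $x \in W \subset A$ and $\xi.x \in W \subset A$, so that $\xi.x \in A$ for some $\xi \in \Xi$ would force $x \in A_{\Xi}$, contradicting $x \in W = A \setminus A_{\Xi}$. Hence $\mu(W \cap \xi^{-1}.W) = 0$ for all $\xi \in \Xi$.

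Finally I would invoke the assumption that $\Xi$ is a Poincar\'e set. If we had $\mu(W) > 0$, then by definition there would exist some $\xi \in \Xi$ with $\mu(W \cap \xi^{-1}.W) > 0$, directly contradicting the previous paragraph. Therefore $\mu(W) = 0$, which is precisely the assertion that $A_{\Xi}$ is $\mu$-conull in $A$.

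There is no serious obstacle here: the argument is just the standard Poincar\'e recurrence dichotomy applied to the whole return-time set $\Xi$ rather than to a single transformation. The only point requiring care is the bookkeeping in the disjointness claim $W \cap \xi^{-1}.W = \emptyset$, where one must keep track of the direction of the action (using $\xi.x \in A \iff x \in \xi^{-1}.A$) so that the definition of $A_{\Xi}$ is applied in the correct variable.
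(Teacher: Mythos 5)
Your proof is correct and takes essentially the same approach as the paper: both set $W = A \setminus A_{\Xi}$, note that $W \cap \xi^{-1}.W$ must be empty for every $\xi \in \Xi$ (since $x \in W$ and $\xi.x \in W \subset A$ would force $x \in A_{\Xi}$), and conclude from the Poincar\'e property that $\mu(W) = 0$. The only difference is cosmetic: the paper runs it as a single contradiction starting from $\mu(W) > 0$, while you first establish the disjointness for all $\xi$ and then invoke the Poincar\'e property.
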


\begin{proof}
Set $B := A \setminus A_{\Xi}$,  and assume, for the sake of contradiction, that $\mu(B) > 0$.  Since $\Xi$ is a Poincar\'e set,  this implies that there exists $\xi \in \Xi$
such that $\mu(B \cap \xi^{-1}B) > 0$.  In particular,  $B \cap \xi^{-1}.B$ is non-empty, which contradicts the fact that $B \subset A$ and $B \cap A_{\Xi} = \emptyset$.   
\end{proof}

For the second lemma we consider a Borel set $V\subset G$ such that $V^{-1}V \subset U$. By Lemma \ref{Lemma_BasicProp} (i),  the composition
\begin{equation}
\label{def_prY}
\pr_Y : V.Y \ra V \times Y \ra Y, \enskip v.y \mapsto (v,y) \mapsto y
\end{equation}
is well defined and Borel.  
\begin{lemma}
\label{Lemma_prYX}
Let $X' \subset X$ be a $\mu$-conull Borel set.  Then $\pr_Y(X' \cap V.Y)$ contains a $\mu_Y$-conull Borel set.
\end{lemma}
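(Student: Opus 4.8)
The plan is to reduce the statement to a Fubini computation for the product measure $m_G \otimes \mu_Y$, exploiting property (iii) of Theorem \ref{GMCT}. The one genuine subtlety is that $\pr_Y(X' \cap V.Y)$ is a priori only an analytic set (a Borel image under a map whose fibres $V.y$ may be uncountable), so instead of trying to measure it directly I will exhibit an explicit Borel, $\mu_Y$-conull subset of it. Throughout I use that $\mu_Y = \res{G}{X}{Y}(\mu)$ is finite since $\mu$ is finite (Remark \ref{Remark_muYfinite}), and that $m_G(V) > 0$, which holds in the relevant case where $V$ is an open identity neighbourhood; note that for a $\mu_G$-null $V$ (e.g. $V = \emptyset$) the assertion fails, so some such hypothesis is necessary.

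First I set $C := \{(v,y) \in V \times Y : v.y \notin X'\}$. Since $a|_{G \times Y}$ is Borel, $C$ is a Borel subset of $V \times Y$; and since $V^{-1}V \subset U$, Lemma \ref{Lemma_BasicProp} (i) tells us that $V \times Y$ is an injective set, so $C$ is an injective Borel set. Its image is $a(C) = V.Y \setminus X' \subseteq X \setminus X'$, which is $\mu$-null because $X'$ is $\mu$-conull. Applying Theorem \ref{GMCT} (iii) therefore gives
\[
(m_G \otimes \mu_Y)(C) = \mu(a(C)) \le \mu(X \setminus X') = 0.
\]

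Next I apply Tonelli's theorem. Writing $C_y = \{v \in G : (v,y) \in C\}$, the function $y \mapsto m_G(C_y)$ is Borel and
\[
\int_Y m_G(C_y)\, d\mu_Y(y) = (m_G \otimes \mu_Y)(C) = 0,
\]
so the Borel set $N := \{y \in Y : m_G(C_y) > 0\}$ is $\mu_Y$-null. I then put $Y' := Y \setminus N$, a $\mu_Y$-conull Borel subset of $Y$, and check that $Y' \subseteq \pr_Y(X' \cap V.Y)$. Indeed, for $y \in Y'$ we have $C_y \subseteq V$ with $m_G(C_y) = 0 < m_G(V)$, hence $C_y \subsetneq V$, and we may pick $v \in V \setminus C_y$. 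By definition of $C$ this means $v.y \in X'$, so $v.y \in X' \cap V.Y$; and by injectivity of $a|_{V \times Y}$ the unique representation of $v.y$ in $V.Y$ is $v.y$ itself, whence $\pr_Y(v.y) = y$. Thus $y \in \pr_Y(X' \cap V.Y)$, as required.

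The main (and essentially only) obstacle is the measurability issue flagged at the outset: one cannot legitimately speak of ``$\mu_Y\big(\pr_Y(X' \cap V.Y)\big)$'' since the image need only be analytic. The device of extracting the Borel set $Y' = Y \setminus N$ from the slice-measure function $y \mapsto m_G(C_y)$ sidesteps this entirely, and the remaining ingredients are just the transverse-measure correspondence (Theorem \ref{GMCT} (iii)), the injectivity of $V \times Y$ (Lemma \ref{Lemma_BasicProp} (i)), and Tonelli's theorem for the finite measure $\mu_Y$ against $m_G$.
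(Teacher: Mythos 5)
Your proof is correct, and it takes a genuinely different route from the paper's. The paper measures the image $\pr_Y(X' \cap V.Y)$ directly: it observes that this set and its $V$-translate are analytic (\cite[Proposition 14.4]{Kechris}), hence universally measurable (\cite[Theorem 21.10]{Kechris}), computes $\mu_Y$ of the image via the scaling identity $\mu(V.B) = m_G(V)\,\mu_Y(B)$ applied in the completions, concludes it has full measure, and only then extracts a Borel conull subset from completion-measurability. You sidestep the descriptive-set-theoretic input entirely: by passing to the Borel set $C = \{(v,y) \in V \times Y : v.y \notin X'\}$, which is injective as a subset of $V \times Y$ (Lemma \ref{Lemma_BasicProp} (i)) with $a(C) = V.Y \setminus X'$ $\mu$-null, Theorem \ref{GMCT} (iii) gives $(m_G \otimes \mu_Y)(C) = 0$, and Tonelli then yields the explicit Borel conull set $Y' = \{y : m_G(C_y) = 0\}$ inside the image. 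Your argument stays within the paper's own transverse-measure toolkit, produces the Borel conull subset constructively rather than by appeal to universal measurability, and extends verbatim to $\sigma$-finite $\mu$ (where $\mu_Y$ is $\cY$-finite, hence $\sigma$-finite, so Tonelli still applies) -- a generalization the paper notes separately after its proof. You are also right to flag $m_G(V) > 0$ as an implicit hypothesis: the paper's proof uses it too (it divides by $m_G(V)$), and it is harmless since in the application (the proof of Theorem \ref{Thm_Poincare}) $V$ is an open identity neighbourhood; your counterexample observation (e.g.\ $V = \{e\}$ in a non-discrete group, where $\pr_Y(X' \cap Y)$ can even be empty for conull $X'$) is accurate. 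The only blemishes are cosmetic: the stray ``$\mu_G$'' should read $m_G$, and you could cite the paper's own use of Fubini (as in the proof of Corollary \ref{cor_measC1C2}) for the Borel measurability of $y \mapsto m_G(C_y)$.
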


\begin{proof}
Since both $a$ and $\pr_Y$ are Borel,  \cite[Proposition 14.4]{Kechris} tells us that 
$Y' := \pr_Y(X' \cap V.Y)$ and $V.Y'$ are analytic sets in $Y$ and $X$ respectively. Hence,  by \cite[Theorem 21.10]{Kechris},  $Y'$ and $V.Y'$ are measurable with respect to the $\mu_Y$-completion of $\mathscr{B}_Y$ and the $\mu$-completion of $\mathscr{B}_X$ respectively (we abuse notation and denote by $\mu_Y$ and $\mu$ the unique extensions of $\mu_Y$ and $\mu$ to the respective completions).  Note that $V.Y' \supset X' \cap V.Y$.  Since $X'$ is $\mu$-conull we have
\[
\mu(X' \cap V.Y) = \mu(V.Y) = m_G(V) \mu_Y(Y),
\]
and thus 
\[
\mu_Y(Y') = \frac{\mu(V.Y')}{m_G(V)} \geq 
\frac{\mu(X' \cap V.Y)}{m_G(V)} = \mu_Y(Y).
\]
Since $Y'$ is measurable with respect to the $\mu_Y$-completion of $\mathscr{B}_Y$,  we can find a Borel set $Y'' \subset Y'$ which is still $\mu_Y$-conull.
\end{proof}
The proof still applies if $\mu$ is merely assumed to be $\sigma$-finite, but we will not need this fact here.

\begin{proof}[Proof of Theorem \ref{Thm_Poincare}]

We fix an identity neighbourhood $V$ in $G$ such that $V^{-1}V \subset U$,  and an exhaustion 
$\{ K_n \,  : \,  n \geq 1 \}$ of $G$ by compact sets.  By Corollary \ref{Cor_Poincare},
there exists a sequence $(\Xi_n)$ of countable Poincar\'e sets in $G$ such that $\Xi_n \cap K_n = \emptyset$ for all $n$.  Since $X$ is Polish,  it is second countable.  Fix a countable basis for the topology on $X$ and denote by $\mathfrak{B} := \{B\}$ the 
collection of (non-empty) intersections of the basis elements with $\supp(\mu_Y) \subset Y$.  Define
\[
\widetilde{B} := V.B \subset X,  \quad \textrm{for $B \in \mathfrak{B}$}.
\]
Since $\mu_Y(B) > 0$,  we have $\mu(\widetilde{B}) > 0$,  for every $B \in \mathfrak{B}$.  Define
\[
N_{B,n} := \widetilde{B} \setminus \widetilde{B}_{\Xi_n} \qand X' := \bigcup_{B \in \mathfrak{B}} \bigcup_{n = 1}^\infty X \setminus N_{B,n}.
\]
By Lemma \ref{Lemma_AXi} we have  $\mu(N_{B,n}) = 0$ for all $B \in \mathfrak{B}$ and $n \in \mathbb N$.  Since $\mathfrak{B}$ is countable,  $X'$ is thus a $\mu$-conull Borel set.  By Lemma \ref{Lemma_prYX} the set $\pr_Y(X' \cap V.Y)$ contains a $\mu_Y$-conull Borel subset 
\[
Y' \subset \pr_Y(X' \cap V.Y) \subset Y,
\]
We claim that $Y'$ satisfies the conclusions of Theorem \ref{Thm_Poincare}.  To see this,  pick 
\[
y \in Y',  \quad v \in V \qand x = v.y \in X' \cap V.Y,
\]
and a sequence $(B_n)$ in $\mathfrak{B}$ such that $\displaystyle \{y\} = \bigcap_{n} B_{n}$.  

Since $x \in X' \cap V.B_n$ for every $n$,  and thus $x \in (V.B_n)_{\Xi_n}$,  we can find $\xi_n \in \Xi_n$
such that $\xi_n.x \in V.B_n$.  Hence there exist $v_n \in V$ and $y_n \in B_n$ such that
\[
\xi_nv.y = v_n.y_n,
\]
and thus $g_n := v_n^{-1}\xi_n v \in Y_y \cap V \xi_n V$ and $g_n.y = y_n \in B_n$. 
Since $\xi_n \notin K_n$ for all $n$,  we see that $g_n \ra \infty$, and since $\displaystyle \{y\} = \bigcap_n B_n$,  we see that $y_n \ra y$.
\end{proof}

\section{Ergodic theorems for cross-sections}\label{SecErgodicTransversal}

In this section we establish a pointwise transversal ergodic theorem along certain averaging sequences which we call \emph{convenient sequences}. As an application we deduce that generic points in the transversal have positive lower density along such sequences. This establishes Theorem \ref{PD3} from the introduction in the case $r=1$. The versions for $r>1$ will later be obtained by applying this version to a suitable intersection space. Throghout this section, $G$ denotes a unimodular lcsc group with bi-invariant Haar measure $m_G$ and $(X,a, Y)$ is a $U$-transverse triple over $G$ for some open identity neighbourhood $U \subset G$. 

\subsection{Convenient sequences}
If $V$ is an open identity neighbourhood and $B \subset G$ is a pre-compact Borel set,  then we set
\[
B^{-}_{V} := \bigcap_{v \in V} v^{-1}B \qand  B^{+}_{V} := \bigcup_{v \in V} v^{-1}B.
\]
\begin{definition}\label{DefConvenient}
Let $(V_n)$ be a decreasing sequence of open identity neighbourhoods in $G$.  We say that a sequence $(G_t)$ of pre-compact Borel sets in $G$ is \emph{convenient} if
\begin{itemize}
\item[$(i)$] there exist sequences $(\delta_n),  (\eps_n)$ and $(t_n)$ of positive real numbers such that $\delta_n,  \eps_n \ra 0$,  and
\begin{equation}
\label{contain}
G_{t-\delta_n} \subset (G_t)_{V_n}^{-} \subset (G_t)_{V_n}^{+} \subset G_{t+\delta_n},  \quad \textrm{for all $t > t_n$}
\end{equation}
and
\begin{equation}
\label{ratios}
1-\eps_n \leq \varliminf_{t \ra \infty} \frac{m_G(G_{t-\delta_n})}{m_G(G_t)} 
\qand
\varlimsup_{t \ra \infty} \frac{m_G(G_{t+\delta_n})}{m_G(G_t)} \leq 1 + \eps_n,
\end{equation}
for all $n$.  \vspace{0.1cm}

\item[$(ii)$] for every Borel $G$-space $(X,a)$ and bounded real-valued Borel function $\varphi$ on $X$,  the set
\[
E_\varphi := \Big\{ x \in X \,  : \,  \lim_{t \ra \infty} \frac{1}{m_G(G_t)} \int_{G_t} \varphi(g.x) \,  dm_G(g) \enskip \textrm{exists} \Big\},
\]
is Borel in $X$ and $\mu$-conull for every $\mu \in M_{\textrm{fin}}(X)^G$.  Furthermore,  the function $\overline{\varphi} : E_\varphi \ra \bR$ defined by
\[
\overline{\varphi}(x) = \lim_{t \ra \infty} \frac{1}{m_G(G_t)} \int_{G_t} \varphi(g.x) \,  dm_G(g),  \quad \textrm{for $x \in E_\varphi$}
\]
is Borel,  and for every $G$-ergodic $\mu \in \Prob(X)^G$,  there is a $G$-invariant and
$\mu$-conull Borel subset $E_\varphi(\mu) \subset E_\varphi$ such that $\overline{\varphi} = \mu(\varphi)$ for all $x \in E_\varphi(\mu)$.
\end{itemize}
\end{definition}

\begin{remark}
The key point of Condition (ii) is that the set $E_\varphi(\mu)$ is $G$-invariant.  While this is automatic if $G$ is amenable,  and $(G_t)$ is a sufficiently nice F\o lner sequence in $G$,  it is not at all automatic when $G$ is non-amenable.  Sufficient conditions on $(G_t)$ to satisfy (ii) are given in \cite[Theorem 5.22]{GorodnikNevo}.  (note however that the averages in this book are taken over $G_t^{-1}$).  Examples of convenient sequences in semisimple Lie groups are
explicated in \cite[Chapter 7]{GorodnikNevo}.
\end{remark}

\subsection{A pointwise transversal ergodic theorem}
Our main goal in this section is to prove the following theorem.
\begin{theorem}
\label{Thm_Ergodic}
Suppose $\mu \in \Prob(X)^G$ is $G$-ergodic.  Then,  for every convenient sequence $(G_t)$ pre-compact Borel sets in $G$ and for every bounded real-valued Borel function $f$ on $X$,  there is a $\mu_Y$-conull Borel set $E_f \subset Y$ such
that 
\[
 \lim_{t \ra \infty} \frac{1}{m_G(G_t)} \sum_{h \in Y_y \cap G_t} f(h.y) = \mu_Y(f), \quad \textrm{for all $y \in E_f$}.
\]
\end{theorem}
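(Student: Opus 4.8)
The plan is to transfer the classical pointwise ergodic theorem for the convenient sequence (Condition (ii) of Definition \ref{DefConvenient}) to the transversal count by \emph{smearing} the atomic measure on $Y_x$ against a bump function, and then to pass from a statement for $\mu$-almost every $x \in X$ to one for $\mu_Y$-almost every $y \in Y$ via the null-set correspondence of Lemma \ref{lemma_null}. Writing $f = f^{+} - f^{-}$, it suffices to treat $f \geq 0$, and throughout I abbreviate $\nu := \mu_Y$ and $\Sigma_t(x) := \sum_{h \in Y_x \cap G_t} f(h.x)$.

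First I would fix, for each $n$, a continuous $\rho_n \geq 0$ supported in $V_n$ with $m_G(\rho_n) = 1$ and form the periodization $\varphi_n := T(\rho_n \otimes f)$ of \eqref{def_Tmap}; it is bounded and Borel (Lemma \ref{Lemma_T}, boundedness from Lemma \ref{Lemma_BasicProp}(ii) since $\rho_n$ has compact support), and Proposition \ref{Prop_Tmap} gives $\mu(\varphi_n) = \nu(f)$. Unwinding $\varphi_n(g.x)$ using $Y_{g.x} = Y_x g^{-1}$ (as in Lemma \ref{CFEx}) and the right-invariance of $m_G$, one finds
\[
\int_{G_t}\varphi_n(g.x)\,dm_G(g) = \sum_{h \in Y_x} w^{(n)}_t(h)\, f(h.x), \qquad w^{(n)}_t(h) := \int_{V_n}\rho_n(u)\,\chi_{G_t}(uh)\,dm_G(u),
\]
where $w^{(n)}_t(h) \in [0,1]$ equals $1$ on $(G_t)^{-}_{V_n}$ and $0$ off $(G_t)^{+}_{V_n}$. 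Thus both this weighted sum and the target $\Sigma_t(x)$ are squeezed between the unweighted sums over $(G_t)^{-}_{V_n}$ and $(G_t)^{+}_{V_n}$, so their difference is at most $\|f\|_\infty$ times the number of hitting times in the shell $(G_t)^{+}_{V_n}\setminus(G_t)^{-}_{V_n} \subset G_{t+\delta_n}\setminus G_{t-\delta_n}$, by \eqref{contain}.

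The decisive step is to control this shell count. Applying the same construction with $f \equiv 1$ and invoking Condition (ii) for each $\varphi_n^{(1)} := T(\rho_n \otimes 1)$ (which has mean $\nu(Y)$), the same squeeze yields $|Y_x \cap G_{s-\delta_n}| \leq \int_{G_s}\varphi_n^{(1)}(g.x)\,dm_G(g) \leq |Y_x \cap G_{s+\delta_n}|$ on a $G$-invariant $\mu$-conull set; combined with the ratio estimates \eqref{ratios} this forces $|Y_x \cap G_s|/m_G(G_s) \to \nu(Y)$, and then $\limsup_t m_G(G_t)^{-1}|Y_x \cap (G_{t+\delta_n}\setminus G_{t-\delta_n})| \leq 2\nu(Y)\eps_n$. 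Since $m_G(G_t)^{-1}\int_{G_t}\varphi_n(g.x)\,dm_G(g) \to \nu(f)$ on a $G$-invariant $\mu$-conull set, we obtain $\limsup_t |m_G(G_t)^{-1}\Sigma_t(x) - \nu(f)| \leq 2\|f\|_\infty\nu(Y)\eps_n$ on the intersection $E$ of all these sets; letting $n \to \infty$ gives $m_G(G_t)^{-1}\Sigma_t(x) \to \nu(f)$ for every $x \in E$.

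Finally, $E$ is $G$-invariant (each ergodic-average set is, by Condition (ii)) and $\mu$-conull, so Lemma \ref{lemma_null} applied to the $G$-invariant Borel set $X \setminus E$ shows that $E_f := E \cap Y$ is $\mu_Y$-conull; for $y \in E_f$ the identity $\Sigma_t(y) = \sum_{h \in Y_y \cap G_t} f(h.y)$ yields the theorem. \textbf{The main obstacle} is the shell estimate: one must pin down the normalized counting limit $|Y_x \cap G_s|/m_G(G_s) \to \nu(Y)$ and feed the two shifts $\delta_n$ through the ratio comparisons \eqref{ratios} so that the error is genuinely $O(\eps_n)$ rather than merely bounded. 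Keeping track that the good set remains $G$-invariant (so that Lemma \ref{lemma_null} transfers the conclusion from $X$ to $Y$) is the other point requiring care, and it is precisely the $G$-invariance clause of Condition (ii) that makes this possible.
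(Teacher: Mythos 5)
Your proof is correct and follows essentially the same route as the paper's: both smear the transversal count through the periodization $T(\rho_n \otimes f)$, squeeze it between sums over $(G_t)^{-}_{V_n}$ and $(G_t)^{+}_{V_n}$ using \eqref{contain} and \eqref{ratios}, invoke the $G$-invariance clause of Condition (ii) of Definition \ref{DefConvenient} for the periodized averages, and transfer the resulting $G$-invariant $\mu$-conull set to a $\mu_Y$-conull subset of $Y$ via Lemma \ref{lemma_null}. The only (cosmetic) deviation is your error bookkeeping, which controls the discrepancy by a shell count obtained from the case $f \equiv 1$, whereas the paper sandwiches the limsup/liminf functions $\psi_{\pm}$ directly against $\overline{f}_{\rho_n}$ with the factors $(1 \pm \eps_n)$; both yield the same conclusion by the same mechanism.
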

\begin{proof}
We may without loss of generality assume that $f$ is non-negative.  Given a non-negative continuous function $\rho$ with compact support such that $m_G(\rho) = 1$,
we define
\[
f_\rho(x) = \sum_{h \in Y_x} \rho(h^{-1}) f(h.x),  \quad x \in X.
\]
Note that $f_\rho = T(\rho \otimes f)$,  where $T$ is the $Y$-periodization defined in \eqref{def_Tmap},  so in particular $f_\rho$ is a bounded Borel function on $X$ by Lemma \ref{Lemma_T} (i) and (v),  and by Proposition \ref{Prop_Tmap},  
\begin{equation}
\label{muf}
\mu(f_\rho) = m_G(\rho) \mu_Y(f) = \mu_Y(f),  \quad \textrm{for every $\mu \in M_{\textrm{fin}}(X)^G$}.
\end{equation} 
Furthemore,  by Lemma \ref{Lemma_T} (ii)
\[
f_\rho(g.x) = \sum_{h \in Y_x} \rho(gh^{-1}) f(h.x),  \quad \textrm{for all $g$},
\]
and thus,  for every Borel set $B \subset G$,
\[
\int_B f_\rho(g.x) = \sum_{h \in Y_x} \Big( \int_G \chi_{g^{-1}B}(h) \, \rho(g) \, dm_G(g)\Big) \,  f(h.x).
\]
Suppose $\supp(\rho) \subset V$.  Then,  since $\rho$ and $f$ are non-negative,
\[
\chi_{B_V^{-}}(h) \leq  \int_G \chi_{g^{-1}B}(h) \, \rho(g) \, dm_G(g)  \leq \chi_{B_V^{+}}(h),  \quad \textrm{for all $h \in G$},
\]
whence 
\begin{equation}
\label{upperlower}
\sum_{h \in Y_x \cap B_V^{-}} f(h.x) \leq \int_B f_\rho(g.x) \,  dm_G(g) \leq \sum_{h \in Y_x \cap B_V^{+}} f(h.x),
\end{equation}
for all $x \in X$ and for every (pre-compact) Borel set $B \subset G$.  \\

Suppose $(G_t)$ is a convenient sequence of pre-compact Borel sets in $G$.  By \eqref{contain}, 
there exist sequences $(\delta_n)$ and $(t_n)$ such that $\delta_n \ra 0$,  and
\[
G_{t-\delta_n} \subset (G_t)_{V_n}^{-} \subset (G_t)_{V_n}^{+} \subset G_{t+\delta_n},  \quad \textrm{for all $t > t_n$}.
\]
Let $(\rho_n)$ be a sequence of non-negative continuous functions such that $\supp(\rho_n) \subset V_n$ and $m_G(\rho_n) = 1$ for all $n$.  
If we set $B = G_t$ and $\rho = \rho_n$ in \eqref{upperlower},  then
\[
\sum_{h \in Y_x \cap G_{t-\delta_n}} f(h.x) \leq \int_{G_t} f_{\rho_n}(g.x) \, dm_G(g) 
\leq \sum_{h \in Y_x \cap G_{t+\delta_n}} f(h.x)
\]
for all $x \in X$ and $t > t_n$.  Define
\[
\psi_{+}(x) = \varlimsup_{t \ra \infty} \frac{1}{m_G(G_t)} \sum_{h \in Y_x \cap G_t} f(h.x)
\qand
\psi_{-}(x) = \varliminf_{t \ra \infty} \frac{1}{m_G(G_t)} \sum_{h \in Y_x \cap G_t} f(h.x),
\]
for $x \in X$,  and set $E_n := E_{f_{\rho_n}}$ and $\psi_n := \overline{f}_{\rho_n}$.  
By \eqref{ratios}, 
\[
(1-\eps_n) \psi_{-}(x) \leq \psi_n(x)  \leq (1+\eps_n) \,  \psi_{+}(x)
\]
for all $x \in E_n$.  Furthermore,  for every $G$-ergodic $\mu \in \Prob(X)^G$, we have $\psi_n(x) = \mu(f_{\rho_n}) = \mu_Y(f)$ for all $x \in E_n(\mu) := E_{f_{\rho_n}}(\mu)$.  Since $\eps_n \ra \infty$,  we conclude that 
\[
\psi_{+}(x) = \psi_{-}(x) = \mu_Y(f),  \quad \textrm{for all $x \in E_f'(\mu) := \bigcap_n E_n(\mu)$}.
\]
By (ii),  each $E_n'(\mu)$ is $G$-invariant and $\mu$-conull,  and thus $E_f'(\mu)$ is $G$-invariant and $\mu$-conull as well.  By Lemma \ref{lemma_null},  $E_f := E_f'(\mu) \cap Y$ is a $\mu_Y$-conull Borel set in $Y$,  and 
\[
\lim_{t \ra \infty} \frac{1}{m_G(G_t)} \sum_{h \in Y_y \cap G_t} f(h.y) = \mu_Y(f),
\]
for all $y \in E_f$.
\end{proof}

\subsection{Positivity of lower densities}

We now give an application of Theorem \ref{Thm_Ergodic} to densities of generic return time sets. Given a convenient sequence $(G_t)$ of pre-compact Borel sets in $G$ we define
the associated \emph{lower density} $\Dens_{(G_t)}(A)$ of a subset $A \subset G$ by
\[
{\Dens}_{(G_t)}(A) :=  \varliminf_{t \ra \infty} \frac{|A \cap G_t|}{m_G(G_t)}.
\]
We now fix such a sequence once and for all and define the \emph{lower density function} $\Dens_Y$ by 
\begin{equation}
\label{def_DensY}
\Dens_{Y} : Y \ra [0,\infty], \quad \Dens_Y(y):={\Dens}_{(G_t)}(Y_y).
\end{equation}

\begin{theorem}
\label{Thm_Density}
For every $\mu \in \Prob(X)^G$,  $\Dens_Y(y) > 0$ for $\mu_Y$-almost every $y \in Y$.
\end{theorem}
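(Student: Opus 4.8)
The key observation is that $\Dens_Y$ is nothing but the transverse Birkhoff average of the constant function. Applying the periodization of Theorem~\ref{Thm_Ergodic} to $f\equiv 1$ on $X$ gives $\sum_{h\in Y_y\cap G_t}f(h.y)=|Y_y\cap G_t|$, so that $|Y_y\cap G_t|/m_G(G_t)$ is exactly the quantity whose limit is computed there. The plan is therefore to run the proof of Theorem~\ref{Thm_Ergodic} with $f\equiv 1$. In the $G$-ergodic case this is immediate: Theorem~\ref{Thm_Ergodic} yields a $\mu_Y$-conull Borel set on which $\lim_{t\to\infty}|Y_y\cap G_t|/m_G(G_t)=\mu_Y(Y)$, and since $\mu\neq 0$ the injectivity statement of Lemma~\ref{Lemma_injectivemap} forces $\mu_Y\neq 0$, hence $\mu_Y(Y)>0$. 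On this conull set the $\varliminf$ defining $\Dens_Y$ coincides with the limit, so $\Dens_Y(y)=\mu_Y(Y)>0$.

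For general $\mu\in\Prob(X)^G$ I would not assume ergodicity but instead extract from the sandwich estimates \eqref{upperlower}--\eqref{ratios} (exactly as in the proof of Theorem~\ref{Thm_Ergodic}, where ergodicity is used only to \emph{identify} the limit) a Borel $G$-invariant function $L\colon X\to[0,\infty]$ together with a $\mu$-conull set on which $\lim_{t\to\infty}|Y_x\cap G_t|/m_G(G_t)=L(x)$. Restricting to $Y$ and using Lemma~\ref{lemma_null} one gets $\Dens_Y=L|_Y$ on a $\mu_Y$-conull Borel set. The mean ergodic theorem (integrating the averages against the $G$-invariant measure, which preserves their integral, and invoking \eqref{muf} from Proposition~\ref{Prop_Tmap} with $f\equiv 1$) shows that for \emph{every} finite $G$-invariant measure $\rho$ on $X$ one has $\int_X L\,d\rho=\rho_Y(Y)$.

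It then remains to prove that $L>0$ almost everywhere, which is the only genuinely nontrivial point. The set $Z_0:=\{L=0\}$ is a $G$-invariant Borel set. Restricting $\mu$ to $Z_0$ gives a finite $G$-invariant measure $\mu_0:=\mu|_{Z_0}$ with $\int_X L\,d\mu_0=0$; by the integral identity above this says $(\mu_0)_Y(Y)=0$, i.e.\ $(\mu_0)_Y=0$, and Lemma~\ref{Lemma_injectivemap} forces $\mu_0=0$, that is $\mu(Z_0)=0$. By Lemma~\ref{lemma_null} this is equivalent to $\mu_Y(Z_0\cap Y)=0$, so $\Dens_Y(y)=L(y)>0$ for $\mu_Y$-almost every $y$. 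The main obstacle is really the measurability of $\Dens_Y$: the $\varliminf$ runs over the continuous parameter $t$, so one cannot simply restrict to a countable cofinal set of $t$'s. Constructing the honestly Borel, $G$-invariant limit $L$ (rather than working with $\Dens_Y$ directly) is what circumvents this, and simultaneously reduces positivity almost everywhere to injectivity of the restriction map. Alternatively, once $\Dens_Y=L|_Y$ is seen to be measurable, one may invoke the ergodic decomposition of Corollary~\ref{Cor_ErgodicDecomposition}, apply the ergodic case to $\sigma$-almost every component $\eta$, and note that $\eta_Y(\{L=0\})=0$ so that $\mu_Y(\{L=0\})=\int \eta_Y(\{L=0\})\,d\sigma(\eta)=0$.
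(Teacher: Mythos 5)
Your ergodic case is exactly the paper's argument: Theorem \ref{Thm_Ergodic} with $f\equiv 1$ plus the non-vanishing statement of Lemma \ref{Lemma_injectivemap}. Likewise, the ``alternative'' you sketch at the end --- ergodic decomposition via Corollary \ref{Cor_ErgodicDecomposition} followed by the ergodic case --- \emph{is} the paper's proof of Theorem \ref{Thm_Density}, so that route is fine; if anything, phrasing it in terms of a Borel limit function rather than $\Dens_Y$ itself tidies up the measurability point that the paper glosses when it integrates the statement ``$\Dens_Y>0$ $\eta_Y$-a.e.'' over the decomposition.

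Your primary route for non-ergodic $\mu$, however, has a genuine gap: the asserted $G$-invariance of $L$. You are right that ergodicity enters the proof of Theorem \ref{Thm_Ergodic} only to identify the limit: the sandwich \eqref{upperlower} together with \eqref{contain}--\eqref{ratios} yields $(1-\eps_n)\psi_{+}\leq \psi_n \leq (1+\eps_n)\psi_{-}$ on $E_{f_{\rho_n}}$ (this is the correct orientation of the inequalities), whence $\psi_{+}=\psi_{-}=\lim_n\psi_n=:L$ on the Borel set $\bigcap_n E_{f_{\rho_n}}$, which is $\mu$-conull for every finite invariant $\mu$ with no ergodicity assumption; the integral identity $\int_X L\,d\rho=\rho_Y(Y)$ also goes through. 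But nothing in Definition \ref{DefConvenient} makes $L$, the sets $E_\varphi$, or the functions $\overline{\varphi}$ $G$-invariant: condition (ii) only supplies a $G$-invariant conull set $E_\varphi(\mu)$ for each \emph{ergodic} $\mu$, on which $\overline{\varphi}$ is the constant $\mu(\varphi)$, and the remark following the definition stresses that such invariance is automatic for amenable $G$ with good F{\o}lner sequences but ``not at all automatic'' for non-amenable $G$. Your argument needs invariance in at least three places: to transfer $\mu$-conullness of $\bigcap_n E_{f_{\rho_n}}$ to $\mu_Y$-conullness of its trace on $Y$ via Lemma \ref{lemma_null} (which applies only to $G$-invariant sets; alternatively one could project conull sets as in Lemma \ref{Lemma_prYX} and the proof of Theorem \ref{Thm_Poincare}); to ensure $Z_0=\{L=0\}$ is $G$-invariant so that $\mu_0=\mu|_{Z_0}$ is a $G$-invariant measure to which the transverse machinery and Lemma \ref{Lemma_injectivemap} apply; and again in the final appeal to Lemma \ref{lemma_null}. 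In the generality of convenient sequences the only way to manufacture a $\mu$-a.e.\ invariant version of $L$ is to pass through the ergodic decomposition --- at which point your first route collapses into your second, i.e.\ into the paper's proof. So the decomposition-free argument is correct only for amenable $G$ (or under an added invariance hypothesis on the limit functions), and your fallback is what the paper actually does.
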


\vspace{0.1cm}

\begin{proof}
Let $N := \{ y \in Y \,  : \,  \Dens_Y(y) = 0 \}$.  Fix a $G$-invariant Borel probability measure $\mu$ on $X$.  We want to show that $\mu_Y(N) = 0$.  By Corollary \ref{Cor_ErgodicDecomposition},  there is a Borel probability measure $\sigma$
on $\Prob(X)^G$,  supported on the set of $G$-ergodic Borel probability measures, 
such that
\[
\mu_Y(f) = \int_{\Prob(X)^G} \eta_Y(f) \,  d\sigma(\eta),
\]
for every bounded real-valued Borel function on $X$.  Hence it suffices to prove that 
$\Dens_Y(y) > 0$ for $\eta$-almost every $y$,  for every $G$-ergodic $\eta$.  To 
do this,  note that
\[
\Dens_Y(y) = \varliminf_{t \ra \infty} \frac{1}{m_G(G_t)} \sum_{h \in Y_y \cap G_t} 1.
\]
By Theorem \ref{Thm_Ergodic},  the right-hand side  converges to $\eta_Y(1)$ for $\eta$-almost every $y$,  and for every $G$-ergodic Borel probability measure $\eta$.  By Lemma \ref{Lemma_injectivemap} we have $\eta_Y(1) > 0$,  and we are done.
\end{proof}

\section{Transverse measures for actions of semidirect products}
\label{Sec:semidirect}

In this section we finally establish Theorem \ref{Stages} concerning restriction in stages for semi-direct products which is the main technical result of this article. We also provide several criteria for the intermediate cross-section to be separated; this will be used in the proof of Theorem \ref{ThmComm} and some of its variants.

\subsection{Basic setup}
Let $G$ be a lcsc group.  We assume that there are closed subgroups $N$ and $L$ of 
$G$,  with left Haar measures $m_N$ and $m_L$ respectively,   such that
\begin{itemize}
\item $G = NL$,  $N$ is normal in $G$ and $N \cap L = \{e\}$.  In other words,  $G$ is 
the semi-direct product of $N$ and $L$; 
\item the conjugation action of $L$ on $N$ preserves $m_N$.  
\end{itemize}
The first assumption in particular implies that every element $g$ in $G$ is of the form
$nl$ for \emph{unique} elements $n \in N$ and $l \in L$ and we write $n := \pr_N(g)$
and $l := \pr_L(g)$.  Note that $\pr_L : G \ra L$ is a continuous homomorphism. The second assumption implies that
\[
m_G(f) = \int_N \int_L f(nl) \,  dm_N(n) \, dm_L(l),  \quad f \in C_c(G),
\]
is a left Haar measure on $G$.  Note that 
\begin{equation}
\label{productmeasure}
m_G(W_N W_L) = m_N(W_N) m_L(W_L),  
\end{equation}
for all Borel sets $W_N \subset N$ and $W_L \subset L$. The key example that we have in mind is the following:
\begin{example}\label{SDPMainExample}
Let $G_o$ be a unimodular lcsc group, set $G := G_o^n$ and denote by  $\pi_k: G \to G_o$ the $k$th coordinate projection. For some $k$ let $N := N_k := \ker(\pi_k)$ and let $L := \Delta(G_o) < G$ be the diagonal subgroup. Then the triple $(G, N, L)$ satisfies all of the assumptions above. Indeed, since $G_o$ is unimodular,  $G$,  $N$ and $L$ are unimodular,  and the $L$-action on $N_k$ preserves the (bi-invariant) Haar measure on $N_k$. 
\end{example}
Given an open identity neighbourhood $U$ in $G$ and a $U$-transverse triple $(X, a, Z)$ over $G$ we make the following definitions: We fix Borel exhaustion $\cZ = \{Z_n \,  : \,  n \geq 1\}$ of $Z$,  and let $\cL = \{ L_n \,  : \,  n \geq 1 \}$ and $\cN = \{ N_n \,  : \,  n \geq 1 \}$ be fundamental exhaustions by compact sets of $L$ and $N$ respectively.  We define \[
Y := L.Z \qand Y_n := L_n.Z_n \qand \cY = \cZ_{\cL} \qand \cG = \{ N_nL_n \,  : \,  n \geq 1 \big\},
\]
and $a_L := a|_{L \times X}$ and $a_N := a|_{N \times X}$.  Note that $Y$ and $Y_n$
are Borel sets in $X$ by Lemma \ref{Lemma_BasicProp} (iv) and Lemma \ref{Lemma_Borel} (iii).  In particular,  $(Y,a_L)$ is a Borel $L$-space by Lemma \ref{Lemma_Borel} (i),  and $(Y, a_L, Z)$ is
a $U \cap L$-transverse triple over $L$. At the same time $Y$ is also a cross-section for the Borel action $a_N$ of $N$ on $X$. In the sequel we refer to it as the \emph{intermediate cross-section}.

\subsection{Criteria for separatedness of the intermediate cross-section}
We keep the notation of the previous subsection. In particular, $Z$ is a separated $G$-transversal in $X$ and at the same time a separated $N$-transversal in $Y$. The following example shows that $Y$ need not be separated when considered as an $N$-cross-section in $X$.

\begin{example}
\label{examplealpha}
Let $G = \bR^2$, $ N = \bR \times \{0\}$ and $L = \bR(\alpha,1)$,  where $\alpha$ is an irrational number,  and let $X = \bR^2/\bZ^2$,  $Z = \{(0,0)\}$ and $Y = L.Z$. 
Then,  for every $y = (t\alpha + \bZ, t + \bZ) \in Y$, 
\[
Y_y = \big\{ (m + n\alpha,0) \, : \,  m,n \in \bZ \big\},
\]
which clearly accumulates at $(0,0)$,  and thus $Y$ is not a separated cross-section.
\end{example}

Our next lemma provides a necessary and sufficient criterion for $Y$ to be a separated cross-section in terms of the return time set $\Lambda_a(Z)$.  However,  this criterion is
often difficult to verify,  so we also give two sufficient criteria,  which are easier to check in practise.  We use the following notation: if $W \subset N$,  then $W^l := l W l^{-1}$
for $l \in L$ and $W^L := \bigcup_{l \in L} W^l$.

\begin{lemma}
\label{Lemma_NcrossIFF}
We have $\Lambda_{a_N}(Y) = \pr_N(\Lambda_{a}(Z))^L$.  In particular,  $Y$ is a separated cross-section if and only if $\pr_N(\Lambda_{a}(Z))^L$ is uniformly discrete. This is the case if either
\begin{itemize}
\item[$(i)$] there is a compact set $K_L \subset L$ such that $K_L(\Lambda_a(Z) \cap L) = L$,  and $\pr_N(\Lambda_a(Z)^3)$ does not accumulate at the identity in $N$,  or 
\item[$(ii)$] $G$ is abelian,  and $\pr_N(\Lambda_a(Z))$ does not accumulate at the identity in $N$.  
\end{itemize}
\end{lemma}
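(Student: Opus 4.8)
The plan is to derive everything from the single set-theoretic identity $\Lambda_{a_N}(Y) = \pr_N(\Lambda_a(Z))^L$, and then to read off the separatedness criterion and its two sufficient conditions from it. Recall that $Y = L.Z$ is automatically an $N$-cross-section, since $N.Y = NL.Z = G.Z = X$, so the only issue is separatedness, i.e.\ whether $\Lambda_{a_N}(Y)$ accumulates at the identity of $N$.

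I would prove the identity by two direct inclusions. For $\subseteq$: given $n \in \Lambda_{a_N}(Y)$, pick $y \in Y$ with $n.y \in Y$ and write $y = l.z$ and $n.y = l'.z'$ with $l,l' \in L$ and $z,z' \in Z$; then $l'^{-1}nl \in \Lambda_a(Z)$, and normality of $N$ gives the decomposition $l'^{-1}nl = (l'^{-1}nl')(l'^{-1}l)$, so $\pr_N(l'^{-1}nl) = l'^{-1}nl'$ and hence $n = l'\,\pr_N(l'^{-1}nl)\,l'^{-1} \in \pr_N(\Lambda_a(Z))^{l'}$. For $\supseteq$: given $n_0 = lml^{-1}$ with $m = \pr_N(g)$, $g \in \Lambda_a(Z)$, write $g = m\,\pr_L(g)$ and pick $z \in Z$ with $g.z \in Z$; then $w := \pr_L(g).z \in Y$ satisfies $m.w = g.z \in Z \subseteq Y$, so $m \in \Lambda_{a_N}(Y)$, and since $Y = L.Z$ is $L$-invariant, $n_0.(l.w) = l.(m.w) \in Y$ shows $n_0 \in \Lambda_{a_N}(Y)$. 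The separatedness claim is then immediate: $Y$ is a $U'$-separated $N$-cross-section for some identity neighbourhood $U' \subset N$ exactly when $\Lambda_{a_N}(Y) \cap U' = \{e\}$, which by the identity is precisely the uniform discreteness of $\pr_N(\Lambda_a(Z))^L$.

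For the sufficient condition $(i)$ I would first prove the inclusion $\pr_N(\Lambda_a(Z))^L \subseteq \pr_N(\Lambda_a(Z)^3)^{K_L}$. The crucial computation is that for $\lambda \in \Lambda_a(Z) \cap L$ and $g \in \Lambda_a(Z)$ one has $\lambda g \lambda^{-1} \in \Lambda_a(Z)^3$, using that $\Lambda_a(Z)$ is symmetric and contains $e$; applying $\pr_N$ as above yields $\lambda\,\pr_N(\Lambda_a(Z))\,\lambda^{-1} = \pr_N(\lambda\Lambda_a(Z)\lambda^{-1}) \subseteq \pr_N(\Lambda_a(Z)^3)$. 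Writing an arbitrary $l \in L$ as $l = k\lambda$ with $k \in K_L$ and $\lambda \in \Lambda_a(Z) \cap L$ (the cocompactness hypothesis) then gives $l\,\pr_N(\Lambda_a(Z))\,l^{-1} \subseteq k\,\pr_N(\Lambda_a(Z)^3)\,k^{-1}$, and taking the union over $l$ establishes the inclusion. It remains to see that $\pr_N(\Lambda_a(Z)^3)^{K_L}$ does not accumulate at the identity, and this is the one genuinely nontrivial point: the conjugates by the compact set $K_L$ must be controlled. I would argue that, fixing an identity neighbourhood $U_N \subset N$ with $\pr_N(\Lambda_a(Z)^3) \cap U_N = \{e\}$, the continuous map $K_L \times N \to N$, $(k,n) \mapsto k^{-1}nk$, sends $K_L \times \{e\}$ into $U_N$, so by the tube lemma there is an identity neighbourhood $W \subset N$ with $k^{-1}Wk \subseteq U_N$ for all $k \in K_L$; then any $x = kdk^{-1} \in \pr_N(\Lambda_a(Z)^3)^{K_L} \cap W$ forces $d = k^{-1}xk \in \pr_N(\Lambda_a(Z)^3) \cap U_N = \{e\}$, hence $x = e$, so $\pr_N(\Lambda_a(Z)^3)^{K_L} \cap W = \{e\}$.

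Condition $(ii)$ is then immediate: if $G$ is abelian all conjugations are trivial, so $W^L = W$ and $\pr_N(\Lambda_a(Z))^L = \pr_N(\Lambda_a(Z))$, and the non-accumulation is exactly the hypothesis. I expect the only real obstacle to be the compactness-and-continuity argument in $(i)$ controlling the $K_L$-conjugates (the tube-lemma step); the rest is bookkeeping with the semidirect-product decomposition together with the symmetry of $\Lambda_a(Z)$.
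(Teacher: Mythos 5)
Your proposal is correct and takes essentially the same route as the paper's proof: the identity $\Lambda_{a_N}(Y) = \pr_N(\Lambda_a(Z))^L$ (which the paper proves via a chain of equivalences and you via two inclusions, with the same underlying computation $\pr_N(l^{-1}nl') = l^{-1}nl$ from normality), then for (i) the inclusion $\pr_N(\Lambda_a(Z))^L \subseteq \pr_N(\Lambda_a(Z)^3)^{K_L}$ via $\eta\,\Lambda_a(Z)\,\eta^{-1} \subset \Lambda_a(Z)^3$ for $\eta \in \Lambda_a(Z)\cap L$, followed by a compactness argument controlling the $K_L$-conjugates, and for (ii) triviality of conjugation. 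Your tube-lemma step merely makes explicit the existence of the identity neighbourhood with $k^{-1}U_N k \subset W$ for all $k \in K_L$, which the paper asserts without comment.
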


\begin{proof}
Let us first show that $\Lambda_{a_N}(Y) = \pr_N(\Lambda_{a}(Z))^L$.  Consider the following equivalences: (1) $n \in \Lambda_{a_N}(Y)$,  i.e.  there exists $y \in Y$ such that $y' := n.y \in Y$.  Since $Y = L.Z$,  we can write $y = l_z.z$ and $y' = l_o.z_o$ for some 
$l_o,  l_z \in L$ and $z_o,  z \in Z$.  Hence (1) is equivalent to (2): $(l_o^{-1}n l_o)l_o^{-1}l_z \in Z_z$,  for \emph{some} $l_o,  l_z \in L$ and $z \in Z$.  Since $N$ is normal,  $N \cap L = \{e\}$,  $G = NL$,  and $z$ and $l_z$ are arbitrary,  (2) is equivalent to 
(3): $l_o^{-1}n l_o \in \pr_N(\Lambda_{a}(Z))$ for some $l_o \in L$,  or
equivalently,  $n \in \pr_N(\Lambda_a(Z))^L$. 

Note that if $G$ is abelian, then $\pr_N(\Lambda_{a}(Z))^L = \pr_N(\Lambda_{a}(Z))$, since the $L$-action on $N$ is trivial, and hence we deduce that Condition (ii) is sufficient. Now suppose that (i) holds and fix a compact subset $K_L \subset L$ such that $K_L(\Lambda_a(Z) \cap L) = L$.  Since $\eta \,  \Lambda_a(Z) \,  \eta^{-1} \subset \Lambda_a(Z)^3$ for 
all $\eta \in \Lambda_a(Z) \cap L$,  we have
\[
\pr_N(\Lambda_a(Z))^L 
\subset
\pr_N(\Lambda_a(Z)^3)^{K_L}.
\]
Suppose that $\pr_N(\Lambda_a(Z)^3)$ does not accumulate at the identity in $N$.   Then there is an identity neighbourhood $W$ in $N$ such that $\pr(\Lambda_a(Z)^3) \cap W = \{e\}$.  Let $U_N$ be an open identity neighbourhood in $N$ such that $k^{-1} U_N k \subset W$ for all $k \in K_L$.  Then,  
\begin{align*}
\pr_N(\Lambda_a(Z)^3)^{K_L} \cap U_N 
&\subseteq \big(\pr_N(\Lambda_a(Z)^3) \cap U_N^{K_L^{-1}}\big)^{K_L} \\[0.1cm]
&\subseteq \big(\pr_N(\Lambda_a(Z)^3) \cap W \big)^{K_L} = \{e\},
\end{align*}
and thus $\pr_N(\Lambda_a(Z))^L \cap U_N = \{e\}$ as well.  In particular,  $Y$ is a $U_N$-separated cross-section.
\end{proof}

One might suspect that the cocompactness assumption in Lemma \ref{Lemma_NcrossIFF} (i) which assumes that there is a compact set $K_L$ in $L$ such that $K_L(\Lambda_a(Z) \cap L) = L$ is superfluous, since it has no counterpart in the abelian case. Our next example shows that it cannot simply be dropped in the non-abelian case.

\begin{example}
Let $G = \SL_2(\bR) \times \SL_2(\bR)$,  let $L$ denote the diagonal in $G$,  and let $N = \SL_2(\bR) \times \{e\}$.  Note that $\pr_N(g_1,g_2) = g_1g_2^{-1}$.  Let $\Gamma_o = \SL_2(\bZ)$ and $\Gamma = \Gamma_o \times \Gamma_o$,  and set
\[
X = G/\Gamma \qand Z = \{\Gamma\},
\]
where $G$ acts on $X$ by left multiplication.  Then it is easy to check that 
$\Lambda_a(Z) = \Gamma$ and $ \pr_N(\Lambda_a(Z))^L = \Gamma_o^{\SL_2(\bR)} \times \{e\}$.  Since $\Gamma_o$ contains non-trivial unipotent elements,  
$ \Gamma_o^{\SL_2(\bR)}$ accumulates at $e$ in $\SL_2(\bR)$,  and thus $Y := L.Z =\big\{ (g_o\Gamma_o,g_o\Gamma_o) \,  : \,  g_o \in \SL_2(\bR)\big\}$ is \emph{not} a separated cross-section for the $N$-action.  However,  since $\pr_N(\Lambda_a(Z)^3) = \Gamma_o \times \{e\}$,  the second condition in Lemma \ref{Lemma_NcrossIFF} (i) clearly holds.
\end{example}

\subsection{Restriction in stages}
In this section we establish Theorem \ref{Stages} from the introduction. In fact, we are going to establish a more general version which also works for certain infinite measures. We keep the notation of the previous subsection and assume in addition that $Y$ is a $U_N$-separated cross-section for 
the $N$-action on $X$ for some open identity neighbourhood $U_N$ in $N$. 

Let now $\mu \in M(X, \cZ_{\cG})^G$ and let $\mu_Z := \res G X Y(\mu) \in M(Z, \cZ)^{R_{G,Z}}$. Since $\mu \in M(\cY_{\cN})^L$ (as $L \subset G$ and $\cZ_\cG = (\cZ_{\cL})_{\cN} = \cY_{\cN}$) we can also form the intermediate measure $\mu_Y := \res N X Y(\mu) \in M(Y, \cY)^{R_{N, Y}}$.

\begin{theorem}\label{StagesInfinite} Assume that the $N$-cross-section $Y$ is separated and let $\mu \in M(C, \cZ_{\cG})^G$ and $\mu_Z := \res G X Y(\mu) \in M(Z, \cZ)^{R_{G,Z}}$.
\begin{enumerate}[(i)]
\item The measure $\mu_Y \in M(Y, \cY)^{R_{N, Y}}$ is $L$-invariant and hence the transverse measure $(\mu_Y)_Z := \res L Y Z(\mu_Y)$ is well-defined.
\item $(\mu_Y)_Z = \mu_Z$.
\end{enumerate} 
\end{theorem}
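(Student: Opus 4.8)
The plan is to establish (i) and then deduce (ii) from the defining periodization identity of Proposition \ref{Prop_Tmap}, the engine being the fact that $G$-return times to $Z$ factor through the two intermediate stages. Throughout I write $T$, $T_N$, $T_L$ for the $Y$-periodizations with respect to the $G$-, $N$- and $L$-actions, and use $Y^N_x=\{n\in N: n.x\in Y\}$ and $Z^L_y=\{l\in L:l.y\in Z\}$.

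For (i), fix $l\in L$ and a Borel set $B\subset Y$; since $Y=L.Z$ is $L$-invariant, $l^{-1}.B\subset Y$. I would pick an open $V\subset N$ with $V^{-1}V\subset U_N$ and $0<m_N(V)<\infty$, so that $V\times Y$ is $N$-injective and, by Proposition \ref{Prop_Tmap} applied to the $N$-action, $\mu_Y(A)=\mu(V.A)/m_N(V)$ for every Borel $A\subset Y$. Applying this to $A=l^{-1}.B$ and using $Vl^{-1}=l^{-1}(lVl^{-1})$ together with $G$-invariance of $\mu$ gives $\mu_Y(l^{-1}.B)=\mu\big((lVl^{-1}).B\big)/m_N(V)$. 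The crucial point is that $W:=lVl^{-1}\subset N$ again satisfies $W^{-1}W\subset lU_Nl^{-1}$ and that $Y$ is $lU_Nl^{-1}$-separated: by Lemma \ref{Lemma_NcrossIFF} the return set $\Lambda_{a_N}(Y)=\pr_N(\Lambda_a(Z))^L$ is invariant under $L$-conjugation, whence $\Lambda_{a_N}(Y)\cap lU_Nl^{-1}=l\big(\Lambda_{a_N}(Y)\cap U_N\big)l^{-1}=\{e\}$. Thus $W\times Y$ is $N$-injective, $\mu(W.B)=m_N(W)\mu_Y(B)$, and since conjugation by $l$ preserves $m_N$ we obtain $\mu_Y(l^{-1}.B)=\mu_Y(B)$, i.e.\ $l_*\mu_Y=\mu_Y$. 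As $\mu_Y\in M(Y,\cY)^L$ with $\cY=\cZ_\cL$ and $L$ unimodular, $(\mu_Y)_Z:=\res{L}{Y}{Z}(\mu_Y)\in M(Z,\cZ)^{R_{L,Z}}$ is well defined.

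For (ii) I would verify that $(\mu_Y)_Z$ satisfies the equation characterizing $\mu_Z=\res{G}{X}{Z}(\mu)$ and invoke uniqueness in Proposition \ref{Prop_Tmap}. Writing each $g\in G$ uniquely as $g=l\tilde n$ with $l\in L$, $\tilde n\in N$ (possible since $N$ normal gives $G=LN$ and $L\cap N=\{e\}$), one checks that $g.x\in Z$ iff $\tilde n.x\in Y$ and $l.(\tilde n.x)\in Z$; here $L$-invariance of $Y$ is used to see $\tilde n.x=l^{-1}.(g.x)\in Y$. This produces a bijection between $\{g:g.x\in Z\}$ and pairs $(\tilde n,l)$ with $\tilde n\in Y^N_x$, $l\in Z^L_{\tilde n.x}$. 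Hence, for bounded non-negative Borel $f$ on $G\times Z$ with relatively compact support, the periodization factors as $Tf=T_NH$, where $H(m,y)=\sum_{l\in Z^L_y}f(ml^{-1},l.y)=(T_LF_m)(y)$ with $F_m(l',z):=f(ml',z)$. The support hypothesis ensures, via the normal form $ml^{-1}\in N\cdot L$ and Lemma \ref{Lemma_BasicProp}(ii) for the $L$-action, that $H$ is bounded and $N$-supported in a compact set, and it is jointly Borel in $(m,y)$ by the partition-of-$L$ argument of the proof of Lemma \ref{Lemma_T}(i). Applying the defining identity of $\mu_Y$ (for $N$), then of $(\mu_Y)_Z$ (for $L$), and finally the product formula $m_G(f)=\int_N\int_L f(ml')\,dm_L(l')\,dm_N(m)$ yields
\[
\mu(Tf)=(m_N\otimes\mu_Y)(H)=\int_N\!\int_Y (T_LF_m)\,d\mu_Y\,dm_N=\int_N\!\int_L\!\int_Z f(ml',z)\,d(\mu_Y)_Z\,dm_L\,dm_N=(m_G\otimes(\mu_Y)_Z)(f).
\]
Since this is exactly the equation of Proposition \ref{Prop_Tmap} characterizing $\res{G}{X}{Z}(\mu)$, uniqueness forces $(\mu_Y)_Z=\mu_Z$; a monotone-convergence argument in $f$ removes the compact-support restriction.

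I expect the main obstacle to be the careful bookkeeping of the return-time factorization together with the measurability and boundedness of the intermediate function $H$: one must confirm that $g\mapsto(\tilde n,l)$ is a genuine bijection compatible with the Haar decomposition, that $L$-invariance of $Y$ is precisely what forces $\tilde n.x\in Y$, and that $H$ is jointly Borel and bounded so that Proposition \ref{Prop_Tmap} may legitimately be applied to it. The $L$-invariance in (i) likewise rests entirely on the conjugation-invariance of $\Lambda_{a_N}(Y)$, so the identity $\Lambda_{a_N}(Y)=\pr_N(\Lambda_a(Z))^L$ of Lemma \ref{Lemma_NcrossIFF} is doing real work there.
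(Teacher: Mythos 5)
Your proof is correct, and the two parts deserve separate comparison. For (i) you are doing essentially what the paper does, organized slightly differently: the paper chooses, for each $l\in L$, a smaller neighbourhood $V_l\subset V_N$ with $l^{-1}V_l l\subset V_N$, so that injectivity of the conjugated thickening is automatic from $U_N$-separatedness, whereas you keep one fixed $V$ and justify injectivity of $(lVl^{-1})\times B$ through the $L$-conjugation invariance of $\Lambda_{a_N}(Y)$ --- which, incidentally, follows already from the $L$-invariance of $Y=L.Z$ without appealing to the identity $\Lambda_{a_N}(Y)=\pr_N(\Lambda_a(Z))^L$ of Lemma \ref{Lemma_NcrossIFF}, so that lemma is doing less work than you suggest; both variants then close with $G$-invariance of $\mu$ and the $m_N$-preservation of conjugation. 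For (ii) your route is genuinely different from the paper's. The paper tests the measures only on sets: for Borel $B\subset Z$ it computes $\mu(V_NV_L.B)$ in two ways, using injectivity of $V_N\times (V_L.B)$, of $V_L\times B$, and of $(V_NV_L)\times B$ at the three stages together with the product formula \eqref{productmeasure}, and reads off $(\mu_Y)_Z(B)=\mu_Z(B)$ directly, with no uniqueness clause needed. You instead verify the full characterizing identity of Proposition \ref{Prop_Tmap} by factorizing the return times through the normal form $g=l\tilde n$ and establishing the periodization-in-stages identity $Tf=T_NH$ with $H(m,\cdot)=T_LF_m$; this costs you exactly the bookkeeping you flag --- joint Borelness of $H$ via the partition argument of Lemma \ref{Lemma_T}(i), boundedness and compact $N$-support via the projections of $\supp f$ and Lemma \ref{Lemma_BasicProp}(ii) for the $L$-action, the Tonelli interchange, and the monotone-convergence extension beyond compactly supported $f$ --- all of which you handle correctly, but it buys a structural statement (the factorization of the $Z$-periodization through the $Y$- and $Z$-periodizations) that is strictly stronger than the theorem and reusable. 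In short: the paper's three-line computation is cheaper because elements of $M(Z,\cZ)$ are determined by their values on Borel subsets of $Z$, so testing on thickened sets suffices; your argument is the natural ``Fubini'' proof and is sound, provided one also records (as the paper does just before the theorem) that $\cZ_\cG=(\cZ_\cL)_\cN=\cY_\cN$, which is what licenses applying Proposition \ref{Prop_Tmap} to $\mu$ for the $N$-action in the first place.
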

\begin{proof} Fix identity neighbourhoods $V, V_N$ and $V_L$ in $G,  N$ and $L$ respectively,  such that 
\[
V^{-1}V \subset U \qand V_N^{-1}V_N \subset U_N \qand V_L^{-1}V_L \subset L \cap U.
\]
We may without loss of generality assume that $V_N V_L \subset V$.  Note that 
$V \times Z$,  $V_L \times Z$ and $V_N \times Y$ are $a$-injective,  $a_L$-injective
and $a_N$-injective Borel sets respectively (cf. Lemma \ref{Lemma_BasicProp} (i)). \\

To prove that $\mu_Y$ is $L$-invariant,  
we fix $l \in L$,  a Borel set $B \subset Y$ and an identity neighbourhood $V_l \subset V_N$ such that $l^{-1}V_l l \subset V_N$.  Then,  since $l.B \subset Y$ and $V_l \times l.B$ is an injective Borel set in $N \times Y$,   
\[
\mu_Y(l.B) = \frac{\mu(V_l l.B)}{m_N(V_l)} = \frac{\mu(l^{-1}V_l l.B)}{m_N(V_l)} = \frac{m_N(l^{-1}V_l l)}{m_N(V_l)} \,  \mu_Y(B) = \mu_Y(B),
\]
where we in the second equality have used that $\mu$ is $L$-invariant,  and in the last identity that the conjugation action of $L$ on $N$ preserves $m_N$.  Since $l$ and $B$ are arbitrary,  $\mu_Y$ is $L$-invariant. \\

Let us now prove the identity $(\mu_Y)_Z = \mu_Z$.  Note that
\[
\mu(V_N V_L. B) = m_N(V_N) \mu_Y(V_L.B) = m_N(V_N)m_L(V_L) (\mu_Y)_Z(B),
\]
for every Borel set $B \subset Z$,  where we in the first identity have used that 
$\mu_Y$ is the $Y$-transverse of $\mu$ for the $N$-action,  and in the second identity that $(\mu_Y)_Z$ is the transverse measure of $\mu_Y$ for the $L$-action on $Y$.  On the other hand,  
\[
\mu(V_N V_L. B) = m_G(V_N V_L) \mu_Z(B) = m_N(V_N)m_L(V_L) \mu_Z(B),
\]
since $\mu_Z$ is the $Z$-transverse measure of $\mu$ for the $G$-action on $X$.  Since $B$ is arbitrary,  we conclude that $(\mu_Y)_Z = \mu_Z$. 
\end{proof}
\begin{proof}[Proof of Theorem \ref{Stages}] This is just a special case of Theorem \ref{StagesInfinite} in which $\mu$ is finite.
\end{proof}
There are several equivalent ways to formulate Theorem \ref{StagesInfinite}, for example:
\begin{corollary} Assume that the $N$-cross-section $Y$ is separated Then the composition $\res{L}{Y}{Z} \circ \res{N}{X}{Y}$ is well-defined on $M(X, \cZ_{\cG})^G$  and satisfies \[\res{L}{Y}{Z} \circ \res{N}{X}{Y} = \res{G}{X}{Z}: M(X, \cZ_{\cG})^G \to M(Z, \cZ)^{R_{G,Z}}.\]
Similarly, the composition $\ind NXY \circ \ind LYZ$ is well-defined on $M(Z, \cZ)^{R_{G,Z}}$ and satisfies
\[
\ind NXY \circ \ind LYZ = \ind GXZ: M(Z, \cZ)^{R_{G,Z}} \to M(X, \cZ_{\cG})^G.
\]
\end{corollary}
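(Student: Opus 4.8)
The plan is to recognize that the first identity is essentially a bookkeeping reformulation of Theorem~\ref{StagesInfinite}, and that the second (induction) identity then follows formally by inverting the first, using that all three restriction maps are bijections with the stated induction maps as inverses (Theorem~\ref{GMCT}). First I would verify that the composition $\res{L}{Y}{Z} \circ \res{N}{X}{Y}$ is even defined on $M(X, \cZ_{\cG})^G$. Fix $\mu \in M(X, \cZ_{\cG})^G$. Since the standing assumptions give $\cZ_{\cG} = \cY_{\cN}$ and $N \subset G$, the measure $\mu$ lies in $M(X, \cY_{\cN})^N$; as $N$ is unimodular and $Y$ is a separated $N$-cross-section, Theorem~\ref{GMCT} applied to the triple $(X, a_N, Y)$ produces $\mu_Y := \res{N}{X}{Y}(\mu) \in M(Y, \cY)^{R_{N,Y}}$. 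By Theorem~\ref{StagesInfinite}(i) this $\mu_Y$ is $L$-invariant, and since $\cY = \cZ_{\cL}$ we have $\mu_Y \in M(Y, \cZ_{\cL})^L$, so Theorem~\ref{GMCT} applied to $(Y, a_L, Z)$ over the unimodular group $L$ makes $\res{L}{Y}{Z}(\mu_Y)$ well-defined. Theorem~\ref{StagesInfinite}(ii) then gives $\res{L}{Y}{Z}(\mu_Y) = \mu_Z = \res{G}{X}{Z}(\mu)$, which lies in $M(Z, \cZ)^{R_{G,Z}}$; this is exactly the first identity, including the claim that the composite lands in the asserted codomain (the extra $R_{G,Z}$-invariance being free, since the value equals $\res{G}{X}{Z}(\mu)$).

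For the induction identity I would first record the one genuinely structural point. Because $L \subset G$ and the $L$-action on $Y$ is the restriction of the $G$-action, every $L$-orbit relation is a $G$-orbit relation, so $R_{L,Z} \subseteq R_{G,Z}$ as subsets of $Z \times Z$; consequently $[[R_{L,Z}]] \subseteq [[R_{G,Z}]]$ and therefore $M(Z, \cZ)^{R_{G,Z}} \subseteq M(Z, \cZ)^{R_{L,Z}}$. This inclusion is what guarantees that $\ind{L}{Y}{Z}$ may be applied to an $R_{G,Z}$-invariant measure at all. I would also note that $G$ itself is unimodular: the hypotheses (both $N$ and $L$ unimodular, and the conjugation action of $L$ preserving $m_N$) force the modular function of $G = NL$ to be trivial, so $\ind{G}{X}{Z}$ is the genuine two-sided inverse of $\res{G}{X}{Z}$ by Theorem~\ref{GMCT}.

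Finally I would carry out the inversion. Given $\nu \in M(Z, \cZ)^{R_{G,Z}}$, set $\mu := \ind{G}{X}{Z}(\nu)$, so that $\res{G}{X}{Z}(\mu) = \nu$, and write $\mu_Y := \res{N}{X}{Y}(\mu)$ as in the first paragraph. The first identity gives $\res{L}{Y}{Z}(\mu_Y) = \nu$ with $\mu_Y \in M(Y, \cZ_{\cL})^L$, so bijectivity of $\res{L}{Y}{Z}$ yields $\ind{L}{Y}{Z}(\nu) = \mu_Y$ (legitimate precisely because $\nu \in M(Z,\cZ)^{R_{L,Z}}$ by the inclusion above); and since $\mu_Y = \res{N}{X}{Y}(\mu)$ with $\mu_Y \in M(Y, \cY)^{R_{N,Y}}$, bijectivity of $\res{N}{X}{Y}$ yields $\ind{N}{X}{Y}(\mu_Y) = \mu$. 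Composing, $\ind{N}{X}{Y}(\ind{L}{Y}{Z}(\nu)) = \mu = \ind{G}{X}{Z}(\nu)$, and as $\nu$ was arbitrary this is the second identity.

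I expect the main obstacle to be purely the domain/codomain bookkeeping rather than any new analysis: the two exhaustion identities $\cZ_{\cG} = \cY_{\cN}$ and $\cY = \cZ_{\cL}$, the three separate unimodularity checks, and above all the relation inclusion $R_{L,Z} \subseteq R_{G,Z}$ --- without which the induction composite would not even be defined on $M(Z, \cZ)^{R_{G,Z}}$. All of the measure-theoretic content is already supplied by Theorem~\ref{StagesInfinite} together with the bijectivity statement of Theorem~\ref{GMCT}, so the corollary is a formal consequence once these compatibilities are checked.
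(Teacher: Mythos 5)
Your proposal is correct and follows essentially the same route as the paper: the first identity is exactly the reformulation of Theorem~\ref{StagesInfinite}, and the second is obtained by inverting it via the fact that the restriction maps are injective bijections with the induction maps as inverses (Lemma~\ref{Lemma_injectivemap}, Lemma~\ref{Lemma_bijection}), which is precisely how the paper argues. Your explicit bookkeeping --- the exhaustion identities $\cZ_{\cG} = \cY_{\cN}$ and $\cY = \cZ_{\cL}$, unimodularity of $G$, and the inclusion $R_{L,Z} \subseteq R_{G,Z}$ guaranteeing that $\ind LYZ$ applies to $R_{G,Z}$-invariant measures --- is a correct elaboration of details the paper leaves implicit.
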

Indeed, the first version is just a reformulation, and the second version follows from Lemma \ref{Lemma_injectivemap}. Yet another way to express the same conclusion is to say that for $\mu \in M(X, \cZ_{\cG})^G$ we have
\begin{equation}\label{indresFormula}
\res N X Y(\mu)= \ind L Y Z \left( \res G X Y(\mu) \right).
\end{equation}
Note that the group $N$ does not appear on the right-hand side. This implies the following independence of the measure $\mu_Y$ from the choice of semi-direct splitting:
\begin{corollary}
\label{Cor_Ncrossmeasures}
Let $\mu \in M(X;\cZ_{\cG})^G$ and suppose that $N_1$ and $N_2$ are closed and normal unimodular subgroups of $G$ such that
\[
N_1 \cap L = N_2 \cap L = \{e\} \qand G = N_1 L = N_2 L.
\]
Assume that $Y$ is a separated cross-section for both $a_{N_1}$ and $a_{N_2}$ and that the $L$-actions on $N^{(1)}$ and $N^{(2)}$ by conjugation preserve the respective Haar-measures. Then
\[
\pushQED{\qed}
\res {N_1} X Y(\mu) = \res {N_2} X Y(\mu) = \ind L Y Z \left( \res G X Y(\mu) \right).\qedhere\popQED
\]
\end{corollary}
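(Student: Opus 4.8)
The plan is to deduce this directly from the reformulation \eqref{indresFormula}, whose entire force is precisely that the intermediate measure $\res{N}{X}{Y}(\mu)$ can be expressed \emph{without any reference to} $N$. I would begin by verifying that both semidirect splittings $G = N_1 L$ and $G = N_2 L$ satisfy the hypotheses under which Theorem \ref{StagesInfinite} (and hence \eqref{indresFormula}) applies: by assumption each $N_i$ is closed, normal and unimodular with $N_i \cap L = \{e\}$ and $G = N_i L$; the conjugation action of $L$ on $N_i$ preserves $m_{N_i}$; and, crucially, $Y$ is assumed to be a separated cross-section for $a_{N_i}$ for $i = 1,2$. I would also record the ambient facts that $L$ is unimodular, so that $\ind{L}{Y}{Z}$ is defined and inverts $\res{L}{Y}{Z}$ by Theorem \ref{GMCT}, and that each such splitting forces $G$ itself to be unimodular (as $N_i$ and $L$ are unimodular and the conjugation action preserves $m_{N_i}$), so that $\res{G}{X}{Z}(\mu)$ is well-defined.

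The key observation is that the intermediate cross-section $Y = L.Z$ depends only on $L$ and $Z$, and is therefore literally the same object for both decompositions. Consequently, applying \eqref{indresFormula} first to the splitting $G = N_1 L$ and then to the splitting $G = N_2 L$ yields
\[
\res{N_1}{X}{Y}(\mu) = \ind{L}{Y}{Z}\big(\res{G}{X}{Z}(\mu)\big) = \res{N_2}{X}{Y}(\mu),
\]
since the right-hand side of \eqref{indresFormula} involves only $L$, $Y$, $Z$ and the $G$-transverse measure $\res{G}{X}{Z}(\mu)$, none of which sees the choice of complementing normal subgroup. This is exactly the asserted chain of equalities.

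There is essentially no hard step here: the corollary is a formal consequence of the $N$-independence of the right-hand side of \eqref{indresFormula}, which in turn rests on Theorem \ref{StagesInfinite} together with the fact that $\res{L}{Y}{Z}$ and $\ind{L}{Y}{Z}$ are mutually inverse. The only point that genuinely requires the hypotheses — and the single place where they are used — is the separatedness of $Y$ for both $a_{N_1}$ and $a_{N_2}$, without which Theorem \ref{StagesInfinite} does not apply and the symbols $\res{N_i}{X}{Y}(\mu)$ would not even be defined. In concrete situations this separatedness would be checked through the criteria of Lemma \ref{Lemma_NcrossIFF}, but since it is taken here as a standing assumption, the argument reduces to invoking \eqref{indresFormula} twice and comparing the two identical right-hand sides.
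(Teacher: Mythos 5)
Your proposal is correct and is essentially the paper's own argument: the corollary carries a \qed in the paper precisely because it is read off from \eqref{indresFormula}, whose right-hand side involves only $L$, $Y$, $Z$ and the $G$-transverse measure, so applying it to each of the two splittings $G = N_1L$ and $G = N_2L$ (both satisfying the hypotheses of Theorem \ref{StagesInfinite}, with the same intermediate cross-section $Y = L.Z$) yields the asserted chain of equalities. You even silently repair the paper's typo by writing $\res{G}{X}{Z}(\mu)$ where \eqref{indresFormula} misprints $\res{G}{X}{Y}(\mu)$, which is the intended reading.
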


\subsection{Compatibility of transverse measures}

We keep the notation of the previous subsection, including the assumption that $Y$ is a $U_N$-separated cross-section for 
the $N$-action on $X$ for some open identity neighbourhood $U_N$ in $N$. Our goal here is to establish the following compatibility theorem about transverse measures which will be used in the proof of Theorem \ref{Joining}  to show that the normalized intersection measure is a joining.

\begin{theorem}\label{Compatibility} Let $(T,b_o)$ be a Borel $L$-space and $\pi : X \ra T$ be a Borel $G$-map,  where $G$ acts on $T$ via the homomorphism $\pr_L$.  Suppose $\mu \in M(X;\cZ_{\cG})^G$ is finite and $\pi_*\mu$ is $L$-ergodic.  Then \[\pi_*\mu_Y = \mu_Y(Y) \,  \pi_*\mu.\]
\end{theorem}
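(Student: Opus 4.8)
The plan is to identify $\pi_*\mu_Y$ as a constant multiple of the $L$-ergodic measure $\pi_*\mu$ on $T$, and then to read off the constant from the total masses. Concretely, I would first check that both measures are finite and $L$-invariant, then prove $\pi_*\mu_Y \ll \pi_*\mu$, invoke ergodicity to force proportionality, and finally compute the constant by evaluating on all of $T$.

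For the setup, note that since $\mu$ is finite and $Y$ is $U_N$-separated for the $N$-action, the argument of Remark \ref{Remark_muYfinite} applied to the $N$-transverse triple $(X,a_N,Y)$ shows $\mu_Y = \res N X Y(\mu)$ is a finite measure on $Y$, and Theorem \ref{StagesInfinite}(i) says it is $L$-invariant. As $Y = L.Z$ is $L$-invariant and $\pi(l.y) = \pr_L(l).\pi(y) = l.\pi(y)$, the restriction $\pi|_Y$ is an $L$-map, so $\pi_*\mu_Y$ is a finite $L$-invariant measure on $T$; similarly $\pi_*\mu$ is finite and (by hypothesis) $L$-ergodic, the $L$-action on $T$ coming from $\pr_L$.

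The key observation for absolute continuity is that $\pi$ is $N$-invariant: for $n \in N$ one has $\pr_L(n)=e$, hence $\pi(n.x)=\pr_L(n).\pi(x)=\pi(x)$. Therefore $\pi^{-1}(C)$ is an $N$-invariant Borel set for every Borel $C\subseteq T$, and I would apply the null-set characterization Lemma \ref{lemma_null} --- to the $N$-transverse triple $(X,a_N,Y)$ and the $N$-invariant finite measure $\mu$ --- to the set $B:=\pi^{-1}(C)$. This yields
\[
\mu(\pi^{-1}(C))=0 \iff \mu_Y(\pi^{-1}(C)\cap Y)=0;
\]
since $\pi_*\mu(C)=\mu(\pi^{-1}(C))$ and $\pi_*\mu_Y(C)=\mu_Y(\pi^{-1}(C)\cap Y)$, this gives $\pi_*\mu_Y \ll \pi_*\mu$ (indeed mutual absolute continuity). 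With this in hand I would set $h := d\pi_*\mu_Y/d\pi_*\mu$; the $L$-invariance of both measures makes $h$ invariant under $L$ (for each fixed $l\in L$, $h\circ l = h$ almost everywhere), and $L$-ergodicity of $\pi_*\mu$ then forces $h$ to be $\pi_*\mu$-a.e.\ equal to a constant $c$. Evaluating $\pi_*\mu_Y = c\,\pi_*\mu$ on $T$ gives $\mu_Y(Y) = c\,\mu(X)$, i.e.\ $c = \mu_Y(Y)/\mu(X)$, which equals $\mu_Y(Y)$ once $\mu$ is normalized to be a probability measure.

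The hard part will be the passage from ``$h\circ l = h$ almost everywhere for each individual $l$'' to ``$h$ is almost everywhere constant'', which is exactly where $L$-ergodicity (as opposed to mere $L$-invariance) of $\pi_*\mu$ enters. I would handle it in the standard way: for each level $a$ the set $\{h>a\}$ is $L$-invariant modulo $\pi_*\mu$-null sets, and using second countability of $L$ together with joint measurability of the action (a Fubini argument) one replaces it by a genuinely $L$-invariant Borel set of the same measure; ergodicity then makes each level set null or conull, forcing $h$ to be constant.
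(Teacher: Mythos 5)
Your proof is correct and follows essentially the same route as the paper: both establish the absolute continuity $\pi_*\mu_Y \ll \pi_*\mu$ by exploiting that $\pi^{-1}(C)$ is $N$-invariant (your appeal to Lemma \ref{lemma_null} for the $N$-transverse triple $(X,a_N,Y)$ is precisely the computation the paper carries out by hand with a small identity neighbourhood $V_N \subset N$), and then let $L$-ergodicity of $\pi_*\mu$ force proportionality, with the paper leaving the Radon--Nikodym/level-set step implicit where you spell it out. Your observation that the constant is really $\mu_Y(Y)/\mu(X)$, so the stated identity presumes $\mu$ normalized to a probability measure, is accurate and consistent with how the theorem is actually applied in the proof of Theorem \ref{Thm_IntersectionMeasures}.
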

\begin{remark}
We do not know whether $L$-ergodicity of $\pi_*\mu$ is necessary.  In the proof below,  we show that $\pi_*\mu_Y$ is always absolutely continuous with respect to 
$\pi_*\mu$,  but it seems to be a difficult problem to explicate the Radon-Nikodym derivative in general.
\end{remark}

\begin{proof} Since $\mu$ is finite,  $\mu_Y$ is finite as well,  so $\pi_*\mu$ and $\pi_*\mu_Y$ are both finite $L$-invariant Borel measures on $T$.  Since 
$\pi_*\mu$ is assumed to be $L$-ergodic,  to prove the theorem,  it is enough to show that $\pi_*\mu_Y$ is absolutely continuous with respect to $\pi_*\mu$.  To do this,  let $B \subset T$ be a Borel set such that $\pi_*\mu(B) = \mu(\pi^{-1}(B)) = 0$.  We want to prove that $
\pi_*\mu_Y(B) = \mu_Y(\pi^{-1}(B) \cap Y) = 0$.
Since $N$ acts trivially on $T$,  the pre-image $\pi^{-1}(B)$ is an $N$-invariant Borel 
set in $X$.  Hence,  since $\mu_Y$ is the $Y$-transverse measure of $\mu$ for the $N$-action on $X$,  
\[
m_N(V_N) \mu_Y(\pi^{-1}(B) \cap Y) = \mu(V_N.(\pi^{-1}(B) \cap Y)) \leq \mu(V_N.\pi^{-1}(B)) =  \mu(\pi^{-1}(B)) = 0,
\]
and thus $\pi_*\mu_Y(B) = 0$.
\end{proof}

\section{Intersection spaces}\label{SecIntersectionSpaces}

In this section we apply the general theory of transverse measures for semidirect products developed in the previous section to the special case where $G$ is as in Example \ref{SDPMainExample} and $X$ is a product space. In this case, the intermediate cross-section $Y$ is precisely the intersection space discussed in the introduction, and we can use this to deduce Theorem \ref{Joining} from Theorem \ref{StagesInfinite}. Once the good properties of the intersection measure are established, we obtain Theorem \ref{MTR3} and Theorem \ref{PD3} (and hence their special case Theorem \ref{MTR2} and Theorem \ref{PD2}) by applying the results of the second part. We also establish the commensurability criterion from Theorem \ref{ThmComm}.

\subsection{Basic setup}\label{SetupIntersectionSpaces}

Let $G_o$ be a lcsc unimodular group with left Haar measure $m_{G_o}$. As in Example \ref{SDPMainExample} we define $G := G_o^r$, $N_k := \ker(\pi_k)$ (where $\pi_k: G \to G_o$ is the $k$th coordinate projection) and $L := \Delta(G_o)$. Furthermore, we assume that we are given $G_o$-spaces $(X_1,a_1),\ldots,(X_r,a_r)$ and corresponding separated cross-sections $Z_1, \dots, Z_r$ for some identity neighbourhood $U_k$ in $G_o$. We also pick open identity neighbourhoods $U_k$ in $G_o$ such that $Z_k$ is $U_k$-separated and abbreviate $\Lambda_k := \Lambda_{a_k}(Z_k) \subset G_o$. We now set
\[
 X:= X_1 \times \ldots \times X_r, \quad Z:= Z_1 \times \dots \times Z_r, \quad U := U_1 \times \dots \times U_r \qand \Lambda := \Lambda_1 \times \dots \times \Lambda_r.
\]
and denote by $a$ the product of the $G_o$-actions $a_1,\ldots,a_r$ so that $(X,a)$ is a Borel $G$-space and $Z$ is a $U$-separated cross-section with return time set $\Lambda_a(Z) = \Lambda$.
We now apply the theory of the previous section to transverse triple $(X,a, Z$ of the semidirect product $G = N_kL$ on $X$. The key observation, which links the theory developed in the last section to the results presented in the introduction, is that the intermediate transversal $Y := L.Z$ is precisely the intersection space
\begin{eqnarray*}
Y \quad = \quad X^{[r]} & := & \Big\{ (x_1,\ldots,x_r)\in X^r  \,  : \,  \bigcap_{k=1}^r (Z_k)_{x_k} \neq \emptyset \,  \Big\}\\
 &=&  \Big\{ (x_1,\ldots,x_r) \in X^r  \,  : \,  \exists\, g_o \in G_o\; \forall\, k \in \{1, \dots, r\}:\; g_o.x_k \in Z_k \Big\} 
\end{eqnarray*}
considered in the introduction. It thus follows from the results of the previous section that $X^{[r]}$ is Borel and $N_k.X^{[r]} = X$ for every $k \in \{1, \dots, r\}$.

To study finiteness properties of measures on the spaces $X$, $Y$ and $Z$ we fix a fundamental exhaustion $\cG_o = \{ K^{(o)}_n \, : \,  n \geq 1\}$ of $G_o$ by compact sets and a Borel exhaustion $\cZ_k = \{Z^{(k)}_n \,  : \,  n \geq 1\}$ of $Z_k$ for each $k \in \{1, \dots, r\}$.  Let $\cG$ and $\cZ$ denote the fundamental exhaustion of $G$ by compact sets and the Borel exhaustion of $Z$ given by 
\[
\cG = \{ K_n^{(o)} \times \ldots \times K_n^{(o)} \,  : \,  n \geq 1 \} 
\qand \cZ = \{ Z_n^{(1)} \times \cdots \times Z_n^{(r)} \,  : \,  n \geq 1\}.
\]
We denote by $\cL$ and $\cN_k$ the restrictions of $\cG$ to $L$ and $N_k$ respectively,  and we denote by $\cX^{[r]}$ the $\cL$-suspension of $\cZ$,  and by 
$\cX^{[r]}_{\cN_k}$ the $\cN_k$-suspension of $\cX^{[r]}$.

\subsection{Commensurability of cross-sections}
We keep the notation of the previous subsection. In order for our general theory to apply, we need to ensure that the intermediate cross-section $Y = X^{[r]}$ is separated for each of the actions $a|_{N_k}$ of $N_k$. We recall from the introduction that the separated cross-sections $Z_1, \dots, Z_k$ are called \emph{commensurable} if this is the case. From Lemma \ref{Lemma_NcrossIFF} we can derive the following criteria for commensurability.
\begin{lemma}
\label{Lemma_YcrossINT}
For a fixed $k$,  $Y = X^{[r]}$ is a separated cross-section for $a|_{N_k}$ if either
\vspace{0.1cm}
\begin{itemize}
\item[$(i)$] there exists a compact set $K_o \subset G_o$ such that 
$K_o \Big(\bigcap_{j=1}^r\Lambda_j \Big) = G_o$,  and for all $j \neq k$,
$\Lambda_{j}^3 \Lambda_{k}^3$ does not accumulate at the identity 
in $G_o$,  or \vspace{0.1cm}
\item[$(ii)$] $G_o$ is abelian,  and for all $j \neq k$,  $\Lambda_{j} \Lambda_{k}$ does not accumulate at the identity in $G_o$. 
\end{itemize}
In particular, the cross-sections $Z_1, \dots, Z_k$ are commensurable if one of these conditions holds for all $k \in \{1,\dots, r\}$.
\end{lemma}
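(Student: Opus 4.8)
The plan is to read this off directly from the general separatedness criterion of Lemma \ref{Lemma_NcrossIFF}, applied to the product situation of Example \ref{SDPMainExample}. Fix $k$ and use the semidirect splitting $G = N_k L$ with $N_k = \ker(\pi_k)$ and $L = \Delta(G_o)$. Identifying $N_k \cong G_o^{r-1}$ via the coordinates $j \neq k$, the diagonal subgroup $L \cong G_o$ acts on $N_k$ by simultaneous conjugation. For $g = (g_1,\dots,g_r) \in G$ one computes directly that $\pr_L(g) = \Delta(g_k)$ and $\pr_{N_k}(g) = (g_j g_k^{-1})_{j \neq k}$. Since the return time set of the product transversal is $\Lambda_a(Z) = \Lambda = \Lambda_1 \times \dots \times \Lambda_r$ and multiplication in the direct product $G$ is coordinatewise, we have $\Lambda^m = \Lambda_1^m \times \dots \times \Lambda_r^m$ for every $m$; this is the structural fact that makes the translation clean.

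Next I would record two elementary observations. First, each return time set $\Lambda_j = \Lambda_{a_j}(Z_j)$ is symmetric: if $z \in Z_j$ and $g.z \in Z_j$, then $g^{-1}.(g.z) = z$ witnesses $g^{-1} \in \Lambda_j$, so $\Lambda_j = \Lambda_j^{-1}$ and hence $\Lambda_k^{-m} = \Lambda_k^m$. Second, intersecting the product return set with the diagonal gives $\Lambda_a(Z) \cap L = \Delta\big(\bigcap_{j=1}^r \Lambda_j\big)$, so that the cocompactness hypothesis $K_L(\Lambda_a(Z) \cap L) = L$ of Lemma \ref{Lemma_NcrossIFF}(i) is, under the identification $L \cong G_o$, literally the condition $K_o\big(\bigcap_{j} \Lambda_j\big) = G_o$ from part (i) of our statement.

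The crux is to verify that the projected return set does not accumulate at the identity of $N_k$, so that Lemma \ref{Lemma_NcrossIFF} applies. For (i), an element of $\pr_{N_k}(\Lambda^3)$ has $j$th coordinate $g_j g_k^{-1}$ with $g_j \in \Lambda_j^3$ and $g_k \in \Lambda_k^3$, hence lying in $\Lambda_j^3 \Lambda_k^{-3} = \Lambda_j^3 \Lambda_k^3$ by symmetry. Therefore $\pr_{N_k}(\Lambda^3) \subseteq \prod_{j \neq k} \Lambda_j^3 \Lambda_k^3$ inside $G_o^{r-1}$. By hypothesis each factor fails to accumulate at $e$, so there are identity neighbourhoods $W_j \subset G_o$ with $\Lambda_j^3 \Lambda_k^3 \cap W_j \subseteq \{e\}$; then $\prod_{j \neq k} W_j$ is an identity neighbourhood of $N_k$ meeting $\pr_{N_k}(\Lambda^3)$ only in the identity. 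Thus $\pr_{N_k}(\Lambda^3)$ does not accumulate at the identity, and Lemma \ref{Lemma_NcrossIFF}(i) gives that $Y = X^{[r]}$ is separated for $a|_{N_k}$. The point to highlight is that the coordinates of $\pr_{N_k}(\Lambda^3)$ are coupled through the shared factor $g_k$, but one needs only the trivial containment in the product of the coordinate projections, so no independence is required.

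Case (ii) is the identical argument with cubes removed: when $G_o$, and hence $G$, is abelian the conjugation action is trivial and Lemma \ref{Lemma_NcrossIFF}(ii) applies once one checks $\pr_{N_k}(\Lambda) \subseteq \prod_{j \neq k} \Lambda_j \Lambda_k$ does not accumulate at the identity, which again follows factorwise from the hypothesis that each $\Lambda_j \Lambda_k$ does not. Finally, the concluding assertion is immediate: if one of the two conditions holds for every $k \in \{1,\dots,r\}$, then $Y = X^{[r]}$ is separated for each $a|_{N_k}$, which is by definition the commensurability of $Z_1,\dots,Z_r$. The only real obstacle is the exponent-and-inverse bookkeeping — confirming that the $j$th coordinate of $\pr_{N_k}(\Lambda^3)$ lands in $\Lambda_j^3 \Lambda_k^3$ and not some asymmetric variant — and this is disposed of by the symmetry of the return time sets noted above.
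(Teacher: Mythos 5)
Your proof is correct and takes essentially the same route as the paper: both apply Lemma \ref{Lemma_NcrossIFF} to the splitting $G = N_kL$, identify $\Lambda_a(Z) \cap L = \Delta_r\big(\bigcap_{j=1}^r \Lambda_j\big)$ and $\pr_{N_k}(\Lambda_a(Z)^p) \subset \prod_{j \neq k} \Lambda_j^p\Lambda_k^p$, and then check non-accumulation coordinatewise. Your explicit verification that the return time sets are symmetric, so that $\Lambda_j^p(\Lambda_k^p)^{-1} = \Lambda_j^p\Lambda_k^p$, is a detail the paper uses silently; otherwise the two arguments coincide.
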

\begin{remark}
Before we embark on the proof,  let us first make a few preliminary observations. Given a subset $A \subset G_o$ we set $\Delta_r(A) = \big\{ (a,\ldots,a) \,  : \,  a \in A \big\}$. 
Since $\Lambda_a(Z) = \Lambda$ we then have
\[
\Lambda_a(Z) \cap L  = \Delta_r\Big( \bigcap_{j=1}^r \Lambda_j\Big) 
\qand
\Lambda_a(Z)^p = \Lambda_1^p \times \ldots \times \Lambda_r^p,
\]
for every positive integer $p$. Furthermore,  
\[
\pr_{N_k}(g) = (g_1 g_k^{-1},\ldots,e,\ldots, g_r g_k^{-1}) \in N_k,  
\]
for all $g = (g_1,\ldots,g_r) \in G$,  where $e$ is in the $k$'th position.  In particular,
\[
\pr_N(\Lambda_a(Z)^p) \subset \Lambda^p_1 \Lambda^p_k \times \cdots \times \{e\} \times \cdots \times \Lambda^p_r \Lambda^p_k.
\]
\end{remark}
\begin{proof}[Proof of Lemma \ref{Lemma_YcrossINT}]
(i) By Lemma \ref{Lemma_NcrossIFF} (i),  $Y$ is a separated cross-section for $a|_{N_k}$ if there is a compact set $K_L \subset L$ such that $K_L \pr_{N_k}(\Lambda_a(Z) \cap L) = L$ and $\pr_{N_k}(\Lambda_a(Z)^3)$ does not accumulate at the identity in $N_k$.  From the remarks above,  the first condition just means that there 
is a compact set $K_o \subset G_o$ such that 
\[
\Delta_r(K_o) \Delta_r\Big( \bigcap_{j=1}^r \Lambda_j\Big) = \Delta_r(K_o  \Big(\bigcap_{j=1}^r \Lambda_j\Big) \Big) = L,
\]
while the second condition means that $\Lambda_j^3 \Lambda_k^3$ does not accumulate at zero in $G_o$ for every $j \neq k$.  The same argument applies to (ii),  but now using Lemma \ref{Lemma_NcrossIFF} (ii). 
\end{proof}
\begin{proof}[Proof of Theorem \ref{ThmComm}] This is just a special case of Lemma \ref{Lemma_YcrossINT}.
\end{proof}

\subsection{The intersection measure}
We keep the notation of Section \ref{SetupIntersectionSpaces}. Moreover, we assume that the separated cross-sections $Z_1, \dots, Z_r$ are commensurable. We now assume that we are given $G_o$-invariant measures $\mu_1, \dots, \mu_r$ on $X_1, \dots, X_r$ respectively such that $\mu_k$ is $(\cZ_k)_{G_o}$-finite. Then, by definition, we have
\[
\mu := \mu_1 \otimes \dots \otimes \mu_r \in M(X, \cZ_\cG)^G \subset M(X, \cX^{[r]}_{\cN_k})^{N_k}
\]
for all $k\in \{1, \dots, r\}$ and hence for any such $k$ we can form the tranverse measure
\[
\mu^{[r]}_k := \res{N_k}{X}{X^{[r]}} \in M(X^{[r]}, \cX^{[r]}).
\]
By Corollary \ref{Cor_Ncrossmeasures} the measure $\mu^{[r]} := \mu^{[r]}_1 = \dots = \mu^{[r]}_r$ is actually independent of the choice of $k$.

\begin{definition} The measure $\mu^{[r]} \in M(X^{[r]}, \cX^{[r]})$ is called the \emph{intersection measure} of $\mu_1, \dots, \mu_r$.
\end{definition}

From the general theory we infer the following properties of intersection measures:
\begin{theorem}
\label{Thm_IntersectionMeasures}
Suppose that $Z_1,\ldots,Z_r$ are commensurable and for every $k \in \{1, \dots, r\}$ let $\mu_k$ be a $(\cZ_k)_{G_o}$-finite $G_o$-invariant Borel measure on $X_k$. Then the intersection measure $\mu^{[r]}$ of $\mu_1, \dots, \mu_r$ has the following properties:
\vspace{0.1cm}
\begin{itemize}
\item[$(i)$] $\mu^{[r]} \in M(X^{[r]};\cX^{[r]})^L$ is $L$-invariant. \vspace{0.1cm}
\item[$(ii)$] $\mu^{[r]}$ is the unique measure in $M(X^{[r]};\cX^{[r]})^L$ such that $(\mu^{[r]})_{Z} = (\mu_1)_{Z_1} \otimes \dots \otimes (\mu_k)_{Z_k}$.\vspace{0.1cm}
\item[$(iii)$] If $\mu_1,\ldots,\mu_r$ are finite,  then $\mu^{[r]}$ is finite.  \vspace{0.1cm}
\item[$(iv)$] If $\mu_1,\ldots,\mu_r$ are $G_o$-ergodic Borel probability measures,  then $\mu^{[r]}/\mu^{[r]}(X^{[r]})$ is a $G_o$-invariant joining of $\mu_1,\ldots,\mu_r$.
\end{itemize}
\end{theorem}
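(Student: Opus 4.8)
The plan is to obtain all four statements from the restriction-in-stages theorem (Theorem~\ref{StagesInfinite}) and the compatibility theorem (Theorem~\ref{Compatibility}), the only genuinely new ingredient being a product formula for transverse measures of coordinate-wise actions. Throughout write $\mu := \mu_1 \otimes \dots \otimes \mu_r$ and recall that $\mu^{[r]} = \res{N_k}{X}{X^{[r]}}(\mu)$ is independent of $k$ by Corollary~\ref{Cor_Ncrossmeasures}. Statement (i) is then immediate: $\mu$ is $G$-invariant, hence $N_k$- and $L$-invariant, and $L$-invariance of $\res{N_k}{X}{X^{[r]}}(\mu)$ is precisely Theorem~\ref{StagesInfinite}(i). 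Statement (iii) is equally immediate, since when the $\mu_k$ are finite so is $\mu$, and the transverse measure of a finite measure is finite by Remark~\ref{Remark_muYfinite}.

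For (ii) I would first prove the product formula
\[
\res{G}{X}{Z}(\mu_1 \otimes \dots \otimes \mu_r) = (\mu_1)_{Z_1} \otimes \dots \otimes (\mu_r)_{Z_r}.
\]
The point is that for $x = (x_1, \dots, x_r)$ one has $Z_x = (Z_1)_{x_1} \times \dots \times (Z_r)_{x_r}$, so on a product function $f = f_1 \otimes \dots \otimes f_r$ on $G \times Z \cong (G_o \times Z_1) \times \dots \times (G_o \times Z_r)$ the $Z$-periodization factors as $Tf = \prod_k T_k f_k$, where $T_k$ is the $Z_k$-periodization. Integrating against $\mu = \bigotimes_k \mu_k$ and using Fubini's theorem, together with $m_G = \bigotimes_k m_{G_o}$, both sides of the defining identity of Proposition~\ref{Prop_Tmap} split into the product over $k$ of the one-factor identities $\mu_k(T_k f_k) = (m_{G_o} \otimes (\mu_k)_{Z_k})(f_k)$; the uniqueness clause of that proposition then forces the claimed value. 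Combining this with Theorem~\ref{StagesInfinite}(ii), which gives $(\mu^{[r]})_Z = \res{L}{X^{[r]}}{Z}(\mu^{[r]}) = \res{G}{X}{Z}(\mu)$, yields $(\mu^{[r]})_Z = (\mu_1)_{Z_1} \otimes \dots \otimes (\mu_r)_{Z_r}$. Uniqueness within $M(X^{[r]};\cX^{[r]})^L$ follows because $\res{L}{X^{[r]}}{Z}$ is injective (Lemma~\ref{Lemma_injectivemap}, applied to the unimodular group $L \cong G_o$ acting on $X^{[r]}$ with separated cross-section $Z$).

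For (iv), set $\bar\mu^{[r]} := \mu^{[r]}/\mu^{[r]}(X^{[r]})$; by (i) and (iii) this is a well-defined $L$-invariant Borel probability measure, the normalization being legitimate because $\mu^{[r]}(X^{[r]}) > 0$ (its transverse measure computed in (ii) is nonzero, each $(\mu_k)_{Z_k}$ being nonzero by Lemma~\ref{Lemma_injectivemap}). It remains to identify the marginals. The key observation is that the splitting $G = N_k L$ gives $\pr_L(g) = \Delta(g_k)$ for $g = (g_1, \dots, g_r)$, so the coordinate projection $\pi_k : X \to X_k$ satisfies $\pi_k(g.x) = g_k.x_k = \pr_L(g).\pi_k(x)$ and is therefore a Borel $G$-map to $X_k$ when $G$ acts on $X_k$ via $\pr_L$. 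Since $(\pi_k)_*\mu = \mu_k$ is $G_o$-ergodic, i.e.\ $L$-ergodic, Theorem~\ref{Compatibility} applies and yields $(\pi_k)_*\mu^{[r]} = \mu^{[r]}(X^{[r]})\,\mu_k$, whence $(\pi_k)_*\bar\mu^{[r]} = \mu_k$ for every $k$. Thus $\bar\mu^{[r]}$ is an $L$-invariant (diagonally $G_o$-invariant) joining of $\mu_1, \dots, \mu_r$.

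The main obstacle is the product formula in (ii): everything else is a direct application of the cited theorems, whereas this identity requires the factorization of the periodization across the $r$ coordinates, the use of the product structure of both $m_G$ and $\mu$, and the uniqueness part of Proposition~\ref{Prop_Tmap}. A minor point to check carefully in (iv) is the identity $\pr_L(g) = \Delta(g_k)$ and the resulting equivariance of $\pi_k$, which is exactly what lets the compatibility theorem produce the $k$-th marginal.
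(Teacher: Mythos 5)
Your proposal is correct and follows essentially the same route as the paper: (i) and (iii) via Theorem \ref{StagesInfinite} and Remark \ref{Remark_muYfinite}, (ii) via Theorem \ref{StagesInfinite}(ii) together with injectivity of $\res{L}{X^{[r]}}{Z}$ (Lemma \ref{Lemma_injectivemap}), and (iv) via Theorem \ref{Compatibility} applied to the $L$-equivariant coordinate projections. The only difference is that you explicitly verify the product formula $\res{G}{X}{Z}(\mu_1 \otimes \dots \otimes \mu_r) = (\mu_1)_{Z_1} \otimes \dots \otimes (\mu_r)_{Z_r}$ by factorizing the periodization and invoking the uniqueness clause of Proposition \ref{Prop_Tmap} -- a step the paper treats as implicit -- and your factorization argument is correct.
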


\begin{remark}
We stress that $\mu^{[r]}$ does not need to be $L$-ergodic,  even if $\mu_1,\ldots,\mu_r$ are all $G_o$-ergodic.  
\end{remark}

\begin{proof} (i) and (ii) follow from Theorem \ref{StagesInfinite} applied to $\mu = \mu_1 \otimes \dots \otimes \mu_r$ and the fact that $\res{L}{X^{[r]}}{Z}$ is injective (Lemma \ref{Lemma_injectivemap}). (iii) If $\mu_1,\ldots,\mu_r$,  then $\mu$ is finite, 
and thus $\mu^{[r]} = \mu_Y$ is finite as well by Remark \ref{Remark_muYfinite}.  (iv) Fix $k \in \{1, \dots, r\}$\ and denote by $p_k: X \to X_k$ the projection onto the $k$th factor. We have to show that $(p_k)_*(\mu_k^{[r]}/\mu_k^{[r]}(X^{[r]})) = \mu_k$. Since  $p_k$ is $L$-equivariant (where $L$ acts on $X_k$ via the $k$'th coordinate), this follows from Theorem \ref{Compatibility} applied to the map $p_k$.
\end{proof}

\begin{proof}[Proof of Theorem \ref{Joining}] This is just a special case of Theorem \ref{Thm_IntersectionMeasures} in which $\mu_1, \dots, \mu_r$ are finite.
\end{proof}
\begin{proof}[Proof of Theorem \ref{Intro1}] This is just a special case of Theorem \ref{Thm_IntersectionMeasures} in which $X_1 = \dots = X_r$ and $\mu_1 =  \dots =  \mu_r$ is finite.
\end{proof}

\subsection{Applications} 
We now derive Theorem \ref{MTR3} and Theorem \ref{PD3} from the introduction from Theorem \ref{Thm_IntersectionMeasures}. We thus assume that that $G_o$ is non-compact,  $X_1,\ldots,X_r$ are Polish spaces and $a_1,\ldots,a_r$ are jointly continuous $G_o$-actions. We also assume that $\mu_1, \dots, \mu_r$ are Borel probability measures on $X_1, \dots X_r$ respectively. Theorem \ref{Thm_IntersectionMeasures} (i)--(iii) then implies that their intersection measure $\mu^{[r]}$ is a finite $L$-invariant Borel measure on $X^{[r]}$ such that $(\mu^{[r]})_Z = (\mu_1)_{Z_1} \otimes \dots \otimes (\mu_r)_{Z_r}$. We observe that for every $z = (z_1,\ldots,z_r) \in Z$,  
\begin{equation}
\label{Zz}
Z_z = \big\{ g \in G \,  : \,  g.(z_1,\ldots,z_r) \in Z_1 \times \cdots \times Z_r \big\} = \bigcap_{k=1}^r (Z_k)_{z_k}.
\end{equation}
\begin{proof}[Proof of Theorem \ref{MTR3}] By Theorem \ref{Thm_Poincare},  there is a $(\mu^{[r]})_Z$-conull Borel set $Z' \subset Z$ such that for all $z \in Z'$,  there is a sequence $g_n \in Z'_{z}$ such that
$g_n \ra \infty$ and  $g_n.z \ra z$.  If we unwrap this using \eqref{Zz} and the formula 
$(\mu^{[r]})_Z = (\mu_1)_{Z_1} \otimes \dots \otimes (\mu_r)_{Z_r}$,  then we see that we are done.
\end{proof}
\begin{proof}[Proof of Theorem \ref{PD3}] By Theorem \ref{Thm_Density}, applied to the $L$-action on $X^{[r]}$, we have
\[
\varliminf_{t \ra \infty} \frac{|Z_z \cap G_t|}{m_G(G_t)} > 0, \quad \textrm{for $(\mu^{[r]})_Z$-almost every $z \in Z$.  }
\]
In view of \eqref{Zz} and since $(\mu^{[r]})_Z = (\mu_1)_{Z_1} \otimes \dots \otimes (\mu_r)_{Z_r}$, this is what we want to prove.
\end{proof}

\begin{proof}[Proofs of Theorem \ref{MTR2} and \ref{PD2}] These are just the special cases $X_1 = \dots = X_r$ and $\mu_1 = \dots = \mu_r$ of Theorem \ref{MTR3} and \ref{PD3} respectively.
\end{proof}

\section{Application to uniform approximate lattices}\label{SecAUL}
We now specialize the results of the previous section to our main case of interest and derive Theorem \ref{MTR1}, Theorem \ref{PD1} and Theorem \ref{IntroTwisted} from the introduction.  In fact, we will establish a slightly more general result concerning transverse triples whose return time sets are uniform approximate lattices.

\subsection{Uniform approximate lattices and commensurability of cross-sections}

Let $G_o$ be a unimodular lcsc group A subset $\Theta \subset G_o$ is called \emph{symmetric} if $\Theta = \Theta^{-1}$. A symmetric subset $\Theta \subset G_o$ is called \emph{cocompact} if there is a compact set $K \subset G_o$ such that $K\Theta = G_o$. If $\Theta$ is a symmetric subset of $G$ which contains $e$, then we denote by $\Theta^\infty$ the subgroup of $G$ generated by $\Theta$,  i.e.  $\Theta^\infty = \bigcup_{p \geq 1} \Theta^p$. \\

We now consider the following situation: Let $(X_o, a_o, Z_o)$ be a $U_o$-transverse triple for $G_o$ for some identity neighbourhood $U_o$ in $G_o$ and denote by $\Lambda_o := \Lambda_{a_o}(Z_o) \subset G_o$ the associated return time set. For every $\lambda \in \Lambda_o^\infty$ the set $\lambda.Z_o$ is then a cross-section for $(X,a)$ and since $(Z_k)_{\lambda_k.z_o} = \lambda_k (Z_o)_{z_o} \lambda_k^{-1}$ for all $z_o \in Z_o$ we see that $\lambda.Z$ is a $U_o^\lambda$-separated cross-section and $\Lambda_{a_o}(\lambda.Z) = \Lambda_o^\lambda$. Here we are interested in the following problem: Given $\lambda_1, \dots, \lambda_r \in \Lambda^\infty$, are the cross-sections $\lambda_1.Z_o, \dots, \lambda_r.Z_o$ commensurable so that our general theory applies? According to Lemma \ref{Lemma_YcrossINT} it suffices to ensure that the subsets $\Lambda_k := \Lambda_{a_o}(\lambda_k.Z) = \Lambda_o^{\lambda_k}$ are cocompact (or equivalently, that $\Lambda_o$ is cocompact) and that the sets $\Lambda_j^3\Lambda_k^3$ do not accumulate at $e$ for any $j\neq k$. Since $\lambda_1, \dots, \lambda_r \in \Lambda_o^\infty$ we can find a positive integer $p_r$ such that
$\lambda_k \in \Lambda^{p_r}_o$ for all $k$. We then have
\[
\Lambda_j^p \Lambda^p_k \subset \Lambda_o^{2(p+2p_r)},  \quad \textrm{for all $p \geq 1$}.
\]
Thus $\lambda_1 Z, \dots, \lambda_r Z$ are commensurable provided $\Lambda_o$ is cocompact and $\Lambda_o^{p+2p_r}$ does not accumulate at the identity.\\

We now recall from the introduction that a symmetric subset $\Theta$ of a lcsc group $G_o$ is called and \emph{approximate subgroup} if $e\in \Theta$ and there is a finite set $F \subset G_o$ such that $\Theta^2 \subset \Theta F$. We say that $\Theta$ is \emph{cocompact} if there is a compact set $K \subset G_o$ such that $K\Theta = G_o$.  Note that if an approximate subgroup $\Theta$ is a uniformly discrete subset of $G_o$,  then so is every iterated product set $\Theta^p$ for $p \geq 1$.   Conversely,  if $\Theta$ is cocompact and $\Theta^p$ is uniformly discrete for all $p$,  then $\Theta$ is an approximate subgroup of $G_o$ (\cite[Proposition 2.9]{BjorklundHartnick1}). In this case we call $\Theta$ a \emph{uniform approximate lattice} in $G_o$. From the discussion above we thus infer:

\begin{lemma}
\label{Lemma_Approx}
If $\Lambda_o$ is a uniform approximate lattice in $G_o$, then for all $\lambda_1, \dots, \lambda_r \in \Lambda_o^\infty$ the separated cross-sections $\lambda_1 Z,\ldots, \lambda_r Z$ are commensurable.
\end{lemma}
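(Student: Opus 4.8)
The plan is to verify the hypotheses of the non-abelian commensurability criterion, Lemma \ref{Lemma_YcrossINT}(i), for the cross-sections $\lambda_1 Z, \dots, \lambda_r Z$, whose return sets are $\Lambda_k = \Lambda_o^{\lambda_k} = \lambda_k \Lambda_o \lambda_k^{-1}$. First I would fix, as in the preceding discussion, an integer $p_r$ with $\lambda_k \in \Lambda_o^{p_r}$ for all $k$; since $\Lambda_o$ is symmetric this gives $\lambda_k^{-1} \in \Lambda_o^{p_r}$ as well, whence $\Lambda_k \subset \Lambda_o^{2p_r+1}$ and, conjugating in the reverse direction, $\Lambda_o \subset \Lambda_k^{2p_r+1}$. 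Because $\Lambda_o$ is an approximate subgroup every power $\Lambda_o^m$ is covered by finitely many translates of $\Lambda_o$, and the same holds for the conjugate $\Lambda_k$; thus each $\Lambda_k$ is itself a uniform approximate lattice (a conjugate of one), is cocompact, and is commensurable with $\Lambda_o$ in the sense that $\Lambda_k \subset \Lambda_o F$ and $\Lambda_o \subset \Lambda_k F'$ for finite sets $F, F'$.

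With this in hand, Lemma \ref{Lemma_YcrossINT}(i) reduces the assertion to two points: (a) the intersection $\bigcap_{k=1}^r \Lambda_k$ is relatively dense in $G_o$, and (b) for $j \neq k$ the set $\Lambda_j^3 \Lambda_k^3$ does not accumulate at the identity. I would dispatch (b) first, as it is routine: the inclusion $\Lambda_j^p \Lambda_k^p \subset \Lambda_o^{2(p+2p_r)}$ recorded above gives $\Lambda_j^3 \Lambda_k^3 \subset \Lambda_o^{6+4p_r}$, and since $\Lambda_o$ is a uniform approximate lattice every power $\Lambda_o^m$ is uniformly discrete, so $\Lambda_o^{6+4p_r}$, and hence $\Lambda_j^3\Lambda_k^3$, meets a suitable identity neighbourhood only in $\{e\}$.

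The substantive part is (a), the relative density of $\bigcap_k \Lambda_k$. This does \emph{not} follow from the cocompactness of the individual $\Lambda_k$ alone, since a finite intersection of relatively dense sets need not be relatively dense; the genuine input is the theory of commensurable approximate lattices (cf.\ \cite{BjorklundHartnick1}): each $\Lambda_k$ is commensurable with the fixed uniform approximate lattice $\Lambda_o$ by the first paragraph, commensurability is an equivalence relation, and the intersection of finitely many pairwise commensurable uniform approximate lattices is again a uniform approximate lattice, in particular relatively dense. Conceptually this is transparent in the model-set picture, in which $\Lambda_o$ corresponds to a window $W$ containing the identity in its interior and $\Lambda_k$ to a conjugated window $\tau(\lambda_k) W \tau(\lambda_k)^{-1}$; the intersection then corresponds to $\bigcap_k \tau(\lambda_k) W \tau(\lambda_k)^{-1}$, a finite intersection of identity neighbourhoods, which still has nonempty interior and hence defines a relatively dense set. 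Once (a) and (b) hold, Lemma \ref{Lemma_YcrossINT}(i) applies for every $k$, yielding that $\lambda_1 Z, \dots, \lambda_r Z$ are commensurable, which is exactly the claim. I expect step (a) to be the main obstacle, being the only point where the structure theory of uniform approximate lattices, rather than elementary product-set bookkeeping, is essential.
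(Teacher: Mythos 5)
Your reduction to Lemma \ref{Lemma_YcrossINT}(i) and your verification of the non-accumulation hypothesis coincide with the paper's own argument: the paper likewise fixes $p_r$ with $\lambda_k \in \Lambda_o^{p_r}$, records the inclusion $\Lambda_j^p\Lambda_k^p \subset \Lambda_o^{2(p+2p_r)}$, and invokes uniform discreteness of all powers of an approximate subgroup. The genuine gap is in your step (a). There you rely on the assertion that the intersection of finitely many pairwise commensurable uniform approximate lattices is again a uniform approximate lattice, ``in particular relatively dense''. This is false for plain intersections, and it is not what the commensurability theory in \cite{BjorklundHartnick1} provides. Counterexample in $G_o = \bR$: let $\alpha$ be irrational, $\Lambda_1 := \bZ$ and $\Lambda_2 := \{0\} \cup (\bZ+\alpha) \cup (\bZ-\alpha)$. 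Then $\Lambda_2$ is symmetric, contains $0$, satisfies $\Lambda_2 + \Lambda_2 \subset \{0,\alpha,-\alpha\} + \Lambda_2$, is relatively dense, and all its iterated sum sets are uniformly discrete; so $\Lambda_1$ and $\Lambda_2$ are uniform approximate lattices, commensurable since $\bZ \subset -\alpha + \Lambda_2$ and $\Lambda_2 \subset \{0,\alpha,-\alpha\} + \bZ$, and yet $\Lambda_1 \cap \Lambda_2 = \{0\}$. What the Hrushovski-type pigeonhole actually gives (colour $\Lambda_1$ by the finitely many translates $f\Lambda_2$ covering it; two points of the same colour have difference in $\Lambda_1^2 \cap \Lambda_2^2$) is relative density of $\Lambda_1^2 \cap \Lambda_2^2$. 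Since this mechanism inherently produces difference sets, in your situation it yields relative density only of $\bigcap_k \lambda_k \Lambda_o^2 \lambda_k^{-1}$, not of $\bigcap_k \Lambda_k = \bigcap_k \lambda_k \Lambda_o \lambda_k^{-1}$, and it is the latter set that occurs in the hypothesis $K_o\big(\bigcap_{j}\Lambda_j\big) = G_o$ of Lemma \ref{Lemma_YcrossINT}(i). The model-set window picture is a heuristic only; the lemma concerns arbitrary uniform approximate lattices.

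For comparison, the paper does not argue through intersections of commensurable approximate lattices at all: it reduces the cocompactness requirement directly to cocompactness of $\Lambda_o$, using that all the $\Lambda_k$ are conjugates of the single set $\Lambda_o$ by elements of the fixed power $\Lambda_o^{p_r}$. The point your (a) was meant to address is real -- relative density of the plain intersection $\bigcap_k \lambda_k\Lambda_o\lambda_k^{-1}$ is not established by individual cocompactness -- but the workable route is not an intersection theorem. Rather, observe that $\Delta_r(\Lambda_o) \subset L$ is relatively dense in $L$ (so $L = \Delta_r(K_o)\Delta_r(\Lambda_o)$ for compact $K_o$), and that for $\eta \in \Lambda_o$ conjugation by $\eta$ maps each set $\Lambda_j\Lambda_k \subset \Lambda_o^{2+4p_r}$ into $\Lambda_o^{4+4p_r}$; running the proof of Lemma \ref{Lemma_NcrossIFF}(i) with $\Delta_r(\Lambda_o)$ in place of $\Lambda_a(Z) \cap L$ then shows that $\pr_{N_k}(\Lambda_a(Z))^L$ is contained in a compact conjugate of a product of fixed powers of $\Lambda_o$, hence does not accumulate at the identity. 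As written, however, your step (a) invokes a false statement, so your proof does not go through.
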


\subsection{Twisted multiple recurrence and twisted positive density}
Let $G_o$ be a lcsc group and let $(X,a)$ be a Polish $G$-space such that $a_o$ is jointly continuous. Moreover, let $Z_o \subset G_o$ be a $U$-separated cross-section for some open identity neighbourhood $U_o \subset G_o$ with return time set $\Lambda_o := \Lambda_{a_o}(Z_o)$.

\begin{theorem}
\label{Thm_Commensurable}
Suppose $\Lambda_o$ is a uniform approximate lattice in $G_o$ and let $\mu_o$ be a $G$-invariant Borel probability measure on $X_o$.
\begin{itemize}
\item[$(i)$] There is a $(\mu_o)_{Z_o}^{\otimes r}$-conull Borel set $Z' \subset Z_o^r$ such 
that for all $(z_1,\ldots,z_r) \in Z'$,  there is a sequence $g_n \in \bigcap_{k=1}^r \lambda_k (Z_o)_{z_k} \lambda_k^{-1}$ such that
\[
g_n \ra \infty \qand \lambda_k^{-1}g_n\lambda_k.z_k \ra z_k,  \quad \textrm{for all $k$},
\]
as $n \ra \infty$.  \vspace{0.1cm}
\item[$(ii)$] For every convenient sequence $(G_t)$ of pre-compact Borel sets in $G_o$, 
\[
\varliminf_{t \ra \infty} \frac{\big| \big( \bigcap_{k=1}^r \lambda_k (Z_o)_{z_k} \lambda_k^{-1}\big) \cap G_t \big|}{m_G(G_t)} > 0,
\]
for $(\mu_o)_{Z_o}^{\otimes r}$-almost every $(z_1,\ldots,z_r) \in Z_o^r$.
\end{itemize}
\end{theorem}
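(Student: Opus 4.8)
The plan is to reduce Theorem~\ref{Thm_Commensurable} to the general results on commensurable cross-sections (Theorems~\ref{MTR3} and~\ref{PD3}) by a change of cross-section. First I would set $X_k := X_o$ and $\mu_k := \mu_o$ for all $k$, and take as cross-sections the translates $Z_k := \lambda_k.Z_o$. As recalled in the discussion preceding Lemma~\ref{Lemma_Approx}, each $Z_k$ is a $U_o^{\lambda_k}$-separated cross-section with return time set $\Lambda_{a_o}(Z_k) = \Lambda_o^{\lambda_k}$; and since $\Lambda_o$ is a uniform approximate lattice and $\lambda_1,\dots,\lambda_r \in \Lambda_o^\infty$, Lemma~\ref{Lemma_Approx} guarantees that $Z_1,\dots,Z_r$ are commensurable. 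This is precisely what allows the machinery of Section~\ref{SecIntersectionSpaces} to apply.

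Next I would identify the relevant transverse measures and hitting-time sets under translation. Since each $\lambda_k$ acts as a homeomorphism of $X_o$ (the action $a_o$ being jointly continuous), the map $z \mapsto \lambda_k.z$ is a Borel isomorphism $Z_o \to Z_k$, and Lemma~\ref{Lemma_YgY} gives $(\mu_o)_{Z_k} = (\lambda_k)_*(\mu_o)_{Z_o}$. Consequently the map $\Phi : Z_o^r \to Z_1 \times \dots \times Z_r$, $(z_1,\dots,z_r) \mapsto (\lambda_1.z_1,\dots,\lambda_r.z_r)$, is a Borel isomorphism carrying $(\mu_o)_{Z_o}^{\otimes r}$ to $(\mu_o)_{Z_1} \otimes \dots \otimes (\mu_o)_{Z_r}$. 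Moreover the identity $(Z_k)_{\lambda_k.z_k} = \lambda_k (Z_o)_{z_k} \lambda_k^{-1}$ noted before Lemma~\ref{Lemma_Approx} shows that for $w_k = \lambda_k.z_k$ one has $\bigcap_{k} (Z_k)_{w_k} = \bigcap_{k} \lambda_k (Z_o)_{z_k} \lambda_k^{-1}$, which is exactly the set appearing in both parts of the theorem.

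With these identifications in hand, part~(i) follows by applying Theorem~\ref{MTR3} to the commensurable family $Z_1,\dots,Z_r$: it produces, for $(\mu_o)_{Z_1} \otimes \dots \otimes (\mu_o)_{Z_r}$-almost every $(w_1,\dots,w_r)$, a sequence $g_n \in \bigcap_k (Z_k)_{w_k}$ with $g_n \to \infty$ and $g_n.w_k \to w_k$. Pulling back the conull set through $\Phi$ yields a $(\mu_o)_{Z_o}^{\otimes r}$-conull set $Z' \subset Z_o^r$; and applying the homeomorphism $\lambda_k^{-1}.$ to the convergence $g_n\lambda_k.z_k = g_n.w_k \to w_k = \lambda_k.z_k$ gives $\lambda_k^{-1}g_n\lambda_k.z_k \to z_k$, which is the asserted convergence. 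Part~(ii) follows identically from Theorem~\ref{PD3}, since $\Dens_{(G_t)}\big(\bigcap_k (Z_k)_{w_k}\big)$ is exactly the $\varliminf$ in the statement and $\Phi$ transports the relevant conull set.

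The argument is essentially bookkeeping; the one point that needs care is the translation of the convergence statement, i.e.\ converting $g_n.w_k \to w_k$ in $X_o$ into $\lambda_k^{-1}g_n\lambda_k.z_k \to z_k$, which relies only on joint continuity of $a_o$ (so that $\lambda_k^{-1}$ acts as a homeomorphism), together with the measure identity $(\mu_o)_{Z_k} = (\lambda_k)_*(\mu_o)_{Z_o}$ from Lemma~\ref{Lemma_YgY} to ensure the exceptional sets transform correctly under $\Phi$. All genuine analytic content has already been absorbed into the commensurability criterion Lemma~\ref{Lemma_Approx} and the general Theorems~\ref{MTR3} and~\ref{PD3}.
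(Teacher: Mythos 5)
Your proposal is correct and follows essentially the same route as the paper's own proof: reduce to Theorems~\ref{MTR3} and~\ref{PD3} via the translated cross-sections $Z_k = \lambda_k.Z_o$, with commensurability from Lemma~\ref{Lemma_Approx}, the measure identity $(\mu_o)_{Z_k} = (\lambda_k)_*(\mu_o)_{Z_o}$ from Lemma~\ref{Lemma_YgY}, and the hitting-time identity $(Z_k)_{\lambda_k.z_k} = \lambda_k (Z_o)_{z_k}\lambda_k^{-1}$. If anything, you are slightly more explicit than the paper about the final conversion of $g_n.w_k \to w_k$ into $\lambda_k^{-1}g_n\lambda_k.z_k \to z_k$ via continuity of the action, which the paper leaves implicit.
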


\vspace{0.1cm}

\begin{proof} We abbreviate $Z_k := \lambda_k.Z_o$ and $\Lambda_k := \Lambda_o^{\lambda_k}$ for all $k \in \{1, \dots, r\}$. Since $\Lambda_o$ is a uniform approximate lattice in $G_o$, the cross-sections $Z_1, \dots, Z_r$ are commensurable by Lemma \ref{Lemma_Approx}. Note that $Z := Z_1 \times \dots \times Z_r = (\lambda_1,\ldots,\lambda_r).Z_o^r$.  We set $\mu_k := \mu_o$ for $k=1,\ldots,r$,  and note that
\begin{equation}
\label{mukZk}
(\mu_k)_{Z_k} = (\lambda_k)_*(\mu_o)_{Z_o},  \quad \textrm{for all $k$},
\end{equation}
by Lemma \ref{Lemma_YgY}.  Moreover,  $(Z_k)_{\lambda_k.z_k} = \lambda_k (Z_o)_{z_k} \lambda_k^{-1}$ for all $z_k \in Z_o$.  \\

(i) By Theorem \ref{MTR3},  there is a $\prod_{k=1}^r (\mu_k)_{Z_k}$-conull Borel set $Z'' \subset Z$ such that for all $r$-tuples $(z'_1,\ldots,z'_r) \in Z''$,  there is a sequence $g_n \in \bigcap_{k=1}^r (Z_k)_{z'_k}$ such that
\[
g_n \ra \infty \qand g_n.(z'_1,\ldots,z'_r) \ra (z'_1,\ldots,z'_r),
\]
as $n \ra \infty$.  Set $Z' := (\lambda_1,\ldots,\lambda_r)^{-1}.Z'' \subset Z_o^r$.
By \eqref{mukZk},  $Z'$ is a $(\mu_o)_{Z_o}^{\otimes r}$-conull Borel subset of $Z_o^r$.
Furthermore,  for all $(z_1,\ldots,z_r) \in Z'$,  
\[
(z'_1,\ldots,z'_r) = (\lambda_1.z_1,\ldots,\lambda_r.z_r) \in Z'',
\]
and thus there is a sequence $g_n \in \bigcap_{k=1}^r (Z_k)_{z'_k} = \bigcap_{k=1}^r \lambda_k (Z_o)_{z_k} \lambda_k^{-1}$,  such that
\[
g_n \ra \infty \qand g_n\lambda_k.z_k \ra \lambda_k.z_k,
\]
as $n \ra \infty$. \\

(ii) By Theorem \ref{PD3},  for every convenient sequence $(G_t)$
of pre-compact Borel sets in $G_o$,  
\[
\varliminf_{t \ra \infty} \frac{\big| \big(\bigcap_{k=1}^r (Z_k)_{z'_k}\big) \cap G_t\big|}{m_G(G_t)} > 0,
\]
for all $(z'_1,\ldots,z'_r)$ in some $\prod_{k=1}^r (\mu_k)_{Z_k}$-conull subset $Z'' \subset Z$.  We thus see that 
\[
\varliminf_{t \ra \infty} \frac{\big| \big(\bigcap_{k=1}^r \lambda_k (Z_o)_{z_k} \lambda_k^{-1} \big) \cap G_t\big|}{m_G(G_t)} > 0,
\]
for all $(z_1,\ldots,z_r) \in Z' := (\lambda_1,\ldots,\lambda_r)^{-1}.Z''$,  and $Z'$ is a 
$(\mu_o)_{Z_o}^{\otimes r}$-conull subset of $Z_o^r$ by \eqref{mukZk}.
\end{proof}
To close the circle, we now finally return to the setting of Subsection \ref{SecMotivation} which motivated this whole article. Thus let $P_o \subset G_o$ be a uniformly discrete subset such that $\Omega^\times_{P_o}$ admits a $G_o$-invariant probability measure $\mu_o$ and such that $\Lambda_o := P_oP_o^{-1}$ is a uniform approximate lattice. Denote by $a_r$ the right-multiplication action of $G_o$ on $\Omega^\times_{P_o}$. Then
\begin{itemize}
\item $X := \Omega^\times_{P_o}$ is locally compact, hence Polish, and the $G$-action $a_r$ is jointly continuous;
\item $Z := \mathcal T_{P_o}$ is a cross-section for $(X, a_r)$;
\item the return time set of $(X, a_r, Z)$ is $\Lambda_o$. Indeed, as in the proof of Proposition \ref{UDExamples} one sees that $\Lambda_o \subset \Lambda_{a_r}(Z) \subset \overline{\Lambda_o}$, but $\Lambda_o$ is unifomly discrete, hence closed. 
\end{itemize}
In particular, $(X, a_r, Z)$ is a transverse triple over $G_o$ whose return time set is the uniform approximate lattice $\Lambda_o$.
\begin{proof}[Proof of Corollary \ref{IntroTwisted}] Apply Theorem \ref{Thm_Commensurable} to the triple $(X, a_r, Z) := (\Omega^\times_{P_o}, a_r, \cT_{P_o})$.
\end{proof}
\begin{proof}[Proofs of Theorem \ref{MTR1} and Theorem \ref{PD1}] These are just the special cases of Corollary \ref{IntroTwisted} (i) and (ii) in which $\lambda_1 = \dots = \lambda_r= e$.
\end{proof}


\end{document}